\newcommand{\mygraph}[1]{\xybox{\xygraph{#1}}}
\setlist[enumerate]{label=\rm{(\arabic*)}}
\setlist[enumerate,2]{label=\rm({\it\roman*})}
\setlist[itemize]{label=\raisebox{0.25ex}{\tiny$\bullet$}}
\theoremstyle{plain}
\newtheorem{theorem}{Theorem}[section]
\newtheorem{corollary}[theorem]{Corollary}
\newtheorem{proposition}[theorem]{Proposition}
\newtheorem{lemma}[theorem]{Lemma}
\newtheorem{IH}[theorem]{Induction Hypothesis}
\newtheorem{fact}[theorem]{Fact}
\theoremstyle{definition}
\newtheorem{example}[theorem]{Example}
\newtheorem{nonexample}[theorem]{Non Example}
\newtheorem{remark}[theorem]{Remark}
\newtheorem{setup}[theorem]{Set-Up}
\let\c@equation\c@theorem
\newcommand{\Q}{\mathbb{Q}}
\newcommand{\Z}{\mathbb{Z}}
\newcommand{\N}{\mathbb{N}}
\newcommand{\A}{{\mathbb{A}}}
\newcommand{\p}{\mathbb{P}}
\newcommand{\TT}{\mathbb{T}} % tensor algebra
\newcommand{\K}{\mathbf{k}}
\renewcommand{\LL}{\mathcal{L}}
\DeclareMathOperator{\GL}{GL}
\DeclareMathOperator{\Aut}{Aut}
\DeclareMathOperator{\Stab}{Stab}
\DeclareMathOperator{\topdeg}{topdeg}
\DeclareMathOperator{\Tame}{Tame}
\DeclareMathOperator{\car}{char}
\newcommand{\PF}{\text{$\circlearrowleft$\!\,}}
\newcommand{\Comp}{\mathcal{C}}
\renewcommand{\phi}{\varphi}
\newcommand{\id}{\text{\rm id}}
\newcommand{\TA}{\Tame(\A^3)}
\renewcommand{\ne}{\between}
\newcommand{\dvirt}{\deg_{\text{\rm virt}}}
\newcommand{\llb}{\llbracket}
\newcommand{\rrb}{\rrbracket}
\newcommand{\lines}[1]{\p^2(#1)}
\newcommand{\linesd}[1]{\hat\p^2(#1)}
\newcommand{\parti}[2]{\tfrac{\partial #2}{\partial x_{#1}}}
\newcommand{\topcomp}[2]{\overline{#1}^{#2}}
\renewcommand{\le}{\leqslant}
\renewcommand{\ge}{\geqslant}
\newcommand{\typeone}{\xy*\cir<2pt>{}\endxy}
\newcommand{\typetwo}{\bullet}
\newcommand{\typethree}{\rule[.2ex]{0.9ex}{0.9ex}}
\newcolumntype{L}{>{$}l<{$}}
\newcolumntype{C}{>{$}c<{$}}
\newcommand{\mycolor}{Maroon}
\DeclareRobustCommand{\SkipTocEntry}[5]{}
\title[Combinatorics of the tame automorphism group]{Combinatorics of the tame automorphism group}
\author{Stéphane Lamy}
\date{\today}
\thanks{This research was partially supported by ANR Grant ``BirPol''  ANR-11-JS01-004-01.}
\address{Institut de Math\'ematiques de Toulouse, Universit\'e Paul Sabatier, 118 route de Narbonne, 31062 Toulouse Cedex 9, France}
\email{slamy@math.univ-toulouse.fr}
\begin{document}

\begin{abstract}
We study the group $\TA$ of tame automorphisms of the 3-dimensional affine space, over a field of characteristic zero.
We recover, in a unified way, previous results of Kuroda, Shestakov, Umirbaev
and Wright, about the theory of reduction and the relations in $\TA$.
The novelty in our presentation is the emphasis on a simply connected  2-dimensional simplicial complex on which $\TA$ acts by isometries.
\end{abstract}

\maketitle

\setcounter{tocdepth}{2}
\tableofcontents

\section*{Introduction}

Let $\K$ be a field, and let $\A^n = \A_\K^n$ be the affine space over $\K$.
We are interested in the group $\Aut(\A^n)$ of algebraic automorphisms of the affine space.
Concretely, we choose once and for all a coordinate system $(x_1, \dots, x_n)$ for $\A^n$.
Then any element $f \in\Aut(\A^n)$ is a map of the form
$$ f\colon(x_1, \dots, x_n) \mapsto (f_1(x_1, \dots, x_n), \dots, f_n(x_1, \dots, x_n)),$$
where the $f_i$ are polynomials in $n$ variables, such that there exists a map $g$ of the same form satisfying $f \circ g = \id$.
We shall abbreviate this situation by writing $f = (f_1, \dots, f_n)$, and $g = f^{-1}$.
Observe a slight abuse of notation here, since we are really
speaking about polynomials, and not about the associated polynomial
functions.
For instance over a finite base field, the group $\Aut(\A^n)$ is
infinite (for $n \ge 2$) even if there is only a finite number of induced bijections on the
finite number of $\K$-points of $\A^n$.

The group $\Aut(\A^n)$ contains the following natural subgroups.
First we have the affine group $A_n = \GL_n(\K) \ltimes \K^n$.
Secondly we have the group $E_n$ of elementary automorphisms, which have the form
$$ f\colon(x_1, \dots, x_n) \mapsto (x_1 + P(x_2, \dots, x_n), x_2, \dots, x_n),$$
for some choice of polynomial $P$ in $n-1$ variables.
The subgroup
$$\Tame(\A^n) = \langle A_n, E_n \rangle$$
generated by the affine and elementary automorphisms is called the subgroup of \textbf{tame automorphisms}.

A natural question is whether the inclusion $\Tame(\A^n) \subseteq \Aut(\A^n)$ is in fact an equality.
It is a well-known result, which goes back to Jung (see e.g. \cite{LJung} for a review of some of the many proofs available in the literature), that the answer is \textit{yes} for $n = 2$ (over any base field), and it is a result by Shestakov and Umirbaev \cite{SU:main} that the answer is \textit{no} for $n = 3$, at least when $\K$ is a field of characteristic zero.

The main purpose of the present paper is to give a self-contained reworked proof of this last result: see  Theorem \ref{thm:reducibility} and Corollary \ref{cor:nagata}.
We follow closely the line of argument by Kuroda \cite{Ku:main}.
However, the novelty in our approach is the emphasis on a 2-dimensional simplicial complex $\Comp$ on which $\Tame(\A^3)$ acts by isometries.
In fact, this construction is not particular to the 3-dimensional case: In
\S\ref{sec:simplicial complex} we introduce, for any $n \ge 2$ and over any base
field, a $(n-1)$-dimensional simplicial complex on which $\Tame(\A^n)$ naturally
acts.

We now give an outline of the main notions and results of the paper.
Since the paper is quite long and technical, we hope that this informal outline will serve as a guide for the reader, even if we cannot avoid being somewhat imprecise at this point.

\begin{itemize}[wide]
\item The 2-dimensional complex $\Comp$ contains three kinds of vertices, corresponding to three orbits under the action of $\TA$.
In this introduction we shall focus on the so-called type 3 vertices, which correspond to a tame automorphism 
$(f_1,f_2,f_3)$ up to post-composition by an affine automorphism.
Such an equivalence class is denoted $v_3 = [f_1, f_2, f_3]$ (see \S\ref{sec:Complex} and Figure \ref{fig:complex}).

\item Given a vertex $v_3$ one can always choose a representative $(f_1,f_2,f_3)$ such that the top monomials of the $f_i$ are pairwise distinct. Such a good representative is not unique, but the top monomials are. This allows to define a degree function (with values in $\N^3$) on vertices, with the identity automorphism corresponding to the unique vertex of minimal degree (see \S\ref{sec:degrees}). 

\item By construction of the complex $\Comp$, two type 3 vertices $v_3$ and $v_3'$ are at distance 2 in $\Comp$ if and only if they admit representatives of the form $v_3 = [f_1, f_2, f_3]$ and $v_3' = [f_1 + P(f_2, f_3), f_2, f_3]$, that is, representatives that differ by an elementary automorphism.
If moreover $\deg v_3 > \deg v_3'$, we say that $v_3'$ is an elementary reduction of $v_3$.
The whole idea is that it is `almost' true that any vertex admits a sequence of elementary reductions to the identity. However a lot of complications lie in this `almost', as we now discuss.

\item Some particular tame automorphisms are triangular automorphisms, of the form (up to a permutation of the variables) $(x_1 + P(x_2, x_3), x_2 + Q(x_3), x_3)$. There are essentially two ways to decompose such an automorphism as a product of elementary automorphisms, namely
\begin{align*}
(x_1 + P(x_2, x_3), x_2 &+ Q(x_3), x_3) \\
&= (x_1, x_2 + Q(x_3), x_3 ) \circ (x_1 + P(x_2, x_3), x_2, x_3)\\
&= (x_1 + P(x_2-Q(x_3), x_3), x_2, x_3) \circ (x_1, x_2 + Q(x_3), x_3).
\end{align*}
This leads to the presence of squares in the complex $\Comp$, see Figure \ref{fig:square}.
Conversely, the fact that a given vertex admits two distinct elementary
reductions often leads to the presence of such a square, in particular if one of
the reductions corresponds to a polynomial that depends only on one variable
instead of two, like $Q(x_3)$ above. We call `simple' such a particular
elementary reduction.

\item When $v_3' = [f_1 + P(f_2, f_3), f_2, f_3]$ is an elementary reduction of $v_3=[f_1, f_2, f_3]$, we shall encounter the following situations (see Corollary \ref{cor:minimal or K}):
\begin{enumerate}[(i), wide]
\item The most common situation is when the top monomial of $f_1$ is the dominant one, that is, the largest among the top monomials of the $f_i$.

\item 
Another (non-exclusive) situation is when $P$ depends only on one variable.
As mentioned before this is typically the case when $v_3$ admits several elementary reductions.
This corresponds to the fact that the top monomial of a component is a power of another one, and we name `resonance' such a coincidence (see definition in \S\ref{sec:degrees}).

\item
Finally another situation is when $f_1$ does not realize the dominant monomial, but $f_2, f_3$ nevertheless satisfy a kind of minimality condition via looking at the degree of the 2-form $df_2 \wedge df_3$.
We call this last case an elementary $K$-reduction (see \S\ref{sec:K} for the definition, Corollary \ref{cor:center of a K red} for the characterization in terms of the minimality of $\deg df_2 \wedge df_3$, and \S\ref{sec:examples} for examples).
This case is quite rigid (see Proposition \ref{pro:K stability}), and \textit{at posteriori} it forbids the
existence of any other elementary reduction from $v_3$ (see Proposition \ref{pro:no K}).
\end{enumerate}

\item Finally we define (see \S\ref{sec:K} again) an exceptional case, under the terminology  `normal proper $K$-reduction', that corresponds to moving from a vertex $v_3$ to a neighbor vertex $w_3$ of the same degree, and then realizing an elementary $K$-reduction from $w_3$ to another vertex $u_3$.
The fact that $v_3$ and $w_3$ share the same degree is not part of the technical definition, but again is true only \textit{at posteriori} (see Corollary \ref{cor:v3 w3 same degree}).

\item Then the main result (Reducibility Theorem \ref{thm:reducibility}) is that we can go from any vertex to the vertex corresponding to the identify by a finite sequence of elementary reductions or normal proper $K$-reductions.

\item The proof proceeds by a double induction on degrees which is quite involved.
The Induction Hypothesis is precisely stated on page \pageref{IH}.
The analysis is divided into two main branches:
\begin{enumerate}[(i), wide]
\item \S\ref{sec:SP} where the slogan is ``a vertex that admits an elementary $K$-reduction does not admit any other reduction'' (Proposition \ref{pro:slide} is an intermediate technical statement, and Proposition \ref{pro:no K} the final one);

\item \S\ref{sec:4C} where the slogan is ``a vertex that admits several
elementary reductions must admit some resonance'' (see in particular Lemmas \ref{lem:outer resonance} and \ref{lem:easy}).
\end{enumerate}
\end{itemize}

For readers familiar with previous works on the subject, we now give a few word about terminology. 
In the work of Kuroda \cite{Ku:main}, as in the original work of Shestakov and Umirbaev \cite{SU:main}, elementary reductions are defined with respect to one of the three coordinates of a fixed coordinate system.
In contrast, as explained above, we always work up to an affine change of
coordinates.
Indeed our simplicial complex $\Comp$ is designed so that two tame
automorphisms correspond to two vertices at distance 2 in the complex if and
only if they differ by the left composition of an automorphism of the form $a_1 e
a_2$, where $a_1, a_2$ are affine and $e$ is elementary.
This slight generalization of the definition of reduction allows us to absorb the so-called ``type I'' and ``type II'' reductions of Shestakov and Umirbaev in the class of elementary reductions: In our terminology they become ``elementary $K$-reductions'' (see \S \ref{sec:K}).
On the other hand, the ``type III'' reductions, which are technically difficult to handle, are still lurking around.
One can suspect that such reductions do not exist (as the most intricate ``type IV'' reductions which were excluded by Kuroda \cite{Ku:main}), and an ideal proof would settle this issue.
Unfortunately we were not able to do so, and these hypothetical reductions still appear in our text under the name of ``normal proper $K$-reduction''.
See Example \ref{nonexample} for more comments on this issue.

One could say that the theory of Shestakov, Umirbaev and Kuroda consists in understanding the relations inside the tame group $\TA$.
This was made explicit by Umirbaev \cite{U}, and then it was proved by Wright
\cite{wright} that this can be rephrased in terms of an amalgamated product
structure over three subgroups (see Corollary \ref{cor:product}).
In turn, it is known that such a structure is equivalent to the action of the group on a 2-dimensional simply connected simplicial complex, with fundamental domain a simplex.
Our approach allows to recover a more transparent description of the relations in $\TA$.
After stating and proving the Reducibility Theorem \ref{thm:reducibility} in \S\ref{sec:reducibility theorem}, we directly show in \S\ref{sec:1-connected} that the natural complex on which $\TA$ acts is simply connected (see Proposition \ref{pro:1connected}), by observing that the reduction process of  \cite{Ku:main, SU:main} corresponds to local homotopies.

We should stress once more that this paper contains no original result, and consists only in a new presentation of previous works by the above cited authors.
In fact, for the sake of completeness we also include in Section \ref{sec:prelim} some preliminary results where we only slightly differ from the original articles \cite{Ku:ineq, SU:ineq}.

Our motivation for reworking this material is to prepare the way for new results about $\TA$, such as the linearizability of finite subgroups, the Tits alternative or the acylindrical hyperbolicity.
From our experience in related settings (see \cite{BFL, CL, Lonjou, Martin}), such results should follow from some non-positive curvature properties of the simplicial complex.
We plan to explore these questions in some follow-up papers (see \cite{LP} for a first step).

\addtocontents{toc}{\SkipTocEntry}
\section*{Acknowledgement}
In August 2010 I was assigned the paper \cite{Ku:main} as a reviewer for
MathSciNet.
I took this opportunity to understand in full detail his beautiful proof, and
in November of the same year, at the invitation of Adrien Dubouloz, I gave some lectures in Dijon on the subject.
I thank all the participants of this study group ``Automorphismes des Espaces
Affines'', and  particularly Jérémy Blanc, Eric Edo, Pierre-Marie
Poloni and Stéphane Vénéreau who gave me useful feedback on some early version of these notes.

A few years later I embarked on the project to rewrite this proof focusing on
the related action on a simplicial complex.
At the final stage of the present version, when I was somehow loosing
faith in my endurance, I greatly benefited from the encouragements and numerous
 suggestions of Jean-Philippe Furter.

Finally, I am of course very much indebted to Shigeru Kuroda.
Indeed without his
work the initial proof of \cite{SU:main} would have
remained a mysterious black box to me.

\section{Simplicial complex} \label{sec:simplicial complex}

We define a $(n-1)$-dimensional simplicial complex on which the  tame
automorphism group of $\A^n$ acts.
This construction makes sense in any dimension $n \ge 2$, over any base
field $\K$.

\subsection{General construction}

For any $1 \le r \le n$, we call \textbf{$r$-tuple of components} a morphism
\begin{align*}
f\colon\A^n &\to \A^r \\
x = (x_1, \dots, x_n) &\mapsto \left( f_1(x), \dots, f_r(x) \right)
\end{align*}
that can be extended as a tame automorphism $f = (f_1,\dots, f_n)$ of $\A^n$.
One defines $n$ distinct types of vertices, by considering $r$-tuple of
components modulo composition by an affine automorphism on the range, $r = 1,
\dots, n$.
We use a bracket notation to denote such an equivalence class:
\begin{equation*}
[f_1, \dots,f_r] := A_r (f_1, \dots, f_r) = \{ a \circ (f_1, \dots, f_r) ; a \in A_r\}
\end{equation*}
where $A_r = \GL_r(\K) \ltimes \K^r$ is the $r$-dimensional affine group.
We say that $v_r = [f_1, \dots,f_r]$ is a vertex of \textbf{type} $\boldsymbol r$, and that $(f_1, \dots, f_r)$ is a representative of $v_r$.
We shall always stick to the convention that the index corresponds to the type of a vertex: for instance $v_r, v'_r, u_r, w_r, m_r$ will all be possible notation for a vertex of type $r$.

Now given $n$ vertices $v_1, \dots, v_n$ of type $1, \dots, n$, we attach a standard Euclidean $(n-1)$-simplex on these vertices if there exists a tame automorphism $(f_1,\dots, f_n) \in \Tame(\A^n)$ such that for all $ i \in \{1, \dots, n\}$:
$$v_i = [f_1, \dots, f_i ].$$
We obtain a $(n-1)$-dimensional simplicial complex $\Comp_n$ on which the tame group acts by isometries, by the formulas
$$g\cdot [f_1,\dots,f_r] := [f_1 \circ g^{-1},\dots, f_r \circ g^{-1}].$$

\begin{lemma}
The group $\Tame(\A^n)$ acts on $\Comp_n$ with fundamental domain the simplex
$$[x_1],\, [x_1, x_2],\, \dots,\, [x_1, \dots, x_n].$$
In particular the action is transitive on vertices of a given type.
\end{lemma}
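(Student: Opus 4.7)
The plan is to show that every top-dimensional simplex of $\Comp_n$ lies in the $\Tame(\A^n)$-orbit of the standard simplex $[x_1], [x_1,x_2], \dots, [x_1, \dots, x_n]$. Since the action is type-preserving (an $r$-tuple of components gets sent to an $r$-tuple of components), and since the standard simplex contains exactly one vertex of each type, this suffices both to exhibit it as a fundamental domain and to deduce transitivity on vertices of each fixed type.

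First I would check that the standard simplex actually sits in $\Comp_n$: the identity automorphism $(x_1, \dots, x_n)$ is tame, and taking it as the witnessing automorphism in the construction glues the $n$ vertices $[x_1, \dots, x_i]$ into an $(n-1)$-simplex. Next I would verify, as routine bookkeeping, that the formula $g\cdot [f_1,\dots, f_r] = [f_1 \circ g^{-1}, \dots, f_r \circ g^{-1}]$ genuinely defines an isometric action: independence from the choice of representative reduces to the fact that postcomposition by $a \in A_r$ commutes with precomposition by $g^{-1}$, the result remains an $r$-tuple of components because $\Tame(\A^n)$ is a group, and simplices are carried to simplices because the whole tame automorphism $g^{-1}\circ(f_1,\dots,f_n)$ is again tame.

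The key computation is then the following. Given an arbitrary top-dimensional simplex $\sigma$ coming from $g = (f_1, \dots, f_n) \in \Tame(\A^n)$, with vertices $v_i = [f_1, \dots, f_i]$, I would apply the tautological identity $x_i \circ g = f_i$ to get, for every $i$,
$$g^{-1} \cdot [x_1, \dots, x_i] = [x_1 \circ g, \dots, x_i \circ g] = [f_1, \dots, f_i] = v_i.$$
Hence $g^{-1}$ carries the standard simplex simultaneously onto $\sigma$. Specializing this to an arbitrary vertex $v_r = [f_1, \dots, f_r]$ of type $r$: by definition of an $r$-tuple of components one may extend it to a tame automorphism $g=(f_1,\dots,f_n)$, and the same calculation yields $g^{-1}\cdot [x_1, \dots, x_r] = v_r$, proving the transitivity on type-$r$ vertices.

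There is no real obstacle; the statement is essentially a translation exercise between the bracket notation and the action formula. The only point requiring care is to keep straight the distinction between the tame automorphism $g$ and its coordinate functions $f_i$, so that the composition $x_i \circ g = f_i$ is invoked on the correct side (matching the precomposition by $g^{-1}$ appearing in the action).
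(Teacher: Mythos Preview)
Your proposal is correct and follows essentially the same approach as the paper: both arguments pick a witnessing tame automorphism $f=(f_1,\dots,f_n)$ for an arbitrary simplex and use the tautology $x_i\circ f = f_i$ to move between it and the standard simplex (the paper applies $f$ to send the arbitrary simplex to the standard one, you apply $g^{-1}$ to go the other way). Your additional remarks on well-definedness and type-preservation are harmless padding but not needed for the argument.
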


\begin{proof}
Let $v_1, \dots, v_n$ be the vertices of a simplex (recall that the index corresponds to the type).
By definition there exists $f =(f_1, \dots f_n) \in \Tame(\A^n)$ such that $v_i = [f_1, \dots, f_i]$ for each $i$.
Then
\begin{align*}
[x_1, \dots, x_i] &= [(f_1, \dots, f_i ) \circ f^{-1}] = f\cdot v_i. \qedhere
\end{align*}
\end{proof}

\begin{remark}
\begin{enumerate}[wide]
\item
One could make a similar construction by working with the full automorphism group $\Aut(\A^n)$ instead of the tame group.
The complex $\Comp_n$ we consider is the gallery connected component of the standard simplex
$[x_1]$, $[x_1, x_2]$, $\dots,\, [x_1, \dots, x_n]$
in this bigger complex.
See \cite[\S 6.2.1]{BFL} for more details.
\item
When $n = 2$, the previous construction yields a graph $\Comp_2$.
It is not difficult to show (see \cite[\S 2.5.2]{BFL}) that $\Comp_2$ is isomorphic to the classical  Bass-Serre tree of $\Aut(\A^2) = \Tame(\A^2)$.
\end{enumerate}
\end{remark}

\subsection{Degrees} \label{sec:degrees}

We shall compare polynomials in $\K[x_1, \dots,x_n]$ by using the graded lexicographic order on monomials.
We find it more convenient to work with an additive notation, so we introduce the \textbf{degree} function, with value in $\N^n \cup \{-\infty\}$, by taking
$$\deg x_1^{a_1} x_2^{a_2} \dots x_n^{a_n} = (a_1, a_2, \dots, a_n)$$
and by convention $\deg 0 = -\infty$.
We extend this order to $\Q^n \cup \{-\infty\}$, since sometimes it is convenient to consider difference of degrees, or degrees multiplied by a rational number.
The \textbf{top term} of $g \in \K[x_1,\dots,x_n]$ is the uniquely defined $\bar g = c x_1^{d_1}\dots x_n^{d_n}$ such that
$$(d_1,\dots,d_n) = \deg g  > \deg (g - \bar g).$$
Observe that two polynomials $f,g \in \K[x_1,\dots,x_n]$ have the same degree if and only if their top terms $\bar f, \bar g$ are proportional. 
If $f = (f_1, \dots, f_r)$ is a $r$-tuple of components,
we call \textbf{top degree} of $f$ the maximum of the degree of the $f_i$:
$$\topdeg f := \max \deg f_i \in \N^n.$$

\begin{lemma} \label{lem:stratified degree}
Let $f = (f_1, \dots, f_r)$
be a $r$-tuple of components, and consider $V \subset \K[x_1, \dots, x_n]$ the vector space generated by the $f_i$.
Then
\begin{enumerate}
\item The set $H$ of elements $g \in V$ satisfying $\topdeg f > \deg g$ is a hyperplane in $V$;
\item There exist a sequence of degrees $\delta_r > \dots > \delta_1$ and a flag of subspaces $V_1 \subset \dots \subset V_r = V$ such that $\dim V_i  = i$ and $\deg g = \delta_i$ for any $g \in V_i \smallsetminus V_{i-1}$.
\end{enumerate}
\end{lemma}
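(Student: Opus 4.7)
The plan is to handle (1) by exhibiting $H$ as the kernel of an explicit linear form on $V$, then deduce (2) by a straightforward induction on $r$. First I would fix $\delta = \topdeg f$ and observe that, since the $f_i$ extend to a tame automorphism of $\A^n$, they are algebraically (hence linearly) independent over $\K$, so $\dim V = r$. To identify $H$ as a hyperplane I would introduce the linear form $\lambda\colon V \to \K$ that sends a polynomial $g$ to the coefficient in $g$ of the monomial $x_1^{d_1}\dots x_n^{d_n}$, where $(d_1,\dots,d_n) = \delta$. Because graded lexicographic order is a total order on $\N^n$, every $g = \sum c_i f_i \in V$ satisfies $\deg g \le \delta$, and $\deg g < \delta$ holds exactly when that coefficient vanishes; hence $H = \ker \lambda$. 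Finally $\lambda$ is nonzero because any $f_i$ with $\deg f_i = \delta$ contributes a nonzero coefficient on $x_1^{d_1}\dots x_n^{d_n}$, so $\ker \lambda$ is a hyperplane, proving (1).

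For (2), I would iterate the construction of (1). Set $\delta_r = \topdeg f$ and $V_{r-1} = H$, so that $\dim V_{r-1} = r-1$ and every $g \in V_r \smallsetminus V_{r-1}$ has $\deg g = \delta_r$. To continue, pick any basis $(g_1,\dots,g_{r-1})$ of $V_{r-1}$ and let $\delta_{r-1} = \max_i \deg g_i$; then $\delta_{r-1} < \delta_r$ since $V_{r-1}$ consists exactly of the elements of $V$ of degree strictly less than $\delta_r$. The very same argument as in (1) applied to $V_{r-1}$ with this basis yields a hyperplane $V_{r-2} \subset V_{r-1}$ of elements of degree $< \delta_{r-1}$, outside of which every element has degree exactly $\delta_{r-1}$. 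Formally this is a descending induction on $r$, producing the flag $V_1 \subset \dots \subset V_r$ together with the strictly decreasing sequence $\delta_r > \dots > \delta_1$ and the required degree property.

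No serious obstacle arises: the whole statement amounts to linear algebra combined with the fact that graded-lexicographic degree is a total order on $\N^n$. The only conceptual point to note is that the induction in (2) requires a mild strengthening of (1), valid for any finite-dimensional subspace of $\K[x_1,\dots,x_n]$ equipped with a basis; this is immediate from the same proof, as the tame-automorphism hypothesis enters the original statement only to guarantee linear independence of the $f_i$.
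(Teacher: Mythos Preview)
Your proof is correct and essentially the same as the paper's. The only cosmetic difference is that for (1) the paper exhibits an explicit basis of $H$ (namely $f_i + c_i f_r$ for $i < r$, after arranging $\deg f_r = \topdeg f$), while you take the dual viewpoint and exhibit the linear form $\lambda$ whose kernel is $H$; for (2) both arguments proceed by the same induction on dimension.
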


\begin{proof}
\begin{enumerate}[wide]
\item
Up to permuting the $f_i$ we can assume $\topdeg f = \deg f_r$.
Then for each $i = 1, \dots, r-1$ there exists a unique $c_i \in \K$ such that
$\deg f_r > \deg (f_i + c_i f_r)$.
The conclusion follows from the observation that an element of $V$ is in $H$ if and only if it is a linear combination of the $f_i + c_i f_r$, $i = 1, \dots, r-1$.
\item
Immediate, by induction on dimension. \qedhere
\end{enumerate}
\end{proof}

Using the notation of the lemma, we call $r$-$\deg f = (\delta_1, \dots, \delta_r)$ the $\boldsymbol{r}$-\textbf{degree} of $f$, and $\deg f = \sum_{i = 1}^r \delta_i \in \N^n$ the \textbf{degree} of $f$.
Observe that for any affine automorphism $a \in A_r$ we have $r\text{-}\deg f =
r\text{-}\deg (a\circ f)$, so we get a well-defined notion of $r$-degree and
degree for any vertex of type $r$.

If $v_r = [f_1,\dots,f_r] \in \Comp_n$ with
the $\deg f_i$ pairwise distinct, we say that $f$ is a
\textbf{good representative} of $v_r$ (we do not ask $\deg f_r > \dots > \deg
f_2 > \deg f_1$).
We use a double bracket notation such as $v_2 = \llb
f_1,f_2 \rrb$ or $v_3 = \llb f_1,f_2,f_3 \rrb$, to indicate that we are using a
good representative.

\begin{lemma} \label{lem:rep of a simplex}
Let $v_1, \dots, v_n$ be a $(n-1)$-simplex in $\Comp_n$.
Then there exists $f = (f_1, \dots, f_n) \in \Tame(\A^n)$ such that
$v_i = \llb f_1, \dots, f_i \rrb$ for each $n  \ge i \ge 1$.
\end{lemma}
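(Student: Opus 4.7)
The plan is to take any realization $(f_1,\dots,f_n) \in \Tame(\A^n)$ of the simplex (which exists by definition, with $v_i = [f_1,\dots,f_i]$ for all $i$) and to modify it by a lower-triangular linear change on the range, in order to produce components with pairwise distinct degrees. Since a linear change lies in $A_n \subset \Tame(\A^n)$, it will not alter the equivalence classes $v_i$ nor the tameness. Set $W_i := \mathrm{span}_\K(f_1,\dots,f_i)$, so we have a flag $W_1 \subset W_2 \subset \dots \subset W_n$, and let $D_i \subset \N^n$ denote the set of degrees of nonzero elements of $W_i$. Lemma \ref{lem:stratified degree}(2) applied to each $W_i$ gives $|D_i| = i$, and since $W_{i-1} \subset W_i$ we have $D_{i-1} \subset D_i$; comparing cardinalities yields a unique new degree $\delta_i \in D_i \setminus D_{i-1}$.

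I would then build the good components inductively. Take $\tilde f_1 := f_1$. For $i \ge 2$, Lemma \ref{lem:stratified degree}(2) applied to $W_i$ produces an element $\tilde f_i \in W_i$ of degree $\delta_i$. Since $\delta_i \notin D_{i-1}$, this $\tilde f_i$ does not lie in $W_{i-1}$, so $\tilde f_1,\dots,\tilde f_i$ is a basis of $W_i$, and by construction the degrees $\delta_1,\dots,\delta_i$ are pairwise distinct.

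To conclude, write $\tilde f_i = \sum_{j \le i} c_{ij} f_j$ with $c_{ii} \ne 0$ (this holds because $\tilde f_i \notin W_{i-1}$). Then $(\tilde f_1,\dots,\tilde f_n) = L \circ (f_1,\dots,f_n)$ for the lower-triangular matrix $L = (c_{ij}) \in \GL_n(\K) \subset A_n$. In particular $(\tilde f_1,\dots,\tilde f_n) \in \Tame(\A^n)$, and each truncation $(\tilde f_1,\dots,\tilde f_i)$ is obtained from $(f_1,\dots,f_i)$ by the upper-left $i \times i$ block of $L$, which still lies in $\GL_i(\K) \subset A_i$. Hence $[\tilde f_1,\dots,\tilde f_i] = [f_1,\dots,f_i] = v_i$, and the pairwise distinct degrees condition upgrades this to $v_i = \llb \tilde f_1,\dots,\tilde f_i \rrb$.

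The only subtle point is ensuring the inductive choices assemble into a single tame automorphism compatible with \emph{all} truncations. This is handled for free by the lower-triangular structure: at step $i$ we only use coordinates indexed by $j \le i$, so the global base change is lower triangular and automatically induces the correct $A_i$-translate on each truncation.
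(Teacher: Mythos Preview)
Your proof is correct and follows essentially the same approach as the paper: both build the good representative inductively by choosing, at each step $i$, a component realizing the unique new degree in $D_i \setminus D_{i-1}$. Your version is slightly more explicit in packaging the induction as a single lower-triangular matrix in $\GL_n(\K)$, which makes the compatibility with all truncations transparent, but the underlying argument is the same.
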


\begin{proof}
We pick $f_1$ such that $v_1= \llb f_1 \rrb$, and we define the other $f_i$ by
induction as follows.
If the $i$-degree of $v_i = \llb f_1, \dots, f_i \rrb$ is $(\delta_1, \dots,
\delta_i)$ (recall that by definition the $\delta_j$ are equal to the degrees of
the $f_j$ only up to a permutation), then there exist $\delta \in \N^3$ and
$i+1 \ge s \ge 1$ such that the $(i+1)$-degree of $v_{i+1}$ is $(\delta_1,
\dots, \delta_{s-1}, \delta, \delta_{s}, \dots, \delta_i)$.
That exactly means that there exists $f_{i+1}$ of degree $\delta$ such that
$v_{i+1} = \llb f_1, \dots, f_{i+1} \rrb$.
\end{proof}

\subsection{The complex in dimension 3} \label{sec:Complex}

Now we specialize the general construction to the dimension $n= 3$, which is our main interest in this paper.
We drop the index and simply denote by $\Comp$ the 2-dimensional simplicial complex associated to $\Tame(\A^3)$.
To get a first feeling of the complex one can draw pictures such as Figure \ref{fig:complex}, where we use the following convention for vertices: $\typeone$, $\typetwo$ or $\typethree$ corresponds respectively to a vertex of type 1, 2 and 3.
However one should keep in mind, as the following formal discussion makes it clear, that the complex is not locally finite.
A first step in understanding the geometry of the complex $\Comp$ is to understand the link of each type of vertex.
In fact, we will now see that if the base field $\K$ is uncountable, then the link of any vertex or any edge also has uncountably many vertices.

\begin{figure}[t]
$$
\mygraph{
!{<0cm,0cm>;<1.7cm,0cm>:<0cm,1.5cm>::}
!{(-2,0)}*{\typethree}="Q"
!{(0,2)}*{\typethree}="id"
!{(0,-2)}*{\typethree}="v3''"
!{(2,0)}*{\typethree}="u3"
!{(0,0)}*{\typeone}="x2"
!{(-1,1)}*{\typetwo}="x2x3"
!{(-1,-1)}*-{\typetwo}="v2''"
!{(1,1)}*{\typetwo}="u2'"
!{(1,-1)}*-{\typetwo}="x1x2"
!{(-1.5,3)}*{\typeone}="x3"
!{(1.5,3)}*{\typeone}="x1"
!{(0,3.5)}*{\typetwo}="x1x3"
!{(1.8,-.6)}*{\typethree}="behind3"
!{(.4,-1)}*{\typetwo}="behind2"
!{(-.8,3.5)}*{\typeone}="behind1"
"id"-_<(.1){[x_1, x_2, x_3]\,}_>{[x_2,x_3]\;\;\;}"x2x3"-_>{[x_1 + Q(x_2),x_2,x_3]\quad}"Q"-_>{[x_1 + Q(x_2),x_2]}"v2''"-_>{[x_1 + Q(x_2),x_2, x_3+P(x_1,x_2)]\quad}"v3''"-"x1x2"-_<{[x_2, x_3+P(x_1,x_2)]}"u3"-_<{\quad[x_1,x_2, x_3+P(x_1,x_2)]}"u2'"-_<{\;\;\;[x_1,x_2]}"id"
"x1x2"-"x2"-"v2''"
"v3''"-"x2"-"id"
"u3"-_>(.81){[x_2]}"x2"-"Q"
"x2x3"-"x2"-"u2'"
"x2x3"-"x3"-^<{[x_3]}^>(1.1){[x_1,x_3]}"x1x3"-^>{[x_1]}"x1"-"u2'"
"id"-"x3" "id"-"x1x3" "id"-"x1"
"Q"-"x3" "u3"-"x1"
"Q"-@{.}_>{[x_1+x_3+Q(x_2),x_2]}"behind2"-@{.}_>(1.45){\quad[x_1,x_2,x_3+Q(x_2)]}"behind3"-@{.}"u2'"
"behind2"-@{.}"x2"-@{.}"behind3"
"x1x3"-_>{[x_1 +x_3]\quad}"behind1"-@{.}"id"
"behind3"-@{.}"x1"
}
$$
\caption{A few simplexes of the complex $\Comp$.}\label{fig:complex}
\end{figure}

Consider first the link $\LL(v_3)$ of a vertex of type $3$.
By transitivity of the action of $\Tame(\A^3)$, it is sufficient to describe the link $\LL([\id])$.
A vertex of type 1 at distance 1 from $[\id]$ has the form $[a_1x_1 + a_2x_2 + a_3x_3]$ where the $a_i \in \K$ are uniquely defined up to a common multiplicative constant.
In other words, vertices of type 1 in $\LL([\id])$ are parametrized by $\p^2$.
We denote by $\p^2(v_3)$ this projective plane of vertices of type $1$ in the link of $v_3$.
Similarly, vertices of type 2 in $\LL(v_3)$ correspond to lines in $\p^2(v_3)$, that is, to points in the dual projective space $\linesd{v_3}$.
The edges in $\LL(v_3)$ correspond to incidence relations (``a point belongs to a line'').
We will often refer to a vertex of type 2 as a ``line in $\lines{v_3}$''.
In the same vein, we will sometimes refer to a vertex of type 1 as being ``the intersection of two lines in $\lines{v_3}$'', or we will express the fact that $v_1$ and $v_2$ are joined by an edge in $\Comp$ by saying ``the line $v_2$ passes through $v_1$''.

Now we turn to the description of the link of a vertex $v_2$ of type $2$.
By transitivity we can assume $v_2 = [x_1,x_2]$, and one checks that vertices of type 1 in $\LL(v_2)$ are parametrized by $\p^1$ and are of the form
$$[a_1 x_1 + a_2 x_2],\; (a_1 : a_2) \in \p^1.$$
On the other hand vertices  of type 3 in $\LL(v_2)$ are of the form
$$[x_1, x_2, x_3 + P(x_1,x_2)],\; P \in \K[y,z].$$
Precisely by taking the $P$ without constant or linear part we obtain a complete
set of representatives for such vertices of type 3 in $\LL(v_2)$.
Using the transitivity of the action of $\TA$ on vertices of type 2, the following lemma and its corollary are then immediate:

\begin{lemma}
The link $\LL(v_2)$ of a vertex of type $2$ is the complete bipartite graph between vertices of type $1$ and $3$ in the link.
\end{lemma}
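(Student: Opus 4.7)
The plan is to use the transitivity of the $\Tame(\A^3)$-action on type 2 vertices to reduce immediately to the case $v_2 = [x_1, x_2]$, for which the explicit descriptions of type 1 and type 3 neighbors have already been worked out just before the statement. Since by definition every 2-simplex of $\Comp$ has exactly one vertex of each type, every vertex of $\LL(v_2)$ is of type 1 or type 3, and every edge of $\LL(v_2)$ joins a type 1 vertex to a type 3 vertex. Bipartiteness is therefore automatic, and the only content of the lemma is completeness: every type 1 vertex of $\LL(v_2)$ is connected by an edge to every type 3 vertex of $\LL(v_2)$.

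Concretely, I would fix an arbitrary pair $v_1 = [a_1 x_1 + a_2 x_2]$ with $(a_1 : a_2) \in \p^1$ and $v_3 = [x_1, x_2, x_3 + P(x_1, x_2)]$ with $P \in \K[y,z]$ (chosen with no constant or linear part), and construct a single tame automorphism $f = (f_1, f_2, f_3)$ realizing all three vertices simultaneously, i.e.\ with $[f_1] = v_1$, $[f_1, f_2] = v_2$, and $[f_1, f_2, f_3] = v_3$. By the symmetry $x_1 \leftrightarrow x_2$ we may assume $a_2 \neq 0$. The candidate is
$$f := \bigl(a_1 x_1 + a_2 x_2,\ x_1,\ x_3 + P(x_1, x_2)\bigr),$$
which is the composition of the elementary automorphism $(x_1, x_2, x_3 + P(x_1, x_2))$ with the affine automorphism $(a_1 x_1 + a_2 x_2, x_1, x_3)$ (invertible since $a_2 \neq 0$), hence tame.

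It then remains to verify the three bracket identities. The first, $[f_1] = v_1$, is immediate. For the second, the pair $(a_1 x_1 + a_2 x_2, x_1)$ is obtained from $(x_1, x_2)$ by the linear isomorphism of $\K^2$ with matrix $\left(\begin{smallmatrix} a_1 & a_2 \\ 1 & 0 \end{smallmatrix}\right)$, invertible thanks to $a_2 \neq 0$, so $[f_1, f_2] = [x_1, x_2] = v_2$. For the third, the affine map $A \in A_3$ sending $(y_1, y_2, y_3)$ to $\bigl(y_2,\ (y_1 - a_1 y_2)/a_2,\ y_3\bigr)$ transforms $f$ into $(x_1, x_2, x_3 + P(x_1, x_2))$, giving $[f_1, f_2, f_3] = v_3$.

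The argument is routine; the only small subtlety is the bookkeeping in the WLOG step (justifying that one of $a_1, a_2$ is nonzero and exploiting the obvious symmetry to arrange it to be $a_2$), and checking in the second bracket that the affine map realizing $[f_1, f_2] = [x_1, x_2]$ is actually invertible. Both points are handled by the single inequality $a_2 \neq 0$, so there is no real obstacle.
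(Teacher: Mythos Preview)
Your proof is correct and follows exactly the approach the paper indicates: reduce by transitivity to $v_2 = [x_1,x_2]$, use the explicit parametrizations of type~1 and type~3 neighbors given just before the lemma, and exhibit a tame automorphism realizing the simplex. The paper simply declares this ``immediate''; you have written out the verification in full, and the details (the WLOG $a_2\neq 0$, the explicit $f$, and the three bracket checks) are all correct.
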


\begin{corollary} \label{cor:type 3 neighbors}
Let $v_2 =[f_1,f_2]$ and $v_3 = [f_1,f_2,f_3]$ be vertices of type 2 and 3.
Then any vertex $u_3$ distinct from $v_3$ such that $v_2 \in \linesd{u_3}$ has
the form
$$u_3  =[f_1,f_2, f_3 + P(f_1,f_2)]$$
where $P \in \K[y,z]$ is a non-affine polynomial in two variables (that is,
not of the form $P(y,z) = ay + bz + c$).
In particular, $v_2$ is the unique type 2 vertex in $\linesd{v_3} \cap \linesd{u_3}$.
\end{corollary}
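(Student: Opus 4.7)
The plan is to transport the situation to the standard simplex using the $\TA$-action, apply the description of the link of a type 2 vertex recalled just above, and handle the uniqueness by a polynomial-degree argument. First, the element $g := (f_1, f_2, f_3) \in \TA$ satisfies $g \cdot v_3 = [x_1, x_2, x_3]$ and $g \cdot v_2 = [x_1, x_2]$; since $g$ acts on $\Comp$ by an isometry, it suffices to prove the result in the reduced case $v_3 = [x_1, x_2, x_3]$, $v_2 = [x_1, x_2]$ and then pull back by $g^{-1}$.

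Next, the hypothesis $v_2 \in \linesd{u_3}$ is equivalent to $u_3 \in \LL(v_2)$, so the description of vertices of type 3 in $\LL([x_1, x_2])$ given just before the corollary yields $u_3 = [x_1, x_2, x_3 + P(x_1, x_2)]$ for some $P \in \K[y, z]$. Because the $A_3$-action on the range absorbs any affine polynomial in the first two components added to the third, changing $P$ by an affine polynomial does not modify $u_3$; hence $u_3 \neq v_3$ is equivalent to $P$ being non-affine, and pulling back by $g^{-1}$ gives the claimed form $u_3 = [f_1, f_2, f_3 + P(f_1, f_2)]$.

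For the uniqueness, let $v_2'$ be a type 2 vertex lying in $\linesd{v_3} \cap \linesd{u_3}$, still in the reduced setup. Membership in $\linesd{v_3}$ produces a representative $(L_1, L_2)$ with each $L_i$ an affine form in $x_1, x_2, x_3$, and membership in $\linesd{u_3}$ produces another representative $(g_1, g_2)$ with $g_i = \alpha_i x_1 + \beta_i x_2 + \gamma_i\bigl(x_3 + P(x_1, x_2)\bigr) + \delta_i$, obtained by noticing that $[g_1, g_2, g_3] = u_3$ forces $(g_1, g_2, g_3) = a \circ (x_1, x_2, x_3 + P(x_1, x_2))$ for some $a \in A_3$. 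The equality of classes $[L_1, L_2] = [g_1, g_2]$ means $(g_1, g_2) = \sigma \circ (L_1, L_2)$ for some $\sigma \in A_2$, so each $g_i$ is a polynomial of total degree at most $1$ in $x_1, x_2, x_3$. Since $P$ is non-affine, $\gamma_i P(x_1, x_2)$ contains a monomial of degree at least $2$ unless $\gamma_i = 0$; thus $\gamma_1 = \gamma_2 = 0$, whence $g_1, g_2 \in \K x_1 + \K x_2 + \K$ and, using that $(g_1, g_2, g_3)$ is an automorphism so $g_1, g_2$ have linearly independent linear parts, $v_2' = [g_1, g_2] = [x_1, x_2] = v_2$. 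The main delicacy is precisely this last step, where the non-affinity of $P$ is what rules out any $x_3$ contribution in the common representatives.
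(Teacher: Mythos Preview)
Your proof is correct and follows the same approach the paper implicitly relies on: reduce to the standard simplex via the transitive action, then invoke the explicit description of type~3 vertices in $\LL([x_1,x_2])$. The paper states the corollary as ``immediate'' from that description and gives no further argument, so your write-up is simply a careful unpacking of what the paper leaves to the reader; in particular, your degree argument for the uniqueness of $v_2$ is exactly the natural way to make that claim precise.
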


The link of a vertex of type 1 is more complicated.
Let us simply mention without proof, since we won't need it in this paper (but
see Lemma \ref{lem:link1connected} for a partial result, and also
\cite[\S3]{LP}), that in contrast with the case of vertices of type 2 or 3, the
link of a vertex of type 1 is a connected unbounded graph, which admits a
projection to an unbounded tree.

\section{Parachute Inequality and Principle of Two Maxima} \label{sec:prelim}

We recall here two results from \cite{Ku:ineq} (in turn they were adaptations from \cite{SU:ineq,SU:main}).
The Parachute Inequality is the most important; we also recall some direct consequences.
From now on $\K$ denotes a field of characteristic zero.

\subsection{Degree of polynomials and forms} \label{sec:degree polynomials}

Recall that we define a \textbf{degree} function on $\K[x_1,x_2,x_3]$ with value in $\N^3 \cup \{-\infty\}$ by taking
$\deg x_1^{a_1} x_2^{a_2} x_3^{a_3} = (a_1, a_2, a_3)$
and by convention $\deg 0 = -\infty$.
We compare degrees using the graded lexicographic order.

We now introduce the notion of \textbf{virtual degree} in two distinct situations, which should be clear by context.

Let $g \in \K[x_1,x_2,x_3]$, and $\phi = \sum_{i \in I} P_i y^i \in  \K[x_1,x_2,x_3][y]$ where $P_i \neq 0$ for all $i \in I$, that is, $I$ is the support of $\phi$.
We define the virtual degree of $\phi$ with respect to $g$ as
$$\dvirt \phi(g) := \max_{i \in I} (\deg P_i g^i) = \max_{i \in I} (\deg P_i + i \deg g).$$
Denoting by $\bar I \subseteq I$ the subset of indexes $i$ that realize the maximum, we also define the \textbf{top component} of $\phi$ with respect to $g$ as
$$\topcomp{\phi}{g} := \sum_{i \in \bar I} \bar P_i y^i.$$

Similarly if $g,h \in \K[x_1,x_2,x_3]$, and $\phi = \sum_{(i,j) \in S} c_{i,j} y^i z^j \in \K[y,z]$ with support $S$, we define the virtual degree of $\phi$ with respect to $g$ and $h$ as
$$\dvirt \phi(g,h) := \max_{(i,j) \in S} \deg g^ih^j = \max_{(i,j) \in S} (i \deg g + j \deg h).$$

Observe that $\phi(g,h)$ can be seen either as an element coming from 
$\phi_h(y) := \phi(y,h) \in \K[h][y] \subset \K[x_1,x_2,x_3][y]$ or from $\phi(y,z) \in \K[y,z]$, and that the two possible notions of virtual degree coincide:
$$\dvirt \phi_h(g) =\dvirt \phi(g,h).$$

\begin{example}
In general we have $\dvirt \phi(g) \ge \deg \phi(g)$ and $\dvirt \phi(g,h) \ge \deg \phi(g,h)$.
We now give two simple examples where these inequalities are strict.
\begin{enumerate}
\item Let $\phi = x_3^2 y - x_3 y^2$, and $g= x_3$.
Then $\phi(g) = 0$, but
$$\dvirt \phi(g) = \deg x_3^3 = (0,0,3).$$
\item Let $\phi = y^2 - z^3$, and $g = x_1^3$, $h = x_1^2$.
Then $\phi(g,h) = 0$, but
$$\dvirt \phi(g,h) = \deg x_1^6 = (6,0,0).$$
\end{enumerate}
\end{example}

We extend the notion of degree to algebraic differential forms.
Given
$$\omega = \sum f_{i_1,\cdots,i_k} dx_{i_1} \wedge \cdots \wedge dx_{i_k}$$
where $k = 1,2$ or $3$ and $f_{i_1,\cdots,i_k} \in \K[x_1,x_2,x_3]$,
we define
$$\deg \omega := \max \{ \deg f_{i_1,\cdots,i_k} x_{i_1} \cdots x_{i_k}\} \in \N^3 \cup \{-\infty \}.$$

We gather some immediate remarks for future reference (observe that here we use the assumption $\car  \K = 0$).

\begin{lemma} \label{lem:basics forms}
If $\omega, \omega'$ are forms, and $g$ is a non constant polynomial, we have
\begin{align*}
\deg \omega + \deg \omega' &\ge \deg \omega \wedge \omega' ; \\
\deg g &= \deg dg; \\
\deg g \omega &= \deg g + \deg \omega.
\end{align*}
\end{lemma}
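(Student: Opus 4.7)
The plan is to treat the three assertions separately, since each reduces to routine monomial bookkeeping once one writes a $k$-form in the basis $\{dx_I\}_{I\subseteq\{1,2,3\},\,|I|=k}$. Throughout I would write $\omega=\sum_I f_I\,dx_I$ and $\omega'=\sum_J g_J\,dx_J$, and unpack the definition as $\deg\omega = \max_I(\deg f_I + \deg x_I)$, where $\deg x_I := \sum_{i\in I} e_i$. The core algebraic input is that $\K[x_1,x_2,x_3]$ is an integral domain and its graded lexicographic order is multiplicative, so $\deg(fg)=\deg f+\deg g$ for nonzero $f,g$.

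For the wedge inequality, I would expand $\omega\wedge\omega' = \sum_{I,J} f_I g_J\,dx_I\wedge dx_J$, noting that $dx_I\wedge dx_J$ vanishes unless $I\cap J=\emptyset$, in which case $\deg x_{I\cup J} = \deg x_I + \deg x_J$. Each nonzero summand thus has degree $\deg f_I + \deg g_J + \deg x_I + \deg x_J \le \deg\omega + \deg\omega'$, giving the bound. For $\deg(g\omega)=\deg g+\deg\omega$, I would simply distribute: $g\omega = \sum_I (gf_I)\,dx_I$, and since no cancellation can occur when multiplying every coefficient by the same polynomial $g$, the maximum of $\deg g+\deg f_I+\deg x_I$ over $I$ is exactly $\deg g+\deg\omega$.

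The main step is $\deg g = \deg dg$, where the assumption $\car\K=0$ enters. Writing $dg = \sum_i \partial_i g\,dx_i$, the upper bound $\deg dg \le \deg g$ is immediate since each $x_i\partial_i g$ has degree at most $\deg g$. For the reverse, I would take the top term $c\,x_1^{a_1}x_2^{a_2}x_3^{a_3}$ of $g$ and pick an index $i$ with $a_i>0$ (which exists because $g$ is non-constant). The monomial $c a_i\,x_1^{a_1}\cdots x_i^{a_i-1}\cdots x_3^{a_3}$ appears in $\partial_i g$, and after multiplication by $x_i$ it produces $c a_i$ times the top monomial of $g$. In characteristic zero, $c a_i\neq 0$, and this contribution dominates every other summand in $x_i\partial_i g$, so $\deg(x_i\partial_i g)=\deg g$.

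The only delicate point in the whole lemma is this last step, and it is delicate only in the sense that it is the unique place where $\car\K=0$ is actually used; the counterexample $g=x_1^p$ in characteristic $p$, where $dg=0$, shows the hypothesis cannot be dropped. Everything else is formal manipulation of monomials and of the wedge basis.
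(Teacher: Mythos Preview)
Your proof is correct. The paper does not actually prove this lemma: it introduces it as ``immediate remarks for future reference'', merely flagging that the hypothesis $\car\K=0$ is used here, and gives no argument. Your write-up supplies exactly the routine monomial bookkeeping one would expect, and correctly isolates $\deg g=\deg dg$ as the single place where characteristic zero matters.
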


\subsection{Parachute Inequality} \label{sec:parachute}

If $\phi \in \K[x_1, x_2, x_3][y]$, we denote by $\phi^{(n)} \in \K[x_1, x_2,
x_3][y] $ the $n$th derivative of $\phi$ with respect to $y$.
We simply write $\phi'$ instead of $\phi^{(1)}$.

\begin{lemma} \label{lem:pre multroot}
Let $\phi \in \K[x_1, x_2, x_3][y]$ and $g \in \K[x_1,x_2,x_3]$.
Then:
\begin{enumerate}
\item \label{pre multroot:1} If $\deg_y \topcomp{\phi}{g} \ge 1$, then $\dvirt
\phi'(g) = \dvirt \phi(g) - \deg g$.

\item \label{pre multroot:2} If $\deg_y \topcomp{\phi}{g} \ge j \ge 1$,
then $\topcomp{\phi^{(j)}}{g} = (\topcomp{\phi}{g})^{(j)}.$
\end{enumerate}
\end{lemma}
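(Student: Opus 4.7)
The plan is to unpack the definition and read off both the derivative and the top component from an explicit expansion. Write $\phi = \sum_{i \in I} P_i y^i$ with each $P_i \neq 0$, let $\mu = \dvirt \phi(g)$, and let $\bar{I} = \{i \in I : \deg P_i + i\deg g = \mu\}$ be the subset of indices realizing the maximum. Then $\topcomp{\phi}{g} = \sum_{i \in \bar{I}} \bar{P}_i y^i$, and the hypothesis $\deg_y \topcomp{\phi}{g} \ge 1$ is exactly the statement that $\bar{I}$ contains some $i \ge 1$.

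For (1) I would compute $\phi' = \sum_{i \in I, \, i \ge 1} i P_i y^{i-1}$. Since $\car \K = 0$, each coefficient $iP_i$ is nonzero and has the same degree as $P_i$, so
\[
\dvirt \phi'(g) = \max_{i \in I,\, i \ge 1}\bigl(\deg P_i + (i-1)\deg g\bigr) = \max_{i \in I,\, i \ge 1}\bigl(\deg P_i + i\deg g\bigr) - \deg g.
\]
By the hypothesis some $i \ge 1$ lies in $\bar{I}$, so this inner maximum equals $\mu = \dvirt \phi(g)$, giving (1).

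For (2) I would argue by induction on $j$. The base case $j = 1$ refines the previous computation: the exponents $i-1$ that realize the maximum for $\phi'$ are exactly those with $i \in \bar{I}$ and $i \ge 1$, so
\[
\topcomp{\phi'}{g} = \sum_{i \in \bar{I},\, i \ge 1} i\, \bar{P}_i\, y^{i-1} = (\topcomp{\phi}{g})'.
\]
For the step from $j$ to $j+1$, assume $\deg_y \topcomp{\phi}{g} \ge j+1$. The induction hypothesis gives $\topcomp{\phi^{(j)}}{g} = (\topcomp{\phi}{g})^{(j)}$, which still has $y$-degree at least $1$, so the $j=1$ case applies to $\phi^{(j)}$ and yields $\topcomp{\phi^{(j+1)}}{g} = (\topcomp{\phi^{(j)}}{g})' = (\topcomp{\phi}{g})^{(j+1)}$.

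The only real obstacle is keeping careful track of the index set $\bar{I}$ under differentiation, and here the characteristic zero assumption is essential: in positive characteristic the scalar factors $i, i-1, \dots, i-j+1$ could vanish even for $i \in \bar{I}$, which would collapse the top component and destroy the identification $\topcomp{\phi^{(j)}}{g} = (\topcomp{\phi}{g})^{(j)}$.
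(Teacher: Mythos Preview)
Your proof is correct and follows essentially the same approach as the paper: write out the expansion, identify $\bar I \smallsetminus \{0\}$ as the set of indices realizing the virtual degree of $\phi'$, read off both (1) and the base case $\topcomp{\phi'}{g} = (\topcomp{\phi}{g})'$, and then induct for (2). Your explicit remark that $\car\K = 0$ is needed so that $\deg(iP_i) = \deg P_i$ is a welcome clarification that the paper leaves implicit.
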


\begin{proof}
We note as before
\begin{equation*}
\phi = \sum_{i \in I} P_i y^i,
\qquad \topcomp{\phi}{g} = \sum_{i \in \bar I} \bar P_i y^i
\qquad \text{and} \qquad \phi' = \sum_{i \in I \smallsetminus \{0\}} iP_i y^{i-1},
\end{equation*}
where $I$ is the support of $\phi$, and $\bar I \subseteq I$ is the subset of
indexes $i$ that realize the maximum $\max_{i \in I} (\deg P_i + i \deg g).$

Now if $\deg_y \topcomp{\phi}{g} \ge 1$, that is, if $\bar I \neq \{0\}$, then
the indexes in $\bar I \smallsetminus \{0\}$ are precisely those that realize
the maximum
$\max_{i \in I \smallsetminus \{0\}} (\deg P_i + (i-1) \deg g).$
Thus we get assertion \ref{pre multroot:1},
and $\topcomp{\phi'}{g} = (\topcomp{\phi}{g})'.$
Assertion \ref{pre multroot:2} for $j \ge 2$ follows by induction.
\end{proof}

\begin{lemma} \label{lem:multroot}
Let $\phi \in \K[x_1, x_2, x_3][y]$ and $g \in \K[x_1,x_2,x_3]$.
Then, for $m \ge 0$, the following two assertions are equivalent:
\begin{enumerate}
\item \label{multroot:1} For $j = 0, \dots, m-1$ we have $\dvirt \phi^{(j)}(g)
> \deg \phi^{(j)}(g)$, but
$$\dvirt \phi^{(m)}(g) = \deg \phi^{(m)}(g).$$
\item \label{multroot:2} There exists $\psi \in \K[x_1, x_2, x_3][y]$ such that $\psi(\bar g) \neq 0$ and
$$\topcomp{\phi}{g} = (y - \bar g)^m \cdot \psi.$$
\end{enumerate}
\end{lemma}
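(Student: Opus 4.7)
The plan is to reduce both conditions to a single statement about the order of vanishing of $\topcomp{\phi}{g}$ at $y = \bar g$, viewed as a polynomial in $y$ over $\K[x_1,x_2,x_3]$. The key preliminary observation is the equivalence
$$
\dvirt \phi(g) > \deg \phi(g) \iff \topcomp{\phi}{g}(\bar g) = 0 \iff (y - \bar g) \text{ divides } \topcomp{\phi}{g}.
$$
Indeed, writing $\phi = \sum_{i \in I} P_i y^i$ and $\bar I \subseteq I$ for the indices realizing the maximum $\max_{i \in I}(\deg P_i + i\deg g)$, the homogeneous component of $\phi(g) = \sum P_i g^i$ in degree $\dvirt \phi(g)$ is precisely $\sum_{i \in \bar I} \bar P_i \bar g^i = \topcomp{\phi}{g}(\bar g)$, and this element of $\K[x_1,x_2,x_3]$ vanishes if and only if $(y - \bar g)$ divides $\topcomp{\phi}{g}$ in $\K[x_1,x_2,x_3][y]$ (standard Bezout-style division).

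For the direction \ref{multroot:2}$\Rightarrow$\ref{multroot:1}, starting from $\topcomp{\phi}{g} = (y - \bar g)^m \psi$ with $\psi(\bar g) \neq 0$, I have $\deg_y \topcomp{\phi}{g} \ge m$, so Lemma \ref{lem:pre multroot}\ref{pre multroot:2} gives $\topcomp{\phi^{(j)}}{g} = (\topcomp{\phi}{g})^{(j)}$ for $0 \le j \le m$. A direct product-rule expansion shows $(\topcomp{\phi}{g})^{(j)}(\bar g) = 0$ for $j < m$, while $(\topcomp{\phi}{g})^{(m)}(\bar g) = m!\, \psi(\bar g) \neq 0$ (here $\car \K = 0$ is essential). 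Feeding each derivative back into the key equivalence yields \ref{multroot:1}.

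For the direction \ref{multroot:1}$\Rightarrow$\ref{multroot:2}, I would prove by induction on $j \in \{0, \dots, m\}$ that $(y - \bar g)^j$ divides $\topcomp{\phi}{g}$. The induction step assumes $\topcomp{\phi}{g} = (y - \bar g)^j h$ with $j < m$; since $\topcomp{\phi}{g} \neq 0$ we have $\deg_y \topcomp{\phi}{g} \ge j$, so Lemma \ref{lem:pre multroot}\ref{pre multroot:2} applies and gives $\topcomp{\phi^{(j)}}{g} = (\topcomp{\phi}{g})^{(j)}$. The first half of \ref{multroot:1} combined with the key equivalence for $\phi^{(j)}$ forces $(\topcomp{\phi}{g})^{(j)}(\bar g) = 0$. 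Expanding $((y - \bar g)^j h)^{(j)}$ via Leibniz, every term carries a factor of $(y - \bar g)$ except $j!\, h$, so evaluation at $\bar g$ yields $j!\, h(\bar g) = 0$, whence $h(\bar g) = 0$ and the induction proceeds. Once $\topcomp{\phi}{g} = (y - \bar g)^m \psi$ is established, running the same argument with $j = m$ and using the non-vanishing clause $\dvirt \phi^{(m)}(g) = \deg \phi^{(m)}(g)$ gives $m!\, \psi(\bar g) \neq 0$, hence $\psi(\bar g) \neq 0$.

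The main obstacle is simply bookkeeping: before each use of Lemma \ref{lem:pre multroot}\ref{pre multroot:2} one must verify the hypothesis $\deg_y \topcomp{\phi}{g} \ge j$, which is why the induction must build up the factorization step by step rather than collapsing all derivatives at once; the characteristic zero hypothesis enters crucially at the very last moment to pass from $j!\, h(\bar g) = 0$ to $h(\bar g) = 0$.
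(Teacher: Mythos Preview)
Your proof is correct and follows essentially the same approach as the paper: both pivot on the equivalence $\dvirt \phi(g) > \deg \phi(g) \iff (y-\bar g)\mid \topcomp{\phi}{g}$ combined with Lemma~\ref{lem:pre multroot}\ref{pre multroot:2}. Your organization of the direction \ref{multroot:1}$\Rightarrow$\ref{multroot:2} via an explicit Leibniz expansion of $((y-\bar g)^j h)^{(j)}$ is slightly more hands-on than the paper's terse induction (which applies the hypothesis to $\phi'$ and implicitly uses that the $(y-\bar g)$-multiplicity drops by exactly one upon differentiation), but the underlying content is identical.
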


\begin{proof}
Observe that we have the equivalences
\begin{equation} \label{eq:2 equivalences}
\dvirt \phi(g) > \deg \phi(g) \;\Longleftrightarrow\; \topcomp{\phi}{g}(\bar
g) = 0 \;\Longleftrightarrow\; y-\bar g \text{ divides } \topcomp{\phi}{g}.
\end{equation}

First we prove $\ref{multroot:2} \Rightarrow \ref{multroot:1}$.
Assuming $\ref{multroot:2}$, by Lemma \ref{lem:pre multroot}\ref{pre
multroot:2} we have $\topcomp{\phi^{(j)}}{g} = (\topcomp{\phi}{g})^{(j)}$ for
$j = 0, \dots, m-1$.
The second equivalence in (\ref{eq:2 equivalences}) yields
$(\topcomp{\phi}{g})^{(j)}(g) = 0$ for $j = 0, \dots, m-1$, and
$(\topcomp{\phi}{g})^{(m)}(g) \neq 0$, and then the first equivalence gives the
result.

To prove $\ref{multroot:1} \Rightarrow \ref{multroot:2}$, it is sufficient to
show that if $\dvirt \phi^{(j)}(g) > \deg \phi^{(j)}(g)$ for $j = 0, \dots,
k-1$, then $\topcomp{\phi}{g} = (y - \bar g)^k \cdot \psi_k$ for some $\psi_k
\in \K[x_1, x_2, x_3][y]$.
The remark (\ref{eq:2 equivalences}) gives it for $k = 1$.
Moreover, by Lemma \ref{lem:pre multroot}\ref{pre multroot:2},  if
$\topcomp{\phi}{g}$ depends on $y$ then
$\topcomp{\phi'}{g} = (\topcomp{\phi}{g})'$, hence the result by induction.
\end{proof}

In the situation of Lemma \ref{lem:multroot}, we call the integer $m$ the \textbf{multiplicity} of $\phi$ with respect to $g$, and we denote it by $m(\phi,g)$.
In other words, the top term $\bar g$ is a multiple root of $\topcomp{\phi}{g}$ of order $m(\phi,g)$.

Following Vénéreau \cite{V}, where a similar inequality is proved, we call the next result a ``Parachute Inequality''.
Indeed its significance is that the real degree cannot drop too much with respect to the virtual degree.
However we follow Kuroda for the proof.

Recall that (over a field $\K$ of characteristic zero) some polynomials $f_1,
\cdots, f_r \in \K[x_1, \dots, x_n]$ are algebraically independent if and only
if
$df_1 \wedge \cdots \wedge df_r \neq 0$.
Indeed this is equivalent to asking that the map $f = (f_1, \dots, f_r)$ from
$\A^n$ to $\A^r$ is dominant, which in turn is equivalent to saying that the
differential of this map has maximal rank on an open set of $\A^n$
(for details see for instance \cite[Theorem III p.135]{HP}).

\begin{proposition}[Parachute Inequality, see {\cite[Theorem 2.1]{Ku:ineq}}] \label{pro:parachute}
Let $r = 2$ or $3$, and let $f_1,\cdots,f_r \in \K[x_1,x_2,x_3]$ be algebraically independent.
Let $\phi \in \K[f_2,\cdots, f_r][y] \smallsetminus \{0\}$.
Then
$$\deg \phi(f_1) \ge \dvirt \phi(f_1) - m(\phi,f_1) (\deg \omega + \deg f_1 - \deg df_1 \wedge \omega).$$
where $\omega =  df_2$ if $r= 2$, or $\omega = df_2 \wedge df_3$ if $r = 3$.
\end{proposition}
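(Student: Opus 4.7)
The plan is to induct on the multiplicity $m = m(\phi, f_1)$. Set the defect
$$\Delta := \deg \omega + \deg f_1 - \deg (df_1 \wedge \omega),$$
which is a well-defined nonnegative element of $\N^3$ by Lemma \ref{lem:basics forms} together with the algebraic independence of $f_1,\dots,f_r$, which ensures $df_1 \wedge \omega \ne 0$. The target inequality then reads $\deg \phi(f_1) \ge \dvirt \phi(f_1) - m\Delta$. The base case $m = 0$ is immediate from Lemma \ref{lem:multroot}, which characterizes $m = 0$ precisely by the equality $\deg \phi(f_1) = \dvirt \phi(f_1)$.

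The inductive step for $m \ge 1$ rests on one key identity. Since $\phi$ is a polynomial in $y$ with coefficients in $\K[f_2,\dots,f_r]$, the chain rule gives
$$d\phi(f_1) = \phi'(f_1)\, df_1 + \sum_{i \ge 2} \frac{\partial \phi}{\partial f_i}(f_1)\, df_i,$$
and each term in the sum is annihilated after wedging with $\omega$, because $\omega$ already contains $df_i$ as a factor for every $i \ge 2$. This yields the clean identity
$$d\phi(f_1) \wedge \omega = \phi'(f_1)\, df_1 \wedge \omega.$$
Because $m \ge 1$, the polynomial $\phi$ genuinely depends on $y$ and algebraic independence forces $\phi'(f_1) \ne 0$, so both sides are nonzero. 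Taking degrees and applying Lemma \ref{lem:basics forms} gives
$$\deg \phi(f_1) + \deg \omega \ge \deg \phi'(f_1) + \deg (df_1 \wedge \omega),$$
which rearranges to $\deg \phi(f_1) \ge \deg \phi'(f_1) + \deg f_1 - \Delta$.

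To close the induction I need $\dvirt \phi'(f_1) = \dvirt \phi(f_1) - \deg f_1$ and $m(\phi', f_1) = m - 1$. The first is Lemma \ref{lem:pre multroot}(1) applied directly (since $\deg_y \topcomp{\phi}{f_1} \ge m \ge 1$). The second uses Lemma \ref{lem:pre multroot}(2) to compute $\topcomp{\phi'}{f_1} = (\topcomp{\phi}{f_1})'$, then applies Lemma \ref{lem:multroot} with the factorization $\topcomp{\phi}{f_1} = (y - \bar f_1)^m \psi$: differentiating peels off exactly one factor of $(y - \bar f_1)$, and the remaining factor evaluates to $m\psi(\bar f_1) \ne 0$ at $y = \bar f_1$, where the characteristic zero hypothesis is essential. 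The induction hypothesis applied to $\phi'$ then gives $\deg \phi'(f_1) \ge \dvirt \phi(f_1) - \deg f_1 - (m-1)\Delta$, which combined with the previous display yields $\deg \phi(f_1) \ge \dvirt \phi(f_1) - m\Delta$. The main obstacle in executing this plan is the bookkeeping in the inductive step: one must simultaneously track the decrease of virtual degree by exactly $\deg f_1$, the decrease of multiplicity by exactly one, and the non-vanishing of $\phi'(f_1)$, all of which lean on Lemmas \ref{lem:pre multroot} and \ref{lem:multroot} together with $\car \K = 0$.
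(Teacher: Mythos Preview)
Your proof is correct and follows essentially the same route as the paper: the key identity $d\phi(f_1)\wedge\omega = \phi'(f_1)\,df_1\wedge\omega$ yields the basic inequality, and induction on $m(\phi,f_1)$ via Lemmas~\ref{lem:pre multroot} and~\ref{lem:multroot} finishes it. One harmless inaccuracy: $\Delta$ need not lie in $\N^3$ coordinatewise (the order is graded lexicographic, so $\Delta\ge 0$ only in that sense, and $\Delta\in\Z^3$), but you never actually use this claim.
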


\begin{proof}
Denoting as before $\phi'$ the derivative of $\phi$ with respect to $y$, we have
$$d (\phi(f_1)) = \phi'(f_1) df_1 + \text{other terms involving }df_2 \text{ or } df_3.$$
So we obtain $  d( \phi(f_1)) \wedge \omega = \phi'(f_1)  df_1 \wedge \omega $.
Using Lemma \ref{lem:basics forms} this yields
\begin{multline*}
\deg \phi(f_1) + \deg \omega = \deg d( \phi(f_1)) + \deg \omega  \ge \deg   d( \phi(f_1)) \wedge \omega\\
= \deg  \phi'(f_1)   df_1 \wedge \omega = \deg \phi'(f_1) + \deg   df_1 \wedge \omega,
\end{multline*}
which we can write as
\begin{equation}
\label{eq:degphi}
- \deg     df_1 \wedge \omega + \deg \omega + \deg \phi(f_1) \ge \deg \phi'(f_1).
\end{equation}

Now we are ready to prove the inequality of the statement, by induction on $m(\phi,f_1)$.

If $m(\phi,f_1) = 0$, that is, if $\deg \phi(f_1) = \dvirt \phi(f_1)$, there is nothing to do.

If $m(\phi,f_1) \ge 1$, it follows from Lemma \ref{lem:multroot} that
$m(\phi',f_1) = m(\phi,f_1) - 1$.
Moreover, the condition $m(\phi,f_1) \ge 1$ implies that $\topcomp{\phi}{f_1}$
does depend on $y$, hence by Lemma \ref{lem:pre multroot}\ref{pre multroot:1}
we have
$$\dvirt \phi'(f_1) = \dvirt \phi(f_1) - \deg f_1.$$
By induction hypothesis, we have
\begin{align*}
\deg \phi'(f_1) &\ge \dvirt \phi'(f_1) - m(\phi',f_1) (\deg \omega + \deg f_1 - \deg df_1 \wedge \omega) \\
&= \dvirt \phi(f_1) - \deg f_1 - (m(\phi,f_1) - 1) (\deg \omega + \deg f_1
-\deg df_1 \wedge \omega ) \\
&= \dvirt \phi(f_1) - m(\phi,f_1)(\deg \omega + \deg f_1 -\deg df_1 \wedge \omega ) \\
& \qquad-\deg df_1 \wedge \omega + \deg \omega.
\end{align*}
Combining with (\ref{eq:degphi}), and canceling the terms $- \deg   df_1 \wedge \omega + \deg \omega$ on each side, one obtains the expected inequality.
\end{proof}

\subsection{Consequences}

We shall use the Parachute Inequality \ref{pro:parachute} mostly when $r = 2$, and when we have a strict inequality $\dvirt \phi(f_1,f_2) > \deg \phi(f_1,f_2)$.
In this context the following easy lemma is crucial.
Ultimately this is here that lies the difficulty when one tries to extend the theory in dimension 4 (or more!).

\begin{lemma}
\label{lem:p and q}
Let $f_1,f_2 \in \K[x_1,x_2,x_3]$ be algebraically independent, and $\phi \in \K[y,z]$ such that $\dvirt \phi(f_1,f_2) > \deg \phi(f_1,f_2)$.
Then:
\begin{enumerate}
\item \label{p and q:1} There exist coprime $p,q \in \N^*$such that
$$p \deg f_1 = q \deg f_2.$$
In particular, there exists $\delta \in \N^3$ such that $\deg f_1 = q \delta$,
$\deg f_2 = p\delta$, so that the top terms of $f_1^p$ and $f_2^q$ are equal
up to a constant: there exists $c \in \K$ such that $\bar f_1^p = c \bar
f_2^q$.

\item \label{p and q:2} Considering $\phi(f_1,f_2)$ as coming from $\phi(y,f_2) \in \K[x_1,x_2,x_3][y]$, we have
$$\topcomp{\phi}{f_1} 
= (y^p - \bar f_1^p)^{m(\phi,f_1)}\cdot \psi
= (y^p - c\bar f_2^q)^{m(\phi,f_1)}\cdot \psi$$
for some $\psi \in \K[x_1,x_2,x_3][y]$.
\end{enumerate}
\end{lemma}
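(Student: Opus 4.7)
The plan is to decode the hypothesis via Lemma \ref{lem:multroot} and then read off the algebraic structure of $\topcomp{\phi}{f_1}$ combinatorially. Write $\phi = \sum_{(i,j) \in S} c_{i,j} y^i z^j$, set $D := \dvirt \phi(f_1,f_2) = \max_{(i,j) \in S}(i \deg f_1 + j \deg f_2)$, and let $\bar S \subseteq S$ be the subset achieving this maximum. Since $\deg f_2 \neq 0$ and distinct $j$'s yield distinct $j \deg f_2$, for each $i$ in the projection $\bar I$ of $\bar S$ onto the first coordinate there is a unique partner $j = j_i$, so
\[ \topcomp{\phi}{f_1} = \sum_{i \in \bar I} c_{i,j_i}\, \bar f_2^{j_i}\, y^i. \]
The hypothesis combined with Lemma \ref{lem:multroot} says $(y - \bar f_1)$ divides this polynomial in $y$; in particular $|\bar I| \ge 2$, since otherwise $\topcomp{\phi}{f_1}$ is a nonzero monomial times a single power of $y$, not divisible by $y - \bar f_1$.

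For (1), pick distinct $i_1 > i_2$ in $\bar I$. Subtracting the two identities $i_k \deg f_1 + j_{i_k} \deg f_2 = D$ gives $(i_1 - i_2)\deg f_1 = (j_{i_2} - j_{i_1})\deg f_2$ with both coefficients in $\N^*$. Dividing by their $\gcd$ produces coprime $p,q \in \N^*$ with $p \deg f_1 = q \deg f_2$; the ratio $p/q$ is forced by the direction of $\deg f_1$ relative to $\deg f_2$, so $(p,q)$ is independent of the chosen pair. Coprimality implies $q$ divides $\deg f_1$ componentwise, so $\delta := \deg f_1 / q \in \N^3$ satisfies $\deg f_1 = q\delta$ and $\deg f_2 = p\delta$. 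Then $\bar f_1^p$ and $\bar f_2^q$ are monomials of common degree $pq\delta$, hence proportional: $\bar f_1^p = c \bar f_2^q$ for a unique $c \in \K^*$.

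For (2), the integer points on the line $iq + jp = D/\delta$ form an arithmetic progression $(i_0 + kp,\, j_0 - kq)_{k \in \Z}$, and $\bar I$ corresponds to a subset of $k \in \{0, 1, \dots, k_{\max}\}$. Grouping by $k$ and using $\bar f_2^q = c^{-1} \bar f_1^p$ to convert the remaining $\bar f_2$-powers into $\bar f_1$-powers, one rewrites
\[ \topcomp{\phi}{f_1} = (\text{monomial})\cdot y^{i_0}\cdot R(y^p,\, \bar f_1^p), \]
where $R(T,s) = \sum_k a_k T^k s^{k_{\max}-k}$ is a homogeneous binary form of degree $k_{\max}$. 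Factor $R(T,s) = C\prod_\alpha (T - \alpha s)$ over $\bar\K$. In $y^p - \alpha \bar f_1^p = \prod_\zeta (y - \zeta \alpha^{1/p} \bar f_1)$, the linear factor $y - \bar f_1$ appears with multiplicity $1$ if $\alpha = 1$ (and the $p$ roots of unity $\zeta$ are distinct) and not at all otherwise. Hence the multiplicity of $y - \bar f_1$ in $\topcomp{\phi}{f_1}$ equals the multiplicity of $T-s$ in $R(T,s)$, which by Lemma \ref{lem:multroot} is exactly $m := m(\phi,f_1)$. Writing $R = (T-s)^m R'$ yields $\topcomp{\phi}{f_1} = (y^p - \bar f_1^p)^m \psi$ for $\psi = (\text{monomial})\cdot y^{i_0} R'(y^p, \bar f_1^p)$, and substituting $\bar f_1^p = c\bar f_2^q$ gives the second expression.

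The main obstacle is combinatorial bookkeeping rather than algebraic depth: recognizing that coprimality of $(p,q)$ forces $\bar S$ to lie on a single $\Z$-orbit under the translation $(p,-q)$, and repackaging $\topcomp{\phi}{f_1}$ as a binary form in $y^p$ and $\bar f_1^p$. Once that reformulation is in place, matching the multiplicity $m$ on both sides is just the observation that among the $p$ linear factors of $y^p - \alpha \bar f_1^p$ over $\bar \K$, only the $\alpha = 1$ factor contributes to the order of vanishing at $y = \bar f_1$.
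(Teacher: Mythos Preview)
Your argument is correct. Part~\ref{p and q:1} matches the paper's proof essentially verbatim: both pick two distinct points of $\bar S$ and read off the linear relation $p\deg f_1 = q\deg f_2$.

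For part~\ref{p and q:2} your route is genuinely different. The paper never touches $\topcomp{\phi}{f_1}$ directly at first; instead it performs successive Euclidean divisions of the \emph{full} polynomial $\phi(y,f_2)$ by $y^p - c f_2^q$ in $\K[f_2][y]$, obtaining $\phi(y,f_2) = \sum_i R_i(y)(y^p - cf_2^q)^i$ with $\deg_y R_i < p$, and then passes to top components. The key observation there is that $\deg_y R_i < p$ forces $\dvirt R_i(f_1) = \deg R_i(f_1)$, hence $(y-\bar f_1)\nmid \topcomp{R_i}{f_1}$, so a contradiction argument bounds the minimal surviving index from below by $m(\phi,f_1)$. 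You instead stay with $\topcomp{\phi}{f_1}$ from the outset, exploit the coprimality of $(p,q)$ to recognise it as a monomial times a binary form $R(y^p,\bar f_1^p)$, and factor $R$ over $\bar\K$. Your approach makes the equality \emph{multiplicity of $(y-\bar f_1)$ in $\topcomp{\phi}{f_1}$} $=$ \emph{multiplicity of $(T-s)$ in $R$} completely transparent, and yields the factorisation in one stroke rather than via contradiction; the small cost is the brief passage to $\bar\K$ and the need to observe that $R/(T-s)^m$ descends back to $\K[T,s]$ (which it does, since both $R$ and $T-s$ lie there). The paper's Euclidean-division approach stays inside $\K[f_2][y]$ throughout and is perhaps closer in spirit to how $m(\phi,f_1)$ is actually used downstream, but yours is more self-contained.
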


\begin{proof}
\begin{enumerate}[wide]
\item
We write $\phi(f_1,f_2) = \sum c_{i,j}f_1^if_2^j$.
Since $\dvirt \phi(f_1,f_2) > \deg \phi(f_1,f_2)$, there exist distinct $(a,b)$ and $(a',b')$ such that
$$\deg f_1^a f_2^b = \deg f_1^{a'}f_2^{b'} = \dvirt \phi(f_1,f_2).$$
Moreover we can assume that $a, a'$ are respectively maximal and minimal for this property.
We obtain
$$(a-a')\deg f_1 = (b' - b) \deg f_2.$$
Dividing by $m$, the GCD of $a-a'$ and $b'-b$, we get the expected relation.

\item With the same notation, we have $a = a' + pm$ where $m \ge 1$, and in particular $\deg_y \phi(y,f_2) \ge p$.
So if $p > \deg_y P(y,f_2)$ for some $P \in \K[f_2][y]$, we have $\dvirt P(f_1,f_2) = \deg P(f_1,f_2)$.
By the first assertion, there exists $c \in \K$ such that $\deg f_1^p > \deg \left(f_1^p - cf_2^q\right)$.
By successive Euclidean divisions in $\K[f_2][y]$ we can write:
$$\phi(y,f_2) = \sum R_i(y) \left(y^p - c f_2^q\right)^i$$
with $p > \deg_y R_i$ for all $i$.
Denote by $I$ the subset of indexes such that
\begin{equation} \label{eq:barphi}
\topcomp{\phi}{f_1}  = \sum_{i \in I} \topcomp{R_i}{f_1} \left(y^p - c \bar f_2^q\right)^i.
\end{equation}
Let $i_0$ be the minimal index in $I$.
We want to prove that $i_0 \ge m(\phi,f_1)$.
By contradiction, assume that $m(\phi,f_1) > i_0$.
Since $y - \bar f_1$ is a simple factor of $(y^p - c \bar f_2^q) = (y^p - \bar f_1^p)$, and is not a factor of any $\topcomp{R_i}{f_1}$, we obtain that $(y - \bar f_1)^{i_0+1}$ divides all summands of (\ref{eq:barphi}) except $\topcomp{R_{i_0}}{f_1} (y^p - c \bar f_2^q)^{i_0}$.
In particular $(y - \bar f_1)^{i_0+1}$, hence also  $(y - \bar f_1)^{m(\phi,f_1)}$, do not divide $\topcomp{\phi}{f_1}$: This is a contradiction with Lemma \ref{lem:multroot}.
\qedhere
\end{enumerate}
\end{proof}

We now list some consequences of the Parachute Inequality \ref{pro:parachute}.

\begin{corollary}\label{cor:parachute}
Let $f_1, f_2 \in \K[x_1,x_2,x_3]$ be algebraically independent with $ \deg f_1 \ge \deg f_2$, and $\phi \in \K[y,z]$ such that $\dvirt \phi(f_1,f_2) > \deg \phi(f_1,f_2)$.
Following Lemma $\ref{lem:p and q}$, we write $p \deg f_1 = q \deg f_2$ where $p,q \in \N^*$ are coprime.
Then:
\begin{enumerate}[wide]
\item \label{parachute-i} $\deg \phi(f_1,f_2) \ge p\deg f_1 - \deg f_1 - \deg f_2 + \deg df_1\wedge df_2 $;
\item \label{parachute-ii} If $\deg f_1 \not\in \N \deg f_2$, then $\deg \phi(f_1,f_2) > \deg df_1 \wedge df_2$;
\item \label{parachute-iii} Assume $\deg f_1 \not\in \N \deg f_2$ and $\deg f_1 \ge \deg \phi(f_1,f_2)$.
Then $p = 2$, $q \ge 3$ is odd, and
$$\deg \phi(f_1,f_2) \ge \deg f_1 - \deg f_2 + \deg df_1 \wedge df_2.$$
If moreover $\deg f_2 \ge \deg \phi(f_1,f_2)$, then $q=3$.
\item \label{parachute-iv} Assume $\deg f_1 \not\in \N \deg f_2$ and $\deg f_1 \ge \deg \phi(f_1,f_2)$.
Then
$$\deg d(\phi(f_1,f_2)) \wedge df_2  \ge \deg f_1 + \deg df_1 \wedge df_2.$$
\end{enumerate}
\end{corollary}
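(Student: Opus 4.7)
The common strategy is to apply the Parachute Inequality \ref{pro:parachute} with $r = 2$ and $\omega = df_2$, then extract structural information from Lemma \ref{lem:p and q}. Set $m = m(\phi, f_1) \ge 1$ and $B = \deg f_1 + \deg f_2 - \deg df_1 \wedge df_2 \ge 0$ (the latter by Lemma \ref{lem:basics forms}). Parachute yields $\deg \phi(f_1, f_2) \ge \dvirt \phi(f_1, f_2) - mB$. By Lemma \ref{lem:p and q}\ref{p and q:2}, $\topcomp{\phi}{f_1} = (y^p - \bar f_1^p)^m \psi$ has $y$-degree at least $pm$; the leading $y$-term in the top component has a coefficient of non-negative degree, so $\dvirt \phi(f_1, f_2) \ge pm \deg f_1$. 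Combining, one obtains the master inequality
$$\deg \phi(f_1, f_2) \ge m(p \deg f_1 - B).$$
Note also that $\phi(f_1, f_2) \ne 0$ (otherwise $f_1, f_2$ would be algebraically dependent), so $\deg \phi(f_1,f_2) \ge 0$ in graded lex.

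For \ref{parachute-i}, I split on the sign in graded lex of $p \deg f_1 - B$. If $p \deg f_1 - B \ge 0$, then $m \ge 1$ yields $m(p \deg f_1 - B) \ge p \deg f_1 - B$, and the master inequality closes. If $p \deg f_1 - B < 0$, then $\deg \phi(f_1, f_2) \ge 0 > p \deg f_1 - B$ directly. For \ref{parachute-ii}, the hypothesis $\deg f_1 \notin \N \deg f_2$ rules out $p = 1$, so $p \ge 2$. With $q \ge p \ge 2$ and $\gcd(p, q) = 1$, one has $(p-1)(q-1) \ge 2$. Writing $\deg f_1 = q\delta$ and $\deg f_2 = p\delta$, part \ref{parachute-i} rearranges to
$$\deg \phi(f_1, f_2) - \deg df_1 \wedge df_2 \ge (p-1)q\delta - p\delta = \bigl((p-1)(q-1) - 1\bigr)\delta \ge \delta,$$
giving the desired strict inequality.

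For \ref{parachute-iii}, combining the hypothesis $\deg f_1 \ge \deg \phi(f_1, f_2)$ with part \ref{parachute-i} gives $(2-p)\deg f_1 + \deg f_2 \ge \deg df_1 \wedge df_2 \ge 0$. If $p \ge 3$, this forces $\deg f_2 \ge (p-2)\deg f_1 \ge \deg f_1$, contradicting $\deg f_1 \ge \deg f_2$ (the equality case $\deg f_1 = \deg f_2$ is ruled out by coprimality). So $p = 2$, and then coprimality with $q \ge p = 2$ gives $q \ge 3$ odd. The middle inequality is \ref{parachute-i} specialized to $p = 2$. If moreover $\deg f_2 \ge \deg \phi(f_1, f_2)$, the same approach yields $(4-q)\delta \ge \deg df_1 \wedge df_2 \ge 0$, forcing $q \le 4$, hence $q = 3$.

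For \ref{parachute-iv}, the key identity $d(\phi(f_1, f_2)) \wedge df_2 = \phi'(f_1, f_2) \, df_1 \wedge df_2$ (with $\phi' = \partial_y \phi$) reduces the claim to $\deg \phi'(f_1, f_2) \ge \deg f_1$. By Lemma \ref{lem:pre multroot} and differentiating $\topcomp{\phi}{f_1} = (y^p - \bar f_1^p)^m \psi$ in characteristic zero, one verifies that $m(\phi', f_1) = m - 1$ and $\dvirt \phi'(f_1) = \dvirt \phi(f_1) - \deg f_1 \ge (pm - 1)\deg f_1$. Parachute applied to $\phi'$ then gives $\deg \phi'(f_1, f_2) \ge (pm - 1)\deg f_1 - (m-1)B$. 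Invoking $p = 2$ from \ref{parachute-iii}, the target $\deg \phi'(f_1, f_2) \ge \deg f_1$ reduces to $(m - 1)(2 \deg f_1 - B) \ge 0$, which holds because $B \le \deg f_1 + \deg f_2 \le 2 \deg f_1$. The main difficulty is precisely this last part, as it requires both correctly tracking the multiplicity drop from $\phi$ to $\phi'$ and crucially recycling the conclusion $p = 2$ from \ref{parachute-iii}.
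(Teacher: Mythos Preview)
Your proof is correct and follows essentially the same approach as the paper: for each part you combine the Parachute Inequality with the lower bound $\dvirt\phi(f_1,f_2)\ge mp\deg f_1$ coming from Lemma~\ref{lem:p and q}\ref{p and q:2}, and for \ref{parachute-iv} you pass to $\phi'$ and track the multiplicity drop. The only cosmetic differences are that in \ref{parachute-i} you split on the sign of $p\deg f_1 - B$ where the paper divides through by $m$, and in \ref{parachute-iii} you argue $p\le 2$ via $(2-p)\deg f_1+\deg f_2\ge 0$ where the paper uses $q>pq-p-q$; both routes are equivalent.
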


\begin{proof}
\begin{enumerate}[wide]
\item By Lemma \ref{lem:p and q}\ref{p and q:2}, we have
\begin{equation}
\label{eq:virtphi}
\dvirt \phi(f_1,f_2) \ge  m(\phi,f_1) p \deg f_1.
\end{equation}
On the other hand the Parachute Inequality \ref{pro:parachute} applied to $\phi(y, f_2) \in \K[f_2][y]$ yields
$$\deg \phi(f_1,f_2) \ge \dvirt \phi(f_1,f_2) - m(\phi,f_1)(\deg f_1 + \deg f_2 -\deg df_1\wedge df_2).$$
Combining with (\ref{eq:virtphi}), and remembering that $m(\phi,f_1) \ge 1$, we obtain
$$\deg \phi(f_1,f_2) \ge \frac{\deg \phi(f_1,f_2)}{m(\phi,f_1)}
\ge p \deg f_1 - \deg f_1 - \deg f_2 + \deg df_1\wedge df_2.$$

\item From Lemma \ref{lem:p and q} we have $\deg f_1 = q\delta$  and  $\deg f_2 = p\delta$ for some $\delta \in \N^3$.
The inequality \ref{parachute-i} gives
\begin{multline} \label{eq:pq}
\deg \phi(f_1,f_2) \ge p\deg f_1 - \deg f_1 - \deg f_2 + \deg df_1 \wedge df_2 = \\
(pq - p - q)\delta + \deg df_1 \wedge df_2.
\end{multline}
The assumption $\deg f_1 \not\in \N \deg f_2$ implies $q > p \ge 2$.
Thus $pq - p - q >0$, and finally
$$\deg \phi(f_1,f_2) > \deg df_1 \wedge df_2.$$

\item Again the assumptions imply $q > p \ge 2$.
Since $q\delta = \deg f_1 \ge \deg \phi(f_1,f_2)$, we get from (\ref{eq:pq}) that $q > pq - p - q$.
This is only possible  if $p = 2$, and so $q \ge 3$ is odd.
Replacing $p$ by $2$ in (\ref{eq:pq}), we get the inequality.

If $\deg f_2 \ge \deg \phi(f_1,f_2)$, we obtain $2\delta > (q-2)\delta$, hence $q = 3$.\\

\item Denote $\phi(y,z) = \sum c_{i,j}y^i z^j$, and consider the partial derivatives
\begin{align*}
\phi_y'(y,z) &= \sum i c_{i,j} y^{i-1}z^j; \\
\phi_z'(y,z) &= \sum j c_{i,j} y^{i}z^{j-1}.
\end{align*}
We have $d (\phi(f_1,f_2)) = \phi_y'(f_1,f_2) df_1 + \phi_z'(f_1,f_2) df_2$.
In particular $d (\phi(f_1,f_2)) \wedge df_2 =  \phi_y'(f_1,f_2) df_1 \wedge
df_2$, and
$$\deg d (\phi(f_1,f_2)) \wedge df_2 = \deg \phi_y'(f_1,f_2) + \deg df_1 \wedge
df_2.$$
Now we consider $\phi_y'(f_1,f_2)$ as coming from $\phi_y'(y,f_2) \in
\K[f_2][y]$, and we simply write $\phi'(f_1)$ instead of $\phi_y'(f_1,f_2)$, in
accordance with the convention for derivatives introduced at the beginning of
\S\ref{sec:parachute}.
We want to show $ \deg \phi'(f_1) \ge \deg f_1$.
Recall that by (\ref{eq:virtphi}), $\dvirt \phi(f_1) \ge 2 m(\phi,f_1)\deg f_1$,
and so, using also Lemma \ref{lem:pre multroot}\ref{pre multroot:1}:
$$\dvirt \phi'(f_1) = \dvirt \phi(f_1) - \deg f_1 \ge 2 (m(\phi,f_1) - 1)\deg f_1 + \deg f_1.$$
The Parachute Inequality \ref{pro:parachute} then gives (for the last
inequality recall that $\deg f_1 \ge \deg f_2$ by assumption):
\begin{align*}
\deg \phi'(f_1) &\ge \dvirt \phi'(f_1) - m(\phi',f_1)(\deg f_1 + \deg f_2 -\deg df_1\wedge df_2) \\
 &\ge  2(m(\phi,f_1) - 1)\deg f_1 + \deg f_1 \\
 & \hspace{3cm} - (m(\phi,f_1) - 1)(\deg f_1 + \deg f_2 -\deg df_1\wedge df_2) \\
 &= (m(\phi,f_1) - 1)(\deg f_1 - \deg f_2 +\deg df_1\wedge df_2) + \deg f_1\\
 &\ge \deg f_1 .\qedhere
\end{align*}
\end{enumerate}
\end{proof}

\begin{corollary}\label{cor:parachute h + phi}
Let $f_1,f_2,f_3 \in \K[x_1,x_2,x_3]$ be algebraically independent, and $\phi \in \K[y,z]$ such that
\begin{align*}
\dvirt \phi(f_1,f_2) &> \deg \phi(f_1,f_2),\\
\dvirt \phi(f_1,f_2) &> \deg f_3.
\end{align*}
Following Lemma $\ref{lem:p and q}$, we write $p \deg f_1 = q \deg f_2$ where $p,q \in \N^*$ are coprime.
Then
$$\deg (f_3 + \phi(f_1,f_2))  > p \deg f_1  - \deg df_2\wedge df_3 - \deg f_1,$$
\end{corollary}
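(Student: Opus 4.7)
The plan is to combine a derivative identity with the Parachute Inequality. First I would establish
\[
d\bigl(f_3 + \phi(f_1, f_2)\bigr) \wedge df_2 \wedge df_3 \;=\; \phi_y'(f_1, f_2) \, df_1 \wedge df_2 \wedge df_3
\]
by expanding $d\bigl(\phi(f_1, f_2)\bigr) = \phi_y'(f_1, f_2)\, df_1 + \phi_z'(f_1, f_2)\, df_2$ and using $df_2 \wedge df_2 = df_3 \wedge df_3 = 0$. Applying Lemma~\ref{lem:basics forms} would then give
\[
\deg\bigl(f_3 + \phi(f_1, f_2)\bigr) + \deg df_2 \wedge df_3 \;\ge\; \deg \phi_y'(f_1, f_2) + \deg df_1 \wedge df_2 \wedge df_3,
\]
so the target inequality reduces to proving
\[
\deg \phi_y'(f_1, f_2) + \deg df_1 \wedge df_2 \wedge df_3 \;>\; (p-1) \deg f_1.
\]

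Next, since $f_1, f_2, f_3$ are algebraically independent the 3-form $df_1 \wedge df_2 \wedge df_3$ is nonzero, so $\deg df_1 \wedge df_2 \wedge df_3 \ge (1, 1, 1) > 0$ in graded lex. Thus it would suffice to show $\deg \phi_y'(f_1, f_2) \ge (p - 1) \deg f_1$. For this I would apply the Parachute Inequality (Proposition~\ref{pro:parachute}) with $r = 2$ to $\phi_y'(y, f_2) \in \K[f_2][y]$ at $g = f_1$. Setting $m := m(\phi, f_1) \ge 1$ (positive by Lemma~\ref{lem:multroot} and the hypothesis $\dvirt \phi(f_1, f_2) > \deg \phi(f_1, f_2)$), Lemma~\ref{lem:pre multroot}\ref{pre multroot:1} and Lemma~\ref{lem:multroot} give $m(\phi_y', f_1) = m - 1$ and $\dvirt \phi_y'(f_1, f_2) = \dvirt \phi(f_1, f_2) - \deg f_1$; combined with $\dvirt \phi(f_1, f_2) \ge m p \deg f_1$ from Lemma~\ref{lem:p and q}\ref{p and q:2}, Parachute would yield
\[
\deg \phi_y'(f_1, f_2) \;\ge\; (m p - 1) \deg f_1 - (m - 1)\bigl(\deg f_1 + \deg f_2 - \deg df_1 \wedge df_2\bigr).
\]

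The hard part will be to verify that the resulting expression dominates $(p - 1) \deg f_1$. After simplification, the condition becomes $(m - 1)\bigl((p - 1) \deg f_1 + \deg df_1 \wedge df_2 - \deg f_2\bigr) \ge 0$, trivial when $m = 1$. When $m \ge 2$, the inequality $(p - 1) \deg f_1 + \deg df_1 \wedge df_2 \ge \deg f_2$ follows in the generic case $p, q \ge 2$ from $p + q - pq = 1 - (p - 1)(q - 1) \le 0$ together with $\deg df_1 \wedge df_2 \ge 0$, and in the degenerate subcase $\min(p, q) = 1$ it reduces to the standard lower bound $\deg df_1 \wedge df_2 \ge \min(\deg f_1, \deg f_2)$ for algebraically independent pairs.
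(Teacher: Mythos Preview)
Your approach diverges from the paper's in an essential way: the paper applies the Parachute Inequality with $r=3$ directly to $\psi(y) = f_3 + \phi(y,f_2) \in \K[f_2,f_3][y]$, using $\omega = df_2 \wedge df_3$. Since $\dvirt\phi(f_1,f_2) > \deg f_3$, one has $\dvirt\psi(f_1) = \dvirt\phi(f_1)$ and $\topcomp{\psi}{f_1} = \topcomp{\phi}{f_1}$, hence $m(\psi,f_1) = m(\phi,f_1) \ge 1$; then $\dvirt\psi(f_1) \ge m(\psi,f_1)\,p\deg f_1$ and the inequality drops out after dividing by $m(\psi,f_1)$. Your route instead manufactures the $r=3$ information by hand via the identity $d(f_3+\phi)\wedge df_2\wedge df_3 = \phi_y'\,df_1\wedge df_2\wedge df_3$, and then invokes Parachute with $r=2$ on $\phi_y'$. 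The reduction up to and including the displayed bound on $\deg\phi_y'(f_1,f_2)$ is fine.

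The gap is in your final step, the subcase $m \ge 2$ and $\min(p,q)=1$. The inequality you need there is $(p-1)\deg f_1 + \deg df_1\wedge df_2 \ge \deg f_2$, and you appeal to a ``standard lower bound'' $\deg df_1\wedge df_2 \ge \min(\deg f_1,\deg f_2)$ for algebraically independent pairs. That bound is false. For instance take $f_2 = x_3^3 + x_1$ and $f_1 = f_2^2 + x_3$: these are algebraically independent, $\deg f_1 = (0,0,6)$, $\deg f_2 = (0,0,3)$, so $p=1$, $q=2$; but $df_1 \wedge df_2 = dx_3 \wedge df_2 = -dx_1\wedge dx_3$ has degree $(1,0,1)$, which is strictly smaller than $\deg f_2 = (0,0,3)$ in graded lex. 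With $\phi(y,z) = (y-z^2)^2$ and $f_3 = x_2$ all the hypotheses of the corollary are met and $m(\phi,f_1)=2$, so this is exactly the regime where your argument is supposed to apply; your Parachute bound on $\deg\phi_y'$ then gives $(1,0,-2)$, which does not dominate $(p-1)\deg f_1 = 0$. (The target inequality $\deg\phi_y' \ge (p-1)\deg f_1$ happens to hold in this example, but your \emph{proof} of it does not.) The cleanest repair is the paper's: apply Parachute with $r=3$ to $f_3+\phi(y,f_2)$ directly, which sidesteps any need to control $\deg df_1\wedge df_2$.
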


\begin{proof}
The Parachute Inequality \ref{pro:parachute} applied to $\psi = f_3 + \phi(y,f_2) \in \K[f_2,f_3][y]$ gives
\begin{multline}
\label{eq:psi}
\deg (f_3 + \phi(f_1,f_2)) \ge \dvirt \psi(f_1) \\
- m(\psi,f_1)(\deg df_2\wedge df_3 + \deg f_1 - \deg df_1\wedge df_2\wedge df_3 ).
\end{multline}
By assumption $\dvirt\phi(f_1,f_2) > \deg f_3$.
Thus not only $\dvirt \psi(f_1) = \dvirt \phi(f_1)$, but also 
$\topcomp{\psi}{f_1} = \topcomp{\phi}{f_1}$, hence $m(\psi,{f_1}) = m(\phi,{f_1}) \ge 1$.
By Lemma \ref{lem:p and q}\ref{p and q:2}, we obtain
$$\dvirt \psi(f_1) \ge m(\phi,f_1) p \deg f_1 = m(\psi,f_1) p \deg f_1$$
Replacing in (\ref{eq:psi}), and dividing by $m(\psi,f_1)$, we get the result.
\end{proof}

\subsection{Principle of Two Maxima}

The proof of the next result, which we call the ``Principle of Two Maxima'', is one of the few places where the formalism of Poisson brackets used by Shestakov and Umirbaev seems to be more transparent (at least for us) than the formalism of differential forms used by Kuroda.
In this section we propose a definition that encompasses the two points of view, and then we recall the proof following \cite[Lemma 5]{SU:ineq}.

\begin{proposition}[Principle of Two Maxima, {\cite[Theorem 5.2]{Ku:ineq} and \cite[Lemma 5]{SU:ineq}}]
\label{pro:ptm}
Let $(f_1, f_2, f_3)$ be an automorphism of $\A^3$.
Then the maximum between the following three degrees is realized at least twice:
$$\deg f_1 + \deg df_2 \wedge df_3,\quad
\deg f_2 + \deg df_1 \wedge df_3, \quad
\deg f_3 + \deg df_1 \wedge df_2. $$
\end{proposition}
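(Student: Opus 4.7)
The key fact we use is that, since $(f_1,f_2,f_3)$ is an automorphism of $\A^3$, the Jacobian $3$-form is a nonzero constant multiple of the volume: $df_1\wedge df_2\wedge df_3 = c\,dx_1\wedge dx_2\wedge dx_3$ for some $c\in\K^*$, so $\deg(df_1\wedge df_2\wedge df_3) = (1,1,1)$. Writing
$$D_i := \deg f_i + \deg(df_j\wedge df_k)\qquad \bigl((i,j,k)\text{ a cyclic permutation of }(1,2,3)\bigr),$$
Lemma~\ref{lem:basics forms} gives the uniform lower bound $D_i \ge (1,1,1)$, and the task is to show that the maximum of $D_1,D_2,D_3$ is attained at least twice.

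My plan is to argue by contradiction: suppose, after relabelling, that $D_1$ is the strict maximum, i.e.\ $D_1 > D_2$ and $D_1 > D_3$. The starting identity is
$$d\Omega \;=\; 3\, df_1\wedge df_2\wedge df_3 \;=\; 3c\, dx_1\wedge dx_2\wedge dx_3\qquad \text{where } \Omega := \sum_{\text{cyc}} f_i\, df_j\wedge df_k.$$
Since also $d\Omega_0 = 3c\, dx_1\wedge dx_2\wedge dx_3$ for $\Omega_0 := c\sum_{\text{cyc}} x_i\, dx_j\wedge dx_k$, the $2$-form $\Omega - \Omega_0$ is closed, hence exact on $\A^3$, so there exists a polynomial $1$-form $\beta$ with $\Omega = \Omega_0 + d\beta$. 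Under our contradiction assumption, no cancellation can occur at top degree in $\Omega$, hence $\deg \Omega = D_1$ with top term $\bar f_1 \cdot \overline{df_2 \wedge df_3}$; one quickly reduces to $D_1 > (1,1,1)$ (if $D_1 = (1,1,1)$, then all $D_i$ equal $(1,1,1)$ and the conclusion holds), which forces $\deg\Omega_0 = (1,1,1) < D_1$ and therefore the entire top part of $\Omega$ must come from $d\beta$.

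The core of the proof is then the degree analysis of this identity, and this is where I expect the main obstacle. Using the bound $\deg d\beta \le \deg \beta$ for polynomial $1$-forms on $\A^3$, one extracts a top part $\bar\beta$ of degree $\ge D_1$ satisfying $\overline{d\beta} = \bar f_1 \cdot \overline{df_2\wedge df_3}$. The essential algebraic input, cleanest in the Shestakov--Umirbaev Poisson-bracket formalism where $df_j\wedge df_k$ is repackaged as a vector $[f_j,f_k]\in\K[x_1,x_2,x_3]^3$ with $\langle df_i,[f_j,f_k]\rangle = c$, is that the identity $\deg(df_1\wedge df_2\wedge df_3) = (1,1,1) < D_1$ forces the vanishing of the top part of the wedge: $\overline{df_1}\wedge \overline{df_2\wedge df_3} = 0$.

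The final step — and the technical heart of the argument, for which I would follow \cite[Lemma~5]{SU:ineq} closely — is to show that this vanishing, combined with the expansion of $\bar\beta$ in the coordinates $(f_1,f_2,f_3)$ (available since $(f_1,f_2,f_3)$ is an automorphism, so every polynomial in the $x_i$ is polynomial in the $f_i$), is incompatible with $\bar f_1\cdot \overline{df_2\wedge df_3}$ being the exact top part of $d\beta$ unless one of the summands $f_2\,df_3\wedge df_1$ or $f_3\,df_1\wedge df_2$ also reaches degree $D_1$. This contradicts $D_2 < D_1$ and $D_3 < D_1$, and completes the proof.
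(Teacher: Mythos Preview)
Your proposal does not actually prove the statement: the entire ``final step'' is admitted to be the technical heart and is deferred to \cite[Lemma~5]{SU:ineq}. But the proof of that lemma is \emph{not} via the exactness argument you have set up; it is the Poisson-bracket argument that the paper reproduces. So what you have written is a setup that is orthogonal to the cited proof, together with a promise to import that proof at the end. Concretely, once you reach $\overline{d\beta}=\bar f_1\cdot\overline{df_2\wedge df_3}$ and $\overline{df_1}\wedge\overline{df_2\wedge df_3}=0$, I do not see how to extract a contradiction from the de Rham side alone: the bound $\deg d\beta\le\deg\beta$ points the wrong way, exact $2$-forms on $\A^3$ are completely unconstrained in their top part, and the change of variables to $(f_1,f_2,f_3)$ does not obviously interact with the exterior derivative in a degree-controlled manner.

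The paper's proof is short and entirely different. One works not in the exterior algebra but in the tensor algebra $\TT=\bigoplus_p\Omega^{\otimes p}$, with bracket $[\omega,\mu]=\omega\otimes\mu-\mu\otimes\omega$. The point is that in $\Omega^{\otimes 3}$ the nine elements $[dx_i\wedge dx_j,dx_k]$ span an $8$-dimensional free submodule, the only relation being the Jacobi identity (Lemma~\ref{lem:independent}); from this one computes directly that $\deg\bigl[[df,dg],dh\bigr]=\deg h+\deg(df\wedge dg)$ for any non-constant $h$ (Lemma~\ref{lem:ptmbracket}). Then the Jacobi identity
\[
\bigl[[df_1,df_2],df_3\bigr]+\bigl[[df_2,df_3],df_1\bigr]+\bigl[[df_3,df_1],df_2\bigr]=0
\]
forces the maximum of the three degrees to occur at least twice, and you are done. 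Note that this argument never uses that $(f_1,f_2,f_3)$ is an automorphism, nor the constancy of the Jacobian; your setup leans on that hypothesis precisely because you are working in the exterior algebra, where the triple wedge has already collapsed and the Jacobi-type constraint has been lost. The Poisson-bracket formalism you mention in passing is exactly the right tool --- it \emph{is} the Shestakov--Umirbaev proof --- but you use it only as a citation rather than as the argument.
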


Let $\Omega$ be the space of algebraic 1-forms $\sum f_idg_i$ where $f_i, g_i \in \K[x_1, x_2, x_3]$.
We consider $\Omega$ as a free module of rank three over $\K[x_1, x_2, x_3]$, with basis $dx_1, dx_2, dx_3$, and we denote by
$$\TT = \bigoplus_{p = 0}^\infty \Omega^{\otimes p}$$
the associative algebra of tensorial powers of $\Omega$, where as usual $\Omega^{\otimes 0} = \K[x_1, x_2, x_3]$.
The degree function on $\Omega$ extends naturally to a degree function on $\TT$.
Recall that $\TT$ has a natural structure of Lie algebra: For any $\omega, \mu  \in \TT$, we define their bracket as
$$[\omega, \mu] := \omega \otimes \mu -  \mu \otimes \omega.$$
In particular, if $df,dg \in \Omega$ are 1-forms, we have
$$[df,dg] = df \otimes dg - dg \otimes df = df \wedge dg.$$
It is easy to check that the bracket satisfies the Jacobi identity:
For any $\alpha, \beta, \gamma \in \TT$, we have
\begin{align*}
\left[ [\alpha, \beta], \gamma \right] &+\left[ [\beta, \gamma], \alpha \right] + \left[ [\gamma, \alpha], \beta \right] \\
&= \,\alpha \otimes \beta \otimes \gamma
 - \beta \otimes \alpha \otimes \gamma
 - \gamma \otimes \alpha \otimes \beta
 + \gamma \otimes \beta \otimes \alpha \\
& \; + \beta \otimes \gamma \otimes \alpha
 - \gamma \otimes \beta \otimes \alpha
 - \alpha \otimes \beta \otimes \gamma
 + \alpha \otimes \gamma \otimes \beta \\
& \; + \gamma \otimes \alpha \otimes \beta
 - \alpha \otimes \gamma \otimes \beta
 - \beta \otimes \gamma \otimes \alpha
 + \beta \otimes \alpha \otimes \gamma \\
&= \, 0
\end{align*}
since each one of the six possible permutations appears twice, with different
signs.

\begin{lemma} \label{lem:independent}
The nine elements $\left[dx_i \wedge  dx_j,dx_k\right]$, for $1\le i <
j \le 3$ and $1 \le k \le 3$ generate a 8-dimensional free submodule in $\TT$,
the only relation between them being the Jacobi identity:
$$[dx_1 \wedge  dx_2, dx_3] + [dx_2 \wedge  dx_3, dx_1] - [dx_1 \wedge  dx_3, dx_2] = 0.$$
\end{lemma}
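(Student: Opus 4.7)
The plan is to exploit the natural grading of $\Omega^{\otimes 3}$ by multi-sets of indices. For each unordered multi-set $M = \{\!\{a,b,c\}\!\}$ of size three with entries in $\{1,2,3\}$, let $\Omega^{\otimes 3}_M$ denote the free $\K[x_1,x_2,x_3]$-submodule spanned by the tensor monomials $dx_\alpha \otimes dx_\beta \otimes dx_\gamma$ such that $\{\!\{\alpha,\beta,\gamma\}\!\} = M$. One has a direct sum decomposition $\Omega^{\otimes 3} = \bigoplus_M \Omega^{\otimes 3}_M$, and expanding $[\omega,\mu] = \omega \otimes \mu - \mu \otimes \omega$ shows that each bracket $[dx_i \wedge dx_j, dx_k]$ lies entirely in $\Omega^{\otimes 3}_{\{\!\{i,j,k\}\!\}}$. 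Since linear independence and relations are preserved under direct sums, it suffices to treat each multi-set $M$ separately.

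There are six multi-sets of the form $\{\!\{i,i,j\}\!\}$ with $i \ne j$. In each such component there lies a single one of the nine generators (either $[dx_i \wedge dx_j, dx_i]$ or $[dx_i \wedge dx_j, dx_j]$, according to which index repeats). A direct expansion, for example
\begin{equation*}
[dx_1 \wedge dx_2, dx_1] = 2\, dx_1 \otimes dx_2 \otimes dx_1 - dx_2 \otimes dx_1 \otimes dx_1 - dx_1 \otimes dx_1 \otimes dx_2,
\end{equation*}
exhibits this element as a nonzero $\Z$-combination of basis vectors of $\Omega^{\otimes 3}_M$, so it spans a free rank-one submodule. These six independent pieces account for 6 of the expected 8 ranks and contribute no relations.

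The only interesting case is the multi-set $M = \{\!\{1,2,3\}\!\}$, whose component $\Omega^{\otimes 3}_M$ is free of rank six with basis given by the six permutations of $dx_1 \otimes dx_2 \otimes dx_3$. The three generators $[dx_1 \wedge dx_2, dx_3]$, $[dx_2 \wedge dx_3, dx_1]$, $[dx_1 \wedge dx_3, dx_2]$ each expand into a four-term $\pm 1$-combination of these six basis vectors. A direct calculation verifies the asserted Jacobi relation
\begin{equation*}
[dx_1 \wedge dx_2, dx_3] + [dx_2 \wedge dx_3, dx_1] - [dx_1 \wedge dx_3, dx_2] = 0,
\end{equation*}
which of course is also an instance of the general Jacobi identity checked above the lemma. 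To conclude that this is the unique relation, I would show that any two of the three generators are already $\K[x_1,x_2,x_3]$-linearly independent; this can be done by reading off a non-vanishing $2\times 2$ minor of the coefficient matrix (for instance, the coefficients of $dx_1 \otimes dx_2 \otimes dx_3$ and $dx_2 \otimes dx_3 \otimes dx_1$ in $[dx_1 \wedge dx_2, dx_3]$ and $[dx_2 \wedge dx_3, dx_1]$ form the matrix $\bigl(\begin{smallmatrix} 1 & -1 \\ 0 & 1 \end{smallmatrix}\bigr)$).

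Summing ranks across components gives $6 + 2 = 8$, which yields both freeness of the submodule and uniqueness of the Jacobi relation. The work is essentially bookkeeping: the main step is the multi-set grading argument that reduces the nine-generator problem to six trivial rank-one checks and one concrete $2\times 2$ non-degeneracy check. No serious obstacle is expected.
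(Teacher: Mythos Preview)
Your proof is correct and follows essentially the same approach as the paper: both expand the brackets in the tensor basis $dx_\alpha \otimes dx_\beta \otimes dx_\gamma$ and verify independence by inspection of coefficients. Your multi-set grading is a slightly cleaner way to organize the same computation, making explicit why the six ``repeated index'' brackets cannot interact with each other or with the three ``distinct index'' ones, whereas the paper simply writes out the expansions and observes the same disjointness of supports directly.
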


\begin{proof}
We work inside the 27-dimensional free sub-module of $\TT$ generated by the $dx_i \otimes dx_j \otimes dx_k$ for $1 \le i,j,k \le 3$.
We compute, for $i < j$:
\begin{align*}
\left[dx_i \wedge  dx_j,dx_i\right]
&= \left[dx_i \otimes dx_j - dx_j \otimes dx_i ,dx_i\right] \\
&= 2 dx_i \otimes dx_j \otimes dx_i - dx_j \otimes dx_i \otimes dx_i - dx_i
\otimes dx_i \otimes dx_j, \\
\left[dx_i \wedge  dx_j,dx_j\right]
&= \left[dx_i \otimes dx_j - dx_j \otimes dx_i ,dx_j\right] \\
&= -2 dx_j \otimes dx_i \otimes dx_j + dx_i \otimes dx_j \otimes dx_j + dx_j
\otimes dx_j \otimes dx_i.
\end{align*}
This shows that the elements $\left[dx_i \wedge  dx_j,dx_i\right]$ and $\left[dx_i \wedge  dx_j,dx_j\right]$, for $i < j$, generate a 6-dimensional free submodule.
On the other hand, for $\{i,j,k\} = \{1,2,3\}$:
\begin{multline*}
\left[dx_i \wedge  dx_j,dx_k\right] = \left[dx_i \otimes dx_j - dx_j \otimes dx_i ,dx_k\right] \\
= dx_i \otimes dx_j \otimes dx_k - dx_j \otimes dx_i \otimes dx_k
- dx_k \otimes dx_i \otimes dx_j + dx_k \otimes dx_j \otimes dx_i,
\end{multline*}
so that $[dx_1 \wedge  dx_2, dx_3]$ and $[dx_2 \wedge  dx_3, dx_1]$ are
independent, and together with the above family  they generate a 8-dimensional
free submodule.
\end{proof}

The proof of the Principle of Two Maxima \ref{pro:ptm} now follows from the observation:

\begin{lemma} \label{lem:ptmbracket}
Let $f,g,h \in \K[x_1,x_2,x_3]$, with $h$ non-constant.
Then
$$\deg \left[ \left[df,dg\right], dh \right] = \deg h + \deg df \wedge dg.$$
\end{lemma}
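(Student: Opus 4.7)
The plan is to expand $[[df, dg], dh]$ in the tensor algebra and then, using the freeness statement of Lemma \ref{lem:independent}, read off the degree coefficient by coefficient. Bilinearity of the bracket gives
$$[[df, dg], dh] = \sum_{i<j,\, k} \alpha_{ij}\, \partial_k h \cdot [dx_i \wedge dx_j, dx_k],$$
where $\alpha_{ij} := \partial_i f\,\partial_j g - \partial_j f\, \partial_i g$ are the coefficients of the $2$-form $df \wedge dg$. The upper bound $\deg[[df, dg], dh] \le \deg df \wedge dg + \deg h$ is immediate since each summand has degree at most $\deg(\alpha_{ij} x_i x_j) + \deg(x_k \partial_k h) \le \deg df \wedge dg + \deg h$, and the degenerate case $df \wedge dg = 0$ makes both sides equal to $-\infty$.

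For the matching lower bound I would apply Lemma \ref{lem:independent}: using the Jacobi relation to replace $[dx_2 \wedge dx_3, dx_1]$ by $[dx_1 \wedge dx_3, dx_2] - [dx_1 \wedge dx_2, dx_3]$, the expression lives in a free rank-$8$ submodule whose basis consists of the six ``non-Jacobi'' brackets $[dx_i \wedge dx_j, dx_k]$ with $k \in \{i, j\}$ together with $[dx_1 \wedge dx_2, dx_3]$ and $[dx_1 \wedge dx_3, dx_2]$. In this basis the coefficients are respectively $\alpha_{ij}\partial_k h$ (for $k \in \{i,j\}$) and the two combinations $\alpha_{12}\partial_3 h - \alpha_{23}\partial_1 h$ and $\alpha_{13}\partial_2 h + \alpha_{23}\partial_1 h$. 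Since degrees in a free module are read off coefficient-by-coefficient with no cross-cancellation, we set $M := \deg df \wedge dg + \deg h$ and observe that if $M$ is attained at a non-Jacobi position $(i,j,k)$ with $k \in \{i, j\}$, the matching coefficient $\alpha_{ij}\partial_k h$ has no competing term and the lower bound is clear.

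The main obstacle is the remaining case, where $M$ is attained only at the ``all distinct indices'' positions $\{i, j, k\} = \{1, 2, 3\}$: then both Jacobi coefficients are differences whose top-degree parts could a priori cancel. I would handle this by a top-monomial analysis. Writing $\bar f = c_f x^\alpha$, $\bar g = c_g x^\beta$, $\bar h = c_h x^\gamma$, the leading scalar of $\alpha_{ij}$ carries the minor $a_i b_j - a_j b_i$, and the triple $(a_2 b_3 - a_3 b_2,\ a_3 b_1 - a_1 b_3,\ a_1 b_2 - a_2 b_1)$ is exactly the cross product $\alpha \times \beta$. Demanding simultaneous top-level cancellation in both Jacobi coefficients forces the three components of $\alpha \times \beta$ to share one sign; but multiplying the three resulting strict inequalities (say $a_2 b_3 > a_3 b_2$, $a_3 b_1 > a_1 b_3$, $a_1 b_2 > a_2 b_1$) yields the impossibility $a_1 a_2 a_3 b_1 b_2 b_3 > a_1 a_2 a_3 b_1 b_2 b_3$, and the cases with a zero entry of $\alpha$ or $\beta$ (or with some minor vanishing, in which case the top of $\alpha_{ij}$ must be traced back to a lower-order monomial of $f$ or $g$) are dispatched by a short separate inspection. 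Hence at least one Jacobi coefficient retains its top, forcing $\deg[[df, dg], dh] = M$ as required.
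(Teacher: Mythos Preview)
Your approach is the same as the paper's: expand $[[df,dg],dh]$ over the nine brackets $[dx_i\wedge dx_j,dx_k]$ and use Lemma~\ref{lem:independent} to control cancellation. The first two paragraphs are fine. The third paragraph, however, is an unnecessary and incomplete detour.

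The case you worry about --- simultaneous top-level cancellation in \emph{both} Jacobi coefficients $\alpha_{12}\partial_3 h-\alpha_{23}\partial_1 h$ and $\alpha_{13}\partial_2 h+\alpha_{23}\partial_1 h$ --- is vacuous under your standing hypothesis that $M$ is attained only at all-distinct positions. Indeed, such simultaneous cancellation forces all three products $\alpha_{12}\partial_3 h$, $\alpha_{13}\partial_2 h$, $\alpha_{23}\partial_1 h$ to share the top degree $M-\deg x_1x_2x_3$, i.e.\ all three Jacobi positions realize $M$. Since each factor is bounded above by $\deg df\wedge dg$ (resp.\ $\deg h$), equality must hold everywhere, so all three $\partial_k h\,x_k$ have degree $\deg h$. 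But then for any $(i_0,j_0)$ realizing $\deg df\wedge dg$, the non-Jacobi position $(i_0,j_0,i_0)$ also realizes $M$, contradicting your hypothesis. Hence at least one of the two Jacobi coefficients keeps its top term, and you are done --- no cross-product argument is needed. This is fortunate, because the cross-product argument as written is not a proof: the leading coefficient of $\alpha_{ij}$ need not come from the top monomials of $f$ and $g$, and your ``short separate inspection'' for the vanishing-minor case is neither short nor supplied.

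The paper makes exactly this observation, but organizes the case split more transparently: if at most two of the $\partial_k h\,dx_k$ realize $\deg dh$, then at most two Jacobi terms appear among the degree-$M$ summands, and by Lemma~\ref{lem:independent} these summands are linearly independent; if all three $\partial_k h\,dx_k$ realize $\deg dh$, then some non-Jacobi position $(i_0,j_0,i_0)$ realizes $M$ and its coefficient cannot be cancelled. Either way $\deg[[df,dg],dh]=M$.
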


\begin{proof}
We have
$$ dh = \sum_{1 \le k \le 3} \parti{k}{h} dx_k$$
and
$$[df,dg] = df \wedge dg = \sum_{1\le i < j \le 3} \left( \parti{i}{f}\parti{j}{g} - \parti{j}{f}\parti{i}{g}\right)  dx_i \wedge  dx_j.$$
Thus
$$\left[ [df,dg], dh \right] = \sum_{1 \le k \le 3} \sum_{1\le i < j \le 3} \parti{k}{h}\left( \parti{i}{f}\parti{j}{g} - \parti{j}{f}\parti{i}{g}\right)  \left[dx_i \wedge  dx_j,dx_k\right].$$
If the degree of $dh$ is realized by at most two of the terms  $\parti{k}{h} dx_k$, $k =1,2,3$, then by Lemma \ref{lem:independent} the terms realizing the maximum  of the degrees
\begin{equation}\label{eq:deg terms}
\deg \left( \parti{k}{h}\left( \parti{i}{f}\parti{j}{g} - \parti{j}{f}\parti{i}{g}\right)  \left[dx_i \wedge  dx_j,dx_k\right] \right)
\end{equation}
are independent (because at most two of them occur in the Jacobi relation), hence the result since $\deg \left[dx_i \wedge  dx_j,dx_k\right] = \deg x_ix_jx_k$.

On the other hand if the three terms  $\parti{k}{h} dx_k$ have the same degree, then among the indexes $(i,j,k)$ that realize the maximum of the degrees in (\ref{eq:deg terms}), we must find some with $k = i$ or $k=j$, hence again we get the conclusion since by Lemma \ref{lem:independent} such terms cannot cancel each other.
\end{proof}

\begin{proof}[{Proof of the Principle of Two Maxima $\ref{pro:ptm}$}]
Since by the Jacobi identity
$$\left[ [df_1,df_2], df_3 \right] + \left[ [df_2,df_3], df_1 \right] + \left[ [df_3,df_1], df_2 \right] = 0,$$
the dominant terms must cancel each other.
In particular the maximum of the degrees, which are computed in Lemma \ref{lem:ptmbracket}, is realized at least twice: This is the five lines proof of the Principle of Two Maxima by Shestakov and Umirbaev!
\end{proof}

\section{Geometric theory of reduction} \label{sec:theory reduction}

In this section we mostly follow Kuroda \cite{Ku:main}, but we reinterpret his theory of reduction in a combinatorial way, using the complex $\Comp$.
Recall that $\K$ is a field of characteristic zero.

\subsection{Degree of automorphisms and vertices} \label{sec:degrees}

Recall that in \S \ref{sec:degrees} we defined a notion of degree for an automorphism $f = (f_1, f_2, f_3) \in \TA$.
The point is that we want a degree that is adapted to the theory of reduction of Kuroda, so for instance taking the maximal degree of the three components of an automorphism is not good, because we would not detect a reduction of the degree on one of the two lower components (such reductions do exist, see \S \ref{sec:examples}).
We also want a definition that is adapted to working on the complex $\Comp$, so directly taking the sum of the degree of the three components is no good either, since it would not give a degree function on vertices of $\Comp$.

Recall that the \textbf{$3$-degree} of $f  \in \TA$, or of the vertex $v_3 =
[f]$, is the triple $(\delta_1, \delta_2, \delta_3)$ given by Lemma
\ref{lem:stratified degree}, where in particular $\delta_3 > \delta_2 >
\delta_1$.
By definition the \textbf{top degree} of $v_3$ is  $\delta_3 \in \N^3$,
and the \textbf{degree} of $v_3$ is the sum
$$\deg v_3 := \delta_1 + \delta_2 + \delta_3 \in \N^3.$$
Similarly we have a $2$-degree $(\nu_1, \nu_2)$ associated with any vertex $v_2$
of type 2, a top degree equal to $\nu_2$ and a degree $\deg v_2 := \nu_1 +
\nu_2$.
Finally for a vertex of type 1 the notions of $1$-degree, top degree and degree
coincide.

\begin{lemma} \label{lem:id is min}
Let $v_3$ be a vertex of type 3.
Then
$$\deg v_3 \ge (1,1,1)$$
with equality if and only if $v_3 = [\id]$.
\end{lemma}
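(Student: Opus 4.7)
My plan is to reduce everything to the Jacobian of a polynomial automorphism. First, choose a good representative $(f_1, f_2, f_3)$ of $v_3$, so that the multiset $\{\deg f_i\}$ coincides with $\{\delta_1, \delta_2, \delta_3\}$ and hence $\deg v_3 = \deg f_1 + \deg f_2 + \deg f_3$.

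For the inequality, the key point is that since $(f_1, f_2, f_3) \in \TA$ is a polynomial automorphism of $\A^3$, its Jacobian is a nonzero element $J \in \K^*$, so that
$$df_1 \wedge df_2 \wedge df_3 = J \, dx_1 \wedge dx_2 \wedge dx_3$$
has degree $(1,1,1)$. Applying the subadditivity of degree under wedge (Lemma~\ref{lem:basics forms}) twice, together with $\deg f_i = \deg df_i$, immediately yields
$$\deg v_3 = \deg f_1 + \deg f_2 + \deg f_3 \;\ge\; \deg(df_1 \wedge df_2 \wedge df_3) = (1,1,1).$$

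For the equality case, I first claim that $\delta_1 \ne (0,0,0)$: such a degree would force the line $V_1$ to contain a nonzero constant, producing a relation $af_1 + bf_2 + cf_3 = k \in \K$ with $(a,b,c) \ne 0$, contradicting the algebraic independence of the $f_i$. So each $\delta_i$ has total degree at least $1$; as their total degrees sum to $3$, each must equal exactly $1$. Combined with the strict graded-lex chain $\delta_1 < \delta_2 < \delta_3$, this forces $(\delta_1, \delta_2, \delta_3) = ((0,0,1), (0,1,0), (1,0,0))$.

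It follows that the linear span of $f_1, f_2, f_3$ consists entirely of polynomials of total degree at most $1$, so each $f_i$ is affine and $(f_1, f_2, f_3)$ is itself an affine automorphism. There then exists $a \in A_3$ with $a \circ (f_1, f_2, f_3) = (x_1, x_2, x_3)$, giving $v_3 = [\id]$. The converse is immediate since $[\id] = [x_1, x_2, x_3]$ has $3$-degree $((0,0,1), (0,1,0), (1,0,0))$ summing to $(1,1,1)$. I do not foresee any serious obstacle; the only subtle step is the algebraic-independence argument ensuring $\delta_1 \ne (0,0,0)$.
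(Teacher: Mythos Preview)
Your proof is correct. The equality case matches the paper's (one-line) argument exactly: once the $\delta_i$ are identified as the three standard basis vectors, the representative is affine and $v_3 = [\id]$. You are also more careful than the paper in justifying $\delta_1 \neq (0,0,0)$ via algebraic independence.

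For the inequality, however, you take a genuinely different route. The paper's implicit argument is purely combinatorial: the $\delta_i$ are three \emph{distinct} nonzero elements of $\N^3$, so their total degrees sum to at least $3$, hence $\deg v_3 \ge (1,1,1)$ in the graded lexicographic order, with equality exactly when $\{\delta_1,\delta_2,\delta_3\} = \{(0,0,1),(0,1,0),(1,0,0)\}$. Your approach instead invokes the Jacobian of a polynomial automorphism and Lemma~\ref{lem:basics forms} to get $\sum \deg f_i \ge \deg(df_1 \wedge df_2 \wedge df_3) = (1,1,1)$ directly. This is a nice conceptual shortcut that avoids the combinatorics and does not require the $\delta_1 \neq (0,0,0)$ step for the inequality itself; on the other hand, the paper's argument is more self-contained at this point in the exposition, since it does not appeal to the differential-form machinery of \S\ref{sec:degree polynomials}.
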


\begin{proof}
If $v_3 = [f]$ with $\deg v_3 = (1,1,1)$, then the $3$-degree $(\delta_1,  \delta_2, \delta_3)$ of $v_3$ must be equal to $\big( (0,0,1), (0,1,0), (0,0,1) \big)$, hence the result.
\end{proof}

Let $v_3$ be a vertex with $3$-degree $(\delta_1, \delta_2, \delta_3)$.
The unique $m_1 \in \p^2(v_3)$ such that $\deg m_1 = \delta_1$ is called the \textbf{minimal vertex} in $\lines{v_3}$,
and the unique $m_2 \in \linesd{v_3}$ such that $\deg m_2 = (\delta_1, \delta_2)$ is called the \textbf{minimal line} in $\lines{v_3}$.
If $v_2\in \linesd{v_3}$ has $2$-degree  $(\nu_1, \nu_2)$, there is a unique degree $\delta$ such that $v_3$ has degree $\nu_1 + \nu_2 + \delta$.
We denote this situation by $\delta := \deg (v_3 \smallsetminus v_2)$.
Observe that, by definition, $\deg (v_3 \smallsetminus v_2) = \deg v_3 - \deg v_2$.
There is also a unique $v_1$ such that $v_2$ passes through $v_1$ and $\deg v_1 = \nu_1$: we call $v_1$ the minimal vertex of $v_2$.
Observe that if $v_2 = m_2 \in \linesd{v_3}$, then the minimal vertex of $v_2$ coincides with the minimal vertex of $v_3$, and if $v_2 \neq m_2$, then $v_1$ is the intersection of $v_2$ with $m_2$.

We call \textbf{triangle} $T$ in $\lines{v_3}$ the data of three
non-concurrent lines.
A \textbf{good triangle} $T$ in $\lines{v_3}$ is the data of three distinct lines $m_2, v_2, u_2$, such that $m_2$ is the minimal line, $v_2$ passes through the minimal vertex $m_1$, and $u_2$ does not pass through $m_1$.
Equivalently, a good triangle corresponds to a good representative $v_3 = \llb f_1, f_2, f_3 \rrb$ with $\deg f_1 > \deg f_2 > \deg f_3$, by putting $m_2 = \llb f_2, f_3 \rrb$, $v_2 = \llb f_1, f_3 \rrb$, $u_2 = \llb f_1, f_2 \rrb$.
If $v_1, v_2, v_3$ is a simplex in $\Comp$, we say that a good triangle $T$ in $\lines{v_3}$  is \textbf{compatible} with this simplex if $v_2$ is one of the lines of $T$, and $v_1$ is the intersection of $v_2$ with another line of $T$.
Each simplex $v_1, v_2, v_3$ admits such a compatible good triangle (not unique
in general): Indeed it corresponds to a choice of good representatives as
given by Lemma \ref{lem:rep of a simplex}.

Let $v_2  \in \linesd{v_3}$ be a vertex with $2$-degree $(\delta_1, \delta_2)$.
We say that $v_2$ has \textbf{inner resonance} if $\delta_2 \in \N\delta_1$.
We say that $v_2$ has \textbf{outer resonance} in $v_3$ if $\deg (v_3 \smallsetminus v_2) \in \N\delta_1 + \N\delta_2$.

\subsection{Elementary reductions}
Let $v_3, v_3'$ be vertices of type 3.
We say that $v_3'$ is a \textbf{neighbor} of $v_3$ if $v_3' \neq v_3$ and there exists a vertex $v_2$ of type 2 such that $v_2 \in \linesd{v_3} \cap \linesd{v_3'}$.
Equivalently, this means that $v_3$ and $v_3'$ are at distance 2 in $\Comp$.
We denote this situation by $v_3' \ne v_3$, or if we want to make $v_2$
explicit, by $v_3' \ne_{v_2} v_3$
We also say that $v_2$ is the \textbf{center} of $v_3 \ne v_3'$.
Recall that the center $v_2$ is uniquely defined, and that we can choose
representatives as in Corollary \ref{cor:type 3 neighbors}.

We say that $v_3'$ is an \textbf{elementary reduction} (resp. a \textbf{weak} elementary reduction)  of $v_3$ with center $v_2$, if $\deg v_3 > \deg v_3' $ (resp. $\deg v_3 \ge \deg v_3'$) and $v_3' \ne_{v_2} v_3$.
Let $v_1$ be the minimal vertex in the line $v_2$.
We say that $v_1, v_2, v_3$ is the \textbf{pivotal simplex} of the reduction,
and that $v_1$ is the \textbf{pivot} of the reduction. \label{def:pivot}
Moreover we say that the reduction is \textbf{optimal} if $v_3'$ has minimal degree among all neighbors of $v_3$ with center $v_2$.
We say that $v_3'$ is a \textbf{simple} elementary reduction (resp. a \textbf{weak simple} elementary reduction) of
$v_3$ if there exist good representatives $v_3 = \llb f_1, f_2, f_3 \rrb$ and
$v_3' = \llb f_1 + P(f_2) + af_3, f_2, f_3 \rrb$ for some $a \in \K$
and non-affine polynomial $P$, satisfying
\begin{align*}
\deg f_1 &> \max\{f_1 + P(f_2), f_1 + P(f_2) + af_3\}; \\
\text{(resp. } f_1 &\ge \max\{f_1 + P(f_2), f_1 + P(f_2) + af_3\}\text{)}.
\end{align*}
In this situation we say that $v_2 = \llb f_2, f_3 \rrb, v_1 = \llb f_2 \rrb$ is the \textbf{simple center} of the reduction, and when drawing pictures we represent this relation by adding an arrow on the edge from $v_2$ to $v_1$ (see Figure \ref{fig:simple}).
Beware that this representation is imperfect, since the arrow does not depend only on the edge from $v_2$ to $v_1$ but indicates a relation between the two vertices $v_3$ and $v_3'$.

\begin{remark} \label{rem:simple}
In the definition of a (weak) simple elementary reduction, if $\deg f_3 = \topdeg v_3$, then we must have $a = 0$.
For instance in the following example:
\begin{align*}
v_3 &= \llb x_1 + x_2^2, x_2, x_3 + x_2^2 + x_2^3 \rrb = \llb f_1, f_2, f_3 \rrb,\\
v_3' &= \llb x_1 - x_3, x_2, x_3 + x_2^2 + x_2^3 \rrb = \llb f_1 + f_2^3 - f_3, f_2, f_3 \rrb,
\end{align*}
we do not want to call $v_3'$ a simple reduction of $v_3$ because $\deg (f_1 +
f_2^3) > \deg f_1$.

On the other hand, consider the following example:
\begin{align*}
v_3 &= \llb x_1 +x_2^2 + x_2^3, x_2, x_3 + x_2^2 \rrb = \llb f_1, f_2, f_3 \rrb ,\\
v_3' &= \llb x_1 - x_3, x_2, x_3 + x_2^2 \rrb = \llb f_1 - f_2^3 - f_3, f_2, f_3 \rrb.
\end{align*}
Here $v_3'$ is a simple elementary reduction of $v_3$, and the coefficient $a = -1$ is necessary to get a good representative.
\end{remark}

\begin{figure}[t]
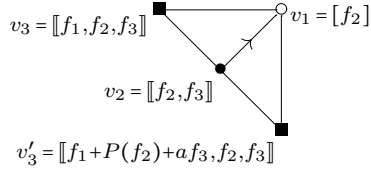

$$
\mygraph{
!{<0cm,0cm>;<0.8cm,0cm>:<0cm,0.8cm>::}
!{(-2,0)}*-{\typethree}="v3"
!{(0,-2)}*-{\typethree}="v3'"
!{(0,0)}*{\typeone}="v1"
!{(-1,-1)}*-{\typetwo}="v2"
"v3"-_<{v_3 \,=\, \llb f_1,f_2,f_3 \rrb}_>{v_2 \,=\, \llb f_2, f_3 \rrb}"v2"-_>{v_3' \,=\, \llb f_1+ P(f_2) +af_3,f_2,f_3  \rrb}"v3'"
"v1"-|@{<}"v2"
"v3'"-_>{v_1 \,=\, [ f_2 ]}"v1"-"v3"
}
$$
\caption{Simple reduction with simple center $v_2,v_1$.}\label{fig:simple}
\end{figure}

\begin{lemma} \label{lem:elem red}
Let $v_3'$ be a neighbor of $v_3 = [ f_1,f_2,f_3 ]$  with center $v_2' = \llb f_1,f_2\rrb$.
Then there exists a non-affine polynomial $P \in \K[y,z]$ such that $v_3'
= \llb
f_1 , f_2,f_3 + P(f_1,f_2)\rrb$.

Moreover:
\begin{enumerate}
\item If $v_3'$ is a weak elementary reduction of $v_3$, then $\deg f_3 \ge \deg P(f_1,f_2)$;
\item If $v_3'$ is an elementary reduction of $v_3$, then $\deg f_3 = \deg P(f_1,f_2)$.
\end{enumerate}
\end{lemma}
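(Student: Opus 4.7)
For Part 1, I would invoke Corollary \ref{cor:type 3 neighbors} to produce some non-affine $Q \in \K[y,z]$ with $v_3' = [f_1, f_2, f_3 + Q(f_1,f_2)]$. Since $\deg f_1 \neq \deg f_2$ (from $v_2' = \llb f_1,f_2 \rrb$), I can adjust $Q$ by subtracting a suitable affine polynomial $ay + bz + c$ so that the top terms of $f_1$ and $f_2$ no longer contribute to the third component of the modified representative; this operation leaves the non-affine part of $Q$ unchanged, so the modified $P$ is still non-affine, and $(f_1, f_2, f_3 + P(f_1,f_2))$ is a good representative of $v_3'$.

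For Parts 2 and 3, set $h = f_3 + P(f_1,f_2)$. Since $(f_1,f_2,h)$ is a good representative, $\deg v_3' = \deg f_1 + \deg f_2 + \deg h$. On the $v_3$ side, the key observation is that $\deg v_3 \le \deg f_1 + \deg f_2 + \deg f_3$ in general, with equality exactly when $(f_1,f_2,f_3)$ is already a good representative. Indeed, in the contrary case one has $\deg f_3 \in \{\deg f_1, \deg f_2\}$, and subtracting an appropriate scalar multiple of $f_1$ or $f_2$ from $f_3$ strictly drops the third-component degree; iterating (at most twice, since $\deg f_1 \neq \deg f_2$) yields a good representative from which $\deg v_3$ is read off. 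Combining,
$$\deg f_1 + \deg f_2 + \deg f_3 \;\ge\; \deg v_3 \;\ge\; \deg v_3' \;=\; \deg f_1 + \deg f_2 + \deg h,$$
so under the weak reduction hypothesis $\deg h \le \deg f_3$.

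I would then conclude by examining which term dominates in $h = f_3 + P(f_1,f_2)$. If $\deg P(f_1,f_2) > \deg f_3$, no cancellation is possible and $\deg h = \deg P(f_1,f_2) > \deg f_3$, contradicting the inequality just obtained; thus $\deg P(f_1,f_2) \le \deg f_3$, proving Part 2. For Part 3, the strict hypothesis $\deg v_3 > \deg v_3'$ upgrades the chain to $\deg f_3 > \deg h$, and such a strict drop $\deg(f_3 + P(f_1,f_2)) < \deg f_3$ can only arise via top-term cancellation between $f_3$ and $P(f_1,f_2)$, which forces $\deg P(f_1,f_2) = \deg f_3$. The main delicate point in the argument is the uniform upper bound $\deg v_3 \le \deg f_1 + \deg f_2 + \deg f_3$ for arbitrary representatives, which relies precisely on $\deg f_1 \neq \deg f_2$ and is obtained by the iterative reduction outlined above.
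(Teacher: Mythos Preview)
Your argument is correct and follows essentially the same route as the paper's proof: invoke Corollary~\ref{cor:type 3 neighbors}, adjust by an affine polynomial in $f_1,f_2$ to obtain a good representative, and then compare $\deg v_3$ and $\deg v_3'$ via the sum of component degrees. Your treatment is in fact slightly more explicit than the paper's, since you spell out why $\deg f_1 + \deg f_2 + \deg f_3 \ge \deg v_3$ holds for an arbitrary representative with $\deg f_1 \neq \deg f_2$, whereas the paper simply writes this inequality without comment.
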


\begin{proof}
From Corollary \ref{cor:type 3 neighbors} we know that $v_3'$ has the form $v_3' = [ f_1 , f_2,f_3 + P(f_1,f_2) ]$.
Since by assumption $\deg f_1 \neq \deg f_2$, there exist $a, b \in \K$ such that $(f_1 , f_2,f_3 + P(f_1,f_2) + af_1 +bf_2)$ is a good representative for $v_3'$.
So up to changing $P$ by a linear combination of $f_1$ and $f_2$ we can assume $v_3' = \llb f_1 , f_2,f_3 + P(f_1,f_2)\rrb$.

If $v_3'$ is a weak elementary reduction of $v_3$, then we have
$$\deg f_1 + \deg f_2 + \deg f_3 \ge \deg v_3 \ge \deg v_3'
= \deg f_1 + \deg f_2 + \deg (f_3 + P(f_1, f_2)).$$
So $\deg f_3 \ge \deg (f_3 + P(f_1, f_2))$, which implies $\deg f_3 \ge \deg P(f_1, f_2)$.

Finally if $v_3'$ is an elementary reduction of $v_3$, that is, $\deg v_3 > \deg v_3'$, then the same computation gives
$\deg f_3 > \deg (f_3 + P(f_1, f_2))$, which implies that $\deg f_3 = \deg
P(f_1, f_2)$.
\end{proof}

\begin{lemma}[Square Lemma] \label{lem:square}
Let $v_3, v_3', v_3''$ be three vertices such that:
\begin{itemize}[wide]
\item $v_3' \ne_{v_2'} v_3$ and $v_3'' \ne_{v_2''} v_3$ for some $v_2' \neq
v_2''$ that are part of a good triangle of $v_3$ (this is automatic if $v_2'$
or $v_2''$ is the minimal line of $v_3$);
\item Denoting $v_1$ the common vertex of $v_2'$ and $v_2''$, $v_3''$ is a (possibly weak) simple elementary reduction of $v_3$ with simple center $v_2'', v_1$;
\item $\deg v_3 \ge \deg v_3'$, $\deg v_3 \ge \deg v_3''$, with at least one of the inequalities being strict.
\end{itemize}
Then there exists $u_3$ such that $u_3 \ne v_3'$, $u_3 \ne v_3''$ and $\deg v_3
> \deg u_3$.
\end{lemma}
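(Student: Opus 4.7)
The plan is to construct $u_3$ as the natural ``fourth corner of a square'' in $\Comp$, obtained by composing the transformations defining $v_3'$ and $v_3''$.

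First I would fix good representatives matching the simple reduction: $v_3 = \llb f_1, f_2, f_3 \rrb$ and $v_3'' = \llb f_1 + P(f_2) + af_3, f_2, f_3 \rrb$ with $P \in \K[y]$ non-affine, so that $v_2'' = \llb f_2, f_3 \rrb$ and $v_1 = \llb f_2 \rrb$. The weak simple reduction combined with the non-affineness of $P$ forces $\deg f_1 > \deg f_2$; a short case analysis on the position of $\deg f_3$ then shows that $v_2'$, being distinct from $v_2''$ in a good triangle of $v_3$ and passing through $v_1 = \llb f_2 \rrb$, must equal $\llb f_1, f_2 \rrb$. By Corollary \ref{cor:type 3 neighbors}, $v_3' = [f_1, f_2, f_3 + Q(f_1, f_2)]$ for some non-affine $Q \in \K[y, z]$. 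Adjusting the representative of $v_3'$ if needed, I further take $Q$ to have no linear or constant terms, which ensures that the transformed polynomial $\tilde Q(y, z) := Q(y - P(z), z)$ is also non-affine.

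I then set $u_3 := [f_1 + P(f_2), f_2, f_3 + Q(f_1, f_2)]$, obtained from $v_3$ by composing the elementary transformations $(f_1, f_2, f_3) \mapsto (f_1, f_2, f_3 + Q(f_1, f_2)) \mapsto (f_1 + P(f_2), f_2, f_3 + Q(f_1, f_2))$. The identity $Q(f_1, f_2) = \tilde Q(f_1 + P(f_2), f_2)$ places $\llb f_1 + P(f_2), f_2 \rrb$ in both $\linesd{u_3}$ and $\linesd{v_3''}$, while $\llb f_2, f_3 + Q(f_1, f_2) \rrb$ manifestly lies in both $\linesd{u_3}$ and $\linesd{v_3'}$. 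The non-affineness of $P$ and $\tilde Q$, combined with the algebraic independence of $f_1, f_2, f_3$, prevents $u_3$ from coinciding with $v_3'$ or $v_3''$.

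For the degree, one bounds $\deg u_3 \le \deg(f_1 + P(f_2)) + \deg f_2 + \deg(f_3 + Q(f_1, f_2))$ as a consequence of Lemma \ref{lem:stratified degree}. The weak simple reduction yields $\deg(f_1 + P(f_2)) \le \deg f_1$, and Lemma \ref{lem:elem red}(1) applied to $v_3'$ yields $\deg(f_3 + Q(f_1, f_2)) \le \deg f_3$, giving $\deg u_3 \le \deg v_3$. The main subtle point is to upgrade this to a strict inequality: if the simple reduction is strict then $\deg(f_1 + P(f_2)) < \deg f_1$ and we are done; otherwise $\deg v_3'' = \deg v_3$, so the hypothesis forces $\deg v_3 > \deg v_3'$, which rules out $\deg(f_3 + Q(f_1, f_2)) = \deg f_3$ (else $(f_1, f_2, f_3 + Q(f_1, f_2))$ would be a good representative of $v_3'$ with $\deg v_3' = \deg v_3$), and therefore $\deg(f_3 + Q(f_1, f_2)) < \deg f_3$. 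In either case, $\deg u_3 < \deg v_3$.
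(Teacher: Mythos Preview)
Your approach is the same as the paper's: build $u_3$ as the opposite corner of the square and bound its degree componentwise. Two small points deserve attention.

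First, your ``short case analysis'' does not quite pin down $v_2' = \llb f_1, f_2 \rrb$. When $\deg f_1 > \deg f_3$, any line $\llb f_1 + \gamma f_3, f_2 \rrb$ through $v_1$ has the correct $2$-degree and can occur as the third side of a good triangle, so a priori $v_2' = \llb f_1 + \gamma f_3, f_2 \rrb$ for some $\gamma$. This is harmless: replacing $f_1$ by $f_1 + \gamma f_3$ (and $a$ by $a-\gamma$) preserves the good representative and the simple-reduction inequalities, and then $v_2' = \llb f_1, f_2 \rrb$ as you want. The paper sidesteps this by choosing $f_1, f_3$ from $v_2', v_2''$ first and invoking the good-triangle hypothesis to conclude $v_3 = \llb f_1, f_2, f_3 \rrb$.

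Second, there is a tension between your two uses of $Q$. Lemma~\ref{lem:elem red}(1) gives the bound $\deg f_3 \ge \deg Q(f_1,f_2)$ for the \emph{specific} $Q$ it produces (the one making $(f_1,f_2,f_3+Q)$ a good representative). If you then strip the affine part from $Q$ to force $\tilde Q$ non-affine, the resulting polynomial need no longer satisfy that degree bound (for instance, when $\deg f_1 > \deg f_3$, removing a linear $y$-term can raise $\deg Q(f_1,f_2)$ to $\deg f_1$). The fix is easy: keep the $Q$ from Lemma~\ref{lem:elem red} for the degree estimate, and verify $u_3 \neq v_3'$ directly (if $u_3 = v_3'$ then $P(f_2)$ would lie in the affine span of $f_2$ and $f_3 + Q(f_1,f_2)$, forcing $P$ affine). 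The verification $u_3 \neq v_3''$ is analogous; the paper itself leaves both checks implicit.
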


\begin{proof}
We pick $f_2$ such that $v_1= [f_2]$, and then we take good representatives $v_2' = \llb f_1, f_2 \rrb$, $v_2'' = \llb f_2,f_3 \rrb$.
Since $v_2'$ and $v_2''$ are part of a good triangle, we have $v_3 = \llb f_1, f_2, f_3 \rrb$.
By Lemma \ref{lem:elem red}, and since $v_3''$ is a (possibly weak) simple
reduction of $v_3$, there exist $a \in \K$, $Q \in \K[f_2]$ and $P \in
\K[f_1,f_2]$ such that
\begin{align*}
v_3' &= \llb f_1, f_2, f_3 + P(f_1,f_2)\rrb; \\
v_3'' &= \llb f_1 + a f_3 + Q(f_2), f_2, f_3 \rrb.
\end{align*}
We have
\begin{align*}
\deg f_3 &\ge \deg (f_3 + P(f_1,f_2)), \\
\deg f_1 &\ge \max\{ \deg (f_1 + a f_3 + Q(f_2)),\deg (f_1 + Q(f_2)) \},
\end{align*}
with one of the two inequalities being strict.
We define
$$u_3 := [f_1 + Q(f_2), f_2, f_3 + P(f_1,f_2) ].$$
Observe that $u_3$ is a neighbor of both $v_3'$, with center $\llb f_2, f_3 + P(f_1,f_2) \rrb$, and $v_3''$, with center $[f_1 + Q(f_2), f_2]$ (see Figure \ref{fig:square}).
The inequality on degrees follows from:
\begin{align*}
\deg v_3 &= \deg f_3 + \deg f_2 + \deg f_1 \\
&> \deg (f_3 + P(f_1,f_2)) + \deg f_2 + \deg (f_1 + Q(f_2))\\
&\ge \deg u_3. \qedhere
\end{align*}
\end{proof}

\begin{figure}[t]
$$
\mygraph{
!{<0cm,0cm>;<0.9cm,0cm>:<0cm,0.9cm>::}
!{(2,0)}*-{\typethree}="v3''"
!{(0,2)}*-{\typethree}="v3"
!{(0,-2)}*-{\typethree}="u3"
!{(-2,0)}*-{\typethree}="v3'"
!{(0,0)}*{\typeone}="v1"
!{(1,1)}*-{\typetwo}="v2''"
!{(1,-1)}*-{\typetwo}="u2''"
!{(-1,1)}*-{\typetwo}="v2'"
!{(-1,-1)}*-{\typetwo}="u2'"
"v3"-^<{v_3=\llb f_1, f_2, f_3 \rrb}^>{v_2'= \llb f_1,f_2 \rrb}"v2''"-^>{v_3'= \llb f_1, f_2, f_3 + P(f_1,f_2)\rrb}"v3''"-^>{u_2'' = \llb f_2, f_3 + P(f_1,f_2) \rrb}"u2''"-^>{u_3= [f_1 + Q(f_2), f_2, f_3 + P(f_1,f_2) ]}"u3"-"u2'"-^<{u_2' = [f_1 + Q(f_2),f_2]}"v3'"-^<{v_3''= \llb f_1 + a f_3 + Q(f_2), f_2, f_3\rrb}"v2'"-^<{v_2''= \llb f_2,f_3 \rrb}"v3"
"u2''"-|@{>}"v1"-"u2'"
"u3"-"v1"-"v3"
"v3'"-"v1"-^<(.43){v_1= [f_2]}"v3''"
"v2'"-|@{>}"v1"-"v2''"
}
$$
\caption{Square Lemma \ref{lem:square}.}\label{fig:square}
\end{figure}

\subsection{$K$-reductions} \label{sec:K}

If $f_1, f_2 \in \K[x_1,x_2,x_3]$ are two algebraically independent polynomials with $\deg f_1 > \deg f_2$, we introduce the degree
$$\Delta(f_1,f_2) := \deg f_1 - \deg f_2 + \deg df_1 \wedge df_2 \in \Z^3.$$
Assuming that $v_2 =\llb f_1,f_2 \rrb$ is a vertex of type 2, we
define
$$d(v_2) := \deg df_1 \wedge df_2 \quad\text{ and }\quad \Delta(v_2) :=
\Delta(f_1,f_2).$$
We call $d(v_2)$ and $\Delta(v_2)$ respectively the \textbf{differential degree} and the \textbf{delta degree} of $v_2$.
It is easy to check that these definitions do not depend on a choice of representative.
In fact, for any $\left( \begin{smallmatrix}
                      \alpha & \beta \\ \gamma & \delta
                      \end{smallmatrix} \right) \in \GL_2(\K)$ we have
$$d(\alpha f_1 + \beta f_2) \wedge d( \gamma f_1 + \delta f_2) = (\alpha\delta - \beta\gamma) df_1 \wedge df_2,$$
so in the definition of $d(v_2)$ we could use any representative.
On the other hand in the definition of $\Delta(v_2)$, because of the term $\deg f_1 - \deg f_2$, we really need to work with a good representative.
Observe also that, by definition, for any vertex $v_2$ we have
$$\Delta(v_2) > d(v_2).$$

We now introduce the key concept of $K$-reduction, where we let the reader
decide for himself whether the $K$ should stand for ``Kuroda'' or for
``Kazakh''.

More precisely by a \textbf{$K$-reduction} we shall mean either an elementary
$K$-reduction, or a proper $K$-reduction, two notions that we now define.
Let $v_3$ and $u_3$ be vertices of type 3.

We say that $u_3$ is an \textbf{elementary $K$-reduction} of $v_3$ if $u_3 \ne v_3$ and:
\begin{enumerate}[(K$1$)]
\setcounter{enumi}{-1}
\item\label{def:Kdeg} $\deg v_3 > \deg u_3$;
\item\label{def:Kinn} the center $v_2$ of $v_3 \ne u_3$ has no inner resonance;
\item\label{def:Kout} $v_2$ has no outer resonance in $v_3$;
\item\label{def:Kmin} $v_2$ is not the minimal line in $\lines{v_3}$;
\item\label{def:Kdel} $\Delta(v_2) > \deg (u_3 \smallsetminus v_2 )$.
\end{enumerate}
Denoting by $v_1$ the minimal point in $v_2$, as before (see definition from page \pageref{def:pivot}) we call $v_1$ the \textbf{pivot}, and $v_1, v_2, v_3$ the \textbf{pivotal simplex} of the elementary $K$-reduction (denoted by $\circlearrowleft$ on Figure \ref{fig:redK}).

We say that $u_3$ is a \textbf{proper $K$-reduction} of $v_3$ via the auxiliary vertex $w_3$ if $v_3$ is a weak elementary reduction of $w_3$ with center the minimal line of $w_3$, and $u_3$ is an elementary $K$-reduction of $w_3$.
Formally, this corresponds to the following conditions:
\begin{enumerate}[(K$1'$)]
\setcounter{enumi}{-1}
\item\label{def:KPdeg} $\deg w_3 > \deg u_3$;
\item\label{def:KPinn} the center $w_2$ of $w_3 \ne u_3$  has no inner resonance;
\item\label{def:KPout} $w_2$ has no outer resonance in $w_3$;
\item\label{def:KPmin} $w_2$ is not the minimal line in $\lines{w_3}$;
\item\label{def:KPdel} $\Delta(w_2) > \deg (u_3 \smallsetminus w_2 )$.
\item\label{def:KPdegbis} $\deg w_3 \ge \deg v_3$;
\item\label{def:KPminbis} the center $m_2$ of $w_3 \ne v_3$ is the minimal line in $\lines{w_3}$.
\end{enumerate}
Observe that the pivot $v_1$ of the elementary $K$-reduction from $w_3$ to $u_3$ is the common vertex of the distinct lines $m_2$ and $w_2$ in $\lines{w_3}$.
The simplex $v_1, w_2, w_3$ is still called the \textbf{pivotal simplex} of the proper $K$-reduction.
It will be proved in Proposition \ref{pro:K reduces degree} that the above conditions \ref{def:KPdeg} to \ref{def:KPminbis} imply $\deg v_3 > \deg u_3$, so that the terminology of ``reduction'' is not misleading, even if by no means obvious at this point.

\begin{figure}[t]
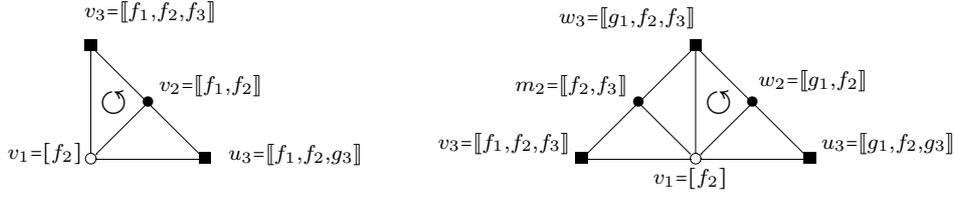

$$
\mygraph{
!{<0cm,0cm>;<0.75cm,0cm>:<0cm,0.75cm>::}
!{(0,2)}*-{\typethree}="v3"
!{(2,0)}*-{\typethree}="v3'"
!{(1,1)}*-{\typetwo}="g1g2"
!{(0,0)}*{\typeone}="v1"
!{(.4,1)}*{\circlearrowleft}
"v3"-^<(-.2){v_3 =\llb f_1, f_2, f_3 \rrb}^>{v_2 = \llb f_1,f_2 \rrb}"g1g2"-^>(1.2){u_3=\llb f_1,f_2,g_3\rrb}"v3'"
"v3"-_>{v_1 = [ f_2 ]}"v1"-"v3'"
"v1"-"g1g2"
}
\qquad
\mygraph{
!{<0cm,0cm>;<0.75cm,0cm>:<0cm,0.75cm>::}
!{(-2,0)}*-{\typethree}="v3"
!{(2,0)}*-{\typethree}="v3'"
!{(0,0)}*{\typeone}="v1"
!{(0,2)}*-{\typethree}="w"
!{(-1,1)}*-{\typetwo}="f2f3"
!{(1,1)}*-{\typetwo}="g1g2"
!{(.4,1)}*{\circlearrowleft}
"f2f3"-_>{v_3=\llb f_1,f_2,f_3\rrb}"v3"-_>{v_1 = [ f_2 ]}"v1"-"f2f3"-^<{m_2 = \llb f_2,f_3 \rrb}^>(1.1){w_3 =\llb g_1,f_2,f_3 \rrb}"w"-"v1"-"v3'"-_<{u_3=\llb g_1,f_2,g_3\rrb}_>{w_2 = \llb g_1,f_2 \rrb}"g1g2"-"w"
"v1"-"g1g2"
}
$$
\caption{Elementary and proper $K$-reductions, with good representatives as in Set-Up \ref{setup:good_rep}.}\label{fig:redK}
\end{figure}

Let $v_1, v_2, v_3$ be a simplex, and let $s \ge 3$ be an odd integer.
We say that the simplex $v_1, v_2, v_3$ has \textbf{Strong Pivotal Form }$\boldsymbol{\PF(s)}$ if
\begin{enumerate}[(\PF$1)$]
\item \label{SFinn} $\deg v_1 = 2\delta$ and 2-$\deg v_2 = (2\delta, s\delta)$ for some $\delta \in \N^3$;
\item \label{SFout} $v_2$ has no outer resonance in $v_3$;
\item \label{SFmin} $v_2$ is not the minimal line in $\lines{v_3}$;
\item \label{SFdel} $\deg(v_3 \smallsetminus v_2) \ge \Delta(v_2)$.
\end{enumerate}

In all the previous definitions we took care of working with vertices, and not with particular representatives.
However for writing proofs it will often be useful to choose representatives.

\begin{setup} \phantomsection \label{setup:good_rep}
\begin{enumerate}[wide]
\item \label{good_rep:1}
Let $u_3$ be an elementary $K$-reduction of $v_3$, with pivotal simplex $v_1,v_2,v_3$.
Then there exist representatives
\begin{align*}
v_1 &= [ f_2 ] & v_3 &= \llb f_1, f_2, f_3 \rrb\\
v_2 &= \llb f_1, f_2 \rrb & u_3 &= \llb f_1, f_2, g_3 \rrb
\end{align*}
such that $g_3 = f_3 + \phi_3(f_1, f_2)$, where $\phi_3 \in \K[x,y]$.
Observe that, by definition, $\deg f_1 > \deg f_2$ and
$\deg f_1 > \deg f_3 > \deg g_3.$

\item \label{good_rep:2}
Let $u_3$ be a proper $K$-reduction of $v_3$, with pivotal simplex $v_1,w_2,w_3$.
Then there exist representatives
\begin{align*}
v_1 &= [ f_2 ] & w_3 &= \llb g_1, f_2, f_3 \rrb \\
m_2 &= \llb f_2, f_3 \rrb & v_3 &= \llb f_1, f_2, f_3 \rrb\\
w_2 &= \llb g_1, f_2 \rrb & u_3 &= \llb g_1, f_2, g_3 \rrb
\end{align*}
such that $g_1 = f_1 + \phi_1(f_2, f_3)$ and $g_3 = f_3 + \phi_3(g_1, f_2)$, where $\phi_1, \phi_3 \in \K[x,y]$.
\end{enumerate}
\end{setup}

\begin{proof}
\begin{enumerate}[wide]
\item
Pick any good representatives $v_1 = [ f_2 ]$, $v_2 =  \llb f_1, f_2 \rrb$, $v_3
= \llb f_1, f_2, f_3 \rrb$ as given by Lemma \ref{lem:rep of a simplex}, and
apply Lemma \ref{lem:elem red} to get $g_3$.
\item
Pick any representative $v_1 = [ f_2 ]$, and then pick $f_3, g_1$ such that $v_2 = \llb f_2, f_3 \rrb$ and $w_2 = \llb g_1, f_2 \rrb$.
Since $m_2$ is the minimal line in $\hat \p^2(w_3)$, we have $\deg g_1 > \deg f_2$ and $\deg g_1 > \deg f_3$, hence $(g_1, f_2, f_3)$ is a good representative for $w_3$.
Now apply Lemma \ref{lem:elem red} twice to get $f_1$ and $g_3$.
 \qedhere
\end{enumerate}
\end{proof}

We establish a first property of a simplex with Strong Pivotal Form.

\begin{lemma} \label{lem:m2 in SPF}
Let $v_1, v_2, v_3$ be a simplex with Strong Pivotal Form $\PF(s)$ for some odd $s \ge 3$.
Then the minimal line $m_2$ in $\lines{v_3}$ has no inner resonance.
\end{lemma}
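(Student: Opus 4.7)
The plan is to fix good representatives $v_1 = \llb f_1 \rrb$, $v_2 = \llb f_1, f_2 \rrb$, $v_3 = \llb f_1, f_2, f_3 \rrb$ provided by Lemma \ref{lem:rep of a simplex}. Condition \ref{SFinn} gives $\deg f_1 = 2\delta$ and, since $s \ge 3$, $\deg f_2 = s\delta > 2\delta$. Write $(\delta_1, \delta_2, \delta_3)$ for the $3$-degree of $v_3$, so that $m_2$ has $2$-degree $(\delta_1, \delta_2)$ by Lemma \ref{lem:stratified degree}. The strategy is to split the argument into three cases according to where $\deg f_3$ sits relative to $2\delta$ and $s\delta$.

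First, if $\deg f_3 > s\delta$, then $(\delta_1, \delta_2) = (2\delta, s\delta)$ coincides with the $2$-degree of $v_2$, and uniqueness of the line realizing the minimal $2$-degree (coming from the flag of Lemma \ref{lem:stratified degree}) forces $m_2 = v_2$, contradicting \ref{SFmin}; so this case does not occur. Next, if $2\delta < \deg f_3 < s\delta$, then $m_2$ has $2$-degree $(2\delta, \deg f_3)$, and an inner resonance would mean $\deg f_3 \in \N \cdot 2\delta$; but $\deg f_3 = \deg(v_3 \smallsetminus v_2)$, and \ref{SFout} forbids this from lying in $\N \cdot 2\delta + \N \cdot s\delta$, contradiction.

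The main case, and the only genuine obstacle, is $\deg f_3 < 2\delta$, where $m_2$ has $2$-degree $(\deg f_3, 2\delta)$. An inner resonance now reads $2\delta = k \deg f_3$ with $k \ge 2$, so $\deg f_3 \le \delta$. On the other hand, since $\deg f_2 > \deg f_1$ one computes $\Delta(v_2) = (s-2)\delta + d(v_2)$, and \ref{SFdel} reads $\deg f_3 \ge (s-2)\delta + d(v_2)$. Combining the two bounds yields $(s-3)\delta + d(v_2) \le 0$. The key point is that $d(v_2) > 0$ strictly: the $2$-form $df_1 \wedge df_2$ is nonzero by algebraic independence of $f_1, f_2$, and a direct inspection of the definition shows that any nonzero $2$-form on $\A^3$ has degree at least $(0,1,1) > 0$. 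Combined with $s \ge 3$ and $\delta > 0$ (the latter because $f_1$ is a component of a tame automorphism, hence non-constant), this contradicts the inequality and finishes the proof.
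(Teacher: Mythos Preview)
Your proof is correct and uses the same ingredients as the paper's: \ref{SFmin} rules out $\deg f_3 > s\delta$, \ref{SFout} rules out the resonance direction $\deg f_3 \in \N\cdot 2\delta$, and \ref{SFdel} together with $d(v_2) > 0$ rules out the direction $2\delta \in \N\cdot\deg f_3$. The paper's version is slightly more compact: after observing from \ref{SFmin} that $m_2 = \llb f_2,f_3\rrb$ (your Case~1 excluded), it uses \ref{SFdel} upfront to get $\deg f_3 > (s-2)\delta \ge \delta$, which handles your Cases~2 and~3 in one stroke without splitting on whether $\deg f_3$ lies above or below $2\delta$.
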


\begin{proof}
We pick representatives $v_1 = [f_2]$, $v_2 = \llb f_1, f_2 \rrb$, $v_3 = \llb
f_1, f_2, f_3 \rrb$ as given by Lemma \ref{lem:rep of a simplex}.
By \ref{SFinn} we have $\deg f_1 = s\delta > 2\delta = \deg f_2$.
Since $v_2$ is not the minimal line by \ref{SFmin}, the minimal line must be $m_2 = \llb f_2, f_3 \rrb$.
Then by \ref{SFdel} we have $\deg f_3 > (s-2)\delta \ge \delta$, so that $\deg f_2 \not\in \N\deg f_3$.
Since by \ref{SFout} we also have $\deg f_3 \not\in \N\deg f_2$, we conclude that the minimal line $m_2 = \llb f_2, f_3 \rrb$ has no inner resonance.
\end{proof}

We can rephrase results from Corollary \ref{cor:parachute} with the previous definitions (see also Example \ref{exple:noSPF} for some complements):

\begin{proposition} \label{pro:SPF}
Let $v_3$ be a vertex that admits an elementary reduction with pivotal simplex $v_1, v_2, v_3$.
\begin{enumerate}
\item \label{SPF1}
Assume $v_2$ has no inner resonance, and no outer resonance in $v_3$.
Then
$$\deg (v_3 \smallsetminus v_2) > d(v_2).$$
\item \label{SPF2}
If moreover $v_2$ is not the minimal line in $\lines{v_3}$, then
 $v_1, v_2, v_3$ has Strong Pivotal Form $\PF(s)$ for some odd $s \ge 3$.
\end{enumerate}
\end{proposition}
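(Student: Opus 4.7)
My plan is to reduce everything to Corollary \ref{cor:parachute} by exploiting the no-outer-resonance hypothesis, which forces the polynomial realizing the reduction to exhibit a strict inequality between its virtual degree and actual degree.

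First I would pick good representatives $v_1 = [f_2]$, $v_2 = \llb f_1, f_2 \rrb$, $v_3 = \llb f_1, f_2, f_3 \rrb$ via Lemma \ref{lem:rep of a simplex}, using the convention $\deg f_1 > \deg f_2$ so that $v_1$ really is the minimal vertex of $v_2$. By Lemma \ref{lem:elem red}(2), the elementary reduction $v_3'$ of $v_3$ with center $v_2$ can be written as $v_3' = \llb f_1, f_2, f_3 + P(f_1,f_2) \rrb$ with $\deg P(f_1,f_2) = \deg f_3$. The first genuine step is to observe that ``no outer resonance'' translates to $\deg f_3 \notin \N\deg f_1 + \N\deg f_2$; but if no cancellation occurred in the monomial expansion $P(f_1,f_2) = \sum c_{a,b} f_1^a f_2^b$, then $\deg P(f_1,f_2)$ itself would be some $a\deg f_1 + b\deg f_2 \in \N\deg f_1 + \N\deg f_2$. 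Hence cancellation must occur, which is exactly
\[
\dvirt P(f_1,f_2) > \deg P(f_1,f_2),
\]
the hypothesis needed to invoke Corollary \ref{cor:parachute}.

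Part (1) then follows directly: ``no inner resonance'' translates to $\deg f_1 \notin \N\deg f_2$, so Corollary \ref{cor:parachute}\ref{parachute-ii} applied with $\phi = P$ gives $\deg P(f_1,f_2) > \deg df_1 \wedge df_2$, which rewrites as $\deg(v_3 \smallsetminus v_2) > d(v_2)$ since $\deg(v_3 \smallsetminus v_2) = \deg f_3 = \deg P(f_1,f_2)$.

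For part (2), the extra hypothesis ``$v_2$ is not the minimal line of $\lines{v_3}$'', together with $\deg f_1 > \deg f_2$, forces $\deg f_1 > \deg f_3$ (otherwise $\deg f_2$ and $\deg f_1$ would be the two smallest degrees, making $v_2$ itself the minimal line). Hence $\deg f_1 \ge \deg P(f_1,f_2)$ and the strong form Corollary \ref{cor:parachute}\ref{parachute-iii} applies: if $p,q \in \N^*$ is the coprime pair from Lemma \ref{lem:p and q}\ref{p and q:1} with $p\deg f_1 = q\deg f_2$, then $p = 2$, $q \ge 3$ odd, and
\[
\deg(v_3 \smallsetminus v_2) = \deg P(f_1,f_2) \ge \deg f_1 - \deg f_2 + \deg df_1 \wedge df_2 = \Delta(v_2),
\]
which is condition \ref{SFdel}. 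Lemma \ref{lem:p and q}\ref{p and q:1} further produces $\delta \in \N^3$ with $\deg f_1 = q\delta$ and $\deg f_2 = 2\delta$, so that $\deg v_1 = 2\delta$ and $2\text{-}\deg v_2 = (2\delta, q\delta)$; this is condition \ref{SFinn} with $s = q$. Conditions \ref{SFout} and \ref{SFmin} are parts of the hypotheses, so $v_1, v_2, v_3$ has Strong Pivotal Form $\PF(s)$ with $s$ odd and at least $3$.

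I do not anticipate a real obstacle: once the short cancellation argument in the first paragraph delivers $\dvirt > \deg$, both parts amount to reading off Corollary \ref{cor:parachute} through the dictionary between the Parachute consequences and the geometric vocabulary (inner/outer resonance, minimal line, delta degree, Strong Pivotal Form) set up in \S\ref{sec:degrees} and \S\ref{sec:K}; the design of those definitions is precisely tailored to make this translation immediate.
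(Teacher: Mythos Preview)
Your proposal is correct and follows essentially the same approach as the paper: pick good representatives, use the no-outer-resonance hypothesis to force $\dvirt P(f_1,f_2) > \deg P(f_1,f_2)$, then read off part \ref{SPF1} from Corollary \ref{cor:parachute}\ref{parachute-ii} (via no inner resonance) and part \ref{SPF2} from Corollary \ref{cor:parachute}\ref{parachute-iii} (via the not-minimal-line condition). Your exposition is slightly more explicit about the dictionary between the geometric vocabulary and the degree inequalities, but the argument is the same.
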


\begin{proof}
\begin{enumerate}[wide]
\item
We pick good representatives $v_1= [ f_2 ]$, $v_2 = \llb f_1, f_2 \rrb$, $v_3 =
\llb f_1, f_2, f_3 \rrb$ as given by Lemma \ref{lem:rep of a simplex}.
By Lemma \ref{lem:elem red}, the elementary reduction has the form
$u_3 = \llb f_1, f_2, f_3 + P(f_1,f_2) \rrb$
with $\deg f_3 = \deg P(f_1,f_2)$.
Since $v_2$ has no outer resonance in $v_3$, we have $\deg f_3 \not\in \N\deg f_1 + \N \deg f_2$, hence
$$\dvirt P(f_1,f_2) > \deg P(f_1,f_2).$$
Since moreover $v_2$ has no inner resonance, we can apply Corollary \ref{cor:parachute}\ref{parachute-ii} to get the inequality $\deg (v_3 \smallsetminus v_2) > d(v_2)$.
\item
By assumption the simplex $v_1, v_2, v_3$ already satisfies conditions
\ref{SFout} and \ref{SFmin}.
The condition that $v_2$ is not the minimal line in $\lines{v_3}$ is equivalent
to
$$\max\{\deg f_1, \deg f_2\} > \deg f_3 = \deg P(f_1,f_2),$$
hence we can apply Corollary \ref{cor:parachute}\ref{parachute-iii},
which yields conditions \ref{SFinn} and \ref{SFdel}.\qedhere
\end{enumerate}
\end{proof}

\subsection{Elementary $K$-reductions} \label{sec:K elementary}

Here we list some properties of an elementary $K$-reduction.
First we have the following corollary from Proposition \ref{pro:SPF}.

\begin{corollary} \label{cor:SPF}
The pivotal simplex of a $K$-reduction has Strong Pivotal Form $\PF(s)$ for some odd $s \ge 3$.
\end{corollary}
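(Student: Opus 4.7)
The plan is to observe that in both cases of a $K$-reduction (elementary or proper), the pivotal simplex arises as the pivotal simplex of an elementary reduction whose center satisfies precisely the three hypotheses required to invoke Proposition~\ref{pro:SPF}\ref{SPF2}, and then simply quote that proposition.

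First I would treat the case of an elementary $K$-reduction $u_3$ of $v_3$ with pivotal simplex $v_1, v_2, v_3$. Condition \ref{def:Kdeg} gives $\deg v_3 > \deg u_3$, so $u_3$ is in particular an elementary reduction of $v_3$ with pivotal simplex $v_1, v_2, v_3$. Conditions \ref{def:Kinn}, \ref{def:Kout} and \ref{def:Kmin} say respectively that $v_2$ has no inner resonance, no outer resonance in $v_3$, and is not the minimal line in $\lines{v_3}$. These are exactly the hypotheses of Proposition~\ref{pro:SPF}\ref{SPF2}, whose conclusion is that $v_1, v_2, v_3$ has Strong Pivotal Form $\PF(s)$ for some odd $s \ge 3$.

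Then I would deal with the case of a proper $K$-reduction $u_3$ of $v_3$ via $w_3$ with pivotal simplex $v_1, w_2, w_3$. By definition, $u_3$ is an elementary $K$-reduction of $w_3$ with pivotal simplex $v_1, w_2, w_3$ (conditions \ref{def:KPdeg}--\ref{def:KPdel} are exactly \ref{def:Kdeg}--\ref{def:Kdel} applied to $w_3$ and $w_2$ in place of $v_3$ and $v_2$). Hence the previous paragraph, applied to the elementary $K$-reduction $w_3 \to u_3$, shows that its pivotal simplex $v_1, w_2, w_3$—which is by definition also the pivotal simplex of the proper $K$-reduction—has Strong Pivotal Form $\PF(s)$ for some odd $s \ge 3$.

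There is no genuine obstacle here: the corollary is just a bookkeeping statement that repackages Proposition~\ref{pro:SPF}\ref{SPF2} in the language of the $K$-reduction definitions. The only thing to check carefully is that in the proper case the \emph{pivotal} simplex (for the proper $K$-reduction) and the pivotal simplex (of the auxiliary elementary $K$-reduction from $w_3$) coincide, which is immediate from the definition.
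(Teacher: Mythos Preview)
Your proof is correct and follows essentially the same approach as the paper: first invoke Proposition~\ref{pro:SPF}\ref{SPF2} using conditions \ref{def:Kinn}, \ref{def:Kout}, \ref{def:Kmin} for the elementary case, then observe that the pivotal simplex of a proper $K$-reduction is by definition that of the auxiliary elementary $K$-reduction from $w_3$, reducing to the first case.
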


\begin{proof}
First, let $v_1, v_2, v_3$ be the pivotal simplex of an elementary $K$-reduction.
We know that,  by \ref{def:Kinn}, $v_2$ has no inner resonance, by
\ref{def:Kout}, $v_2$ has no outer resonance in $v_3$, and by \ref{def:Kmin},
$v_2$ is not the minimal line in $v_3$, so we can apply Proposition
\ref{pro:SPF}\ref{SPF2}.

Now the pivotal simplex of proper $K$-reduction is by definition the pivotal simplex of an elementary $K$-reduction from the auxiliary vertex, so that the above argument applies.
\end{proof}

\begin{lemma} \label{lem:dm2}
Let $u_3$ be an elementary $K$-reduction of $v_3$ with center $v_2$, $m_2$ the
minimal line in $\lines{v_3}$, and $u_2 \in \lines{v_3}$ a line not passing
through the pivot $v_1$ of the reduction.
Then
\begin{enumerate}
\item \label{dm2:1} $d(m_2) \ge \deg (v_3 \smallsetminus m_2) + d(v_2)$;
\item \label{dm2:3} $d(u_2) > d(m_2) > d(v_2)$;
\item \label{dm2:inter} The function
$$t_2 \in \linesd{v_3} \mapsto d(t_2) \in \N^3$$
only takes the three distinct values $d(u_2) > d(m_2) > d(v_2)$, and it takes
its minimal value only at the point $v_2$;
\item \label{dm2:2} $\deg (v_1) + d(u_2) > 2\deg(v_3
\smallsetminus m_2)$.
\end{enumerate}
\end{lemma}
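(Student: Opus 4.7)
The plan is to fix representatives via Set-Up~\ref{setup:good_rep}\ref{good_rep:1}, writing
\[
v_3 = \llb f_1, f_2, f_3 \rrb, \quad v_2 = \llb f_1, f_2 \rrb, \quad u_3 = \llb f_1, f_2, g_3 \rrb
\]
with $g_3 = f_3 + \phi_3(f_1, f_2)$ and $\deg g_3 < \deg f_3$. By Corollary~\ref{cor:SPF}, the pivotal simplex $v_1, v_2, v_3$ has Strong Pivotal Form $\PF(s)$ for some odd $s \ge 3$, so $\deg f_1 = s\delta$, $\deg f_2 = 2\delta$, and $\deg f_3 \ge \Delta(v_2) = (s-2)\delta + d(v_2)$ by \ref{SFdel}. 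Since $\deg f_1$ is the top degree of $v_3$, the minimal line is $m_2 = \llb f_2, f_3 \rrb$ and $\deg(v_3 \smallsetminus m_2) = \deg f_1 = s\delta$. I abbreviate $d_{ij} := \deg df_i \wedge df_j$, so $d(v_2) = d_{12}$ and $d(m_2) = d_{23}$.

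For Part~\ref{dm2:1}, I apply Corollary~\ref{cor:parachute}\ref{parachute-iv} to $\phi_3 \in \K[y,z]$: no inner resonance gives $\deg f_1 \notin \N \deg f_2$, and no outer resonance gives $\deg f_3 = \deg \phi_3(f_1, f_2) \notin \N \deg f_1 + \N \deg f_2$, whence $\dvirt \phi_3(f_1, f_2) > \deg \phi_3(f_1, f_2)$. The corollary yields $\deg d(\phi_3(f_1, f_2)) \wedge df_2 \ge \deg f_1 + d_{12}$. Expanding $d(\phi_3(f_1, f_2)) \wedge df_2 = dg_3 \wedge df_2 - df_3 \wedge df_2$ and using \ref{def:Kdel} to bound $\deg dg_3 \wedge df_2 \le \deg g_3 + \deg f_2 < \Delta(v_2) + \deg f_2 = \deg f_1 + d_{12}$, the dominant term on the right is $df_3 \wedge df_2$, giving $d(m_2) = d_{23} \ge \deg f_1 + d_{12} = \deg(v_3 \smallsetminus m_2) + d(v_2)$.

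For Parts~\ref{dm2:3} and~\ref{dm2:inter}, Part~\ref{dm2:1} immediately gives $d_{23} > d_{12}$. To obtain $d_{13} > d_{23}$, I apply the Principle of Two Maxima~\ref{pro:ptm} to $(f_1, f_2, f_3)$: among the three quantities $\deg f_1 + d_{23}$, $\deg f_2 + d_{13}$, $\deg f_3 + d_{12}$, the first is $\ge 2s\delta + d_{12}$ by Part~\ref{dm2:1}, while the third is $< s\delta + d_{12}$ since $\deg f_3 < \deg f_1 = s\delta$; the maximum cannot be realized uniquely by the third, so it must be shared by the first and second, giving $d_{13} = (s-2)\delta + d_{23} > d_{23}$. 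To classify $d(t_2)$ for $t_2 = \llb F_1, F_3 \rrb \in \linesd{v_3}$, I expand
\[
dF_1 \wedge dF_3 = a_{12}\, df_1 \wedge df_2 + a_{13}\, df_1 \wedge df_3 + a_{23}\, df_2 \wedge df_3
\]
with constants $a_{ij}$ (the $2 \times 2$ minors of the coefficient matrix of $F_1, F_3$ in the basis $f_1, f_2, f_3$). The condition $a_{13} \ne 0$ is equivalent to $t_2$ not passing through $v_1 = [f_2]$; since $d_{12} < d_{23} < d_{13}$, such a $t_2$ has $d(t_2) = d_{13}$. When $t_2$ passes through $v_1$ with $t_2 \ne v_2$, one may take $F_3 = f_2$ and $F_1 = \alpha f_1 + \gamma f_3$ with $\gamma \ne 0$, so that $d(t_2) = d_{23}$; and $d(t_2) = d_{12}$ occurs only for $t_2 = v_2$.

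For Part~\ref{dm2:2}, combining the bounds yields
\[
\deg v_1 + d(u_2) = 2\delta + d_{13} \ge 2\delta + (s-2)\delta + (s\delta + d_{12}) = 2s\delta + d_{12},
\]
while $2\deg(v_3 \smallsetminus m_2) = 2s\delta$. The inequality is strict because $d_{12} > 0$: the nonzero $2$-form $df_1 \wedge df_2$ on $\A^3$ has degree at least $(0,1,1)$ in the graded lex order, as any of its monomial coefficients $c \cdot dx_i \wedge dx_j$ contributes degree $\ge (0,1,1)$. The main technical obstacle is Part~\ref{dm2:1}: establishing that the $dg_3 \wedge df_2$ summand is strictly smaller than the lower bound from Corollary~\ref{cor:parachute}\ref{parachute-iv}, so that $df_3 \wedge df_2$ must realize it. Once this is secured, the Principle of Two Maxima and the linear-algebraic classification of type~$2$ vertices in $\linesd{v_3}$ deliver the remaining assertions.
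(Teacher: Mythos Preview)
Your proof is correct and follows essentially the same approach as the paper: Part~\ref{dm2:1} via Corollary~\ref{cor:parachute}\ref{parachute-iv} and the bound on $\deg dg_3\wedge df_2$ from \ref{def:Kdel}, Parts~\ref{dm2:3} and~\ref{dm2:inter} via the Principle of Two Maxima and the linear decomposition of $dF_1\wedge dF_3$, and Part~\ref{dm2:2} by combining the equality $\deg f_2 + d_{13} = \deg f_1 + d_{23}$ with Part~\ref{dm2:1}. The only cosmetic difference is that you spell out the justification $d_{12} > 0$ explicitly for the strict inequality in Part~\ref{dm2:2}, whereas the paper leaves this implicit when passing from $d(m_2) \ge \deg(v_3\smallsetminus m_2) + d(v_2)$ to $d(m_2) > \deg(v_3\smallsetminus m_2)$.
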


\begin{proof}
The assumption means that $v_2, m_2, u_2$ form a (not necessarily good) triangle.
We use the notation from Set-Up \ref{setup:good_rep}, we therefore have $m_2 =
\llb f_2, f_3 \rrb$, $v_2 = \llb f_1, f_2 \rrb$ and $v_1 = [f_2]$.
\begin{enumerate}[wide]
\item
On the one hand:
$$df_2 \wedge df_3 = df_2 \wedge dg_3 - df_2 \wedge d (\phi_3(f_1,f_2)).$$
On the other hand, the following sequence of inequalities holds, where the first one comes from Corollary \ref{cor:parachute}\ref{parachute-iv}, the second one from \ref{def:Kdel}, and the third one from Lemma \ref{lem:basics forms}:
\begin{equation} \label{eq:bycor(iv)}
\deg df_2 \wedge d (\phi_3(f_1,f_2)) \ge
 \deg f_1 + \deg df_1\wedge df_2 > \deg f_2 + \deg g_3 \ge \deg df_2 \wedge dg_3.
\end{equation}
So $\deg df_2 \wedge df_3 = \deg df_2 \wedge d (\phi_3(f_1,f_2))$, and replacing in (\ref{eq:bycor(iv)}) we obtain the expected inequality:
\begin{equation*}
d(m_2) = \deg df_2 \wedge df_3 \ge \deg f_1 + \deg df_1 \wedge df_2 = \deg (v_3 \smallsetminus m_2) + d(v_2).
\end{equation*}

\item
By the previous point we have
$$\deg f_1 + \deg df_2 \wedge df_3 > \deg f_3 + \deg df_1 \wedge df_2,$$
hence by the Principle of Two Maxima \ref{pro:ptm} we get
\begin{equation} \label{eq:P 2 M}
\deg f_2 + \deg df_1 \wedge df_3 = \deg f_1 + \deg df_2 \wedge df_3.
\end{equation}
Since $\deg f_1 > \deg f_2$ we get $\deg df_1 \wedge df_3 > \deg df_2 \wedge
df_3$, and finally
\begin{equation} \label{eq:3 terms ineq}
\deg df_1 \wedge df_3 > \deg df_2 \wedge df_3 > \deg df_1 \wedge df_2.
\end{equation}
The general form of $u_2$ being $u_2 = \llb f_1 + \alpha f_2, f_3 + \beta
f_2 \rrb$, we have
\begin{equation*}
d(u_2) = \deg (df_1 + \alpha\, df_2) \wedge (df_3 + \beta\, df_2) = \deg df_1
\wedge df_3,
\end{equation*}
so that the expected result is exactly (\ref{eq:3 terms ineq}).

\item We just saw that if $t_2$ is any line not passing through $[f_2]$, then
$d(t_2) = \deg df_1 \wedge df_3 = d(u_2)$.

Now consider $t_2$ passing through $[f_2]$ but not equal to $v_2$.
Then $t_2 = [f_2, f_3 + \alpha f_1]$ for some $\alpha \in \K$ and,
using (\ref{eq:3 terms ineq}):
$$d(t_2) = \deg \big( df_2 \wedge df_3 - \alpha\, df_1 \wedge df_2 \big) = \deg
df_2 \wedge df_3 = \deg (m_2).$$

We obtain that $v_2$ is the unique minimum of the function
$$t_2 \in \linesd{v_3} \mapsto d(t_2).$$

\item 
By (\ref{eq:P 2 M}) we have
$$\deg (v_1) + d(u_2) = \deg (v_3 \smallsetminus m_2) +
d(m_2).$$
Since by assertion \ref{dm2:1} we have $d(m_2) > \deg(v_3 \setminus
m_2)$, the result follows. \qedhere
\end{enumerate}
\end{proof}

As an immediate consequence of Lemma \ref{lem:dm2}\ref{dm2:inter} we get:

\begin{corollary} \label{cor:center of a K red}
Assume that $v_3$ admits an elementary $K$-reduction, and that one of the following holds:
\begin{enumerate}
\item \label{center:1} There exists a (non necessarily good) triangle $u_2,
v_2, w_2 \in \linesd{v_3}$  such that
$$d(u_2) > d(w_2) > d(v_2);$$
\item \label{center:2} $m_2$ is the minimal line in $\lines{v_3}$, and $v_2$ is another line in $\lines{v_3}$ such that
$$d(m_2) > d(v_2).$$
\end{enumerate}
Then $v_2$ is the center of the $K$-reduction.
\end{corollary}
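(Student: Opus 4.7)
The plan is to derive both cases as immediate consequences of Lemma~\ref{lem:dm2}\ref{dm2:inter}. By hypothesis $v_3$ admits an elementary $K$-reduction; temporarily call its center $c \in \linesd{v_3}$ and let $m$ denote the minimal line of $\lines{v_3}$. Lemma~\ref{lem:dm2}\ref{dm2:inter} tells us that the function $t_2 \in \linesd{v_3} \mapsto d(t_2)$ takes exactly three distinct values $d_{\max} > d(m) > d(c)$, and that the minimum value $d(c)$ is attained \emph{only} at $c$. The whole task is thus to identify the line $v_2$ appearing in the hypothesis as the unique minimizer of $d$ on $\linesd{v_3}$, which would force $v_2 = c$ and conclude.

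In case~\ref{center:1}, the triangle $u_2, v_2, w_2$ provides three lines at which $d$ takes three strictly decreasing values; since the image of $d$ consists of only three distinct elements, these three values must be $d_{\max}, d(m), d(c)$ in order, so in particular $d(v_2) = d(c)$. In case~\ref{center:2}, the line $m_2$ from the hypothesis coincides with $m$ (both are \emph{the} minimal line of $\lines{v_3}$), so the hypothesis $d(m_2) > d(v_2)$ excludes $d(v_2) \in \{d_{\max}, d(m)\}$ and again forces $d(v_2) = d(c)$. In both situations, the uniqueness of the minimizer stated in Lemma~\ref{lem:dm2}\ref{dm2:inter} gives $v_2 = c$, i.e. $v_2$ is the center of the $K$-reduction.

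There is no real obstacle here: all the substance is already packaged inside Lemma~\ref{lem:dm2}, and the corollary is just its trichotomy repackaged around the slogan ``the center of a $K$-reduction is the unique minimizer of $d$ on $\linesd{v_3}$''. The only thing to verify carefully is that the symbol $m_2$ in hypothesis~\ref{center:2} really is the minimal line appearing in Lemma~\ref{lem:dm2}, which is so by definition; after that, the combinatorics of a three-valued function with a unique minimum does all the work.
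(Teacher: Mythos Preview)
Your proof is correct and follows exactly the approach intended by the paper, which records the corollary simply as ``an immediate consequence of Lemma~\ref{lem:dm2}\ref{dm2:inter}'' without further elaboration. You have spelled out the trichotomy argument cleanly: in both cases the hypotheses force $d(v_2)$ to be the minimum value of the three-valued function $t_2 \mapsto d(t_2)$, and the uniqueness of the minimizer identifies $v_2$ with the center.
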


\subsection{Proper $K$-reductions}

In this section we list some properties of proper $K$-reductions, and introduce the concept of a normal $K$-reduction.

\begin{proposition} \label{pro:foldable}
Let $u_3$ be a proper $K$-reduction of $v_3$, via $w_3$.
Then (using notation from Set-Up $\ref{setup:good_rep}$):
\begin{enumerate}
\item \label{foldable:2} $g_1 = f_1 + \phi_1(f_2,f_3)$ with
$\dvirt \phi_1(f_2,f_3) = \deg \phi_1(f_2,f_3).$
\item \label{foldable:3} If the pivotal simplex has Strong Pivotal Form $\PF(s)$
with $s \ge 5$, then $v_3$ is a weak simple elementary reduction of $w_3$, with
simple center $m_2, v_1$, and $\deg v_3 = \deg w_3$.
\end{enumerate}
\end{proposition}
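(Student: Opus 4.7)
For part (\ref{foldable:2}), I would argue by contradiction using consequences of the Parachute Inequality. Suppose $\dvirt \phi_1(f_2,f_3) > \deg \phi_1(f_2,f_3)$. By Corollary \ref{cor:SPF} and Lemma \ref{lem:m2 in SPF}, the minimal line $m_2 = \llb f_2, f_3 \rrb$ of $w_3$ has no inner resonance, so Corollary \ref{cor:parachute}\ref{parachute-ii} (applied with the larger of $\deg f_2, \deg f_3$ in the first slot) yields $\deg \phi_1(f_2,f_3) > d(m_2)$. On the other hand, since $u_3$ is an elementary $K$-reduction of $w_3$, Lemma \ref{lem:dm2}\ref{dm2:1} applied to that reduction gives
$$d(m_2) \ge \deg(w_3 \smallsetminus m_2) + d(w_2) = \deg g_1 + d(w_2) > \deg g_1.$$
Combining these, $\deg \phi_1(f_2,f_3) > \deg g_1$. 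But $\phi_1(f_2,f_3) = g_1 - f_1$ together with the standing hypothesis $\deg w_3 \ge \deg v_3$ (equivalently $\deg g_1 \ge \deg f_1$) forces $\deg \phi_1(f_2,f_3) \le \deg g_1$, a contradiction.

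For part (\ref{foldable:3}), I would exploit the tight numerical data supplied by $\PF(s)$. From (\PF\ref{SFinn}) we have $\deg f_2 = 2\delta$ and $\deg g_1 = s\delta$, and (\PF\ref{SFdel}) together with $d(w_2) > 0$ yields $\deg f_3 > (s-2)\delta$. By part (\ref{foldable:2}),
$$\deg \phi_1(f_2,f_3) = \dvirt \phi_1(f_2,f_3) = \max\bigl\{\, 2i\delta + j\deg f_3 \;:\; y^i z^j \text{ is a monomial of } \phi_1 \,\bigr\},$$
and this maximum is bounded above by $\deg g_1 = s\delta$ (again because $\deg g_1 \ge \deg f_1$). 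A monomial with $j \ge 2$ would contribute at least $2\deg f_3 > (2s-4)\delta \ge s\delta$ for $s \ge 5$, which is forbidden; a monomial with $j = 1$ and $i \ge 1$ would contribute at least $2\delta + \deg f_3 > s\delta$, also forbidden. Hence $\phi_1(y,z) = A(y) + bz$ for some $A \in \K[y]$ and $b \in \K$.

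This immediately exhibits $v_3 = \llb g_1 - A(f_2) - bf_3,\ f_2,\ f_3 \rrb$ as a weak simple elementary reduction of $w_3$ with simple center $m_2, v_1$: the inequality $\deg g_1 \ge \deg(g_1 - A(f_2))$ follows from $\deg A(f_2) \le (s-1)\delta < s\delta$ (since $s$ is odd, the bound $2k\delta \le s\delta$ forces $k \le (s-1)/2$), while $\deg g_1 \ge \deg f_1$ is the standing hypothesis. Finally, $\deg \phi_1(f_2,f_3) < \deg g_1$ together with $f_1 = g_1 - \phi_1(f_2,f_3)$ forces $\deg f_1 = \deg g_1$, giving $\deg v_3 = \deg w_3$. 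The main obstacle, I expect, is orchestrating the right combination in part (\ref{foldable:2})—namely Corollary \ref{cor:parachute}\ref{parachute-ii} with Lemma \ref{lem:dm2}\ref{dm2:1}—after which the monomial analysis in part (\ref{foldable:3}) is essentially mechanical.
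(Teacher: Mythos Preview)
Your proof is correct and follows essentially the same line as the paper's: part~\ref{foldable:2} is the contradiction obtained by chaining Corollary~\ref{cor:parachute}\ref{parachute-ii} with Lemma~\ref{lem:dm2}\ref{dm2:1}, and part~\ref{foldable:3} is the same monomial-counting argument showing $\phi_1(f_2,f_3)=Q(f_2)+af_3$ and then using the parity of $s$ to force the strict inequality $\deg\phi_1(f_2,f_3)<s\delta$. Your write-up is in fact slightly more explicit than the paper's in two places (you spell out why $\deg\phi_1(f_2,f_3)\le\deg g_1$ in part~\ref{foldable:2}, and you verify the two inequalities in the definition of weak simple reduction in part~\ref{foldable:3}), but the argument is the same.
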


\begin{proof}
First observe that by Corollary \ref{cor:SPF} we know that the pivotal simplex $v_1, w_2, w_3$ has Strong Pivotal Form $\PF(s)$.
In particular by Lemma \ref{lem:m2 in SPF} the minimal line $m_2 = \llb f_2, f_3 \rrb$ of $w_3$ has no inner resonance.

\begin{enumerate}[wide]
\item Assume by contradiction that $\dvirt \phi_1(f_2,f_3) > \deg \phi_1(f_2,f_3)$.

By non resonance of $\llb f_2, f_3 \rrb$ we can apply Corollary \ref{cor:parachute}\ref{parachute-ii} and Lemma \ref{lem:dm2}\ref{dm2:1} to get the contradiction
\begin{equation*}
\deg \phi_1(f_2,f_3) > \deg df_2 \wedge df_3 = d(m_2) >  \deg g_1.
\end{equation*}

\item We just established
$$\deg g_1 \ge  \deg \phi_1(f_2,f_3) = \dvirt \phi_1(f_2,f_3).$$
Since $v_1, w_2,w_3$ has Strong Pivotal Form $\PF(s)$, we have $\deg g_1 = s\delta$, $\deg f_2 = 2\delta$ and  $\deg f_3 > (s-2)\delta$, so that $\deg (f_2f_3) > \deg g_1$.
As soon as $s \ge 5$ we also have $2(s-2) > s$, hence $\deg f_3^2 > \deg g_1$.
This implies that if $s \ge 5$ then $\phi_1$ has the form $\phi_1(f_2,f_3) =  a f_3 + Q(f_2)$, as expected.
Moreover since $\deg Q(f_2) = 2r\delta$ for some $r \ge 2$ and since $s$ is
odd, we have $\deg g_1 > \dvirt \phi_1(f_2,f_3) = \deg \phi(f_2,f_3)$ hence
$\deg f_1 = \deg g_1$ and $\deg v_3 = \deg w_3$.
\qedhere
\end{enumerate}
\end{proof}

In view of the previous proposition, we introduce the following definition.
We say that $u_3$ is a \textbf{normal} $K$-reduction of $v_3$ in any of the two following situations:
\begin{itemize}
\item either $u_3$ is an elementary $K$-reduction of $v_3$;
\item or $u_3$ is a proper $K$-reduction of $v_3$ via an auxiliary vertex $w_3$, and, denoting by $v_1$ the pivot of the reduction and $m_2$ the minimal line in $w_3$, the vertex $v_3$ is not a weak \textit{simple} elementary reduction of $w_3$ with center $m_2, v_1$.
\end{itemize}

Given a proper $K$-reduction, Corollary \ref{cor:SPF} and Proposition
\ref{pro:foldable}\ref{foldable:3} say that if the reduction is normal then the
pivotal simplex has Strong Pivotal Form $\PF(3)$.
We now prove the converse, and give some estimations on the degrees
involved.

\begin{lemma}\label{lem:normal proper}
Assume that $u_3$ is a proper  $K$-reduction of $v_3$, via $w_3$, and that the pivotal simplex has Strong Pivotal Form $\PF(3)$.
Then the reduction is normal, and using representatives as from Set-Up $\ref{setup:good_rep}$, we have:
\begin{align}
\label{NP1} \deg g_1 = 3\delta, \quad \deg f_2 &= 2\delta, \quad \tfrac32\delta \ge \deg f_3 > \delta, \\
\label{NP2} \deg df_1 \wedge df_3 &= \delta + \deg df_2 \wedge df_3\ge 4\delta + \deg dg_1 \wedge df_2,\\
\label{NP3} \deg df_1 \wedge df_2 &= \deg f_3 + \deg df_2 \wedge df_3.
\end{align}
Moreover we have the implications:
\begin{align}
\label{NP4} \deg w_3 > \deg v_3 &\Longrightarrow \deg f_3 = \tfrac32\delta, \quad \deg f_1 > \tfrac52 \delta.\\
\label{NP5} \deg w_3 = \deg v_3 &\Longrightarrow \deg f_1 = \deg g_1 = 3\delta.
\end{align}
In any case we have
\begin{equation}
\label{NP6bis}
\deg f_1 > \deg f_2 > \deg f_3,
\end{equation}
\begin{equation}
\label{NP6} \deg df_1 \wedge df_2 > \deg df_1 \wedge df_3 > \deg df_2 \wedge df_3,
\end{equation}
$m_2 = \llb f_2, f_3 \rrb$ is the minimal line of $v_3$, and for any other line $\ell_2 \in \linesd{v_3}$, we have
\begin{equation}
\label{NP7} d(\ell_2) > d(m_2).
\end{equation}
\end{lemma}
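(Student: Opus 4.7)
The proof is organized around the structure of $\phi_1(f_2,f_3) = g_1 - f_1$, and I would proceed in three parts, corresponding roughly to the three blocks of claims in the statement.

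\textit{Normality and the bounds (NP1).} The equalities $\deg g_1 = 3\delta$ and $\deg f_2 = 2\delta$ come directly from $\PF(3)$. To prove normality I would argue by contradiction: if $v_3$ were a weak simple elementary reduction of $w_3$ with simple center $m_2, v_1$, then by definition there would exist a non-affine $P \in \K[f_2]$ and $a \in \K$ such that $f_1 = g_1 + P(f_2) + af_3$, so that $\deg P(f_2) \ge 2 \deg f_2 = 4\delta > 3\delta = \deg g_1$, violating the weak condition $\deg g_1 \ge \deg(g_1 + P(f_2))$. For the bounds on $\deg f_3$: the lower bound $\deg f_3 > \delta$ is immediate from the $\PF(3)$ condition $\deg(w_3\setminus w_2) \ge \Delta(w_2) = \delta + d(w_2) > \delta$. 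For $\deg f_3 \le \tfrac32\delta$, Proposition \ref{pro:foldable}(1) gives $\dvirt \phi_1 = \deg \phi_1 \le \deg g_1 = 3\delta$; since $\phi_1$ is non-affine (else $w_3 = v_3$) and cannot be of the form $af_3 + Q(f_2)$ by the same argument as for normality, $\phi_1$ must contain some monomial $f_2^i f_3^j$ with $j \ge 2$, and the bound $j \deg f_3 \le 3\delta$ forces $\deg f_3 \le \tfrac32\delta$.

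\textit{Differential identities (NP2), (NP3), (NP6).} Apply the Principle of Two Maxima \ref{pro:ptm} to $(g_1, f_2, f_3)$: by Lemma \ref{lem:dm2}(1) applied to the elementary $K$-reduction $w_3 \ne u_3$, one has $d(m_2) \ge 3\delta + d(w_2)$, so the third PTM degree $\deg f_3 + d(w_2)$ is strictly dominated by the first, and the tie lies between the first two, giving $\deg dg_1 \wedge df_3 = \delta + d(m_2)$. Expanding $df_1 \wedge df_3 = dg_1 \wedge df_3 - \partial_2\phi_1 \cdot df_2 \wedge df_3$ and bounding $\deg \partial_2\phi_1 \le \deg \phi_1 - \deg f_2 \le \delta$ (with $\dvirt \phi_1 = \deg \phi_1$ ruling out hidden cancellations) yields the equality in (NP2); the right-hand inequality of (NP2) is a restatement of Lemma \ref{lem:dm2}(1). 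For (NP3), the analogous expansion $df_1 \wedge df_2 = dg_1 \wedge df_2 + \partial_3\phi_1 \cdot df_2 \wedge df_3$ has a first summand of degree $d(w_2) < d(m_2)$ by Lemma \ref{lem:dm2}(3), so the second dominates; the $f_3^j$-monomial with $j \ge 2$ produced in Paragraph~1, together with $\dvirt = \deg$, forces $\deg \partial_3\phi_1 = \deg f_3$, whence (NP3). The chain (NP6) then follows from (NP2), (NP3), $d(m_2) > d(w_2)$ and $\deg f_3 \le \tfrac32 \delta < 2\delta$.

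\textit{Case split (NP4), (NP5) and conclusions (NP6bis), (NP7).} If $\deg w_3 = \deg v_3$, then $\deg f_1 = \deg g_1 = 3\delta$, which is (NP5). If $\deg w_3 > \deg v_3$, then $\deg g_1 > \deg f_1$, so $\deg \phi_1 = 3\delta$ exactly; inspecting the monomials $f_2^i f_3^j$ with $2i\delta + j\deg f_3 = 3\delta$ under the constraints $\deg f_2 = 2\delta$ and $\delta < \deg f_3 \le \tfrac32\delta$ leaves only $(i,j)=(0,2)$, which forces $\deg f_3 = \tfrac32\delta$ and produces the top-term resonance $\bar g_1 = c\bar f_3^2$ for some nonzero $c$. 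A top-term analysis of $f_1 = g_1 - cf_3^2 - (\text{affine})$, controlled by the equality $\deg dg_1 \wedge df_3 = \delta + d(m_2)$ of Paragraph~2 applied to $g_1 - cf_3^2$, then yields $\deg f_1 > \tfrac52\delta$, completing (NP4). In both cases $\deg f_1 > 2\delta = \deg f_2 > \deg f_3$, which is (NP6bis) and makes $m_2 = \llb f_2, f_3 \rrb$ the minimal line of $v_3$. For (NP7), applying PTM to $(f_1, f_2, f_3)$ with the identities (NP2), (NP3) produces the three values $\deg f_1 + d(m_2), 3\delta + d(m_2), 3\delta + d(m_2)$, and the argument parallel to Lemma \ref{lem:dm2}(3) shows that every line $\ell_2 \neq m_2$ in $\linesd{v_3}$ satisfies $d(\ell_2) > d(m_2)$. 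The main obstacle in this plan lies in the cancellation-control bookkeeping of Paragraph~2 and in establishing $\deg f_1 > \tfrac52\delta$ in (NP4), where the resonance $\bar g_1 = c\bar f_3^2$ must be combined with the precise information about $d(w_2)$ coming from Lemma \ref{lem:dm2}(1); both steps hinge on Proposition \ref{pro:foldable}(1) aligning the virtual and actual degrees of $\phi_1$.
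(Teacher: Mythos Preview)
Your approach is essentially the same as the paper's, and most steps are correct. The one structural difference is that the paper, immediately after combining $\dvirt\phi_1(f_2,f_3) = \deg\phi_1(f_2,f_3) \le 3\delta$ with $\deg f_2 = 2\delta$ and $\deg f_3 > \delta$, enumerates all admissible monomials $f_2^if_3^j$ and pins down the exact form
\[
\phi_1(f_2,f_3) = af_3^2 + bf_2 + cf_3,\qquad a \neq 0,
\]
from which normality and the bound $\deg f_3 \le \tfrac32\delta$ are immediate. Working with this explicit form makes the differential computations cleaner, and your version has two small gaps stemming from not making it explicit:

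In your derivation of (NP2), the bound $\deg\partial_2\phi_1 \le \delta$ only yields $\deg(\partial_2\phi_1\cdot df_2\wedge df_3) \le \delta + d(m_2) = \deg dg_1\wedge df_3$, which is not strict, so a cancellation is not excluded by your argument. With the explicit form one has $\partial_2\phi_1 = b$, a constant, and strictness is immediate; the paper simply writes $dg_1\wedge df_3 = df_1\wedge df_3 + b\,df_2\wedge df_3$. Similarly in (NP3), the claim $\deg\partial_3\phi_1 = \deg f_3$ is only watertight once you know $\partial_3\phi_1 = 2af_3 + c$ with $a\neq 0$.

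Your argument for (NP7) via the Principle of Two Maxima is miscomputed: the third PTM value for $(f_1,f_2,f_3)$ is $\deg f_3 + \deg df_1\wedge df_2 = 2\deg f_3 + d(m_2)$ by (NP3), which is not $3\delta + d(m_2)$ in general. But (NP7) needs no PTM: once (NP6) is in hand, any line $\ell_2 \neq m_2$ has $d(\ell_2) = \deg(\alpha\,df_1\wedge df_2 + \beta\,df_1\wedge df_3 + \gamma\,df_2\wedge df_3)$ with $(\alpha,\beta)\neq(0,0)$, and (NP6) gives $d(\ell_2) > d(m_2)$ directly.

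For (NP4), the paper's route is shorter than your top-term analysis: Lemma~\ref{lem:basics forms} gives $\deg f_1 + \deg f_3 \ge \deg df_1\wedge df_3$, and the right-hand inequality of (NP2) gives $\deg df_1\wedge df_3 > 4\delta$, whence $\deg f_1 > 4\delta - \tfrac32\delta = \tfrac52\delta$.
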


\begin{proof}
The equalities $\deg g_1 = 3\delta$ and $\deg f_2 = 2\delta$ come from the fact that the pivotal simplex has Strong Pivotal Form $\PF(3)$.
Property \ref{SFdel} gives $\deg f_3 > \delta$, and Proposition \ref{pro:foldable}\ref{foldable:2} says that $\dvirt \phi_1(f_2,f_3) = \deg \phi_1(f_2,f_3)$.
Hence apart from $f_2$, $f_3$ and $f_3^2$, any monomial in $f_2, f_3$ has degree strictly bigger than $g_1$.
So there exists $a,b,c \in \K$ such that
$$w_3 = \llb g_1 = f_1 + af_ 3^2 + bf_2 + cf_3, f_2, f_3 \rrb.$$
Moreover, since $w_3 \neq v_3$, we have $a \neq 0$, which implies $3\delta = \deg g_1 \ge 2\deg f_3$.
This proves (\ref{NP1}), and the fact that the $K$-reduction is normal.

By Lemma \ref{lem:dm2}\ref{dm2:1} we have
\begin{equation}\label{eq:df2df3}
\deg df_2 \wedge df_3 = d(m_2) \ge \deg(w_3\smallsetminus m_2) + d(w_2) = \deg g_1 + \deg dg_1 \wedge df_2.
\end{equation}
This implies
$$\deg g_1 + \deg df_2 \wedge df_3 > \deg f_3 + \deg dg_1 \wedge df_2.$$
By the Principle of Two Maxima \ref{pro:ptm}, we get
\begin{equation}\label{eq:2max}
\deg g_1 + \deg df_2 \wedge df_3 = \deg f_2 + \deg dg_1 \wedge df_3.
\end{equation}
Now $dg_1 \wedge df_3 = df_1 \wedge df_3 + bdf_2 \wedge df_3$, and the previous equality implies $\deg dg_1 \wedge df_3 > \deg df_2 \wedge df_3$, so that
$$\deg dg_1 \wedge df_3 = \deg df_1 \wedge df_3.$$
Now combining (\ref{eq:df2df3}) and (\ref{eq:2max}) we get the expected inequality (\ref{NP2}):
\begin{equation*}
\deg df_1 \wedge df_3 = \delta + \deg df_2 \wedge df_3 \ge 4\delta + \deg dg_1 \wedge df_2.
\end{equation*}

Observe that $\deg w_3 > \deg v_3$ is equivalent to $\deg g_1 = \deg f_3^2 > \deg f_1$.
So in this situation $\deg f_3 = \frac32 \delta$, and since by Lemma \ref{lem:basics forms} we have $\deg f_1 + \deg f_3 \ge \deg df_1 \wedge df_3$, from (\ref{NP2}) we also get $\deg f_1 > 4\delta - \frac32 \delta = \frac52 \delta$.
This proves (\ref{NP4}), and (\ref{NP5}) is immediate.
These two assertions imply that we always have $\deg f_1 > \frac52 \delta$, so
that we get (\ref{NP6bis}), and
the minimal line $m_2 =\llb f_2, f_3 \rrb$ of $w_3$ also is the minimal line of
$v_3$.

For the equality (\ref{NP3}) we start again from $g_1 = f_1 + af_3^2 + bf_2 + cf_3$, which gives
$$dg_1 \wedge df_2 = df_1 \wedge df_2 -2 a f_3 df_2 \wedge df_3 - c\, df_2 \wedge df_3 .$$
Since (\ref{eq:df2df3}) implies $\deg (f_3 df_2 \wedge df_3) > \deg dg_1 \wedge df_2$, the first two terms on the right-hand side must have the same degree, which is the expected equality.

Now (\ref{NP2}), (\ref{NP3}) and the inequality $\deg f_3 > \delta$ from
(\ref{NP1}) immediately implies (\ref{NP6}).

Finally, for any line $\ell_2$ distinct from $m_2 = \llb f_2, f_3\rrb$, we have
$$d(\ell_2) = \deg (\alpha\, df_1 \wedge df_2 + \beta\, df_1 \wedge df_3
+\gamma\, df_2 \wedge df_3)$$
with $(\alpha, \beta) \neq (0,0)$, from which we obtain (\ref{NP7}).
\end{proof}

Now we can justify the terminology of ``reduction'', as announced when we gave the definition of a proper $K$-reduction:

\begin{proposition} \label{pro:K reduces degree}
Let $u_3$ be a proper $K$-reduction of a vertex $v_3$.
Then
$$\deg v_3 > \deg u_3.$$
\end{proposition}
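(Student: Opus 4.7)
The plan is to split into cases according to whether the defining condition $\deg w_3 \ge \deg v_3$ of a proper $K$-reduction is an equality or strict. In the equality case, the conclusion $\deg v_3 > \deg u_3$ is a direct consequence of the other defining inequality $\deg w_3 > \deg u_3$, and there is nothing to do.

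The real work is in the case $\deg w_3 > \deg v_3$. By Corollary \ref{cor:SPF} the pivotal simplex has Strong Pivotal Form $\PF(s)$ for some odd $s \ge 3$, and Proposition \ref{pro:foldable}\ref{foldable:3} rules out $s \ge 5$, since that would force $\deg v_3 = \deg w_3$. Hence $s = 3$, and Lemma \ref{lem:normal proper} applies. Using the notation of Set-Up \ref{setup:good_rep}, \eqref{NP1} gives $\deg g_1 = 3\delta$ and $\deg f_2 = 2\delta$, while implication \eqref{NP4} additionally provides $\deg f_3 = \tfrac{3}{2}\delta$ and $\deg f_1 > \tfrac{5}{2}\delta$.

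The crux is then to obtain an upper bound on $\deg g_3$. I would combine the two differential-degree relations \eqref{NP2} and \eqref{NP3} with the trivial bound $\deg df_1 \wedge df_2 \le \deg f_1 + \deg f_2$ from Lemma \ref{lem:basics forms}: \eqref{NP3} converts the latter into a bound on $\deg df_2 \wedge df_3$, and then \eqref{NP2} converts that into a bound of the form $\deg dg_1 \wedge df_2 \le \deg f_1 - \tfrac{5}{2}\delta$. Feeding this into the defining inequality $\Delta(w_2) > \deg (u_3 \smallsetminus w_2) = \deg g_3$, and using $\Delta(w_2) = \delta + \deg dg_1 \wedge df_2$, yields $\deg g_3 < \deg f_1 - \tfrac{3}{2}\delta$. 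The conclusion then reduces to the arithmetic identity $\deg v_3 - \deg u_3 = \deg f_1 - \tfrac{3}{2}\delta - \deg g_3$, which is positive by the bound just obtained.

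The main obstacle is exactly this last chain of estimates. Although the defining conditions guarantee $\deg w_3 > \deg u_3$, the gap $\deg w_3 - \deg v_3$ could in principle absorb that drop, so to conclude one must exploit the rigid numerical shape of $\PF(3)$ through Lemma \ref{lem:normal proper}, and in particular the relation \eqref{NP2} which ultimately encodes the Principle of Two Maxima. Without this tight interplay between the differential-degree identities and the delta-degree hypothesis, no direct argument from the defining conditions alone seems to give the required strict inequality.
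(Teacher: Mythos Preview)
Your proof is correct and follows essentially the same strategy as the paper: reduce to the case $\deg w_3 > \deg v_3$, force $s = 3$ via Proposition~\ref{pro:foldable}\ref{foldable:3}, and then combine condition \ref{def:KPdel} with the differential-degree identity \eqref{NP2} from Lemma~\ref{lem:normal proper} and a trivial bound from Lemma~\ref{lem:basics forms}. The only cosmetic difference is that the paper applies Lemma~\ref{lem:basics forms} directly to $df_1 \wedge df_3$ (so that \eqref{NP3} is never invoked), whereas you apply it to $df_1 \wedge df_2$ and then pass through \eqref{NP3}; both routes yield the same bound $\deg dg_1 \wedge df_2 \le \deg f_1 - \tfrac{5}{2}\delta$, and the paper's chain is one step shorter.
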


\begin{proof}
We use the notation from Set-Up \ref{setup:good_rep}.
Observe that if $\deg w_3 = \deg v_3$, then the proposition is obvious from \ref{def:KPdeg}.

Assume first that the reduction is not normal, that is, $g_1 = f_1 + af_3 + Q(f_2)$.
We know by Corollary \ref{cor:SPF}
that $\deg g_1 = s\delta$, $\deg f_2 = 2\delta$ and $s\delta > \deg
f_3 > (s-2) \delta$, where $s \ge 3$ is odd.
An inequality $\deg g_1 > \deg f_1$ would imply $s\delta = \deg (g_1) = \deg
Q(f_2) = 2r\delta$ for some integer $r$ (the degree of $Q$), a contradiction
with $s$ odd.
Thus we obtain $\deg g_1 = \deg f_1 > \deg (af_3 +
Q(f_2))$, hence $\deg w_3 = \deg v_3$ and we are done.

Now assume we have a normal proper $K$-reduction, and that $\deg w_3 > \deg
v_3$.
By Proposition \ref{pro:foldable}\ref{foldable:3} (see also the discussion
just before Lemme \ref{lem:normal proper}), we are in the setting of Lemma
\ref{lem:normal proper}.
By condition \ref{def:KPdel} we have
$$\delta +  \deg dg_1 \wedge df_2 > \deg g_3 .$$
Adding $3\delta = \deg g_1$, and using (\ref{NP2}) from Lemma \ref{lem:normal proper}, we get
$$\deg df_1 \wedge df_3 \ge 4 \delta +  \deg dg_1 \wedge df_2 > \deg g_1 + \deg g_3.$$
Finally adding $\deg f_2$ we get
\begin{align*}
\deg v_3 &= \deg f_1 + \deg f_2 + \deg f_3 \\
&\ge \deg df_1 \wedge df_3 + \deg f_2 &\text{by Lemma \ref{lem:basics forms}}\\
&> \deg g_1  + \deg g_3 + \deg f_2 \\
&= \deg u_3.\qedhere
\end{align*}
\end{proof}

In the following result we prove that if a vertex admits a non-normal proper
$K$-reduction, then it already admits an elementary (and therefore normal)
$K$-reduction.
It follows that any vertex admitting a $K$-reduction admits a normal
$K$-reduction.

\begin{lemma}[Normalization of a $K$-reduction] \label{lem:normalization}
Let $u_3$ be a non-normal proper $K$-reduction of $v_3$, via $w_3$.
Then there exists $u_3'$ such that
\begin{enumerate}
\item $u_3' \ne v_3$ and $u_3' \ne u_3$;
\item $u_3'$ is an elementary (hence by definition normal) $K$-reduction of $v_3$.
\end{enumerate}
\end{lemma}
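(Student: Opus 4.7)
The plan is to apply the Square Lemma~\ref{lem:square} centered at $w_3$, with the two neighbors being $v_3$ (via $m_2$) and $u_3$ (via $w_2$), and then recognize the fourth vertex $u_3'$ that the lemma produces as an elementary $K$-reduction of $v_3$.

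First I unpack the non-normality hypothesis. Using the representatives of Set-Up~\ref{setup:good_rep} for a proper $K$-reduction, the condition that $v_3$ is a weak simple elementary reduction of $w_3$ with simple center $m_2, v_1$ forces $\phi_1(f_2,f_3) = Q(f_2) + b f_3$ for some non-affine $Q \in \K[y]$ and some $b \in \K$, i.e. $g_1 = f_1 + Q(f_2) + b f_3$. By Corollary~\ref{cor:SPF} the pivotal simplex has Strong Pivotal Form $\PF(s)$, giving $\deg g_1 = s\delta$, $\deg f_2 = 2\delta$ and $s\delta > \deg f_3 > (s-2)\delta$ with $s \ge 3$ odd. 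Since $\deg Q(f_2) = 2r\delta$ with $r \ge 2$ and $s$ is odd, a parity argument forces $\deg Q(f_2) < s\delta$; hence $s \ge 5$, and both $\deg f_1 = \deg g_1 = s\delta$ and $\bar f_1 = \bar g_1$, so in particular $\deg v_3 = \deg w_3$.

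Next I apply the Square Lemma. Its hypotheses are immediate: $w_2$ and $m_2$ are two distinct lines of $w_3$ sharing the vertex $v_1$, and $m_2$ being the minimal line of $w_3$ makes the ``good triangle'' condition automatic; $v_3$ is a weak simple reduction of $w_3$ with center $m_2, v_1$ by assumption; and $\deg w_3 > \deg u_3$ together with $\deg w_3 \ge \deg v_3$ give the required degree inequalities with at least one strict. The lemma produces $u_3' \ne u_3, v_3$ with $\deg w_3 > \deg u_3'$. Following the construction in the proof of Lemma~\ref{lem:square}, this vertex admits the explicit representative
\[
u_3' = [\,F_1,\, f_2,\, g_3\,], \qquad v_2 := [\,F_1,\, f_2\,],
\]
where $F_1 := g_1 - Q(f_2) = f_1 + b f_3$, and $v_2$ is the center of $v_3 \ne u_3'$.

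The final task is to verify conditions~\ref{def:Kdeg}--\ref{def:Kdel} for $(v_3, v_2, u_3')$. The key observation is that $F_1 - g_1 \in \K[f_2]$, so
\[
\bar F_1 = \bar g_1 \qquad \text{and} \qquad dF_1 \wedge df_2 = dg_1 \wedge df_2.
\]
It follows that $v_2$ has the same 2-degree $(2\delta, s\delta)$, the same differential degree $d(v_2) = d(w_2)$, and the same delta degree $\Delta(v_2) = \Delta(w_2)$ as $w_2$, and direct computation gives $\deg(v_3 \smallsetminus v_2) = \deg f_3 = \deg(w_3 \smallsetminus w_2)$ and $\deg(u_3' \smallsetminus v_2) = \deg g_3 = \deg(u_3 \smallsetminus w_2)$. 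Consequently conditions~\ref{def:Kinn}, \ref{def:Kout} and~\ref{def:Kdel} transfer verbatim from hypotheses~\ref{def:KPinn}, \ref{def:KPout} and~\ref{def:KPdel} of the original proper $K$-reduction; \ref{def:Kmin} holds because $v_2$ has top degree $s\delta$ strictly above $\deg f_3$, the top degree of the minimal line $m_2$ of $v_3$; and \ref{def:Kdeg} is $\deg v_3 = \deg w_3 > \deg u_3'$.

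I expect the only subtle step to be the opening one: extracting the precise shape $g_1 = f_1 + Q(f_2) + b f_3$ from non-normality and invoking the parity of $s$ to conclude $\deg f_1 = \deg g_1$, and hence $\deg v_3 = \deg w_3$. Once this is in hand, the Square Lemma delivers $u_3'$ essentially for free, and the verification of the elementary $K$-reduction axioms is automatic, because $F_1$ and $g_1$ share both top term and $2$-form against $df_2$.
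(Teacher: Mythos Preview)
Your proof is correct and follows essentially the same route as the paper's: both arguments observe that non-normality forces the pivotal simplex to have Strong Pivotal Form $\PF(s)$ with $s \ge 5$ and hence $\deg v_3 = \deg w_3$, then apply the Square Lemma~\ref{lem:square} to produce $u_3'$, and finally transfer conditions \ref{def:Kinn}--\ref{def:Kdel} from $w_2$ to $v_2 = \llb g_1 - Q(f_2), f_2\rrb$ using that $g_1 - Q(f_2)$ and $g_1$ share the same top term and the same wedge with $df_2$. The only cosmetic difference is that where the paper cites Proposition~\ref{pro:foldable}\ref{foldable:3} and Lemma~\ref{lem:normal proper} for $s\ge 5$ and $\deg v_3 = \deg w_3$, you re-derive these directly from the parity of $s$ and the non-affine shape of $Q$.
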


\begin{figure}[ht]
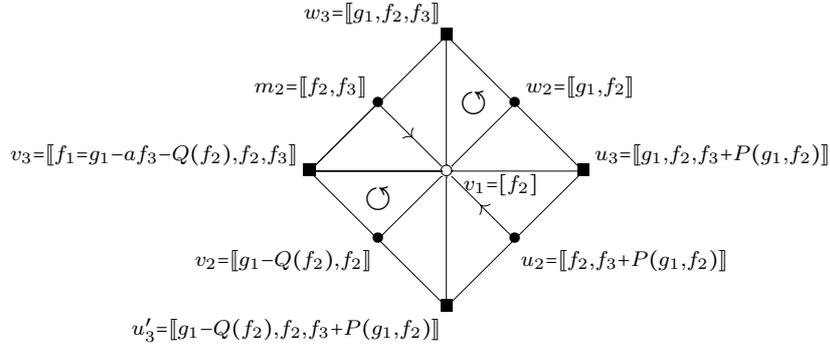

$$
\mygraph{
!{<0cm,0cm>;<0.9cm,0cm>:<0cm,0.9cm>::}
!{(-2,0)}*-{\typethree}="v3"
!{(0,2)}*-{\typethree}="w3"
!{(0,-2)}*-{\typethree}="u3'"
!{(2,0)}*-{\typethree}="u3"
!{(0,0)}*{\typeone}="v1"
!{(-1,1)}*-{\typetwo}="v2"
!{(-1,-1)}*-{\typetwo}="w2'"
!{(1,1)}*-{\typetwo}="w2"
!{(1,-1)}*-{\typetwo}="u2"
!{(.4,1)}*{\circlearrowleft}
!{(-1,-.4)}*{\circlearrowleft}
"w3"-_<{w_3 = \llb g_1, f_2, f_3 \rrb}_>{m_2 = \llb f_2, f_3 \rrb}"v2"-_>{v_3 = \llb f_1 = g_1 - af_3 - Q(f_2), f_2, f_3 \rrb}"v3"-_>{v_2 = \llb g_1 - Q(f_2), f_2 \rrb}"w2'"-_>{u_3' = \llb g_1- Q(f_2), f_2, f_3 + P(g_1,f_2)\rrb}"u3'"-"u2"-_<{u_2 = \llb f_2, f_3 + P(g_1,f_2)\rrb}"u3"-_<{u_3 = \llb g_1, f_2, f_3 + P(g_1,f_2)\rrb}"w2"-_<{w_2 = \llb g_1, f_2 \rrb}"w3"
"v1"-"v3"-"v2"
"u2"-|@{>}"v1"-"w2'"
"u3'"-"v1"-"w3"
"v3"-"v1"-"v3"
"v2"-|@{>}"v1"-_<{\,v_1 = [f_2]}"w2"
"v1"-"u3"
}
$$
\caption{Normalization of a $K$-reduction.}\label{fig:normal}
\end{figure}

\begin{proof}
By Corollary \ref{cor:SPF} the pivotal simplex of the reduction has Strong Pivotal Form $\PF(s)$ for some odd $s$, and by Lemma \ref{lem:normal proper} we have $s \ge 5$.
Note also that by Proposition \ref{pro:foldable} we have $\deg v_3 = \deg w_3$,
and $v_3$ is a weak simple elementary reduction of $w_3$ with simple center
$m_2, v_1$: see the upper-half of Figure \ref{fig:normal}, where we use the
notation of Set-Up \ref{setup:good_rep}.

By the Square Lemma \ref{lem:square}, we get the existence of $u_3'$ with $u_3'
\ne v_3$, $u_3' \ne u_3$ and $\deg w_3 > \deg u_3'$: see Figure
\ref{fig:normal}.
In particular $\deg v_3 > \deg u_3'$, which is \ref{def:Kdeg}.

Since $v_3$ and $w_3$ have the same $3$-degrees, the vertices $v_2$ and $w_2$ also have the same $2$-degrees.
So Properties \ref{def:KPinn} and \ref{def:KPout} for the initial proper $K$-reduction from $v_3$ to $u_3$ imply \ref{def:Kinn} and \ref{def:Kout} for the elementary reduction from $v_3$ to $u_3'$.

Finally $m_2$ is the minimal line of $v_3$, and is distinct from $v_2$, which gives \ref{def:Kmin}, and
 $v_2 = \llb g_1 - Q(f_2), f_2 \rrb$ so that
$d(v_2) = d(w_2)$, which gives \ref{def:Kdel}.
\end{proof}

\begin{corollary} \label{cor:degree of K}
If $v_3 = \llb f_1, f_2, f_3 \rrb$ admits a $K$-reduction,
then one of the following holds:
\begin{enumerate}
\item \label{degreeK:case1} Any line in $\lines{v_3}$ has no inner resonance;
\item \label{degreeK:case2} Up to permuting the $f_i$, we have $\deg f_1 =
2\deg f_3$ and $2\deg f_1 = 3\deg f_2$.
In particular with this numbering $[f_3]$ is the minimal vertex of $v_3$.
\end{enumerate}
\end{corollary}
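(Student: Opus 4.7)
The plan is a degree-pair analysis after normalizing: first reduce to a normal $K$-reduction, then read off the possible degrees of $f_1, f_2, f_3$ from the Strong Pivotal Form structure, and finally check each of the three lines in $\linesd{v_3}$ for inner resonance.

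First I would apply Lemma~\ref{lem:normalization} to replace the given $K$-reduction by a normal one, and then invoke Corollary~\ref{cor:SPF} so that its pivotal simplex has Strong Pivotal Form $\PF(s)$ for some odd $s \ge 3$. In the normal proper case, Proposition~\ref{pro:foldable}\ref{foldable:3} combined with the definition of ``normal'' forces $s = 3$: for $s \ge 5$ the vertex $v_3$ would be a weak simple elementary reduction of $w_3$, contradicting normality.

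Next, using the representatives from Set-Up~\ref{setup:good_rep} together with the SPF conditions and, in the proper case, Lemma~\ref{lem:normal proper}, I would extract the possible degrees of $f_1, f_2, f_3$. For an elementary $K$-reduction: $\deg f_1 = s\delta$, $\deg f_2 = 2\delta$, and $(s-2)\delta < \deg f_3 < s\delta$, with moreover $\deg f_3 \notin \N s\delta + \N 2\delta$ by~\ref{def:Kout}. For a normal proper $K$-reduction (so $s = 3$): $\deg g_1 = 3\delta$, $\deg f_2 = 2\delta$, $\deg f_3 \in (\delta, \tfrac{3}{2}\delta]$, and either $\deg f_1 = 3\delta$ when $\deg w_3 = \deg v_3$, or $\deg f_1 \in (\tfrac{5}{2}\delta, 3\delta)$ with $\deg f_3 = \tfrac{3}{2}\delta$ when $\deg w_3 > \deg v_3$.

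Then I would test each of the three lines in $\linesd{v_3}$ for inner resonance. The line $\llb f_1, f_2 \rrb$ is handled by the coprimality $\gcd(s, 2) = 1$ when $\deg f_1 \in \{s\delta, 3\delta\}$, and by the absence of any multiple of $2\delta$ in the interval $(\tfrac{5}{2}\delta, 3\delta)$ in the remaining proper subcase. The line $\llb f_2, f_3 \rrb$ is handled by combining the outer resonance condition~\ref{def:Kout} (to exclude $\deg f_3 \in \N \deg f_2$) with the lower bound $\deg f_3 > (s-2)\delta \ge \delta$ (to exclude $\deg f_2 \in \N \deg f_3$). For $\llb f_1, f_3 \rrb$, matching a resonance $\deg f_1 = k \deg f_3$ against the available ranges leaves only $k = 2$, $s = 3$, $\deg f_1 = 3\delta$, $\deg f_3 = \tfrac{3}{2}\delta$. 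In that single exceptional configuration one has $\deg f_1 = 2 \deg f_3$ and $2 \deg f_1 = 6\delta = 3 \deg f_2$, while $\deg f_3 < \deg f_2 < \deg f_1$ shows $[f_3]$ is the minimal vertex of $v_3$, which is exactly Case~(2).

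The main obstacle is the bookkeeping in the normal proper case, where $\deg f_1$ lies in a range rather than at a single value; one has to verify carefully that the open interval $(\tfrac{5}{2}\delta, 3\delta)$ contains no positive integer multiple of $2\delta$ or of $\tfrac{3}{2}\delta$, so that the sole resonance pattern that survives is the one producing Case~(2).
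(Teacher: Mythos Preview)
Your proposal is correct and follows essentially the same approach as the paper: normalize via Lemma~\ref{lem:normalization}, invoke Corollary~\ref{cor:SPF} to get Strong Pivotal Form $\PF(s)$, use Lemma~\ref{lem:normal proper} in the proper case, and then do a degree-pair analysis on the three lines. The paper organizes the case split slightly differently (it reduces the proper case with $\deg g_1 = \deg f_1$ back to the elementary case rather than treating it as a separate subcase of the normal proper situation), but the content of the argument is the same.
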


\begin{proof}
Any line in $\lines{v_3}$ has $2$-degree $(\delta_1, \delta_2)$ with $\delta_i \in \{ \deg f_1, \deg f_2, \deg f_3\}$ for $i = 1,2$.
So there exists a line  in $\lines{v_3}$ with inner resonance if and only if there exist two indexes $i \neq j$ in $\{1,2,3\}$ such that $\deg f_i \in \N \deg f_j$.

If the $K$-reduction is elementary, using the notation from Set-Up
\ref{setup:good_rep}\ref{good_rep:1}, we have $\deg f_1 = s\delta$, $\deg f_2 =
2\delta$ and $s \delta > \deg f_3 > (s-2)\delta$ for some odd $s \ge 3$.
Moreover an inner resonance in $\llb f_2, f_3 \rrb$ would be of the form $\deg f_3 = (s-1) \delta = \frac{s-1}{2} \deg f_2$ for $s \ge 5$, but this is impossible by Property \ref{def:Kout}.
So the only possible resonance is between $\deg f_1$ and $\deg f_3$, in the case $s = 3$, as stated in \ref{degreeK:case2}.

If the $K$-reduction is proper, we use the notation from Set-Up
\ref{setup:good_rep}\ref{good_rep:2}.
Either $\deg g_1 = \deg f_1$ and we are reduced to the previous case; or $\deg
g_1 > \deg f_1$ and by Lemma \ref{lem:normalization} we can assume that the
$K$-reduction is normal (and proper, otherwise again we are reduced to the
previous case).
Then by Lemma \ref{lem:normal proper} we have $3\delta > \deg f_1 > \frac52 \delta$, $\deg f_2 = 2\delta$, $\deg f_3 = \frac32 \delta$, hence there is no relation of the form $\deg f_i \in \N \deg f_j$ for any $i \neq j$
and we are in case \ref{degreeK:case1}.
\end{proof}

\begin{remark}
We shall see later in Corollary \ref{cor:no inner resonance} that in fact Case \ref{degreeK:case2} in the previous corollary never happens.
\end{remark}

\subsection{Stability of $K$-reductions}

Consider $v_3$ a vertex that admits a normal $K$-reduction.
In this section we want to show that most elementary reductions of $v_3$ still admit a $K$-reduction.
First we prove two lemmas that give some constraint on the (weak) elementary reductions that such a vertex $v_3$ can admit.

\begin{lemma} \label{lem:=lemme14}
Let $u_3$ be a normal $K$-reduction of $v_3$, with pivot $v_1$.
Let $u_2$ be any line in $\lines{v_3}$ not passing through $v_1$.
Then $v_3$ does not admit a weak elementary reduction with center $u_2$.
\end{lemma}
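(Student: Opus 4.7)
The plan is to assume for contradiction that $v_3'$ is a weak elementary reduction of $v_3$ with center $u_2$, and derive $\deg P > 2\delta$ via the Parachute Inequality, contradicting the bound forced by the weak reduction. Using Set-Up \ref{setup:good_rep}, I fix representatives of the pivotal simplex: $v_1 = [f_2]$ and $v_3 = \llb f_1, f_2, f_3 \rrb$. By Corollary \ref{cor:SPF} the simplex has Strong Pivotal Form $\PF(s)$ for some odd $s \ge 3$; this gives $\deg f_1 = s\delta$, $\deg f_2 = 2\delta$, $(s-2)\delta < \deg f_3 < s\delta$ in the elementary case, and (via Lemma \ref{lem:normal proper}) $\deg f_1 \in (\tfrac52\delta, 3\delta]$, $\deg f_2 = 2\delta$, $\deg f_3 \in (\delta, \tfrac32\delta]$ in the normal proper case. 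In both settings $\deg f_3 > \delta$, and I will use the crucial lower bound $d(u_2) \ge 4\delta$: this follows from Lemma \ref{lem:dm2}\ref{dm2:2} (giving $d(u_2) > 2\deg f_1 - \deg f_2 \ge 4\delta$) in the elementary case, and from $d(u_2) \ge \deg df_1 \wedge df_3 \ge 4\delta$ via (\ref{NP2}) and (\ref{NP6}) of Lemma \ref{lem:normal proper} in the proper case.

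Since $u_2$ does not pass through $v_1 = [f_2]$, I can write $u_2 = \llb \tilde f_1, \tilde f_3 \rrb$ with $\tilde f_1 = f_1 + \alpha f_2$ and $\tilde f_3 = f_3 + \beta f_2$, so that $\deg \tilde f_1 = \deg f_1$ and $\deg \tilde f_3 > \delta$. Applying Lemma \ref{lem:elem red} with center $u_2$ produces a non-affine polynomial $P \in \K[y,z]$ with $v_3' = \llb \tilde f_1, \tilde f_3, f_2 + P(\tilde f_1, \tilde f_3)\rrb$, and the weak-reduction condition $\deg v_3 \ge \deg v_3'$ forces $\deg P(\tilde f_1, \tilde f_3) \le 2\delta$ (direct in the case $\deg \tilde f_3 = \deg f_3$, and via a short analysis in the degenerate case $\deg \tilde f_3 = 2\delta$, where weak reduction forces $\deg P = 2\delta$ with a specific leading cancellation between $f_2$ and $P$).

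The contradiction is then obtained by splitting on whether $\dvirt P(\tilde f_1, \tilde f_3) = \deg P(\tilde f_1, \tilde f_3)$. If equality holds, then since $P$ is non-affine there is a monomial of total degree $\ge 2$ in $P$, yielding $\deg P \ge 2\min(\deg \tilde f_1, \deg \tilde f_3) > 2\delta$. Otherwise, Lemma \ref{lem:p and q} provides coprime $p, q \in \N^*$ with $p \deg \tilde f_1 = q \deg \tilde f_3$, and Corollary \ref{cor:parachute}\ref{parachute-i} gives $\deg P \ge (p-1)\deg \tilde f_1 - \deg \tilde f_3 + d(u_2)$. For $p \ge 2$ the inequality $(p-1)\deg \tilde f_1 \ge \deg \tilde f_3$ (which follows from $q > p$) yields $\deg P \ge d(u_2) \ge 4\delta > 2\delta$. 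For $p = 1$ a short check of the resonance constraints shows $\deg \tilde f_3 \le \tfrac{3\delta}{2}$ in all occurring subcases (it forces $s = 3$ in the elementary case), and then $\deg P \ge -\tfrac{3\delta}{2} + 4\delta = \tfrac{5\delta}{2} > 2\delta$. The main obstacle is that Lemma \ref{lem:dm2} applies only to elementary $K$-reductions of $v_3$ itself; in the proper $K$-reduction case one must invoke Lemma \ref{lem:normal proper} to recover the analogous lower bound $d(u_2) \ge 4\delta$, and this bookkeeping between the two branches is the only delicate point in the argument.
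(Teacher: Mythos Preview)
Your argument is correct and follows essentially the same strategy as the paper: bound $\deg P$ above by $2\delta$ via Lemma~\ref{lem:elem red}, then below via Corollary~\ref{cor:parachute} combined with a lower bound on $d(u_2)$ coming from Lemma~\ref{lem:dm2} (elementary case) or Lemma~\ref{lem:normal proper} (proper case). The organization differs only cosmetically: the paper first observes that $\dvirt P > \deg P$ always holds (your $\dvirt = \deg$ case is subsumed in its inequality $\dvirt P \ge 2\deg h_3 > 2\delta \ge \deg(v_3\smallsetminus u_2) \ge \deg P$), and then splits on whether $u_2$ has inner resonance---which is exactly your split $p=1$ versus $p\ge 2$; in the non-resonant branch the paper chains through $d(u_2) > d(m_2) > \deg(v_3\smallsetminus m_2)$ using Corollary~\ref{cor:parachute}\ref{parachute-ii}, whereas you go directly to $d(u_2) > 4\delta$ using Corollary~\ref{cor:parachute}\ref{parachute-i}, and in the resonant branch the paper invokes Corollary~\ref{cor:degree of K} while you rederive the needed constraint $\deg\tilde f_3 \le \tfrac32\delta$ by hand.
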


\begin{proof}
We start with the notation $v_3 = \llb f_1, f_2, f_3 \rrb$ from Set-Up \ref{setup:good_rep}\ref{good_rep:1} when $u_3$ is an elementary $K$-reduction of $v_3$,
and with the Set-Up \ref{setup:good_rep}\ref{good_rep:2} to which we apply Lemma \ref{lem:normal proper} when $u_3$ is a normal proper $K$-reduction of $v_3$.
It follows that $m_2 = \llb f_2, f_3 \rrb$ is in both cases the minimal line of $v_3$. 
We have $u_2 = \llb h_1, h_3 \rrb$ where $h_1 = f_1 + af_2$, $h_3 = f_3 + bf_2$ for some $a, b \in \K$.
Then $[f_2, h_3]$ is the minimal line $m_2$ in $\lines{v_3}$, however it is possible that $[f_2, h_3]$ is not a good representative of $m_2$.
There are two possibilities:

\begin{itemize}[wide]
\item either $(h_1,f_2,h_3)$ is still a good representative for $v_3$, and we have $\deg h_3 = \deg f_3$, $\deg f_2 = \deg (v_3 \smallsetminus u_2)$;
\item or $\deg f_2 = \deg h_3 > \deg m_1$ where $m_1 = [f_3]$ is the minimal point of $v_3$, and $\deg f_2 > \deg (v_3 \smallsetminus u_2) = \deg f_3$.
\end{itemize}
In both cases we have
\begin{equation*} \label{eq:the true equation}
\deg h_1 = \topdeg v_3, \quad \deg f_2 \ge \deg (v_3 \smallsetminus u_2) \quad \text{and} \quad \deg h_3 \ge \deg f_3.
\end{equation*}

Assume by contradiction that $v_3$ admits a weak elementary reduction with center $u_2$.
By Lemma \ref{lem:elem red} there exists a non-affine polynomial $P\in
\K[y,z]$
such that
$$\deg (v_3 \smallsetminus u_2)\ge \deg P(h_1,h_3).$$
On the other hand we know from Corollary \ref{cor:SPF} that the pivotal simplex
of the $K$-reduction has Strong Pivotal Form $\PF(s)$ for some odd $s \ge 3$, hence
$$\deg h_3 \ge \deg f_3 > (s-2)\delta \ge \delta \;\text{ which implies }\;
2\deg h_3 > 2\delta =  \deg f_2.$$
In consequence, since $\deg h_1 > \deg h_3$, we have 
$$\dvirt P(h_1,h_3) \ge 2\deg h_3 > \deg f_2 \ge \deg (v_3 \smallsetminus u_2),$$ so that
$$\dvirt P(h_1,h_3) > \deg P(h_1,h_3).$$

If $u_2$ has no inner resonance, then we get a contradiction as follows, in both cases of an elementary or a normal proper $K$-reduction:

% ATTENTION : Proposition \ref{pro:SPF}\ref{SPF1} NE MARCHE PAS POUR LES WEAK,
% Il faut vraiment utiliser le parachute \ref{cor:parachute}\ref{parachute-ii}

\begin{align*}
\deg (v_3 \smallsetminus u_2) 
& \ge \deg P(h_1, h_3) \\
&> \deg dh_1 \wedge dh_3 = d(u_2) &\text{by Corollary
\ref{cor:parachute}\ref{parachute-ii}},\\
&> d(m_2) &\text{by Lemma \ref{lem:dm2}\ref{dm2:3} or (\ref{NP7}),}
\\
&> \deg(v_3 \smallsetminus m_2) &\text{by Lemma \ref{lem:dm2}\ref{dm2:1}.}
\end{align*}
More precisely, in the case of a normal proper $K$-reduction the last
inequality comes from $
d(m_2) > \deg (w_3 \smallsetminus m_2) \ge \deg(v_3 \smallsetminus m_2)$ by
Lemma \ref{lem:dm2}\ref{dm2:1} and by \ref{def:KPdegbis}.

Now consider the case where $u_2 = \llb h_1, h_3 \rrb$ has inner resonance.
By Corollary  \ref{cor:degree of K}\ref{degreeK:case2} we have $\deg h_3 = \min \{\deg f_2, \deg f_3\}$, and since by assumption $\deg h_3 \ge \deg f_3$ we get $\deg h_3 = \deg f_3$.
Then Corollary  \ref{cor:degree of K}\ref{degreeK:case2} gives the two relations
\begin{align} \label{eq:v3-m2}
\begin{split}
\tfrac12 \deg(v_3 \smallsetminus m_2) &= \tfrac12 \deg h_1 = \deg h_3, \\
\tfrac23 \deg(v_3 \smallsetminus m_2) &= \tfrac23 \deg h_1 = \deg f_2 \ge \deg
(v_3 \smallsetminus u_2).
\end{split}
\end{align}
In particular we have $\deg f_1 > \deg f_2 > \deg f_3$, and $b=0$, that is, $h_3
= f_3$.
We apply Corollary \ref{cor:parachute}\ref{parachute-i} which gives
$$\deg (v_3 \smallsetminus u_2) \ge \deg P(h_1,h_3) \ge d(u_2) - \deg h_3,$$
which we rewrite as
\begin{equation}\label{eq:intermediate}
\deg h_3 + 2\deg (v_3 \smallsetminus u_2) \ge \deg (v_3 \smallsetminus u_2) +
d(u_2).
\end{equation}
If $u_3$ is an elementary $K$-reduction of $v_3$, and since in our
situation $\deg (v_3 \smallsetminus u_2) = \deg [f_2]$, by Lemma
\ref{lem:dm2}\ref{dm2:2} we have
\begin{equation}\label{eq:dm2}
\deg (v_3 \smallsetminus u_2) + d(u_2) > 2 \deg (v_3 \smallsetminus m_2).
\end{equation}
If on the other hand $u_3$ is a proper $K$-reduction of $v_3$ via $w_3$,
let us prove that (\ref{eq:dm2}) still holds, by using Lemma
\ref{lem:normal proper}.
First note that $\deg (v_3 \smallsetminus u_2) = \deg f_2$, $\deg (v_3 \smallsetminus m_2) = \deg f_1$ and
$$d(u_2) = \deg dh_1 \wedge dh_3 = \deg d(f_1+af_2)\wedge df_3 = \deg df_1 \wedge df_3 \text{ by (\ref{NP2})}.$$
Then, using (\ref{NP1}) and (\ref{NP2}) from Lemma
\ref{lem:normal proper}, we get 
$$\deg f_2 + \deg df_1 \wedge df_3 > 2\delta + 4 \delta = 2\deg g_1 \ge 2 \deg f_1$$
as expected.

Adding the first equality of (\ref{eq:v3-m2}) to twice the second one, and combining with (\ref{eq:intermediate}) and (\ref{eq:dm2}),  we get
the contradiction
\begin{equation*}
(\tfrac12 + \tfrac{2.2}3) \deg(v_3 \smallsetminus m_2) > 2\deg(v_3 \smallsetminus
m_2).\qedhere
\end{equation*}
\end{proof}

\begin{lemma} \label{lem:=lemme15}
Let $u_3$ be a normal proper $K$-reduction of $v_3$, with pivot $v_1$.
Let $v_2' \neq m_2$ be a line in $\lines{v_3}$ passing through $v_1$.
If $v_3'$ is a weak elementary reduction of $v_3$ with center $v_2'$, then this
reduction is simple with center $v_2',v_1$.
\end{lemma}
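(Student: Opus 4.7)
My plan is to combine the sharp degree data from Lemma~\ref{lem:normal proper} with the Parachute Inequality to pin down the polynomial $P$ describing the weak reduction, and display the resulting shape as a simple reduction with simple center $(v_2', v_1)$.

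I begin with Set-Up~\ref{setup:good_rep}\ref{good_rep:2}, writing $v_3 = \llb f_1, f_2, f_3\rrb$ with $v_1 = [f_2]$ and $m_2 = \llb f_2, f_3\rrb$. Lemma~\ref{lem:normal proper} gives $\deg f_2 = 2\delta$, $\tfrac{3}{2}\delta \ge \deg f_3 > \delta$, and $\deg f_1 > \tfrac{5}{2}\delta$; combining (NP3) with (NP6) yields $d(v_2') = \deg df_1\wedge df_2 = \deg f_3 + d(m_2) > \delta$ for any line $v_2' \ne m_2$ through $v_1$. Since $v_2' \ne m_2$ passes through $v_1$, there exists $\alpha \in \K$ with $v_2' = \llb g, f_2\rrb$ for $g := f_1 + \alpha f_3$, and $\deg g = \deg f_1$. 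Corollary~\ref{cor:degree of K} rules out $\deg g \in \N\cdot \deg f_2$ in either of its cases, so $v_2'$ has no inner resonance. Corollary~\ref{cor:type 3 neighbors} together with Lemma~\ref{lem:elem red} write $v_3' = \llb g, f_2, f_3 + P(g, f_2)\rrb$ with $P \in \K[y, z]$ non-affine and $\deg P(g, f_2) \le \deg f_3$.

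The heart of the argument is to show that $P(y, z) = Q(z) + a y + c$ for some $Q \in \K[z]$ and $a, c \in \K$. Suppose instead $P$ contains a monomial $y^i z^j$ with $i \ge 1$ and $(i, j) \ne (1, 0)$; its virtual contribution is at least $\deg g + 2\delta > \tfrac{9}{2}\delta > \deg f_3 \ge \deg P(g, f_2)$, so $\dvirt P(g, f_2) > \deg P(g, f_2)$. Lemma~\ref{lem:p and q} then yields coprime $p, q \in \N^*$ with $p \deg g = q \deg f_2$, and absence of inner resonance on $v_2'$ forces $p \ge 2$. Corollary~\ref{cor:parachute}\ref{parachute-i} gives
\[
\deg P(g, f_2) \;\ge\; (p-1)\deg g + d(v_2') - \deg f_2 \;\ge\; \deg f_1 + d(v_2') - 2\delta \;>\; \tfrac{5}{2}\delta + \delta - 2\delta \;=\; \tfrac{3}{2}\delta \;\ge\; \deg f_3,
\]
contradicting $\deg P(g, f_2) \le \deg f_3$. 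Hence $P$ has the claimed shape.

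Finally, passing to the alternate good representative $v_3 = \llb f_3, f_2, g\rrb$, we rewrite $v_3' = \llb f_3 + Q(f_2) + a g + c, f_2, g\rrb$, which is precisely the (weak) simple elementary reduction shape, with simple center $\llb f_2, g\rrb = v_2'$ and pivot $[f_2] = v_1$; the non-affineness of $P$ transfers to $Q$ since $ay + c$ is affine in $(y, z)$. The main technical obstacle is the tightness of the Parachute estimate: the margin $\tfrac{5}{2}\delta + \delta - 2\delta = \tfrac{3}{2}\delta$ exactly matches the upper bound $\deg f_3 \le \tfrac{3}{2}\delta$ coming from (NP1), so the contradiction relies on every strict inequality in Lemma~\ref{lem:normal proper} (notably NP1 and NP3) together with the strict bound $p \ge 2$ supplied by the no-inner-resonance property of $v_2'$.
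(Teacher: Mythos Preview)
Your argument is essentially correct and close in spirit to the paper's, but there is a small gap in the final paragraph. You conclude that $P(y,z)=Q(z)+ay+c$ and then assert that $v_3'=\llb f_3+Q(f_2)+ag+c,\,f_2,\,g\rrb$ is a good representative exhibiting the simple reduction shape. However, if $a\neq 0$, then since $\deg g\not\in\N\deg f_2$ there is no cancellation and $\deg\bigl(f_3+Q(f_2)+ag\bigr)=\deg g$, equal to the degree of the third component, so this is \emph{not} a good representative and the simple reduction conditions fail. The fix is immediate: the weak reduction bound $\deg P(g,f_2)\le\deg f_3<\deg g$ forces $a=0$, after which your conclusion goes through with $P\in\K[z]$ exactly as in the paper.

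On the comparison: both proofs reduce to showing $\dvirt P(g,f_2)>\deg P(g,f_2)$ leads to a contradiction via Corollary~\ref{cor:parachute}, but you invoke part~\ref{parachute-i} together with the numerical bounds $\deg f_1>\tfrac52\delta$ and $d(v_2')>\delta$, whereas the paper uses part~\ref{parachute-ii} to get $\deg P(g,f_2)>d(v_2')$ directly, and then chains $d(v_2')=\deg df_1\wedge df_2>\deg df_2\wedge df_3\ge 3\delta\ge\deg f_1$ using (\ref{NP2}). The paper's route is slightly cleaner because part~\ref{parachute-ii} already packages the $p\ge 2$ information, avoiding the explicit appeal to Lemma~\ref{lem:p and q}; your route works just as well but needs the extra step of bounding $p$ and the tighter arithmetic with $\tfrac52\delta$ and $\tfrac32\delta$.
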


\begin{proof}
We use the notation from Set-Up \ref{setup:good_rep}\ref{good_rep:2}, and set
$v_2' = \llb h_1,
f_2 \rrb$ with $h_1 = f_1 + a f_3$.
By Lemma \ref{lem:normal proper}, $h_1$ realizes the top degree of $v_3$.
Then $v_3 = \llb f_1, f_2, f_3 \rrb = \llb h_1, f_2, f_3 \rrb$, and by Lemma
\ref{lem:elem red} we have  $v_3' = \llb h_1, f_2, f_3 +P(h_1, f_2)\rrb$ for
some non-affine polynomial $P$.
We want to prove that $P(h_1, f_2) \in \K[f_2]$.
It is sufficient to prove $\deg h_1 > \dvirt P(h_1,f_2)$.
Assume the contrary. Then
$$\dvirt P(h_1,f_2) \ge \deg h_1 > \deg f_3 \ge \deg P(h_1,f_2).$$
By Lemma \ref{lem:normal proper}, we have $\deg f_1 > \deg f_2 > \deg f_3$, so
that by Corollary \ref{cor:degree of K} we have $\deg h_1 = \deg f_1 \not\in
\N\deg f_2$.
Thus we can apply Corollary \ref{cor:parachute}\ref{parachute-ii} to
get
$$\deg f_1 > \deg P(h_1,f_2) > \deg dh_1 \wedge df_2.$$
By (\ref{NP3}) of Lemma \ref{lem:normal proper} we get
$$\deg dh_1 \wedge df_2 = \deg (df_1 \wedge df_2 - adf_2 \wedge df_3)
= \deg df_1 \wedge df_2 > \deg df_2 \wedge df_3.$$
Then by (\ref{NP2}) and (\ref{NP1}) of Lemma \ref{lem:normal proper}
we have
$$\deg df_2 \wedge df_3 \ge 3\delta = \deg g_1 \ge \deg f_1,$$
hence the contradiction $\deg f_1 > \deg dh_1 \wedge df_2 \ge \deg
f_1$.
\end{proof}

\begin{proposition}[Stability of a $K$-reduction]
\label{pro:K stability}
Let $u_3$ be a normal $K$-reduction of $v_3$, and $v_3'$ a weak elementary
reduction of $v_3$ with center $v_2'$.
Denote by $m_2$ the minimal line of $v_3$.
If $u_3$ is an elementary $K$-reduction, assume moreover that the centers of
$v_3' \ne v_3$ and $u_3 \ne v_3$  are distinct.
Then $u_3$ is a $K$-reduction of $v_3'$, and more precisely, we are in one
of the following cases:
\begin{enumerate}
\item \label{Kstability:1} $u_3$ is an elementary $K$-reduction of $v_3$,
and $v_2' = m_2$: then $u_3$ is a (possibly
non-normal) proper $K$-reduction of $v_3'$, via $v_3$;

\item \label{Kstability:2} $u_3$ is an elementary $K$-reduction of $v_3$,
and $v_2' \neq m_2$: then $u_3$ is a (possibly
non-normal) proper $K$-reduction of $v_3'$, via an auxiliary vertex $w_3'$ that
satisfies $\deg w_3' = \deg v_3$;

\item \label{Kstability:3} $u_3$ is a normal proper $K$-reduction of $v_3$ via
$w_3$, and $v_3' = w_3$: then $u_3$ is an elementary $K$-reduction of
$v_3'$;

\item \label{Kstability:4} $u_3$ is a normal proper $K$-reduction of $v_3$ via
$w_3$, and $v_2' = m_2$: then $u_3$
also is a normal proper $K$-reduction of $v_3'$ via $w_3$.
\end{enumerate}
\end{proposition}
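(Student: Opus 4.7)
The proof is a case analysis driven by whether $u_3$ is elementary or normal proper, and by the position of $v_2'$ relative to the minimal line $m_2$. My first move is to apply Lemma \ref{lem:=lemme14}: since $v_3$ admits a weak elementary reduction with center $v_2'$, this center must pass through the pivot $v_1$ of the $K$-reduction, forcing $v_2'$ to be either $m_2$ or another line through $v_1$ (distinct from $v_2$ in the elementary case by the extra hypothesis).

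Cases (1), (3) and (4) should come from unwinding the definitions together with Lemma \ref{lem:normal proper}. In case (1) take $w_3' := v_3$: then $v_3' \ne_{m_2} v_3$ with $m_2 = v_2'$ the minimal line of $v_3$, and $u_3$ is an elementary $K$-reduction of $v_3$ by hypothesis, which is exactly the definition of a proper $K$-reduction of $v_3'$ via $v_3$. Case (3) is immediate: $u_3$ is by definition an elementary $K$-reduction of $w_3 = v_3'$. In case (4), Lemma \ref{lem:normal proper} gives that $m_2$ is also the minimal line of $w_3$, and since $v_3' \ne w_3$ both contain $m_2 = v_2'$, we get $v_3' \ne_{m_2} w_3$, which is a weak elementary reduction by the chain $\deg w_3 \ge \deg v_3 \ge \deg v_3'$. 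Combining with the existing $u_3 \ne w_3$ elementary $K$-reduction displays $u_3$ as a proper $K$-reduction of $v_3'$ via the same $w_3$; preservation of normality is a separate degree-theoretic check, relying on the explicit form $g_1 - f_1 = a f_3^2 + b f_2 + c f_3$ with $a \ne 0$ from Lemma \ref{lem:normal proper}.

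The substantive case is (2). Using Set-Up \ref{setup:good_rep}\ref{good_rep:1}, I would write $v_1 = [f_2]$, $v_2 = \llb f_1, f_2 \rrb$, $v_3 = \llb f_1, f_2, f_3 \rrb$, $u_3 = \llb f_1, f_2, f_3 + \phi_3(f_1, f_2) \rrb$, $v_2' = \llb f_2, f_1 + c f_3 \rrb$ for a unique $c \ne 0$, and set $h_1 := f_1 + c f_3$. Lemma \ref{lem:elem red} then yields $v_3' = \llb h_1, f_2, f_3 + P(h_1, f_2) \rrb$ for some non-affine $P$ with $\deg f_3 \ge \deg P$. In the strict subcase $\deg v_3 > \deg v_3'$, the candidate $w_3' := [h_1, f_2, f_3 + P(h_1, f_2) + \phi_3(h_1, f_2)]$ is tame, because it is obtained from $(f_1, f_2, f_3)$ by first the affine change $(y_1, y_2, y_3) \mapsto (y_1 + c y_3, y_2, y_3)$ and then the elementary $y_3$-modification by $P(y_1, y_2) + \phi_3(y_1, y_2)$; its third component has top $-\bar f_3$ of degree $\deg f_3$ because the cancellation in $f_3 + P$ leaves $\phi_3(h_1, f_2)$ as the new leading term, giving $\deg w_3' = \deg v_3$. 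Once $w_3'$ is in hand, the verification that the minimal line of $w_3'$ is the center of $v_3' \ne w_3'$ and that $w_3' \ne u_3$ is an elementary $K$-reduction amounts to degree bookkeeping transferring non-resonance and the $\Delta$-inequality from the hypotheses on $v_3 \ne_{v_2} u_3$, via Lemma \ref{lem:p and q} and Corollary \ref{cor:parachute}.

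The main obstacle is precisely this construction in case (2): one must split into the strict and borderline subcases $\deg v_3 > \deg v_3'$ versus $\deg v_3 = \deg v_3'$ and choose a different correction in each, since the above candidate drops in degree in the borderline subcase where the top of $P$ does not cancel that of $f_3$. Moreover the most obvious choice $[f_1, f_2, f_3 + P(h_1, f_2)]$ is not a tame automorphism (the would-be post-composition $(y_1, y_2, y_3) \mapsto (y_1, y_2, y_3 + P(y_1 + c y_3, y_2))$ has non-constant Jacobian $1 + c \partial_1 P$ and is therefore not polynomially invertible), which rules out the simplest candidate and forces careful choice of representatives. Orchestrating $w_3'$ in each subcase, verifying tameness, identifying the correct minimal line, and checking the full $K$-reduction axioms for the center of $w_3' \ne u_3$ is where the real technical work lies.
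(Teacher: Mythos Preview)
Your handling of Cases \ref{Kstability:1}, \ref{Kstability:3}, \ref{Kstability:4} is essentially correct and matches the paper. However, there are two genuine gaps.

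\textbf{Case \ref{Kstability:2} is the real problem.} Your candidate $w_3' = [h_1, f_2, f_3 + P(h_1,f_2) + \phi_3(h_1,f_2)]$ does not work: there is no reason for $u_3 = [f_1, f_2, f_3 + \phi_3(f_1,f_2)]$ to be a neighbor of this $w_3'$. The lines of $u_3$ through $[f_2]$ are of the form $[f_2, f_1 + \lambda g_3]$, while those of your $w_3'$ are $[f_2, h_1 + \mu(\dots)]$; since $h_1 = f_1 + cf_3$ with $c \neq 0$, these pencils do not share a line in general. Your degree claim for the third component is also unjustified: you have no control over $\deg \phi_3(h_1,f_2)$, only over $\deg \phi_3(f_1,f_2)$. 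The paper's approach is completely different and much cleaner: one proves that $P$ actually depends only on $f_2$. Indeed, if not then $\dvirt P(h_1,f_2) \ge \deg h_1 = \deg f_1 > \deg f_3 \ge \deg P(h_1,f_2)$; since $v_2'$ has the same $2$-degree as $v_2$ it has no inner resonance, so Corollary \ref{cor:parachute}\ref{parachute-ii} gives $\deg P(h_1,f_2) > d(v_2')$, and one computes $d(v_2') = \deg(df_1\wedge df_2 - a\,df_2\wedge df_3) = d(m_2) > \deg f_3$ using Lemma \ref{lem:dm2}\ref{dm2:1}, a contradiction. Once $P \in \K[f_2]$, the correct auxiliary vertex is simply $w_3' = \llb f_1, f_2, f_3 + P(f_2)\rrb$, which visibly has $\deg w_3' = \deg v_3$ (since $m_2$ has no inner resonance), has $[f_2, f_3+P(f_2)]$ as minimal line and center of $v_3' \ne w_3'$, and shares the line $v_2 = \llb f_1, f_2\rrb$ with $u_3$.

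\textbf{Exhaustiveness in the normal proper case.} You treat only $v_3' = w_3$ and $v_2' = m_2$, but Lemma \ref{lem:=lemme14} only forces $v_2'$ through $v_1$; it does not rule out $v_2' \neq m_2$ with $v_3' \neq w_3$. The paper closes this gap using Lemma \ref{lem:=lemme15}: any weak elementary reduction of $v_3$ with center $v_2' \neq m_2$ through $v_1$ must be simple with simple center $v_2', v_1$, i.e.\ $v_3' = \llb f_1 + af_3, f_2, f_3 + P(f_2)\rrb$; but then $\deg P(f_2) \ge 2\deg f_2 = 4\delta > \tfrac32\delta \ge \deg f_3$ by Lemma \ref{lem:normal proper}, contradicting $\deg v_3 \ge \deg v_3'$. (Incidentally, normality in Case \ref{Kstability:4} is automatic: the pivotal simplex $v_1, w_2, w_3$ is unchanged and still has $\PF(3)$, so Lemma \ref{lem:normal proper} applies directly.)
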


\begin{proof}
First assume that $u_3$ is an elementary $K$-reduction of $v_3$.
We denote by $v_1 = [ f_2 ]$,
$v_2 = \llb f_1,f_2 \rrb$, $v_3 = \llb f_1, f_2, f_3 \rrb$ the pivotal simplex
of the $K$-reduction $u_3$ (following Set-Up
\ref{setup:good_rep}\ref{good_rep:1}).
By Lemma \ref{lem:=lemme14}, the line $v_2'$ passes through $v_1$.

\ref{Kstability:1}. If $v_2' = m_2$, since by
assumption $\deg v_3 \ge \deg v_3'$, we directly get that $u_3$ is a proper
$K$-reduction of $v_3'$, via $v_3$.

\ref{Kstability:2}. Now assume that $v_2' \neq m_2$, so
that $v_2' =\llb f_1 + af_3, f_2 \rrb$ for some $a \in \K$, and $a \neq 0$
since we assume $v_2' \neq v_2$.
Then by Lemma \ref{lem:elem red} we can write
$$v_3' =  \llb f_1 + af_3, f_2, f_3 +P( f_1 + af_3, f_2) \rrb \text{ with
} \deg f_3 \ge \deg P( f_1 + af_3, f_2).$$
If we can show that $P$ depends only on $f_2$ we are done: indeed then
$\deg f_3 \neq \deg P(f_2)$, because $m_2 = \llb f_2, f_3 \rrb$ has no inner
resonance by Corollary \ref{cor:degree of K}, hence we have $\deg f_3 =
\deg (f_3 + P(f_2))$.
It follows that $u_3$ is
a proper $K$-reduction of $v_3'$ via $w_3' = \llb f_1, f_2, f_3 + P(f_2) \rrb$,
where $m_2' = \llb f_2, f_3 + P(f_2) \rrb$ is the minimal line of $w_3'$ (see
Figure \ref{fig:stability}, Case \ref{Kstability:2}).

To show that $P$ depends only on $f_2$ it is sufficient to show that $\deg f_3
\ge \dvirt  P( f_1 + af_3, f_2)$.
By contradiction, assume that this is not the case.
Then
$$\dvirt  P( f_1 + af_3, f_2) > \deg f_3 \ge \deg  P( f_1 + af_3, f_2).$$
Since $v_2'$ has the same 2-degree as $v_2$, it has no inner resonance by
\ref{def:Kinn}, and by Corollary \ref{cor:parachute}\ref{parachute-ii} we get
$$\deg  P( f_1 + af_3, f_2) > \deg (df_1 \wedge df_2 - adf_2 \wedge df_3).$$
By Lemma \ref{lem:dm2}\ref{dm2:1} we have $\deg df_2 \wedge df_3 > df_1 \wedge
df_2$ and  $\deg df_2 \wedge df_3 > \deg f_3$, so finally we obtain the
contradiction
$$\deg  P( f_1 + af_3, f_2) > \deg df_2 \wedge df_3 > \deg f_3.$$

\begin{figure}[t]
$$
\xymatrix@R-10px{
\mygraph{
!{<0cm,0cm>;<0.76cm,0cm>:<0cm,0.76cm>::}
!{(-2,-1)}*{\typethree}="v3"
!{(2,-1)}*{\typethree}="v3'"
!{(0,-1)}*{\typeone}="v1"
!{(0,1)}*-{\typethree}="w"
!{(-1,0)}*-{\typetwo}="f2f3"
!{(1,0)}*-{\typetwo}="g1g2"
!{(.4,0)}*{\circlearrowleft}
"f2f3"-_>{v_3'}"v3"-_>{v_1 = [ f_2
]}"v1"-"f2f3"-^<{v_2' = m_2 = \llb f_2,f_3 \rrb}^>(1.1){v_3 =\llb f_1,f_2,f_3
\rrb}"w"-"v1"-"v3'"-_<{u_3}_>{v_2 = \llb f_1,f_2
\rrb}"g1g2"-"w"
"v1"-"g1g2"
}
&
\mygraph{
!{<0cm,0cm>;<1cm,0cm>:<0cm,0.65cm>::}
!{(-1,-1)}*{\typethree}="v3'"
!{(2.2,0.5)}*{\typethree}="u3"
!{(0,2)}*-{\typethree}="v3"
!{(1,-1)}*{\typethree}="w3"
!{(0,0)}*{\typeone}="v1"
!{(-1,1)}*-{\typetwo}="v2'"
!{(1,1)}*{\typetwo}="v2"
!{(0,-2)}*{\typetwo}="m2"
!{(.4,1)}*{\circlearrowleft}
!{(.65,-.2)}*{\circlearrowleft}
"v3'"-^<{v_3' =  \llb f_1 + af_3, f_2, f_3 +P(f_2) \rrb}^>{v_2'=\llb f_1 + af_3,
f_2 \rrb}"v2'"-^>{v_3= \llb f_1, f_2, f_3 \rrb\quad}"v3"-"v2"-^<{\quad v_2= \llb
f_1,f_2 \rrb}^>{u_3}"u3"-"v1"-"v3'"
"v2'"-_>(.8){v_1}|@{>}"v1"-"v2"
"v3"-"v1"
"v3'"-_>{m_2'= \llb f_2, f_3 + P(f_2) \rrb\quad}"m2"-_>{\quad w_3'= \llb f_1,
f_2, f_3 + P(f_2) \rrb}"w3"-"v2" "w3"-"v1"-|@{<}"m2"
}
\\
\text{Case } \ref{Kstability:1} & \text{Case } \ref{Kstability:2} \\
\mygraph{
!{<0cm,0cm>;<0.76cm,0cm>:<0cm,0.76cm>::}
!{(-2,-1)}*{\typethree}="v3"
!{(2,-1)}*{\typethree}="v3'"
!{(0,-1)}*{\typeone}="v1"
!{(0,1)}*-{\typethree}="w"
!{(-1,0)}*-{\typetwo}="f2f3"
!{(1,0)}*-{\typetwo}="g1g2"
!{(.4,0)}*{\circlearrowleft}
"f2f3"-_>{v_3}"v3"-_>{v_1}"v1"-"f2f3"-^<{v_2' =
m_2}^>(1.1){v_3' = w_3}"w"-"v1"-"v3'"-_<{u_3}_>{w_2}"g1g2"-"w"
"v1"-"g1g2"
}
&
\mygraph{
!{<0cm,0cm>;<0.76cm,0cm>:<0cm,0.76cm>::}
!{(-2.5,.4)}*{\typethree}="v3'"
!{(2,-1)}*{\typethree}="v3''"
!{(0,1)}*-{\typethree}="v3"
!{(-2,-1)}*{\typethree}="w'"
!{(-1,0)}*{\typetwo}="v2NO"
!{(1,0)}*-{\typetwo}="v2NE"
!{(0,-1)}*{\typeone}="v1"
!{(.4,0)}*{\circlearrowleft}
"w'"-^<{v_3}^>{w_3}"v3"-^>{u_3}"v3''"
"v2NO"-_<{v_2' = m_2\qquad}_>(1.2){v_3'}"v3'"
"v2NO"-"v1" "v2NE"-"v1"
"w'"-_>{v_1}"v1"-"v3''" "v3"-"v1"
"v3'"-"v1"
}
\\
\text{Case } \ref{Kstability:3} & \text{Case } \ref{Kstability:4}
}
$$
\caption{Stability of a $K$-reduction.} \label{fig:stability}
\end{figure}
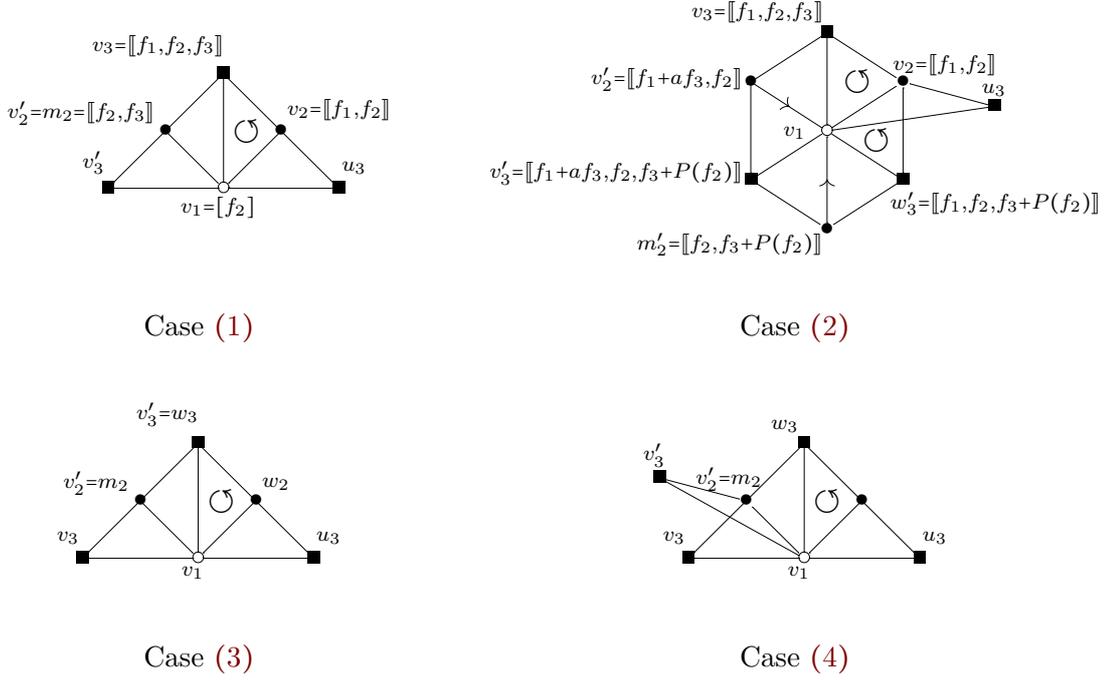

Now assume  that $u_3$ is a normal proper  $K$-reduction of $v_3$, via an
auxiliary vertex $w_3$.
Recall that the minimal line $m_2$ of $v_3$ is also
the minimal line of the intermediate vertex $w_3$ (last assertion of Lemma
\ref{lem:normal proper}).

\ref{Kstability:3}. If $v_3' = w_3$, then by
definition $u_3$ is an elementary $K$-reduction of $v_3'$.

\ref{Kstability:4}. If $v_3' \neq w_3$, but $v_2' = m_2$ , then the conclusion
is also direct, because $\deg w_3 \ge \deg v_3 \ge \deg v_3'$.

Finally we prove that the situation where $v_2' \neq m_2$ leads to a
contradiction.
By Lemma \ref{lem:=lemme15} the reduction from $v_3$ to $v_3'$ is
simple with center $v_2', v_1$.
But then by Remark \ref{rem:simple} (or directly from the proof of Lemma
\ref{lem:=lemme15}) we should have $v_2' = \llb f_1+a f_3, f_2 \rrb$
and $v_3' = \llb f_1+ a f_3, f_2, f_3 + P(f_2) \rrb$.
By Lemma \ref{lem:normal proper} we have
$$\deg f_2 = 2\delta > \tfrac32 \delta \ge \deg f_3,$$
so $\deg P(f_2) > \deg f_3$ and we get a contradiction with $\deg v_3 \ge \deg
v_3'$.
\end{proof}

\section{Reducibility Theorem} \label{sec:reducibility theorem}

In this section we state and prove the main result of this paper, that is, the Reducibility Theorem \ref{thm:reducibility}.

\subsection{Reduction paths}

Given a vertex $v_3$ with a choice of good triangle $T$, we call elementary
$T$-reduction any elementary reduction with center one of the three lines of
$T$.

We now define the notion of a \textbf{reducible} vertex in a recursive manner as follows:
\begin{itemize}[wide]
\item We declare that the vertex $[\id]$ is reducible, where by Lemma \ref{lem:id is min} $[\id]$ is the unique type 3 vertex realizing the minimal degree $(1,1,1)$.
\item Let $\mu > \nu$ be two consecutive degrees, and assume that we have already defined the subset of reducible vertices among type 3 vertices of degree at most $\nu$.
Then we say that a vertex $v_3$ with $\deg v_3 = \mu$ is reducible if for any
good triangle $T$ in $\lines{v_3}$, there exists either a $T$-elementary
reduction or a (proper or elementary) $K$-reduction  from $v_3$ to $u_3$, with
$u_3$ reducible.
\end{itemize}

Let $v_3$, $v_3'$ be vertices of type 3.
A \textbf{reduction path} of length $n \ge 0$ from $v_3$ to $v_3'$ is a sequence
of type 3 vertices $v_3(0), v_3(1), \dots, v_3(n)$ such that:
\begin{itemize}
\item $v_3(0) = v_3$ and $v_3(n) = v_3'$;
\item $v_3(i)$ is reducible for all $i = 0, \dots, n$;
\item For all $i = 0, \dots, n-1$, $v_3(i+1)$ is either an elementary reduction,
or a $K$-reduction, of $v_3(i)$.
\end{itemize}

Observe that, by definition, a reducible vertex $v_3$ admits a reduction path from $v_3$ to the vertex $[\id]$.

In the following sections we shall prove the main result:

\begin{theorem}[Reducibility Theorem] \label{thm:reducibility}
Any vertex of type 3 in the complex $\Comp$ is reducible.
\end{theorem}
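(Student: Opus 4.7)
The plan is to proceed by strong induction on $\deg v_3 \in \N^3$ ordered lexicographically, following the double induction scheme announced in the introduction (with a secondary invariant needed to handle vertices of equal degree; see below). The base case is $v_3 = [\id]$, which by Lemma \ref{lem:id is min} is the unique vertex of minimal degree $(1,1,1)$ and is reducible by definition. For the inductive step, fix $v_3$ of degree $\mu > (1,1,1)$ and assume every type 3 vertex of degree strictly smaller than $\mu$ is reducible. Given an arbitrary good triangle $T$ in $\lines{v_3}$, the goal is to exhibit either a $T$-elementary reduction or a $K$-reduction from $v_3$ to some $u_3$ with $\deg v_3 > \deg u_3$, which will then be reducible by the inductive hypothesis.

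First I would show that $v_3$ admits at least one reduction. Starting from a good representative $v_3 = \llb f_1, f_2, f_3 \rrb$, the combined use of the Parachute Inequality \ref{pro:parachute}, its Corollary \ref{cor:parachute}, and the Principle of Two Maxima \ref{pro:ptm} forces the top-degree configuration of $v_3$ to fit one of the three patterns (i)--(iii) enumerated in the introduction: either an elementary reduction on the dominant component, or a resonance-driven simple reduction, or an elementary $K$-reduction. This is the Shestakov--Umirbaev--Kuroda reducibility principle, repackaged combinatorially.

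The inductive step then splits into two branches matching the two slogans of the introduction. If $v_3$ admits a normal $K$-reduction $u_3$ (normality being guaranteed by Lemma \ref{lem:normalization}), the rigidity encoded in Lemma \ref{lem:dm2}, Lemma \ref{lem:normal proper} and the Stability Proposition \ref{pro:K stability} pins down the center of the reduction and forbids competing elementary reductions. A case inspection of the four cases of Stability shows that $u_3$ can always be re-routed through the lines of the prescribed good triangle $T$: in the elementary case $u_3$ becomes a proper $K$-reduction of an adjacent vertex of equal degree; in the proper case it is already adapted. The strict degree drop then comes from Proposition \ref{pro:K reduces degree}. If instead $v_3$ admits only elementary reductions, then either one of them already has center on $T$ and we are done, or two distinct elementary reductions coexist — by the resonance slogan this forces an inner or outer resonance among the $f_i$, and the Square Lemma \ref{lem:square} completes a fourth vertex $u_3$ of strictly smaller degree that is reached from $v_3$ by an elementary reduction with center on a chosen line of $T$.

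The main obstacle is the proper $K$-reduction case, because the auxiliary vertex $w_3$ satisfies $\deg w_3 \ge \deg v_3$ and is therefore inaccessible to a naive induction on $\deg v_3$. This is precisely why the formal Induction Hypothesis on page \pageref{IH} must simultaneously control reducibility at vertices of the same degree as $v_3$ via a refined secondary invariant. The delicate verification is that the rigid degree pattern of Lemma \ref{lem:normal proper} — with $\deg g_1 = 3\delta$, $\deg f_2 = 2\delta$, $\tfrac32\delta \ge \deg f_3 > \delta$, and the minimal line $m_2$ realizing the minimum of $d(\cdot)$ on $\linesd{v_3}$ — is compatible with every choice of good triangle $T$, so that the Square Lemma can always complete the required square through a vertex of equal degree. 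Once this rigidity analysis is in hand, the induction closes via Proposition \ref{pro:K reduces degree}.
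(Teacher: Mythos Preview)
Your proposal has a fundamental gap: the induction you set up never uses the hypothesis that $v_3$ corresponds to a \emph{tame} automorphism, and without that hypothesis the claim is simply false. Concretely, you assert that ``the combined use of the Parachute Inequality \ref{pro:parachute}, its Corollary \ref{cor:parachute}, and the Principle of Two Maxima \ref{pro:ptm} forces the top-degree configuration of $v_3$ to fit one of the three patterns (i)--(iii)''. But those results only give \emph{lower bounds} on the degree of a polynomial expression $\phi(f_1,f_2)$; they never \emph{produce} a $\phi$ realizing a degree drop. The Nagata automorphism in Corollary \ref{cor:nagata} has a perfectly good triple of degrees to which all these inequalities apply, yet it fits none of the patterns (i)--(iii) and admits no reduction whatsoever. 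So your ``first I would show that $v_3$ admits at least one reduction'' step is exactly the content of the theorem, and it cannot be extracted from the degree inequalities alone.

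The paper's proof is organized in the opposite direction: rather than showing directly that each $v_3$ admits a reduction, it proves (Proposition \ref{pro:reduction path}) that reducibility \emph{propagates to neighbors}. Since by definition of the complex every type 3 vertex is joined to $[\id]$ by a finite chain of neighbors (this is where tameness enters), the theorem follows. The double induction of Hypothesis \ref{IH} is not a ``secondary invariant to handle equal-degree vertices'' in the sense you sketch; it is a two-parameter statement (degrees $\nu,\mu$) whose inductive step is split into Fact \ref{fact:induction 1} ($(\nu,\nu)\Rightarrow(\nu,\mu)$) and Fact \ref{fact:induction 2} ($(\nu,\mu)\Rightarrow(\mu,\mu)$). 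In each step one is handed a \emph{reducible} vertex $v_3'$ and must show a specified neighbor is reducible --- so there is always a known reduction path to work from, and the machinery of \S\ref{sec:SP} and \S\ref{sec:4C} (Propositions \ref{pro:slide}, \ref{pro:2 neighbors}, Stability \ref{pro:K stability}) serves to transport or reroute that path, not to conjure one from nothing. Your outline would need to be reorganized around this propagation scheme to close.
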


Directly from the definition, this theorem has the following consequence: For
any vertex $v_3 \neq [\id]$ of type 3 in the complex $\Comp$, and for any good
triangle $T$ in $\lines{v_3}$, the vertex $v_3$ admits either a $T$-elementary
reduction  or a (proper or elementary) $K$-reduction.

We remark that this result immediately implies that $\Tame(\A^3)$ is a proper
subgroup of $\Aut(\A^3)$:

\begin{corollary} \label{cor:nagata}
The Nagata's automorphism
$$f = (x_1 + 2x_2(x_2^2 - x_1x_3) + x_3(x_2^2 - x_1x_3)^2,\; x_2 + x_3(x_2^2 -
x_1x_3),\; x_3)$$
is not tame.
\end{corollary}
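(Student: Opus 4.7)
The plan is to argue by contradiction: assume $f$ is tame, so that $v_3 := [f]$ is a vertex of $\Comp$, and derive a contradiction with the Reducibility Theorem \ref{thm:reducibility}. Direct expansion gives the top terms: $\deg f_1 = (2,0,3)$ (from $x_1^2 x_3^3$), $\deg f_2 = (1,0,2)$ (from $-x_1 x_3^2$), and $\deg f_3 = (0,0,1)$. These three vectors are pairwise distinct, so $\llb f_1, f_2, f_3 \rrb$ is a good representative of $v_3$; since $\deg v_3 = (3,0,6) \neq (1,1,1)$, Lemma \ref{lem:id is min} gives $v_3 \neq [\id]$. So the reducibility of $v_3$ must be witnessed by at least one elementary reduction or (elementary or proper) $K$-reduction, and the rest of the proof excludes both possibilities.

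The key observation for elementary reductions is that the three top degrees are pairwise non-resonant: for each distinct pair $i,j \in \{1,2,3\}$ there are no positive integers $p,q$ with $p \deg f_i = q \deg f_j$. By Lemma \ref{lem:p and q}\ref{p and q:1}, this means that for any polynomial $P$ in two of the $f_k$'s, the virtual and real degrees coincide and are realized by a single monomial. The 2-degrees of lines in $\linesd{v_3}$ fall into just three classes, namely $((0,0,1),(1,0,2))$, $((0,0,1),(2,0,3))$, $((1,0,2),(2,0,3))$, and for each of them Lemma \ref{lem:elem red} reduces the existence of an elementary reduction to solving an equation $a \delta_1 + b \delta_2 = \delta_3$ in $\N^2$, where $(\delta_1, \delta_2)$ is the 2-degree of the center and $\delta_3$ is the remaining top degree; a direct calculation (e.g.~$a(1,0,2)+b(0,0,1)=(2,0,3)$ forces $a=2, b=-1$) shows none of these equations has a solution.

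For $K$-reductions, Corollary \ref{cor:SPF} says the pivotal simplex must have Strong Pivotal Form $\PF(s)$ for some odd $s \ge 3$, which by condition \ref{SFinn} forces the central line to have 2-degree $(2\delta, s\delta)$ with $\delta \in \N^3$. Since the ``small'' 2-coordinate of any line in $\linesd{v_3}$ lies in $\{(0,0,1), (1,0,2)\}$ and neither is in $2\N^3$, no elementary $K$-reduction exists. For a proper $K$-reduction via an auxiliary vertex $w_3$, the vertex $v_3$ is obtained from $w_3$ by a weak elementary reduction along the minimal line of $w_3$, which forces two of the three components of $w_3$ to have the same degrees as two of the components of $v_3$; the only new degree appearing in $\linesd{w_3}$ is the top degree of the third (large) component of $w_3$, and it can only play the role of $s\delta$ and not of $2\delta$. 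Hence the small coordinate of every candidate center $w_2 \in \linesd{w_3}$ still lies in $\{(0,0,1), (1,0,2), (2,0,3)\}$, and the same arithmetic obstruction applies.

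The step I expect to require the most care is this proper-$K$-reduction case, where one has to argue that $w_3$ cannot introduce any new ``small'' 2-degree in its link beyond those inherited from $v_3$---despite being allowed to have an arbitrary polynomial added to its large component. Once both claims are established, the Reducibility Theorem is contradicted and $f$ cannot be tame.
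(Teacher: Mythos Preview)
Your proof is correct and follows the same strategy as the paper: compute the three component degrees, rule out elementary $T$-reductions by checking that no $\deg f_i$ is an $\N$-combination of the other two (the paper phrases this via pairwise $\Z$-independence of the degrees), and rule out $K$-reductions by checking that no candidate pivot degree has the form $2\delta$.

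Your handling of proper $K$-reductions is more elaborate than necessary, and the paper dispatches both the elementary and proper cases in one stroke. The point you anticipated needing care is in fact automatic: for a proper $K$-reduction of $v_3$ via $w_3$, the pivot $v_1$ is the intersection in $\lines{w_3}$ of $w_2$ with the minimal line $m_2$; but $m_2$ is precisely the center of $v_3 \ne w_3$, so $m_2 \in \linesd{v_3}$, and since the link of $m_2$ is complete bipartite, $v_1 \in \lines{v_3}$ as well. Hence in either case $\deg v_1 \in \{\deg f_1, \deg f_2, \deg f_3\}$, and one only has to note that none of $(2,0,3), (1,0,2), (0,0,1)$ lies in $2\N^3$. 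There is no need to track which degrees of $w_3$ are inherited from $v_3$.
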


\begin{proof}
Denote $f =(f_1, f_2, f_3)$ the components of $f$.
Assume that $f$ is tame.
Let $v_3 = \llb f_1, f_2, f_3 \rrb$ be the associated
vertex in $\Comp$, and let $T$ be the good triangle associated with this
representative.
We have $\deg f_1 = (2,0,3)$, $\deg f_2 = (1, 0 ,2)$ and $\deg f_3 = (0,0,1)$.

On the one hand, if $f$ admits a $K$-reduction, by Corollary \ref{cor:SPF} one
of the $f_i$ (the pivot of the reduction) should have a degree of the form
$2\delta$: this is not the case.

On the other hand, the degrees of the $f_i$ are pairwise $\Z$-independent, so
for any distinct $i,j \in \{1,2,3\}$ and any polynomial $P$ we have $\dvirt
P(f_i,f_j) = \deg P(f_i,f_j)$.
This implies that if $f$ admits an elementary $T$-reduction, then one of the
$\deg f_i$ should be a $\N$-combination of the other two. Again this is not the
case.

Thus $v_3$ is not reducible, a contradiction.
\end{proof}

We shall prove Theorem \ref{thm:reducibility} in \S\ref{sec:the proof}.
In the next two sections we establish preliminaries technical results.

\subsection{Reduction of a strongly pivotal simplex} \label{sec:SP}

First we describe the set-up that we shall use in this section.

\begin{setup} \label{setup:pivotal}
Let $v_1, v_2, v_3$ be a simplex in $\Comp$ with Strong Pivotal Form $\PF(s)$
for some odd $s \ge 3$.
We choose some good representatives $v_1 =[ f_2 ]$, $v_2 = \llb f_1, f_2
\rrb$ and $v_3 = \llb f_1, f_2, f_3 \rrb$.
Condition \ref{SFinn} means that
$$ \deg f_1 = s\delta, \quad \deg f_2 = 2\delta.$$
By \ref{SFmin} we have $\deg f_1 > \deg f_3$.
 we have
$$ \deg f_3 \ge (s-2) \delta + \deg df_1 \wedge df_2 .$$
Condition \ref{SFout} is equivalent to the condition
$$\deg f_3 \not\in \N\deg f_2.$$
Since $\deg f_3 > (s-2)\delta \ge \delta$, we also obtain
\begin{equation}\label{degf2}
\deg f_3^2 > \deg f_2
\;\text{ and }\;
\deg f_2 \not\in \N \deg f_3.
\end{equation}
In particular, as already noticed in Lemma \ref{lem:m2 in SPF}, the minimal line
$m_2 = \llb f_2, f_3 \rrb$ of $v_3$
has no inner resonance.
Observe also that
\begin{equation}\label{degf1}
\deg f_1 \not\in \N \deg f_3 \text{ except if } s = 3 \text{ and } \deg f_1 = 2
\deg f_3.
\end{equation}
\end{setup}

\begin{lemma} \label{lem:no proper K reduction}
Assume Set-Up $\ref{setup:pivotal}$.
Then $v_3$ does not admit a normal proper $K$-reduction.
\end{lemma}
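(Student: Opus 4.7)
The plan is to suppose for contradiction that $v_3$ admits a normal proper $K$-reduction via an auxiliary vertex $w_3$, and derive a contradiction by comparing two bounds on $d(v_2)$. By Corollary \ref{cor:SPF} together with Proposition \ref{pro:foldable}\ref{foldable:3}, the pivotal simplex of this reduction has Strong Pivotal Form $\PF(3)$, so Lemma \ref{lem:normal proper} applies; I write its parameter as $\delta'$ (to avoid conflict with the $\delta$ of Set-Up \ref{setup:pivotal}). In particular, the $3$-degree of $v_3$ has middle coordinate $2\delta'$ and minimum coordinate in the interval $(\delta', \tfrac{3}{2}\delta']$.

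I match this data with Set-Up \ref{setup:pivotal}. The value $\deg f_2 = 2\delta$ is one coordinate of the $3$-degree of $v_3$, and since $\deg f_1 = s\delta > 2\delta$ is the top, $\deg f_2$ must be either the middle or the minimum coordinate. Either way, the middle coordinate is at least $2\delta$, forcing $\delta' \geq \delta$. Inequality (\ref{NP2}) of Lemma \ref{lem:normal proper}, combined with the non-negativity of the degree of the 2-form $dg_1 \wedge df$ for the representative $f$ of the middle vertex (non-zero because $g_1$ and that representative are algebraically independent as two components of $w_3$), yields $d(m_2) \geq 3\delta' \geq 3\delta$. Since $v_2 \neq m_2$ by \ref{SFmin}, property (\ref{NP7}) then gives the lower bound
$$d(v_2) > d(m_2) \geq 3\delta.$$

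For the opposing upper bound, condition \ref{SFdel} reads $\deg f_3 \geq (s-2)\delta + d(v_2)$, and by \ref{SFmin} the top degree of $v_3$ is $\deg f_1 = s\delta$, so $\deg f_3 < s\delta$. Subtracting yields $d(v_2) < 2\delta$. The inequalities $d(v_2) > 3\delta$ and $d(v_2) < 2\delta$ are incompatible because $\delta > 0$ (otherwise $f_1$ would be constant, contradicting the algebraic independence of the components of a tame automorphism). The only delicate step is the unified matching $\delta' \geq \delta$ across the two setups; the argument hinges on the fact that the three coordinates of the $3$-degree of $v_3$ are totally ordered, so the position of $2\delta$ in that $3$-degree unambiguously forces the middle coordinate to be at least $2\delta$ and avoids any case split on whether $\deg f_2$ is the middle or the minimum.
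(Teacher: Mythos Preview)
Your argument is correct, and the contradiction you reach is genuine. The two key ingredients—inequality (\ref{NP7}) giving $d(v_2) > d(m_2)$, and condition \ref{SFdel} giving an upper bound on $d(v_2)$—are the same as in the paper, but you assemble them differently.

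The paper's proof avoids your $\delta$-matching step entirely by working intrinsically with degrees of complements rather than explicit multiples of $\delta$. It runs the single chain
\[
d(m_2) > \deg(w_3 \smallsetminus m_2) \ge \deg(v_3 \smallsetminus m_2) > \deg(v_3 \smallsetminus v_2) \ge \Delta(v_2) > d(v_2) > d(m_2),
\]
invoking Lemma \ref{lem:dm2}\ref{dm2:1}, condition \ref{def:KPdegbis}, \ref{SFmin}, \ref{SFdel}, and (\ref{NP7}) in turn. Your route instead extracts from (\ref{NP2}) the numerical bound $d(m_2)\ge 3\delta'$, then shows $\delta'\ge\delta$ by locating $2\delta$ among the coordinates of the $3$-degree of $v_3$, and closes with $d(v_2)>3\delta$ versus $d(v_2)<2\delta$. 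Both are valid; the paper's version is shorter and sidesteps the need to reconcile the two parameters $\delta$ and $\delta'$ coming from the two different Strong Pivotal Form structures on (simplexes at) $v_3$, whereas your version makes that reconciliation explicit—which is a reasonable trade-off, since it forces you to check carefully that the two set-ups are compatible.
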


\begin{proof}
Assume $v_3$ admits a normal proper $K$-reduction, via $w_3$.
Then we get a contradiction as follows:
\begin{align*}
d(m_2) &> \deg(w_3 \smallsetminus m_2) &&\text{by Lemma
\ref{lem:dm2}\ref{dm2:1}}\\
 &\ge \deg(v_3 \smallsetminus m_2) &&\text{by \ref{def:KPdegbis}} \\
 &> \deg(v_3 \smallsetminus v_2) \ge \Delta(v_2) && \text{by \ref{SFmin} and
\ref{SFdel}}\\
 &> d(v_2) > d(m_2) &&\text{by (\ref{NP7}) in Lemma \ref{lem:normal
proper}.}\qedhere
\end{align*}
\end{proof}

\begin{lemma} \label{lem:no outer reduction}
Assume Set-Up $\ref{setup:pivotal}$, and $\deg f_1 \neq 2 \deg f_3$.
Assume that $v_3$ admits a weak elementary reduction $v_3'$ with center $v_2'$.
Then $v_2'$ passes through $v_1$.
\end{lemma}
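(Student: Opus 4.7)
The plan is to argue by contradiction, mirroring the scheme of Lemma \ref{lem:=lemme14}. Suppose $v_2'$ does not pass through $v_1 = [f_2]$; then $v_2' = \llb h_1, h_3 \rrb$ with $h_1 = f_1 + af_2$ and $h_3 = f_3 + bf_2$ for some $a,b \in \K$. Following the split used in that earlier proof, either Case A holds (where $(h_1, f_2, h_3)$ is still a good representative of $v_3$, so $\deg h_3 = \deg f_3$ and $\deg(v_3 \smallsetminus v_2') = 2\delta$), or Case B holds (which forces $s = 3$, $b \neq 0$ and $\deg f_3 < 2\delta$, with $(h_1, h_3, f_3)$ as good representative and $\deg(v_3 \smallsetminus v_2') = \deg f_3$). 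In either case Lemma \ref{lem:elem red} produces a non-affine $P \in \K[y,z]$ with $\deg P(h_1, h_3) \le \deg(v_3 \smallsetminus v_2')$, and since $\dvirt P(h_1, h_3) \ge 2\deg h_3 > \deg(v_3 \smallsetminus v_2')$ one has $\dvirt P > \deg P$. One also checks that $v_2'$ has no inner resonance: in Case B this is immediate from $\deg h_1/\deg h_3 = 3/2$, and in Case A a potential relation $\deg h_1 = k \deg h_3$ with $k \ge 2$ combined with $\deg h_3 > (s-2)\delta$ would force $s = 3$, $k = 2$, i.e.\ $\deg f_1 = 2\deg f_3$, which is excluded by hypothesis.

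With these preparations, Corollary \ref{cor:parachute}\ref{parachute-ii} gives $\deg P(h_1, h_3) > d(v_2')$, while Corollary \ref{cor:parachute}\ref{parachute-iii} forces $p = 2$ and $q \ge 3$ odd in the relation $p\deg h_1 = q\deg h_3$; hence $\deg h_3 \le 2s\delta/3$, which for $s = 3$ combined with \ref{SFout} pins $\deg f_3$ down to $6\delta/5$. Then Corollary \ref{cor:parachute h + phi}, applied to the algebraically independent triple $(h_1, h_3, \ast)$ with $\ast = f_2$ in Case A or $\ast = f_3$ in Case B, and using the calculations $dh_3 \wedge df_2 = -df_2 \wedge df_3$ and $dh_3 \wedge df_3 = b\,df_2 \wedge df_3$ respectively, yields the key inequality
\[
d(m_2) > (s-2)\delta.
\]
I then invoke the Principle of Two Maxima \ref{pro:ptm} on $(f_1, f_2, f_3)$ with $\alpha = \deg f_1 + d(m_2)$, $\beta = \deg f_2 + d(v_2'')$ for $v_2'' = \llb f_1, f_3 \rrb$, and $\gamma = \deg f_3 + d(v_2)$. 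The estimate $\alpha = s\delta + d(m_2) > (2s-2)\delta$, combined with $\gamma \le 2\deg f_3 - (s-2)\delta$ from \ref{SFdel}, gives $\gamma < \alpha$ (immediately for $s \ge 5$ via $\deg f_3 < s\delta$, and for $s = 3$ via the sharper bound $\deg f_3 \le 2\delta$ just obtained). Hence $\alpha = \beta$, so $d(v_2'') = (s-2)\delta + d(m_2)$. A short check shows that $d(v_2'')$ strictly dominates both $d(v_2)$ and $d(m_2)$, so the expansion $dh_1 \wedge dh_3 = df_1 \wedge df_3 + b\,df_1 \wedge df_2 - a\,df_2 \wedge df_3$ yields $d(v_2') = d(v_2'') > 2(s-2)\delta \ge 2\delta$, contradicting $\deg P(h_1, h_3) \le 2\delta$.

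The main obstacle is the delicate case $s = 3$: the PTM comparison $\gamma < \alpha$ cannot be extracted from SPF and the bound on $d(m_2)$ alone, and one must bring in the refined upper bound $\deg f_3 \le 2\delta$ supplied by Corollary \ref{cor:parachute}\ref{parachute-iii}. The hypothesis $\deg f_1 \neq 2\deg f_3$ is essential precisely at this point: it rules out the single inner-resonance configuration that would otherwise block the use of this corollary.
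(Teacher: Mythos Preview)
Your argument is correct, but it takes a substantially more circuitous route than the paper's. The key difference is a coordinate simplification: since $\deg f_1 > \deg f_2$, the paper first replaces $f_1$ by $f_1 + af_2$ so that $[f_1]$ lies on $v_2'$, writing $v_2' = \llb f_1, h_3 \rrb$ with $h_3 = f_3 + b f_2$. This eliminates the need for your $h_1$ and for the Case~A/Case~B split. From there the paper applies Corollary~\ref{cor:parachute h + phi} directly to the triple $(f_1, h_3, f_2)$; the appearance of $\deg df_1 \wedge df_2 = d(v_2)$ (rather than $d(m_2)$) in the resulting inequality lets \ref{SFdel} be fed in immediately, yielding a purely numerical inequality $0 \ge ps(q-2) + q(s-4)$ in the coprime pair $q > p \ge 2$ and the odd $s \ge 3$, which is visibly impossible. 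No appeal to the Principle of Two Maxima, no computation of $d(v_2'')$, and no comparison of the three differential degrees is needed.

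Your route, by contrast, first extracts $p = 2$ from Corollary~\ref{cor:parachute}\ref{parachute-iii}, then uses Corollary~\ref{cor:parachute h + phi} with a case-dependent third variable to obtain the lower bound $d(m_2) > (s-2)\delta$, invokes the Principle of Two Maxima to determine $d(\llb f_1,f_3\rrb)$, and finally closes via Corollary~\ref{cor:parachute}\ref{parachute-ii}. Each step checks out (including the $s=3$ subcase, where the bound $\deg f_3 \le 2s\delta/3$ combined with \ref{SFout} is exactly what is needed to get $\gamma < \alpha$), and the hypothesis $\deg f_1 \neq 2\deg f_3$ is used at the same point as in the paper, namely to rule out inner resonance of $v_2'$. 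But the paper's coordinate choice buys a one-shot contradiction in place of your four-stage argument.
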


\begin{proof}
By contradiction, assume that $v_2'$ does not pass through $v_1$.
Recall that $v_1= [ f_2 ]$ and $v_2 = \llb f_1, f_2 \rrb$ with $\deg
f_1 > \deg f_2$.
Up to replacing $f_1$ by $f_1 + af_2$ for some $a \in \K$, we can assume  $v_2
\cap v_2' = [ f_1 ]$ while keeping all the properties stated in Set-Up
\ref{setup:pivotal}.
Then let $[ h_3 ] = [f_3 + af_2]$ be the intersection of $v_2'$ with the
minimal line $\llb f_2, f_3 \rrb$ of $\lines{v_3}$.
Then we have $v_2' = \llb f_1, h_3 \rrb$,  $v_3 = [f_1, f_2, h_3]$, and by
Lemma \ref{lem:elem red} $v_3' = \llb f_1, f_2+\phi_2(f_1,h_3), h_3 \rrb$ for
some non-affine polynomial $\phi_2$, with
$\deg f_1 > \deg f_2 \ge \deg{\phi_2}(f_1,h_3)$.
We have either $\deg f_2 = \deg
h_3$ or $\deg f_3 = \deg h_3$, hence in any case by (\ref{degf2})
$$\dvirt \phi_2(f_1,h_3) > \deg \phi_2(f_1,h_3).$$
In particular $\phi_2(f_1,h_3) \not\in \K[h_3]$, and then the inequality $\deg
f_1 > \deg f_2$ implies
$$\dvirt \phi_2(f_1,h_3) > \deg f_2.$$
By Lemma \ref{lem:p and q} there exist coprime $q > p$ such that
$$q \deg h_3 = p \deg f_1 = ps\delta.$$
Moreover if $p = 1$, we would have $\deg h_3 = \deg f_3$ and $\deg f_1 = 2 \deg
f_3$, in contradiction with our assumption. Hence we have $q > p \ge 2$.

Observe that even if $(f_2, h_3)$ is not a good representative of the minimal
line in $\lines{v_3}$, in any case we have $\deg h_3 \ge \deg(v_3 \smallsetminus
v_2) =   \deg f_3$, and Property \ref{SFdel} gives
$$\deg h_3 \ge (s-2)\delta + \deg df_1 \wedge df_2.$$
Then Corollary \ref{cor:parachute h + phi} yields:
\begin{align*}
2\delta = \deg f_2 \ge \deg (f_2 + \phi_2(f_1,h_3)) &\ge q\deg h_3 -
\deg df_1\wedge df_2 - \deg h_3 \\
& \ge q\deg h_3 - \deg h_3 + (s-2)\delta - \deg h_3.
\end{align*}
Multiplying by $q$ and replacing $q\deg h_3 = ps\delta$ we get:
$$0 \ge (pqs - 2ps + sq-4q )\delta,$$
hence
$$0 \ge ps (q-2) + q(s-4).$$
This implies $s = 3$, and we get the contradiction:
\begin{equation*}
0 \ge 3pq - 6p - q = (3p - 1)(q-2) -2 \ge 5 - 2.\qedhere
\end{equation*}
\end{proof}

\begin{lemma} \label{lem:no minimal reduction}
Assume Set-Up $\ref{setup:pivotal}$.
Assume that $v_3$ admits an elementary reduction $v_3'$ with center $m_2$, the
minimal line of $v_3$.
Assume moreover that $v_3'$ is reducible.
Then $v_3'$ also admits an elementary reduction with center $m_2$.
\end{lemma}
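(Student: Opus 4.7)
My plan is to argue by contradiction: suppose $v_3'$ admits no elementary reduction with center $m_2$, and use the reducibility of $v_3'$ to produce a reduction of a different type, which I show contradicts Set-Up~\ref{setup:pivotal}.

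First, applying Lemma~\ref{lem:elem red} to the $m_2$-reduction $v_3 \to v_3'$, and combining with Lemma~\ref{lem:m2 in SPF} (no inner resonance in $m_2$) and Lemma~\ref{lem:p and q}\ref{p and q:2}, I would write $v_3' = \llb f_1', f_2, f_3 \rrb$ with $f_1' = f_1 + P(f_2,f_3)$, $\deg P(f_2,f_3) = \deg f_1 = s\delta$ and $\deg f_1' < s\delta$; by the non-resonance of $m_2$ one has $\dvirt P(f_2,f_3) = \deg P(f_2,f_3)$, so $P$ has a unique top monomial $\bar f_2^a \bar f_3^b$, and $\bar f_1$ is a scalar multiple of $\bar f_2^a \bar f_3^b$. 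Existence of an elementary reduction of $v_3'$ with center $m_2$ is then equivalent to $\bar f_1'$ being a scalar multiple of some monomial $\bar f_2^{a'} \bar f_3^{b'}$ with $a' \deg f_2 + b' \deg f_3 = \deg f_1'$ and $a' + b' \ge 2$.

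Assuming this fails, the reducibility of $v_3'$ applied to a good triangle in $\lines{v_3'}$ containing $m_2$ forces either an elementary reduction of $v_3'$ with center $v_2'' \neq m_2$, or a $K$-reduction of $v_3'$. For the elementary case I would follow the strategy of Lemma~\ref{lem:no outer reduction}: using Corollary~\ref{cor:parachute h + phi} applied to $v_3'$, together with the Set-Up bounds $\deg f_2 = 2\delta$ and $\deg f_3 > (s-2)\delta$, derive a polynomial inequality in coprime integers $p,q$ and $s$ which fails for $s$ odd. For the $K$-reduction case, Corollary~\ref{cor:SPF} endows its pivotal simplex with Strong Pivotal Form, and Lemma~\ref{lem:dm2}\ref{dm2:1} provides a lower bound on the differential degree of the minimal line of $v_3'$; comparing this with the differential degree $d(m_2)$ already controlled by the Set-Up on $v_3$ (via the shared line $m_2 \in \linesd{v_3} \cap \linesd{v_3'}$) produces a contradiction.

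The main obstacle is that $\deg f_1'$ is not directly pinned down by the Set-Up, so the relative order of $\deg f_1'$, $\deg f_2$, $\deg f_3$ — and hence the identity of the minimal line of $v_3'$ — requires careful case analysis. The delicate sub-case $s = 3$ with $\deg f_1 = 2\deg f_3$ (which is excluded from Lemma~\ref{lem:no outer reduction} but which can occur here) must be handled separately, since then $P(f_2,f_3)$ may simply be $cf_3^2$ and the behavior of elementary reductions with center $m_2$ has to be tracked by direct computation.
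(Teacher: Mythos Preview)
Your high-level architecture is right: write $v_3' = \llb f_1', f_2, f_3 \rrb$, assume $v_3'$ has no $m_2$-reduction, and rule out every other reduction using reducibility. But two steps in your plan would not go through as stated.

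\medskip

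\textbf{First gap.} The claim that non-resonance of $m_2$ forces $\dvirt P(f_2,f_3) = \deg P(f_2,f_3)$ is false. Non-\emph{inner}-resonance of $m_2$ (Lemma~\ref{lem:m2 in SPF}) only rules out $\deg f_2 \in \N\deg f_3$ and $\deg f_3 \in \N\deg f_2$; it does not rule out a relation $p\deg f_2 = q\deg f_3$ with $p,q \ge 2$ coprime, which by Lemma~\ref{lem:p and q} is exactly what allows $\dvirt P > \deg P$. In fact the paper treats the case $\dvirt \phi_1(f_2,f_3) > \deg \phi_1(f_2,f_3)$ as a genuine (and nontrivial) sub-case, pinning down the degrees to $\deg f_1 = 3\delta$, $\deg f_2 = 2\delta$, $\deg f_3 = \tfrac43\delta$ and then excluding all reductions of $v_3'$ by direct degree arithmetic.

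\medskip

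\textbf{Second gap.} You describe the case $s=3$, $\deg f_1 = 2\deg f_3$ as a ``delicate sub-case to be handled separately''. In the paper's proof this is not a side case but the \emph{only} remaining case once $\dvirt \phi_1 = \deg \phi_1$ is assumed: one quickly shows (using $\deg f_1 \notin \N\deg f_2$, $\deg f_1 \notin \N\deg f_3$ unless $s=3$, and $\deg f_2 + \deg f_3 > \deg f_1$) that $\deg f_1 = 2\deg f_3$ is forced, hence $s=3$ and $\phi_1 = af_3^2 + cf_3 + ef_2$ with $a \neq 0$. The bulk of the work is then a careful computation of the differential degrees $\deg df_1' \wedge df_2$, $\deg df_1' \wedge df_3$, $\deg df_2 \wedge df_3$ (using the Principle of Two Maxima and the explicit form of $\phi_1$), together with a lower bound $\deg f_1' > \delta$ obtained from the Parachute Inequality with $r=3$. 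These are then used to exclude, one by one, elementary $K$-reductions (via Corollary~\ref{cor:center of a K red}), normal proper $K$-reductions (via Lemma~\ref{lem:normal proper}), and elementary reductions with center $\llb f_1',f_2\rrb$ or $\llb f_1',f_3\rrb$ (via Proposition~\ref{pro:SPF}\ref{SPF1} and Corollary~\ref{cor:parachute h + phi}). Your proposed shortcuts --- adapting Lemma~\ref{lem:no outer reduction} for the elementary case, and comparing $d(m_2)$ with Set-Up data for the $K$-case --- do not supply the needed information; in particular Set-Up~\ref{setup:pivotal} controls $d(v_2)$, not $d(m_2)$, and the minimal line of $v_3'$ is not a priori $m_2$.
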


\begin{proof}
By Lemma \ref{lem:elem red} we can write $v_3' = \llb f_1', f_2, f_3 \rrb$,
where $f_1'$ has the form
$$f_1' = f_1+\phi_1(f_2,f_3)$$
for some non-affine polynomial $\phi_1$, with $\deg f_1 = \deg
\phi_1(f_2,f_3) >
\deg f_1'$.
Without loss in generality we can assume that $\phi_1$ has no constant term.
Moreover we can also assume
\begin{equation} \label{eq:f1'notNf_2+Nf_3}
\deg f_1' \not\in \N\deg f_2 + \N \deg f_3,
\end{equation}
otherwise the result is immediate.

Working with the good triangle associated with the representative $(f_1', f_2,
f_3)$, we want to prove that $v_3'$ does not admit a $K$-reduction, nor an
elementary reduction with center $\llb f_1', f_2 \rrb$ or $\llb f_1', f_3 \rrb$:
Indeed since $v_3'$ is reducible by assumption, the only remaining possibility
will be that $v_3'$ admits an elementary reduction with center $m_2 = \llb f_2,
f_3 \rrb$, as expected.
The proof is quite long, so we prove several facts along the way.
The first one is:

\begin{fact} \label{fact:dvirt phi1}
If $\dvirt \phi_1(f_2,f_3) > \deg \phi_1(f_2,f_3)$ then Lemma $\ref{lem:no
minimal reduction}$ holds.
\end{fact}

\begin{proof}
If $\dvirt \phi_1(f_2,f_3) > \deg \phi_1(f_2,f_3)$ then by Lemma \ref{lem:p and
q}, there exist coprime $p,q$  such that $q \deg f_2 = p\deg f_3$.
Observe that \ref{SFout} and (\ref{degf2}) imply $p,q \neq 1$.
We have
\begin{align*}
s\delta = \deg f_1 >\deg f_1' & = \deg (f_1 + \phi_1(f_2,f_3)) &&\text{by Lemma
\ref{lem:elem red}}\\
 &> p\deg f_3 - \deg df_1\wedge df_2 - \deg f_3 &&\text{by Corollary
\ref{cor:parachute h + phi}}\\
 &\ge p\deg f_3 -(\deg f_3 -(s-2)\delta)) - \deg f_3 &&\text{by \ref{SFdel}} \\
 &= (p-2)\deg f_3 - 2\delta + s\delta.
\end{align*}
Multiplying by $p$, recalling that $\deg f_2 = 2\delta$, $p\deg f_3 = q \deg
f_2$ and putting $\delta$ in factor we get:
\begin{equation} \label{eq:2q(p-2)}
0 > 2q(p-2) - 2p
  = (2p-4)(q-1) - 4
  \ge 2p -8.
\end{equation}
It follows that $3 \ge p$.
Now we deduce $p = 3$.
If $p = 2$, then $\deg f_3 = q\delta$.
Condition \ref{SFdel} gives
$$s\delta > \deg f_3 > (s-2)\delta,$$
so $q = s-1$, which contradicts $q$ coprime with $2$.

Replacing $p = 3$ in the first inequality of (\ref{eq:2q(p-2)}) we get $6 > 2q$,
hence $q = 2$.
We obtain $\deg f_3 = \frac{4}{3}\delta$, and the condition $\deg f_3 >
(s-2)\delta$ yields $s = 3$.
Finally
\begin{equation} \label{eq:degf1,f2,f3,f1'}
\deg f_1 = 3\delta, \quad \deg f_2 = 2\delta, \quad \deg f_3 = \tfrac{4}{3}
\delta, \quad 3\delta > \deg f_1' > \tfrac{7}{3} \delta.
\end{equation}

First we observe that these values are not compatible with $v_3'$ admitting a
$K$-reduction.
Indeed, by Corollary \ref{cor:SPF} an elementary $K$-reduction would imply $2
\deg f_1' = s' \deg f_j$ for some odd integer $s' \ge 3$ and $j \in \{2,3\}$,
and one checks from
(\ref{eq:degf1,f2,f3,f1'}) that there is no such relation.
Indeed if $s' = 3$, we have $2\deg f_1 > \frac{14}{3}\delta > 4\delta = s'\deg
f_3$, and if $s' \ge 5$ we have $s' \deg f_3 \ge \frac{20}{3} \delta > 6 \delta
> 2 \deg f_1'$.
Finally, for any $s' \ge 3$ we have $s' \deg f_2 \ge 6 \delta > 2 \deg f_1'$.

Now if $v_3'$ admits a normal proper $K$-reduction, then, noting that $\deg
f_1' > \deg f_2 > \deg f_3$, there should exist $\delta' \in \N^3$ such that all
the conclusions of Lemma \ref{lem:normal proper} hold, with $f_1', f_2, f_3,
\delta'$ instead of $f_1, f_2, f_3, \delta$.
In particular (\ref{NP4}) gives $\deg f_2 = 2 \delta'$, hence $\delta' =
\delta$.
Now since $3\delta > \deg f_1'$, by (\ref{NP4}) and (\ref{NP5}) from Lemma
\ref{lem:normal proper} would imply $\deg f_3 = \frac32 \delta$, incompatible
with $\deg f_3 = \tfrac{4}{3} \delta$.

On the other hand, if $v_3'$ admits an elementary reduction with center $\llb
f_1', f_j \rrb$ with $j = 2$ or $3$,
then, denoting by $k$ the integer such that $\{j,k\} = \{2,3\}$, there would
exist a non-affine polynomial $\phi$ such that $\deg f_k > \deg (f_k +
\phi(f_1', f_j))$, and in particular
$\deg f_1' > \deg f_k = \deg \phi(f_1', f_j)$.
Since $\llb f_2, f_3 \rrb$ has no inner resonance this implies
$\phi(f_1', f_j) \not\in \K[f_j]$, so that $\dvirt \phi(f_1', f_j) \ge \deg
f_1'$, and finally $ \dvirt \phi(f_1', f_j) > \deg \phi(f_1', f_j)$.
Now by (\ref{eq:f1'notNf_2+Nf_3}) we can apply
Corollary \ref{cor:parachute}\ref{parachute-iii} to get  an odd
integer $q'\ge 3$ such that $2 \deg f'_1 = q' \deg f_j$: again
this is not compatible with (\ref{eq:degf1,f2,f3,f1'}).
\end{proof}

From now on we assume $\dvirt \phi_1(f_2,f_3) = \deg \phi_1(f_2,f_3)$.

\begin{fact}
\label{fact:deg f1}
$\deg f_1 = 2\deg f_3$.
\end{fact}

\begin{proof}
By contradiction, assume $\deg f_1 \neq 2\deg f_3$.
Then $\deg f_1 \not\in \N \deg f_3$ by (\ref{degf1}).
Moreover we know that $\deg f_1 \not\in \N \deg f_2$ and $\deg f_2 + \deg f_3 >
\deg f_1$.
This is not compatible with the equalities
\begin{equation*}
\deg f_1 = \deg \phi_1(f_2,f_3) = \dvirt \phi_1(f_2,f_3). \qedhere
\end{equation*}
\end{proof}

We deduce from (\ref{degf1}) and Fact \ref{fact:deg f1} that $s = 3$, so that
$$\deg f_1 = 3\delta, \quad \deg f_2 = 2\delta, \quad \deg f_3 = \tfrac{3}{2}
\delta,$$
and there exist $a,c,e \in \K$ such that (recall that $\phi_1$ has no constant
term):
\begin{equation}
\label{eq:phi1}
\phi_1(f_2,f_3) = af_3^2 + cf_3 + ef_2 \text{ with } a\neq 0.
\end{equation}

Now come some technical facts.

\begin{fact}
\label{fact:df1 df3}
$\deg df_1 \wedge df_3 = \deg df_1' \wedge df_3 = \delta + \deg df_2 \wedge df_3
.$
\end{fact}

\begin{proof}
Recall from Set-Up \ref{setup:pivotal} that we have $\frac{3}{2}\delta = \deg
f_3 > \deg df_1 \wedge df_2 $, so
$$3\delta > \deg f_3 + \deg   df_1 \wedge df_2.$$
Since $\deg f_1 = 3\delta$ we get
$$\deg f_1 + \deg   df_2 \wedge df_3 > \deg f_3 + \deg   df_1 \wedge df_2.$$
By the Principle of Two Maxima \ref{pro:ptm} we have
$$\deg f_2 + \deg   df_1 \wedge df_3 = \deg f_1 + \deg   df_2 \wedge df_3.$$
Passing $\deg f_2$ to the right-hand side we get one of the expected
equalities
\begin{equation*}
\deg   df_1 \wedge df_3 = \deg f_1 - \deg f_2 + \deg   df_2 \wedge df_3 = \delta + \deg df_2 \wedge df_3.
\end{equation*}
From (\ref{eq:phi1}) we get
$$df_1' \wedge df_3 =  df_1 \wedge df_3 + e\,df_2\wedge df_3.$$
By the previous equality we obtain $\deg  df_1 \wedge df_3 = \deg df_1' \wedge df_3$.
\end{proof}

\begin{fact}
\label{fact:df1' df2}
$\deg df_1' \wedge df_2 = \frac{3}{2}\delta + \deg df_2 \wedge df_3.$
\end{fact}

\begin{proof}
From (\ref{eq:phi1}) we get
$$df_1' \wedge df_2 = df_1 \wedge df_2 + 2af_3\, df_3\wedge df_2 + c\, df_3 \wedge df_2.$$
By \ref{SFdel} we have $\deg f_3 > \deg df_1 \wedge df_2$, so $2af_3\, df_3\wedge df_2$ has strictly larger degree than the two other terms of the right-hand side.
Finally,
\begin{equation*}
\deg df_1' \wedge df_2 = \deg f_3\,df_3\wedge df_2 = \tfrac{3}{2}\delta + \deg df_2 \wedge df_3. \qedhere
\end{equation*}
\end{proof}

\begin{fact}
\label{fact:deg f1'}
$\deg f_1' > \delta.$
\end{fact}

\begin{proof}
Consider $P =  f_1 + a y^2 + c y + ef_2 \in \K[f_1,f_2][y]$.
We have
$$\dvirt P(f_3) = \deg f_1 > \deg f_1' = \deg P(f_3).$$
On the other hand $P' = 2ay + c$, so that $\dvirt P'(f_3) =\deg P'(f_3) =  \deg f_3$.
Thus $m(P,f_3) = 1$, and the Parachute Inequality \ref{pro:parachute} yields
\begin{align*}
\deg f_1' = \deg P(f_3) &\ge \dvirt P(f_3) + \deg df_1 \wedge df_2 \wedge df_3 - \deg df_1 \wedge df_2 - \deg f_3\\
            &> \deg f_1  - \deg df_1 \wedge df_2 - \deg f_3\\
            &= \deg f_3  - \deg df_1 \wedge df_2.
\end{align*}
Recall that by \ref{SFdel} we have
$$ \deg f_3 \ge \deg f_1 - \deg f_2  + \deg df_1 \wedge df_2 = \delta + \deg df_1 \wedge df_2.$$
Replacing in the previous inequality we get the result.
\end{proof}

\begin{fact}
\label{fact:non resonant}
The vertices $\llb f_1', f_2 \rrb$ and $\llb f_1', f_3 \rrb$ do not have outer resonance in $v_3' = \llb f_1', f_2, f_3 \rrb$.
\end{fact}

\begin{proof}
We have (the last inequality is Fact \ref{fact:deg f1'}):
$$\deg f_2 = 2\delta, \quad \deg f_3 = \tfrac32 \delta, \quad \deg f_1' >
\delta.$$
Moreover these degrees are pairwise distinct, because $(f_1', f_2, f_3)$ is a
good representative of $v_3'$.
This implies that $\deg f_3$ is not a $\N$-combination of $\deg f_1'$ and $\deg
f_2$, and $\deg f_2$  is not a $\N$-combination of $\deg f_1'$ and $\deg f_3$.
\end{proof}

Now we are ready to finish the proof of Lemma \ref{lem:no minimal reduction}.
Observe that by Lemma \ref{lem:normalization}, to prove that $v_3'$ does not
admit a $K$-reduction it is sufficient to exclude normal $K$-reductions.
Recall also that from Facts \ref{fact:df1 df3} and \ref{fact:df1' df2} we have
\begin{equation} \label{eq:three 2-forms}
\deg f_1' \wedge f_2 > \deg f_1' \wedge f_3 > \deg f_2 \wedge f_3.
\end{equation}

\begin{fact} \label{fact:elementary K}
$v_3'$ does not admit an elementary $K$-reduction.
\end{fact}

\begin{proof}
By contradiction, assume that $v_3'$ admits an elementary $K$-reduction.
By (\ref{eq:three 2-forms}) and Corollary \ref{cor:center of a K
red}\ref{center:1}, it follows that the center of the reduction is $\llb f_2,
f_3 \rrb$.
But then by Corollary \ref{cor:SPF} we should have $2\deg f_2 = s' \deg f_3$ for some odd integer $s' \ge 3$, and this is not compatible with $\deg f_2 = 2\delta$ and $\deg f_3 = \frac32 \delta$.
\end{proof}

\begin{fact} \label{fact:proper K}
$v_3'$ does not admit a normal proper $K$-reduction.
\end{fact}

\begin{proof}
By contradiction, assume that $v_3'$ admits a normal proper $K$-reduction $u_3$ via $w_3$.
By comparing (\ref{eq:three 2-forms}) with (\ref{NP6}) from Lemma
\ref{lem:normal proper}, we obtain that the center $v_2'$ of $v_3' \ne w_3$ is
equal to $v_2' = \llb f_2, f_3 \rrb$, that $v_2'$ is the minimal line of
$v_3'$, and that $\deg w_3 = 3\delta + \deg f_2 + \deg f_3 = \deg v_3$.
Since $\llb f_2, f_3 \rrb$ is also the minimal line of $v_3$, we
get that $u_3$ is also a proper $K$-reduction of $v_3$ via $w_3$.
This is a contradiction with Lemma \ref{lem:no proper K reduction}.
\end{proof}

Now we are left with the task of proving that $v_3'$ does not admit an
elementary reduction with center $\llb f_1', f_2 \rrb$ or $\llb f_1', f_3 \rrb$.

\begin{fact} \label{fact:no f1'f2}
$v_3'$ does not admit an elementary reduction with center $\llb f_1', f_2 \rrb$.
\end{fact}

\begin{proof}
By contradiction, assume there exists $\phi_3(f_1', f_2)$ such that
$$\deg{\phi_3(f_1', f_2)} = \deg{f_3}.$$
By Fact \ref{fact:non resonant} we have $\dvirt \phi_3(f_1',f_2) > \deg \phi_3(f_1',f_2)$, and on the other hand Proposition \ref{pro:SPF}\ref{SPF1} gives
$$\tfrac{3}{2}\delta = \deg f_3 = \deg \phi_3(f_1',f_2) > \deg df_1'\wedge df_2.$$
This is a contradiction with Fact \ref{fact:df1' df2}.
\end{proof}

\begin{fact} \label{fact:no f1'f3}
$v_3'$ does not admit an elementary reduction with center $\llb f_1', f_3 \rrb$.
\end{fact}

\begin{proof}
By contradiction, assume there exists $\phi_2(f_1', f_3)$ such that
$\deg f_2 > \deg (f_2 +  \phi_2(f_1', f_3))$, which implies
$$\deg{\phi_2(f_1', f_3)} = \deg{f_2}.$$
By Fact \ref{fact:non resonant} we have $\dvirt \phi_2(f_1',f_3) > \deg \phi_2(f_1',f_3)$.
By Lemma \ref{lem:p and q} there exist coprime $p,q \in \N^*$ and $\gamma \in
\N^3$ such that
$$\deg f_1' = p\gamma, \qquad \deg f_3 = q\gamma.$$
Corollary \ref{cor:parachute}\ref{parachute-i} then yields
$$ 2\delta = \deg f_2 = \deg \phi_2(f_1',f_3) \ge
pq\gamma + \deg df_1' \wedge df_3 - p\gamma - q\gamma.$$
By Fact \ref{fact:df1 df3} we know that $ \deg df_1' \wedge df_3 = \delta + \deg df_2\wedge df_3$, so that
\begin{equation}
\label{eq:pq-p-q}
\delta \ge \deg df_2\wedge df_3 + (pq - p - q) \gamma.
\end{equation}
We know that  $q\gamma = \deg f_3 = \frac{3}{2}\delta > \delta$, and by Fact \ref{fact:deg f1'} we have $p\gamma = \deg f_1' > \delta$, so
$$ \min\{p,q\} > pq - p - q .$$
By Fact \ref{fact:non resonant} we know that $p \neq 1$ and $q \neq 1$.
The only possibilities for the pair $(p,q)$ are then $(2,3)$ or $(3,2)$.

If $(p,q) = (2,3)$ then Fact \ref{fact:deg f1'} gives the contradiction
$$ 3\delta = 2 \deg f_3 = 3\deg f_1' > 3\delta.$$

If $(p,q) = (3,2)$, the equalities $\deg f_1' = 3 \gamma$ and $\deg f_3 = 2
\gamma = \frac32 \delta$  yield
$$\gamma = \tfrac{3}{4}\delta, \quad \deg f_1' = \tfrac{9}{4} \delta,
\quad pq - p-q = 1.$$
The inequality (\ref{eq:pq-p-q}) becomes
$$ \tfrac{\delta}{4} \ge \deg df_2\wedge df_3.$$
Corollary \ref{cor:parachute h + phi} then yields
\begin{align*}
\deg (f_2 + \phi_2(f_1',f_3)) &> 2\deg f_1' - \deg d f_2 \wedge d f_3 - \deg
f_1' \\
	&\ge \left( \tfrac{9}{4} - \tfrac{1}{4} \right) \delta \\
	&= 2\delta.
\end{align*}
This is a contradiction with $2\delta = \deg f_2 > \deg (f_2 +  \phi_2(f_1',f_3)).$
\end{proof}

This finishes the proof of Lemma \ref{lem:no minimal reduction}.
\end{proof}

We now introduce an induction hypothesis that will be the corner-stone for the proof of the Reducibility Theorem \ref{thm:reducibility}.

\begin{IH}[for degrees $\nu, \mu$ in $\N^3$] \label{IH}
Let $v_3$ be a reducible vertex such that $\nu \ge \deg v_3$.
Then any neighbor $u_3$ of $v_3$ with $\mu \ge \deg u_3$ also is reducible.
\end{IH}

We shall refer to this situation by saying that we can apply the $(\nu, \mu)$-Induction Hypothesis \ref{IH} to $v_3 \ne u_3$.

\begin{lemma} \label{lem:optimal normal}
Let $\mu > \nu \in \N^3$ be two consecutive degrees, and assume Induction Hypothesis $\ref{IH}$ for degrees $\nu, \nu$.
Let $v_3$ be a reducible vertex distinct from $[\id]$, with $\mu \ge \deg v_3$,
and let $T$ be a good triangle for $v_3$.
Then there exists a reducible vertex $u_3$ such that
\begin{itemize}
\item either $u_3$ is an optimal elementary $T$-reduction or an optimal
elementary $K$-reduction of $v_3$;
\item or $u_3$ is a normal proper $K$-reduction of $v_3$.
\end{itemize}
\end{lemma}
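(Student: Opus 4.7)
The plan is to exploit reducibility of $v_3$ together with the $(\nu,\nu)$-Induction Hypothesis to upgrade an arbitrary reduction coming from the definition of reducibility into one of the three required forms. Applied to the given triangle $T$, reducibility yields a reducible vertex $u_3^{(0)}$ which is either a $T$-elementary reduction of $v_3$, an elementary $K$-reduction of $v_3$, or a (normal or non-normal) proper $K$-reduction of $v_3$. In all cases $\deg u_3^{(0)} < \deg v_3 \le \mu$, using Proposition~\ref{pro:K reduces degree} for proper $K$-reductions, and since $\mu > \nu$ are consecutive we obtain $\deg u_3^{(0)} \le \nu$.

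If $u_3^{(0)}$ is a normal proper $K$-reduction, set $u_3 := u_3^{(0)}$ and stop. If $u_3^{(0)}$ is a non-normal proper $K$-reduction, the Normalization Lemma~\ref{lem:normalization} produces a neighbor $u_3^{(1)} \ne u_3^{(0)}$ that is an elementary $K$-reduction of $v_3$; in particular $\deg u_3^{(1)} \le \nu$, and the $(\nu,\nu)$-Induction Hypothesis applied to $u_3^{(0)} \ne u_3^{(1)}$ shows that $u_3^{(1)}$ is reducible. This reduces the non-normal case to the elementary case.

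It remains to treat the case where $u_3^{(0)}$ (or, after normalization, $u_3^{(1)}$) is a $T$-elementary reduction or an elementary $K$-reduction with some center $v_2$. Here I would take $u_3$ to be a neighbor of $v_3$ with center $v_2$ of minimal degree; such a minimum exists because $\N^3$ is well-ordered. By construction $u_3$ is an optimal elementary reduction of $v_3$ with center $v_2$. Crucially, the type of reduction is preserved: the $T$-elementary condition is simply $v_2 \in T$, while for $K$-reductions the conditions (K\ref{def:Kinn})--(K\ref{def:Kmin}) depend only on $v_2$ and $v_3$, and (K\ref{def:Kdeg}) and (K\ref{def:Kdel}) are preserved since $\deg u_3 \le \deg u_3^{(0)}$. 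If $u_3$ coincides with the previous vertex we are done; otherwise they are neighbors with common center $v_2$, both of degree at most $\nu$, and the $(\nu,\nu)$-Induction Hypothesis applied to this pair yields reducibility of $u_3$.

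The proof is essentially a bookkeeping argument: the work lies in verifying that each upgrade (normalization, then optimization) preserves the defining conditions of the relevant class of reduction and keeps the degrees small enough to invoke the IH at each step. No new technical input should be needed beyond Lemma~\ref{lem:normalization}, Proposition~\ref{pro:K reduces degree}, and the $(\nu,\nu)$-Induction Hypothesis already assumed; the only mild subtlety is the verification that the optimization step does not destroy the four conditions defining an elementary $K$-reduction, which follows immediately from the fact that all these conditions are monotone in $\deg u_3$.
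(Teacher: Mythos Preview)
Your proof is correct and follows essentially the same approach as the paper: start from the reduction furnished by reducibility, handle the proper $K$-case via normalization (Lemma~\ref{lem:normalization}), and in the elementary case pass to an optimal reduction with the same center, using the $(\nu,\nu)$-Induction Hypothesis to transfer reducibility along the neighbor relation. Your treatment is in fact slightly more explicit than the paper's in one respect: you spell out why optimization preserves the elementary $K$-reduction property (conditions (K\ref{def:Kinn})--(K\ref{def:Kmin}) depend only on $v_2,v_3$, while (K\ref{def:Kdeg}) and (K\ref{def:Kdel}) are monotone in $\deg u_3$), whereas the paper dismisses this with ``by construction''.
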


\begin{proof}
Let $v_3(1)$ be the first step of a reduction path from $v_3$, with respect to
the triangle $T$.
By this we mean that $v_3(1)$ is a reducible vertex that is either a
$T$-elementary reduction of $v_3$, or a $K$-reduction of $v_3$.

First assume that $v_3(1)$ is an elementary reduction of $v_3$.
If this reduction is optimal, we can take $u_3 = v_3(1)$ and we are done.
If the reduction is non-optimal, denote by $v_2$ the center of $v_3 \ne
v_3(1)$.
Then let $u_3$ be an optimal elementary reduction of $v_3$ with the same center
$v_2$.
Since $\mu > \nu $ are two consecutive degrees, the inequalities
$$\mu \ge \deg v_3, \quad \deg v_3 > \deg v_3(1) \quad \text{ and } \quad
\deg v_3 > \deg u_3$$
imply
$$\nu \ge \deg v_3(1)  \quad  \text{ and } \quad \nu \ge \deg u_3.$$
By the $(\nu,\nu)$-Induction Hypothesis \ref{IH} applied to $v_3(1) \ne u_3$
we get that $u_3$ is reducible, as expected.
Moreover since $u_3$ is an elementary reduction with center $v_2$, by
construction $u_3$ is an optimal elementary $T$-reduction or an optimal
elementary $K$-reduction of $v_3$.

Now assume that $v_3(1)$ is a proper $K$-reduction of $v_3$.
If this reduction is normal, we are done.
Otherwise, by Lemma \ref{lem:normalization}, there exists $u_3'$ an elementary
$K$-reduction of $v_3$, such that $u_3' \ne v_3(1)$.
By the $(\nu, \nu)$-Induction Hypothesis \ref{IH} applied to $v_3(1) \ne u_3'$
we get that $u_3'$ is reducible.
So we can take $u_3'$ as the first step of a reduction path from $v_3$,
and we are reduced to the first case of the proof.
\end{proof}

\begin{proposition} \label{pro:slide}
Let $\mu > \nu \in \N^3$ be two consecutive degrees, and assume Induction Hypothesis $\ref{IH}$ for degrees $\nu, \nu$.
Let $v_3$ be a vertex that is part of a simplex $v_1, v_2, v_3$ with Strong Pivotal Form $\PF(s)$ for some odd $s \ge 3$.
Let $T$ be any good triangle compatible with the simplex $v_1$, $v_2$, $v_3$.
Assume that $v_3$ is reducible, and that $\mu \ge \deg v_3$.
Then
\begin{enumerate}
\item \label{slide1} There exists a reduction path from $v_3$ to $[\id]$ that
starts with an elementary $T$-reduction or an elementary $K$-reduction;
\item \label{slide2} Any elementary $T$-reduction or elementary $K$-reduction
of $v_3$ admits $v_2$ as a center;
\item \label{slide3} Any optimal elementary $T$-reduction of $v_3$ is an elementary $K$-reduction;
\item \label{slide4} There exists an elementary $K$-reduction $u_3$ of $v_3$ with center $v_2$, such that $u_3$ is reducible.
\end{enumerate}
\end{proposition}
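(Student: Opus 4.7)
I will establish the four assertions in the order (1), (4), (2), (3). The main engines are Lemma~\ref{lem:optimal normal}, which supplies a reducible first step in a reduction path, together with Lemma~\ref{lem:no proper K reduction} (forbidding normal proper $K$-reductions in Set-Up~\ref{setup:pivotal}), Lemma~\ref{lem:no outer reduction} (forcing reduction centers through the pivot $v_1$), and Lemma~\ref{lem:no minimal reduction} combined with a descent argument (to exclude the minimal line as a center).

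For (1), I apply Lemma~\ref{lem:optimal normal} to $v_3$ and $T$. The hypotheses are met: $v_3$ is reducible, $\mu \ge \deg v_3$, $\mu > \nu$ are consecutive, and the $(\nu,\nu)$-Induction Hypothesis is assumed. The lemma produces a reducible $u_3$ that is either an optimal elementary $T$-reduction, an optimal elementary $K$-reduction, or a normal proper $K$-reduction of $v_3$. The third alternative is ruled out by Lemma~\ref{lem:no proper K reduction}. Prepending the edge $v_3 \to u_3$ to a reduction path from $u_3$ to $[\id]$ then yields~(1).

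For (4) I show that the $u_3$ just produced is an elementary $K$-reduction with center $v_2$. Let $v_2'$ be the center of $v_3 \ne u_3$. When $\deg f_1 \ne 2 \deg f_3$, Lemma~\ref{lem:no outer reduction} forces $v_2'$ through $v_1$; the exceptional case $s = 3,\ \deg f_3 = \tfrac{3}{2}\delta$ is treated by direct inspection in the spirit of Facts~\ref{fact:elementary K}--\ref{fact:no f1'f3} in the proof of Lemma~\ref{lem:no minimal reduction}, using the sharp bounds of Corollary~\ref{cor:parachute}\ref{parachute-iii} and Lemma~\ref{lem:dm2}. To exclude $v_2' = m_2$ I argue by descent: pick an optimal elementary reduction $v_3^{\mathrm{opt}}$ of $v_3$ centered at $m_2$; its degree lies at or below $\nu$, so Lemma~\ref{lem:optimal normal} combined with the $(\nu,\nu)$-IH (fed by the reducible $u_3$ from~(1)) gives that $v_3^{\mathrm{opt}}$ is reducible; Lemma~\ref{lem:no minimal reduction} then furnishes a further elementary reduction $v_3^{\mathrm{opt}} \to w_3$ with center $m_2$, and the additive parametrization of center-$m_2$ neighbors from Corollary~\ref{cor:type 3 neighbors} shows that $w_3$ is also an elementary reduction of $v_3$ centered at $m_2$, of strictly smaller degree than $v_3^{\mathrm{opt}}$, contradicting optimality. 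The remaining option $v_2' = v_2$ then follows from compatibility of $T$ with the simplex $v_1, v_2, v_3$ for $T$-reductions, and from the uniqueness of the center given by Corollary~\ref{cor:center of a K red} for $K$-reductions. The $K$-reduction axioms \ref{def:Kinn}--\ref{def:Kdel} for the pair $(v_3, u_3)$ now follow from the SPF conditions: \ref{def:Kinn} from the $2$-degree $(2\delta, s\delta)$ with $s$ odd, \ref{def:Kout} from \ref{SFout}, \ref{def:Kmin} from \ref{SFmin}, and \ref{def:Kdel} from \ref{SFdel} together with the strict drop $\deg(v_3 \smallsetminus v_2) > \deg(u_3 \smallsetminus v_2)$.

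Assertion (2) follows because the center-pinning argument above applies to any elementary $T$- or $K$-reduction of $v_3$, not just the one produced by Lemma~\ref{lem:optimal normal}. Assertion (3) is then immediate: the center of any optimal elementary $T$-reduction is $v_2$ by (2), and the verification of the $K$-axioms from (4) upgrades it to an elementary $K$-reduction. The principal obstacle is the descent step excluding $v_2' = m_2$: threading reducibility of $v_3^{\mathrm{opt}}$ through the $(\nu,\nu)$-IH while maintaining precise degree control, and then composing two successive center-$m_2$ reductions into a single reduction of $v_3$ that contradicts optimality, is the most delicate ingredient of the argument and is where the combinatorics of $\Comp$ interacts most tightly with the degree bookkeeping.
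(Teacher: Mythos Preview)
Your overall strategy parallels the paper's, but there are two genuine gaps.

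The most serious is your verification of condition~\ref{def:Kdel} in part~(4). You claim it follows ``from \ref{SFdel} together with the strict drop $\deg(v_3 \smallsetminus v_2) > \deg(u_3 \smallsetminus v_2)$''. But \ref{SFdel} only says $\deg(v_3 \smallsetminus v_2) \ge \Delta(v_2)$, and combining this with $\deg(v_3 \smallsetminus v_2) > \deg(u_3 \smallsetminus v_2)$ does \emph{not} yield $\Delta(v_2) > \deg(u_3 \smallsetminus v_2)$: the two premises are perfectly compatible with $\deg(u_3 \smallsetminus v_2) \ge \Delta(v_2)$. This is precisely the delicate point. The paper's argument for \ref{slide3} proceeds by contradiction: if \ref{def:Kdel} fails for the optimal $u_3$, then the simplex $v_1, v_2, u_3$ itself has Strong Pivotal Form, and one applies the already-established \ref{slide1} and \ref{slide2} to the reducible vertex $u_3$ (of degree $\le \nu$) to obtain a further elementary reduction of $u_3$ with center $v_2$; composing with $v_3 \ne_{v_2} u_3$ produces an elementary reduction of $v_3$ with center $v_2$ of strictly smaller degree than $u_3$, contradicting optimality. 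Your ordering (1), (4), (2), (3) hides this recursive descent on Strong Pivotal Form: you need \ref{slide1} and \ref{slide2} available for $u_3$ before you can verify \ref{def:Kdel}, which is why the paper establishes \ref{slide2} before \ref{slide3} and derives \ref{slide4} last.

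A second gap is your handling of the exceptional case $s = 3$, $\deg f_1 = 2\deg f_3$, where Lemma~\ref{lem:no outer reduction} gives no information about a center $\llb f_1, f_3 \rrb$. You gesture at ``direct inspection in the spirit of Facts~\ref{fact:elementary K}--\ref{fact:no f1'f3}'' and invoke Lemma~\ref{lem:dm2}, but that lemma presupposes $v_3$ already admits an elementary $K$-reduction, which is what you are trying to establish, so the appeal is circular. The paper instead observes that in this case $v_3$ admits the simple elementary reduction $w_3 = [f_1 + af_3^2, f_2, f_3]$ with center the minimal line $m_2$, then uses the Square Lemma~\ref{lem:square} and the $(\nu,\nu)$-Induction Hypothesis (threaded through the reducible first step) to show $w_3$ is reducible; this reduces to the center-$m_2$ case, which is then excluded via Lemma~\ref{lem:no minimal reduction} by the descent argument you correctly describe.
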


\begin{proof}
Let $v_3(1)$ be the first step of a reduction path from $v_3$ to $[\id]$,
with respect to the choice of good triangle $T$.
By Lemma \ref{lem:optimal normal} we can assume that if this first step is a $K$-reduction, then it is a normal $K$-reduction.
Moreover we know from Lemma \ref{lem:no proper K reduction} that $v_3$
does not admit any normal proper $K$-reduction.
This gives \ref{slide1}.

We write $v_3 = \llb f_1, f_2, f_3 \rrb$ as in Set-Up \ref{setup:pivotal}, such
that the triangle $T$ corresponds to the choice of representative $(f_1, f_2,
f_3)$.
The remaining possibilities for the first step $v_3(1)$ of the
reduction path are:
\begin{enumerate}[(i)]
\item \label{slide:case1} An elementary $T$-reduction with center $\llb f_2,
f_3 \rrb$;
\item \label{slide:case0} An elementary $T$-reduction with center $\llb f_1,
f_3 \rrb$;
\item \label{slide:case2} An elementary $K$-reduction;
\item \label{slide:case3} An elementary $T$-reduction with center $v_2 = \llb
f_1, f_2 \rrb$.
\end{enumerate}

In case \ref{slide:case1}, by Lemma \ref{lem:optimal normal} we can moreover assume that the elementary reduction is optimal.
Then Lemma \ref{lem:no minimal reduction} gives a contradiction.

In case \ref{slide:case0}, Lemma \ref{lem:no outer reduction}
implies that $\deg f_1 = 2\deg f_3$, so there exists $a \in \K$ such that $w_3
:= [f_1 + a f_3^2, f_2, f_3]$ satisfies $\deg v_3 > \deg w_3$.
By the Square Lemma \ref{lem:square}, there exists $u_3$ such that $u_3 \ne
w_3$, $u_3 \ne v_3(1)$ and $\deg v_3 > \deg u_3$.
By the $(\nu,\nu)$-Induction Hypothesis \ref{IH} applied successively to $v_3(1) \ne
u_3$ and $u_3 \ne w_3$, we obtain that $w_3$ is reducible, and so could be
chosen as the first step of a reduction path.
We are reduced to case \ref{slide:case1}, which leads to a contradiction.

In case \ref{slide:case2}, by Lemma \ref{lem:dm2}\ref{dm2:1}  we have
$$d(m_2) > \topdeg v_3.$$
Moreover by \ref{SFdel} we have
$$\topdeg v_3 > \deg (v_3 \smallsetminus v_2) \ge \Delta(v_2) > d(v_2).$$
By Corollary \ref{cor:center of a K red}\ref{center:2} we conclude that $v_2$ is the center of the $K$-reduction, hence we also are in case \ref{slide:case3}, which gives \ref{slide2}.

Now assume that $u_3$ is an optimal elementary $T$-reduction of $v_3$.
By \ref{slide2}, we know that the center of this reduction is $v_2$,
and by the $(\nu,\nu)$-Induction Hypothesis \ref{IH} applied to $v_3(1) \ne_{v_2} u_3$,
we get that $u_3$ is reducible.
We want to prove that $\Delta(v_2) > \deg(u_3 \smallsetminus v_2)$, that is, Property \ref{def:Kdel}, which will imply that $u_3$ is a $K$-reduction of $v_3$.
By contradiction, assume $\deg(u_3 \smallsetminus v_2) \ge \Delta(v_2)$, which
is condition \ref{SFdel} for the simplex $v_1,v_2, u_3$.
Moreover conditions \ref{SFinn} and \ref{SFmin} for the simplex $v_1,v_2, u_3$
directly follow from the analogous conditions for the simplex $v_1,v_2, v_3$,
and condition \ref{SFout} follows from the optimality of the reduction $u_3$.
Thus $v_1,v_2, u_3$ has Strong Pivotal Form $\PF(s)$, and from assertions
\ref{slide1} and \ref{slide2} we conclude that $u_3$ admits an elementary
reduction with center $v_2$.
This contradicts the optimality of the reduction from $v_3$ to $u_3$, and so we obtain \ref{slide3}.

Finally to prove \ref{slide4}, consider a first step of a reduction path from
$v_3$ to $[\id]$,  with respect to the good triangle $T$, as given by Lemma
\ref{lem:optimal normal}.
By Lemma \ref{lem:no proper K reduction} this first step
is not a normal proper $K$-reduction, so that it is an optimal elementary
reduction, hence by
\ref{slide3} this is an elementary $K$-reduction, as expected.
\end{proof}

\begin{corollary} \label{cor:minimal or K}
Let $\mu > \nu \in \N^3$ be two consecutive degrees, and assume Induction Hypothesis $\ref{IH}$ for degrees $\nu, \nu$.
Let $v_3$ be a reducible vertex with $\mu \ge \deg v_3$, and assume that $u_3$ is an optimal elementary reduction of $v_3$ with pivotal simplex $v_1,v_2,v_3$.
If $v_2$ has no inner resonance, and no outer resonance in $v_3$, then either $v_2$ is the minimal line of $v_3$, or $u_3$ is an elementary $K$-reduction of $v_3$.
\end{corollary}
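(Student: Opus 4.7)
The plan is to obtain this corollary as an immediate consequence of Propositions \ref{pro:SPF} and \ref{pro:slide}\ref{slide3}, which between them contain essentially all the required work.

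Suppose that $v_2$ is not the minimal line in $\lines{v_3}$; the goal is then to show that $u_3$ is an elementary $K$-reduction of $v_3$. The very existence of the elementary reduction $u_3$ with pivotal simplex $v_1, v_2, v_3$, together with the hypotheses that $v_2$ has neither inner resonance nor outer resonance in $v_3$, is precisely what is needed to invoke Proposition \ref{pro:SPF}\ref{SPF2}. I would therefore apply it to conclude that the simplex $v_1, v_2, v_3$ has Strong Pivotal Form $\PF(s)$ for some odd $s \ge 3$.

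Next, I would pick any good triangle $T$ in $\lines{v_3}$ compatible with the simplex $v_1, v_2, v_3$; such a $T$ exists by the discussion following Lemma \ref{lem:rep of a simplex}. By compatibility, the line $v_2$ is one of the three lines of $T$, and consequently the optimal elementary reduction $u_3$ with center $v_2$ is in particular an optimal elementary $T$-reduction of $v_3$.

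At this stage the conclusion follows by applying Proposition \ref{pro:slide}\ref{slide3}: the hypotheses of that statement are all in hand, namely $v_3$ is reducible, $\mu \ge \deg v_3$, the $(\nu,\nu)$-Induction Hypothesis \ref{IH} is assumed, the simplex $v_1, v_2, v_3$ has Strong Pivotal Form $\PF(s)$, and $T$ is a compatible good triangle. That proposition then asserts that any optimal elementary $T$-reduction of $v_3$ is an elementary $K$-reduction of $v_3$, which gives exactly what is claimed for $u_3$. No genuine obstacle arises in this argument; the substantive content has already been discharged in the proofs of Propositions \ref{pro:SPF} and \ref{pro:slide}, and the corollary merely packages them in the form that will be used later in the text.
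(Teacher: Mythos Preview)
Your proof is correct and follows essentially the same approach as the paper's own proof: assume $v_2$ is not the minimal line, apply Proposition~\ref{pro:SPF}\ref{SPF2} to obtain Strong Pivotal Form, choose a compatible good triangle $T$ so that $u_3$ becomes an optimal elementary $T$-reduction, and conclude via Proposition~\ref{pro:slide}\ref{slide3}.
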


\begin{proof}
Assume $v_2$ is not the minimal line of $v_3$.
By Proposition \ref{pro:SPF}\ref{SPF2}, the simplex $v_1, v_2, v_3$ has Strong Pivotal Form.
Let $T$ be a good triangle compatible with the simplex $v_1$, $v_2$, $v_3$.
In particular $u_3$ is an optimal elementary $T$-reduction of $v_3$, so we can
conclude by Proposition \ref{pro:slide}\ref{slide3}.
\end{proof}

\subsection{Vertex with two low degree neighbors} \label{sec:4C}

\begin{setup} \label{setup:2 neighbors}
Let $\mu >\nu \in \N^3$ be two consecutive degrees, and assume the Induction Hypothesis \ref{IH} for degrees $\nu, \mu$.
Let $v_3, v_3', v_3''$ be vertices such that
\begin{itemize}
\item $\mu \ge \deg v_3$, $\deg v_3 > \deg v_3'$ (hence $\nu \ge \deg v_3'$), $\deg v_3 \ge \deg v_3''$;
\item $v_3' \ne_{v_2'} v_3$ and $v_3'' \ne_{v_2''} v_3$ with $v_2' \neq v_2''$;
\item $v_3'$ is reducible (hence $v_3$ also is by the $(\nu,\mu)$-Induction Hypothesis);
\item $v_2'$ is minimal, in the sense that if $u_3$ is an elementary reduction of $v_3$ with center $u_2$, which is the first step of a reduction path, then $\deg u_2 \ge \deg v_2'$.
\end{itemize}

We denote by $v_1 = [ f_2 ]$ the intersection point of the lines $v_2'$ and $v_2''$.
We fix choices of $f_1, f_3$ such that $v_2' = \llb f_1, f_2 \rrb$ and $v_2'' = \llb f_2, f_3 \rrb$.
Observe that it is possible that $\deg f_1 = \deg f_3$, and in this case $(f_1, f_2, f_3)$ is not a good representative of $v_3$.
In any case by Lemma \ref{lem:elem red} there exist some non-affine
polynomials in two variables $P_1, P_3$ such that (see Figure
\ref{fig:setup}):
\begin{align*}
v_3 &= [f_1, f_2, f_3]; \\
v_3' &=  \llb f_1, f_2, f_3 + P_3(f_1, f_2) \rrb; \\
v_3'' &= \llb f_1 + P_1(f_2, f_3), f_2, f_3 \rrb.
\end{align*}
\end{setup}

\begin{figure}[ht]
$$\mygraph{
!{<0cm,0cm>;<0.8cm,0cm>:<0cm,0.8cm>::}
!{(2,0)}*{\typethree}="v3'"
!{(-2,0)}*{\typethree}="v3''"
!{(0,0)}*{\typeone}="v1"
!{(0,2)}*-{\typethree}="v3"
!{(1,1)}*-{\typetwo}="v2'"
!{(-1,1)}*-{\typetwo}="v2''"
"v2'"-^>{v_3' =\llb f_1, f_2,  f_3 + P_3(f_1,f_2) \rrb}"v3'"-^>{v_1 = [ f_2 ]}"v1"-"v2'"-_<{v_2' =\llb f_1,f_2 \rrb}_>(1.1){v_3 =[f_1,f_2,f_3]}"v3"-"v1"-"v3''"-^<{v_3'' =\llb f_1 + P_1(f_2,f_3), f_2,f_3\rrb}"v2''"-^<{v_2''=  \llb f_2,f_3 \rrb}"v3"
"v1"-"v2''"
}
$$
\caption{Set-Up \ref{setup:2 neighbors}.}\label{fig:setup}
\end{figure}

In this section we shall prove:

\begin{proposition} \label{pro:2 neighbors}
Assume Set-Up $\ref{setup:2 neighbors}$.
Then $v_3''$ is reducible.
\end{proposition}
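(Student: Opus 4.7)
The strategy is to build a common neighbor $u_3$ of $v_3'$ and $v_3''$ with $\deg v_3 > \deg u_3$, hence $\deg u_3 \le \nu$. The Induction Hypothesis \ref{IH} (for degrees $\nu, \mu$) then propagates reducibility in two steps: since $v_3'$ is reducible with $\deg v_3' \le \nu$ and $u_3$ is a neighbor satisfying $\deg u_3 \le \mu$, we first deduce that $u_3$ is reducible; then since $u_3$ is reducible with $\deg u_3 \le \nu$ and $v_3''$ is a neighbor with $\deg v_3'' \le \deg v_3 \le \mu$, we conclude that $v_3''$ is reducible.

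The natural tool to produce such a $u_3$ is the Square Lemma \ref{lem:square}, whose hypotheses I intend to verify. The strict degree inequality it requires is immediate from $\deg v_3 > \deg v_3'$. That $v_2'$ and $v_2''$ extend to a good triangle of $v_3$ is a matter of degree bookkeeping: if $\deg f_2$ is the smallest of $\deg f_1, \deg f_2, \deg f_3$, then one of $v_2', v_2''$ is the minimal line $m_2$ of $v_3$; otherwise the minimal vertex $m_1$ of $v_3$ is $[f_1]$ or $[f_3]$, so exactly one of $v_2', v_2''$ passes through $m_1$, and together with $m_2$ they form a good triangle. The remaining, and principal, hypothesis is that $v_3''$ be a (possibly weak) simple elementary reduction of $v_3$ with simple center $v_2'', v_1$; concretely, the polynomial $P_1(f_2, f_3)$ in the expression $v_3'' = \llb f_1 + P_1(f_2, f_3), f_2, f_3 \rrb$ should be of the form $Q(f_2) + a f_3$ for some $Q \in \K[y]$ and $a \in \K$.

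The core of the proof is therefore to exclude the possibility that $P_1$ contains a monomial $f_2^i f_3^j$ with $j \ge 2$. I would argue by contradiction: assuming such a term exists, Lemma \ref{lem:elem red} gives $\deg f_1 = \deg P_1(f_2, f_3)$, and a dichotomy applies. Either $\dvirt P_1(f_2, f_3) = \deg P_1(f_2, f_3)$, in which case $\deg f_1 \in \N \deg f_2 + \N \deg f_3$ with a non-zero contribution from $\deg f_3$, meaning $v_2''$ has outer resonance in $v_3$; or $\dvirt P_1(f_2, f_3) > \deg P_1(f_2, f_3)$, in which case Lemma \ref{lem:p and q} yields coprime integers $p, q \ge 2$ with $p \deg f_2 = q \deg f_3$, i.e., $v_2''$ has inner resonance. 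In either case I would leverage the resonance, via the Parachute Inequality \ref{pro:parachute} and Corollary \ref{cor:parachute}, to exhibit a third elementary reduction $v_3 \to u_3^*$ whose center $u_2^*$ satisfies $\deg u_2^* < \deg v_2'$ and whose target $u_3^*$ is reducible, so that $v_3 \to u_3^*$ is a first step of a reduction path; this contradicts the minimality of $v_2'$ in Setup \ref{setup:2 neighbors}. The hardest part is precisely this last construction: one must identify the combination of $f_1, f_2, f_3$ realizing a smaller-degree center (handling the boundary case $\deg f_1 = \deg f_3$, in which $(f_1, f_2, f_3)$ is not a good representative and $f_1$ must first be replaced by $f_1 + \alpha f_3$), and certify reducibility of $u_3^*$ via a secondary application of the Induction Hypothesis starting from $v_3'$.
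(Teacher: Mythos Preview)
Your plan is to force every instance into the Square Lemma by showing that $P_1(f_2,f_3)$ is always of the form $Q(f_2)+af_3$, using the minimality of $v_2'$ to rule out monomials $f_2^if_3^j$ with $j\ge 2$. This cannot succeed as stated. First, your dichotomy is mislabeled: when $\dvirt P_1(f_2,f_3)>\deg P_1(f_2,f_3)$, Lemma~\ref{lem:p and q} only gives coprime $p,q$ with $p\deg f_2=q\deg f_3$, which is \emph{not} inner resonance of $v_2''$ unless $p=1$ or $q=1$. More seriously, the contradiction you seek --- a center $u_2^*$ with $\deg u_2^*<\deg v_2'$ --- is impossible precisely in the critical case $v_2'=m_2$: there is no line in $\lines{v_3}$ of smaller degree than the minimal line, so minimality of $v_2'$ cannot be violated that way. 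And this case genuinely occurs under the standing hypotheses: with $v_2'=m_2$, $\deg f_3=\topdeg v_3$, $\deg f_3\notin\N\deg f_2$, $\deg f_1\notin\N\deg f_2$, and $P_1\notin\K[f_2]$, one has $\dvirt P_1(f_2,f_3)>\deg P_1(f_2,f_3)$ and Corollary~\ref{cor:parachute}\ref{parachute-iii} forces $2\deg f_3=s\deg f_2$ with $s\ge 3$ odd, so the simplex $v_1,v_2'',v_3$ has Strong Pivotal Form; here $P_1$ is \emph{not} simple and no smaller center exists.

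The paper handles exactly these obstructions by bringing in $K$-reductions rather than the Square Lemma. When $v_2'\neq m_2$ has no outer resonance (Lemma~\ref{lem:v3' K reduction}), an optimal reduction with center $v_2'$ is an elementary $K$-reduction $u_3$, and reducibility of $v_3''$ is obtained because $u_3$ is a \emph{proper} $K$-reduction of $v_3''$ (via Proposition~\ref{pro:K stability}). When $v_2'=m_2$ and none of the simple cases of Lemma~\ref{lem:easy} apply (Lemma~\ref{lem:hard}), the Strong Pivotal Form of $v_1,v_2'',v_3$ and Proposition~\ref{pro:slide}\ref{slide4} produce an elementary $K$-reduction with center $v_2''$. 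It is true that \emph{a posteriori} (Corollary~\ref{cor:no hard}) this last case is vacuous, but that fact is a consequence of the Reducibility Theorem and is unavailable inside the induction; at this stage the $K$-reduction machinery is unavoidable. Your Square-Lemma-only route therefore has a genuine gap in both branches where $K$-reductions enter.
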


We divide the proof in several lemmas.
The proposition will be a direct consequence of Lemmas \ref{lem:outer
resonance}, \ref{lem:v3' K reduction}, \ref{lem:easy} and \ref{lem:hard}.
We start with a consequence from the minimality of $v_2'$.

\begin{lemma} \label{lem:inner gives minimal}
Assume Set-Up $\ref{setup:2 neighbors}$.
If $v_2'$ has inner resonance, then $v_2'$ is the minimal line of $v_3$.
\end{lemma}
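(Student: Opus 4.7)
The plan is to argue by contradiction and exploit the minimality of $v_2'$ among centers of first-step reductions. Assume $v_2'$ has inner resonance but is not the minimal line of $v_3$. Writing $v_2' = \llb f_1, f_2 \rrb$ and denoting by $f_\sharp, f_\flat \in \{f_1, f_2\}$ the two components with $\deg f_\sharp > \deg f_\flat$, the inner resonance translates into $\deg f_\sharp = r \deg f_\flat$ for some integer $r \ge 2$. Since two monomials of the same degree are proportional, there exists $c \in \K^\ast$ with $\bar f_\sharp = c \bar f_\flat^{\,r}$. The non-minimality of $v_2'$ forces $\deg f_\sharp > \deg f_3$, so $f_\sharp$ realises the top degree of $v_3$, the minimal line of $v_3$ is $m_2 = \llb f_\flat, f_3 \rrb$, and $v_2'$ meets $m_2$ at the vertex $p := [f_\flat]$.

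I then introduce the vertex $w_3 := [f_\sharp - c f_\flat^{\,r},\, f_\flat,\, f_3]$. Killing the top of $f_\sharp$ gives $\deg v_3 > \deg w_3$, and after a routine linear adjustment of the first component by an element of $\K f_\flat \oplus \K f_3 \oplus \K$ to obtain a good representative of $w_3$, the vertex $w_3$ is a simple elementary reduction of $v_3$ with simple center $(m_2, p)$, via a non-affine polynomial $P \in \K[y]$ containing the term $-cy^{\,r}$. I then apply the Square Lemma \ref{lem:square} to the triple $(v_3, v_3', w_3)$: the centers $v_2' \neq m_2$ lie on a good triangle automatically since $m_2$ is minimal, the common vertex $p$ of $v_2'$ and $m_2$ matches the simple-center vertex of $w_3$, and both degree drops $\deg v_3 > \deg v_3'$ and $\deg v_3 > \deg w_3$ are strict. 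The lemma produces $u_3$ with $u_3 \ne v_3'$, $u_3 \ne w_3$ and $\deg v_3 > \deg u_3$; since $\nu, \mu$ are consecutive and $\mu \ge \deg v_3$, we have $\deg u_3, \deg w_3 \le \nu$.

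Two applications of the $(\nu,\mu)$-Induction Hypothesis \ref{IH} then close the argument: first along $v_3' \ne u_3$ (with $v_3'$ reducible) to conclude that $u_3$ is reducible, then along $u_3 \ne w_3$ to conclude that $w_3$ is reducible. Therefore $v_3 \to w_3$ is an elementary reduction to a reducible vertex, hence is the first step of a reduction path from $v_3$, and its center is $m_2$. The minimality hypothesis on $v_2'$ now forces $\deg m_2 \ge \deg v_2'$, that is $\deg f_\flat + \deg f_3 \ge \deg f_\flat + \deg f_\sharp$, in contradiction with $\deg f_\sharp > \deg f_3$. The main technical subtlety is the verification that $w_3$ formally qualifies as a simple elementary reduction: one must choose the adjustment so that the resulting first component has degree distinct from both $\deg f_\flat$ and $\deg f_3$, while keeping the $-cy^{\,r}$ term alive inside $P$ (so that $P$ remains non-affine); the algebraic independence of $f_\sharp, f_\flat, f_3$ ensures such an adjustment exists.
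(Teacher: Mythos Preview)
Your strategy is the same as the paper's: exhibit a simple elementary reduction of $v_3$ along a line of strictly smaller degree than $v_2'$, propagate reducibility to it via the Square Lemma and the Induction Hypothesis, and contradict the minimality of $v_2'$. This works in most configurations, but your unified treatment through $f_\sharp, f_\flat$ has a genuine gap.

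The claim ``non-minimality of $v_2'$ forces $\deg f_\sharp > \deg f_3$'' is not true in general. Set-Up~\ref{setup:2 neighbors} explicitly allows $\deg f_1 = \deg f_3$, since $(f_1,f_2,f_3)$ need not be a good representative of $v_3$. Take this case together with $\deg f_1 > \deg f_2$, so that $f_\sharp = f_1$ and $f_\flat = f_2$; then $\deg f_\sharp = \deg f_3$, and your assertion fails. In this situation the minimal line of $v_3$ is not $\llb f_\flat, f_3\rrb = v_2''$ at all: it is $[f_2,\, f_1 + b f_3]$ for the unique $b\in\K^*$ with $\deg(f_1+bf_3) < \deg f_1$.

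This breaks everything downstream. Your vertex $w_3 = [f_1 - c f_2^{\,r},\, f_2,\, f_3]$ is a reduction of $v_3$ with center $v_2''$, not with center the minimal line. The pair $(v_2', v_2'')$ is then not part of any good triangle of $v_3$, because the minimal vertex $[f_1+bf_3]$ lies on neither line, so the hypothesis of the Square Lemma~\ref{lem:square} is not met. And even if you could push reducibility through to $w_3$, the minimality of $v_2'$ would only give $\deg v_2'' \ge \deg v_2'$; but here $\deg v_2'' = \deg f_2 + \deg f_3 = \deg f_2 + \deg f_1 = \deg v_2'$, so there is no contradiction. The paper's proof isolates exactly this case and handles it by passing to $w_3' = [f_1 + a f_2^{\,r},\, f_2,\, f_1 + b f_3]$, a simple reduction of $v_3$ whose center is the genuine minimal line $[f_2,\, f_1 + bf_3]$, which does have strictly smaller degree than $v_2'$.
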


\begin{proof}
Assume first that there exists $a \in \K$ and $r \ge 2$ such that $\deg f_2 > \deg (f_2 + a f_1^r)$.
Then consider the vertex $w_3 = [f_1, f_2 + af_1^r, f_3]$, which satisfies $\deg v_3 > \deg w_3$.
Assume by contradiction that $v_2'$ is not the minimal line of $v_3$.
Then we have $\deg f_2 > \deg f_3$, hence $m_2 = [f_1, f_3]$ is the minimal line of $v_3$.
By the Square Lemma \ref{lem:square}, we find $u_3$ such that $u_3 \ne w_3$, $u_3 \ne v_3'$ and $\deg v_3 > \deg u_3$.
By applying the $(\nu,\mu)$-Induction Hypothesis \ref{IH} successively to $v_3' \ne u_3$ and $u_3 \ne w_3$, we find that $w_3$ is reducible.
So we can take $w_3$ as the first step of a reduction path from $v_3$, which contradicts the minimality of $v_2'$.

Now assume that there exist $a \in \K^*$ and $r \ge 2$ such that $\deg f_1 >
\deg (f_1 + a f_2^r)$.
If $v_2'$ is not the minimal line, we have $\deg f_1 \ge \deg f_3$.

If $\deg f_1 = \deg f_3$, there exists $b \in \K^*$ such that $\deg f_1 > \deg
(f_1 + b f_3)$.
Then $[f_2, f_1 +b f_3]$ is the minimal line of $v_3$, and we consider $w_3' = [f_1 + af_2^r, f_2,f_1 +b f_3]$, which satisfies $\deg v_3 > \deg w_3'$.
As above, by the Square Lemma \ref{lem:square} and the $(\nu,\mu)$-Induction
Hypothesis \ref{IH}, we obtain that $w_3'$ can be chosen to be the first step of
a reduction path from $v_3$.
This contradicts the minimality of $v_2'$.

If $\deg f_1 > \deg f_3$, $v_2'' = \llb f_2, f_3 \rrb$ is the minimal line of $v_3$.
We consider $w_3'' = [f_1 + af_2^r, f_2, f_3]$ which satisfies $\deg v_3 > \deg w_3''$.
As before $w_3''$ can be chosen to be the first step of a reduction path from $v_3$, with center $v_2''$: again this contradicts the minimality of $v_2'$.
\end{proof}

\begin{lemma} \label{lem:outer resonance}
Assume Set-Up $\ref{setup:2 neighbors}$.
If $v_2'$ is not the minimal line in $v_3$, and has outer resonance in $v_3$, then $v_3''$ is reducible.
\end{lemma}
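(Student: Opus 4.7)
The plan is to exploit the outer resonance on $v_2'$ to manufacture an auxiliary reduction $w_3$ of $v_3$ sharing the center $v_2'$, to combine $w_3$ with $v_3'$ through the Induction Hypothesis to transfer reducibility to $w_3$, and finally to apply the Square Lemma \ref{lem:square} to transport reducibility to $v_3''$.

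Outer resonance means that $\deg f_3 = \deg(v_3\smallsetminus v_2') = p\deg f_1 + q\deg f_2$ for some $p,q\in\N$ with $(p,q)\neq(0,0)$. The top monomials of $f_3$ and $f_1^pf_2^q$ therefore share a common degree, so one can choose $c\in\K^*$ with $\deg(f_3-cf_1^pf_2^q)<\deg f_3$. Setting $w_3:=\llb f_1,f_2,f_3-cf_1^pf_2^q\rrb$, one has $w_3\ne_{v_2'}v_3$ and $\deg v_3>\deg w_3$. Since $v_3'$ is also a neighbor of $v_3$ via $v_2'$, the vertices $v_3'$ and $w_3$ are themselves at distance two from one another via $v_2'$, and the $(\nu,\mu)$-Induction Hypothesis \ref{IH} applied to $v_3'\ne_{v_2'}w_3$ yields that $w_3$ is reducible.

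A short analysis of the $3$-degree $(\delta_1,\delta_2,\delta_3)$ of $v_3$, combined with Lemma \ref{lem:inner gives minimal} (which rules out inner resonance of $v_2'$ since $v_2'$ is not minimal), shows that the $2$-degree of $v_2'$ must be $(\delta_1,\delta_3)$ and that the outer resonance reduces to $\delta_2=k\delta_1$ for some integer $k\geq 2$, with exactly one of $p,q$ non-zero. In the favourable case $\deg f_2=\delta_1$ (so $p=0$, $q=k$), the monomial $f_1^pf_2^q=f_2^q$ depends only on $f_2$ and $w_3$ is a simple elementary reduction of $v_3$ with simple center $v_2',v_1$, where $v_1=[f_2]$ is the common vertex of $v_2'$ and $v_2''$. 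The Square Lemma \ref{lem:square} applied to the triple $v_3,v_3'',w_3$ (with $w_3$ playing the role of the simple reduction) then produces a vertex $u_3$ with $u_3\ne w_3$, $u_3\ne v_3''$ and $\deg v_3>\deg u_3$; two further applications of the Induction Hypothesis \ref{IH}, first to $w_3\ne u_3$ and then to $u_3\ne v_3''$, yield the reducibility of $v_3''$.

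The main obstacle is the complementary sub-case $\deg f_1=\delta_1$, $\deg f_2=\delta_3$ (so $p=k\geq 2$, $q=0$), in which the simple center of $w_3$ involves $[f_1]$ instead of $v_1=[f_2]$ and the Square Lemma is not directly applicable. Handling this sub-case requires a more delicate argument, which I expect to carry out by combining the minimality of $v_2'$ (which, via the Parachute Inequality \ref{pro:parachute} and in the spirit of Lemma \ref{lem:no outer reduction}, tightly constrains the polynomial $P_1(f_2,f_3)$ defining $v_3''$) with the reducibility of $w_3$ already established, in order either to exhibit a different intermediate vertex amenable to the Square Lemma or to construct a direct reduction path from $v_3''$.
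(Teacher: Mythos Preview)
Your favourable sub-case ($\deg f_2=\delta_1$, $\deg f_1=\delta_3$, and $(f_1,f_2,f_3)$ a good representative so that $\deg f_3=\delta_2$) is handled correctly and coincides with the paper's argument in the case $\deg f_1>\deg f_3$: the simple reduction $w_3=\llb f_1,f_2,f_3-cf_2^k\rrb$ together with the Square Lemma and the Induction Hypothesis does the job.

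There are, however, two genuine gaps.

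\textbf{First}, the equality $\deg f_3=\deg(v_3\smallsetminus v_2')$ that you use from the outset can fail: Set-Up \ref{setup:2 neighbors} explicitly allows $\deg f_1=\deg f_3$, in which case $(f_1,f_2,f_3)$ is not a good representative and $\deg f_3=\delta_3\neq\delta_2$. Your construction of $w_3$ then breaks down, since $\deg f_3$ is no longer a multiple of $\deg f_2$. The paper treats this sub-case separately: one picks $b\in\K^*$ with $\deg(bf_1+f_3)=\delta_2=r\deg f_2$, and then the two vertices $u_3'=[f_1,f_2,bf_1+f_3+af_2^r]$ and $u_3''=[bf_1+f_3+af_2^r,f_2,f_3]$ are simple reductions of $v_3$ through $v_2'$ and $v_2''$ respectively, and are neighbours of one another. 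Three applications of the Induction Hypothesis then reach $v_3''$.

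\textbf{Second}, and more seriously, the complementary sub-case $\deg f_1=\delta_1$, $\deg f_2=\delta_3$ is not merely ``more delicate'': it is \emph{impossible}, and your proposed line (producing a reduction path from $v_3''$) is not how the contradiction is obtained. The paper argues as follows. After checking $\deg f_1\neq\deg f_3$ (so $\deg f_3=\delta_2=r\deg f_1$ with $r\ge 2$), one observes that the non-affine polynomial $P_1$ satisfies $\dvirt P_1(f_2,f_3)\ge 2\min(\deg f_2,\deg f_3)>\deg f_1\ge\deg P_1(f_2,f_3)$, and that $\deg f_2\not\in\N\deg f_3$ (otherwise $v_2'$ would have inner resonance, contradicting Lemma \ref{lem:inner gives minimal}). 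Corollary \ref{cor:parachute}\ref{parachute-iii}, applied to $\phi=P_1$ with the pair $(f_2,f_3)$, then forces $2\deg f_2=3\deg f_3$ and $\deg f_1\ge\Delta(v_2'')$; writing $\deg f_2=3\delta$, $\deg f_3=2\delta$, this yields $\deg f_1>\delta$, which is incompatible with $\deg f_3=2\delta=r\deg f_1$ for $r\ge 2$. So this case simply does not arise, and no reduction path for $v_3''$ needs to be built here.
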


\begin{proof}
Since $v_2' = \llb f_1, f_2\rrb$ is not the minimal line in $v_3$, one of the
degrees $\deg f_1$ or $\deg f_2$ must realize the top degree of $v_3$.

First consider the case $\deg f_2 = \topdeg v_3$.
In this case, we have $\deg f_1 \neq \deg f_3$.
Indeed, assuming by contradiction that $\deg f_1 = \deg f_3$, there would exist
$\alpha \in \K^*$ such that $f_3' = f_3 - \alpha f_1$ satisfies $\deg f_3 >
\deg f_3'$.
Then, we would have $\deg f_2 > \deg f_1 > \deg f_3'$, and the line $v_2' = \llb
f_1, f_2 \rrb$ could not have outer resonance in $v_3$.
Therefore, $(f_1, f_2, f_3)$ is a good representative of $v_3$, and the
assumption on outer resonance means that $\deg f_3 = \deg f_1^r$ for some $r \ge
2$.
Recall that by Lemma \ref{lem:elem red}, the non-affine polynomial $P_1 \in \K[y,z]$  associated with the weak elementary reduction $v_3''$ of $v_3$ satisfies $\deg f_1 \ge P_1(f_2,f_3)$.
We have $\deg f_2 \not\in \N \deg f_3$, otherwise we would have $\deg f_2 \in \N\deg f_1$, that is, $v_2'$ would have inner resonance, and by Lemma \ref{lem:inner gives minimal} this would contradict $v_2' \neq m_2$.
Corollary \ref{cor:parachute}\ref{parachute-iii} then gives $2\deg f_2 = 3 \deg
f_3$, and $\deg (v_3 \smallsetminus v_2'') \ge \Delta(v_2'')$.
But then the existence of $\delta \in \N$ such that $\deg f_2 = 3\delta, \deg
f_3 = 2\delta$ and  $\deg f_1 > \delta$ is not compatible with the existence of
$r \ge 2$ such that $\deg f_3 = r \deg f_1$: contradiction.

Now consider the case $\deg f_1 = \topdeg v_3$.
In particular $\deg f_1 > \deg f_2$ and $v_1 =[ f_2 ]$ is on the minimal line of $v_3$.
Now we distinguish two subcases (see Figure \ref{fig:outer resonance}):
\begin{itemize}[wide]
\item Case $\deg f_1 > \deg f_3$.
Then there exist $a \in \K$ and $r \ge 2$ such that $\deg f_3 >  \deg
(f_3 +af_2^r)$, and $v_2''$ is the minimal line of $v_3$.
We apply the Square Lemma \ref{lem:square} to get $u_3$ a common neighbor of $v_3''$ and $w_3' = [f_1, f_2, f_3 + af_2^r]$ satisfying $\deg v_3 > \deg u_3$, and then we conclude by the $(\nu,\mu)$-Induction Hypothesis \ref{IH} applied successively to $w_3' \ne u_3$ and $u_3 \ne v_3''$.
\item Case $\deg f_1 = \deg f_3$. Then there exists $b \in \K^*$ such that
$\deg f_1 > \deg (bf_1 + f_3)$.
As in the first case of the proof, the assumption on outer resonance implies
that $(f_1, f_2, f_3 + bf_1)$ is a good representative of $v_3$,
and there exists $r \ge 2$ such that $\deg (bf_1 + f_3) = \deg f_2^r$.
Then there exists $a \in \K$ such that $u_3' = [f_1,f_2, bf_1 + f_3 +af_2^r]$ and $u_3'' = [bf_1 + f_3 +af_2^r,f_2,f_3]$ are (simple) elementary reductions of $v_3$ with respective centers $v_2'$ and $v_2''$.
Moreover $u_3'$ and $u_3''$ are neighbors, with center $[bf_1 + f_3 +af_2^r,f_2]$.
Again we conclude by applying the $(\nu,\mu)$-Induction Hypothesis \ref{IH} to $v_3' \ne u_3'$, $u_3' \ne u_3''$ and $u_3'' \ne v_3''$ successively. \qedhere
\end{itemize}
\end{proof}

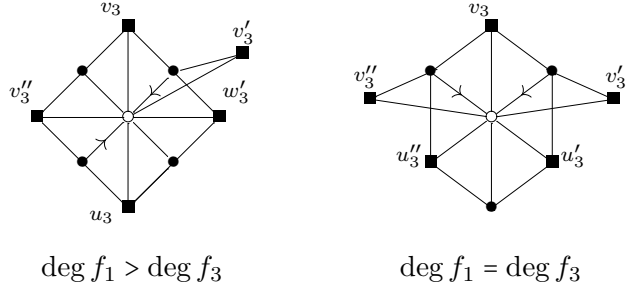
\begin{figure}[t]
$$
\xymatrix@R-20px{
\mygraph{
!{<0cm,0cm>;<0.6cm,0cm>:<0cm,0.6cm>::}
!{(2.5,1.4)}*{\typethree}="v3''"
!{(2,0)}*{\typethree}="w''"
!{(0,2)}*-{\typethree}="v3"
!{(-2,0)}*{\typethree}="v3'"
!{(0,-2)}*{\typethree}="w"
!{(-1,1)}*-{\typetwo}="v2NO"
!{(-1,-1)}*-{\typetwo}="v2SO"
!{(1,1)}*{\typetwo}="v2NE"
!{(1,-1)}*{\typetwo}="v2SE"
!{(0,0)}*{\typeone}="v1"
"w"-^<{u_3}"v3'"-^<{v_3''}^>{v_3}"v3"-^>{w_3'}"w''"-"w"
"v2NE"-^>(1.2){v_3'}"v3''"
"v2NO"-"v1"-|@{<}"v2SO" "v2NE"-|@{>}"v1"-"v2SE"
"w''"-"v1"-"v3'" "v3"-"v1"-"w"-"v2SE"
"v3''"-"v1"
}
&
\mygraph{
!{<0cm,0cm>;<0.8cm,0cm>:<0cm,0.6cm>::}
!{(2,0.4)}*{\typethree}="v3'"
!{(0,2)}*-{\typethree}="v3"
!{(-2,0.4)}*{\typethree}="v3''"
!{(0,-2)}*-{\typetwo}="u2"
!{(-1,1)}*-{\typetwo}="v2''"
!{(-1,-1)}*{\typethree}="u3''"
!{(1,1)}*-{\typetwo}="v2'"
!{(1,-1)}*{\typethree}="u3'"
!{(0,0)}*{\typeone}="v1"
"v3''"-^<{v_3''}"v2''"-^>{v_3}"v3"-"v2'"-^>{v_3'}"v3'"
"v2''"-|@{>}"v1"-|@{<}"v2'" "u3''"-"v1"-"u3'" "v3"-"v1"-"u2"
"v2''"-_>{u_3''}"u3''"-"u2"-"u3'"-_<{u_3'}"v2'"
"v3''"-"v1"-"v3'"
}
\\
\deg f_1 > \deg f_3 & \deg f_1 = \deg f_3
}
$$
\caption{Lemma \ref{lem:outer resonance}, case $\deg f_1 = \topdeg v_3$.}\label{fig:outer resonance}
\end{figure}

We conclude the case where $v_2'$ is not equal to the minimal line $m_2$ of $v_3$ with the following:

\begin{lemma} \label{lem:v3' K reduction}
Assume Set-Up $\ref{setup:2 neighbors}$, $v_2'\neq m_2$ (where $m_2$ is the minimal line in $v_3$), and has no outer resonance in $v_3$.
Then there exists $u_3$ an elementary $K$-reduction of $v_3$ with center $v_2'$.
Moreover, $v_3''$ is reducible, and there exists a reduction path starting with a proper $K$-reduction from $v_3''$ to $u_3$.
\end{lemma}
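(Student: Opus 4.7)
The plan is to first establish that the pivotal simplex $v_1, v_2', v_3$ has Strong Pivotal Form, then to invoke Proposition \ref{pro:slide}\ref{slide4} to obtain an elementary $K$-reduction $u_3$ of $v_3$ with center $v_2'$, and finally to observe that this same $u_3$ serves as a proper $K$-reduction of $v_3''$ with the role of the auxiliary vertex played by $v_3$ itself.

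First I would verify that $v_2'$ has no inner resonance: were it to have one, Lemma \ref{lem:inner gives minimal} would force $v_2' = m_2$, contradicting the hypothesis. Combined with the lemma's assumptions that $v_2'$ has no outer resonance in $v_3$ and is not minimal in $v_3$, Proposition \ref{pro:SPF}\ref{SPF2} then yields Strong Pivotal Form $\PF(s)$ for some odd $s \ge 3$. The $(\nu, \mu)$-Induction Hypothesis supplied by Set-Up \ref{setup:2 neighbors} implies the $(\nu, \nu)$-Induction Hypothesis required by Proposition \ref{pro:slide}, so its part \ref{slide4} produces a reducible elementary $K$-reduction $u_3$ of $v_3$ with center $v_2'$.

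The critical observation is then that $v_2''$ is the minimal line of $v_3$. From $\PF(s)$ we have $\deg f_1 = s\delta$ and $\deg f_2 = 2\delta$; the condition $v_2' \neq m_2$ further forces $\deg f_1 > \deg f_3$ (exactly as in Set-Up \ref{setup:pivotal}), while the good-representative assumption on $v_2''$ supplies $\deg f_2 \neq \deg f_3$. Hence $f_1$ is the unique top-degree component of $v_3$, and $v_2'' = \llb f_2, f_3 \rrb$ is the minimal line. Once this is known, the seven conditions \ref{def:KPdeg}--\ref{def:KPminbis} defining a proper $K$-reduction of $v_3''$ via auxiliary vertex $v_3$ can be read off directly: \ref{def:KPdeg}--\ref{def:KPdel} are inherited from $u_3$ being an elementary $K$-reduction of $v_3$, \ref{def:KPdegbis} is part of Set-Up \ref{setup:2 neighbors}, and \ref{def:KPminbis} is the identification just proved.

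Finally, since $u_3$ is reducible and $\deg v_3'' > \deg u_3$ by Proposition \ref{pro:K reduces degree}, the existence of the proper $K$-reduction $v_3'' \to u_3$ (which does not depend on the choice of a good triangle of $v_3''$) immediately yields reducibility of $v_3''$ from the recursive definition, and prepending this step to a reduction path of $u_3$ produces the claimed reduction path. I expect the identification $v_2'' = m_2$ to be the subtlest point --- it is precisely what allows $v_3$ itself to serve as the auxiliary vertex --- whereas everything else amounts to bookkeeping against the definitions.
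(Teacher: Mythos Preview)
Your overall strategy is sound, but there is one genuine gap at the step where you assert $\deg f_1 = s\delta$ and $\deg f_2 = 2\delta$. Proposition~\ref{pro:SPF}\ref{SPF2} yields Strong Pivotal Form for the \emph{pivotal} simplex of the elementary reduction from $v_3$, in which the type~$1$ vertex is by definition the minimal vertex of $v_2'$. In Set-Up~\ref{setup:2 neighbors}, however, $v_1 = [f_2]$ is merely the intersection $v_2' \cap v_2''$, and nothing you have written establishes that this coincides with the minimal vertex of $v_2'$. If instead $\deg f_2 > \deg f_1$, the pivot would be $[f_1]$, the line $v_2'' = \llb f_2, f_3 \rrb$ would not pass through it, and your identification $v_2'' = m_2$ would fail.

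The fix is short: once the elementary $K$-reduction $u_3$ of $v_3$ is in hand, Lemma~\ref{lem:=lemme14} forces the center of every weak elementary reduction of $v_3$ to pass through the pivot. Since $v_2''$ is such a center and $v_2' \cap v_2'' = \{v_1\}$, the pivot must equal $v_1 = [f_2]$, whence $\deg f_1 > \deg f_2$ and the rest of your argument is valid. The paper's proof follows a parallel but more packaged route: it invokes Corollary~\ref{cor:minimal or K} (rather than Propositions~\ref{pro:SPF} and~\ref{pro:slide} separately) to obtain $u_3$, and then cites Proposition~\ref{pro:K stability} (Cases~\ref{Kstability:1}--\ref{Kstability:2}) for the proper $K$-reduction of $v_3''$, letting that proposition's proof absorb the appeal to Lemma~\ref{lem:=lemme14}. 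Your unpacked version, once patched, has the merit of showing explicitly that $v_2'' = m_2$ always holds here, so that only Case~\ref{Kstability:1} of the Stability proposition is ever relevant.
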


\begin{proof}
By Lemma \ref{lem:inner gives minimal}, we know that $v_2'$ has no inner resonance.
Then Corollary \ref{cor:minimal or K} says that any optimal elementary reduction $u_3$ of $v_3$ with center $v_2'$ is an elementary $K$-reduction.
By the $(\nu,\mu)$-Induction Hypothesis \ref{IH} applied to $v_3' \ne u_3$, we get that $u_3$ is reducible.
The last assertion follows by the Stability of $K$-reductions \ref{pro:K
stability}, Case \ref{Kstability:1} or \ref{Kstability:2}.
\end{proof}

Now we treat the situation where $v_2' = m_2$ is the minimal line of $v_3$, and first we identify some cases that we can handle with the Square Lemma \ref{lem:square}.

\begin{lemma} \label{lem:easy}
Assume Set-Up $\ref{setup:2 neighbors}$, and $v_2' = m_2$.
In the following cases, $v_3''$ is reducible:
\begin{enumerate}
\item \label{case1:easy} $v_3$ admits a simple elementary reduction with simple center $v_2', v_1$;
\item \label{case2:easy} $v_3$ admits a simple elementary reduction with simple center $v_2'', v_1$;
\item \label{case3:easy} $v_3''$ is a simple weak elementary reduction of $v_3$ with simple center $v_2'', v_1$.
\end{enumerate}
\end{lemma}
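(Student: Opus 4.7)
The plan is to deduce all three cases from the Square Lemma \ref{lem:square} combined with iterated applications of the $(\nu,\mu)$-Induction Hypothesis \ref{IH}. The common recipe is: exhibit a simple (or weak simple) elementary reduction of $v_3$ attached to the appropriate line, apply the Square Lemma to obtain an auxiliary vertex $u_3$ with $\deg v_3 > \deg u_3$, and then propagate reducibility from $v_3'$ (which is reducible by assumption) through $u_3$ to $v_3''$. Since $v_2' = m_2$ is the minimal line of $v_3$, the Square Lemma's good-triangle compatibility is automatic in all three cases, as recorded in its statement.

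For Case 1, let $w_3$ be the given simple elementary reduction of $v_3$ with simple center $v_2', v_1$, so $w_3 \ne_{v_2'} v_3$ and $\deg v_3 > \deg w_3$. Since $v_3'$ and $w_3$ are both neighbors of $v_3$ at the line $v_2'$, either $v_3' = w_3$ or $v_3' \ne_{v_2'} w_3$; in the latter situation the $(\nu,\mu)$-IH applied to $v_3' \ne w_3$ (with $\nu \ge \deg v_3'$ and $\nu \ge \deg w_3$) gives that $w_3$ is reducible. I then apply the Square Lemma with our $v_3''$ playing the non-simple role (center $v_2''$) and $w_3$ playing the simple role (simple center $v_2',v_1$, with strict degree drop): this produces $u_3$ with $u_3 \ne w_3$, $u_3 \ne v_3''$, and $\deg v_3 > \deg u_3$, whence $\nu \ge \deg u_3$. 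Two more applications of the $(\nu,\mu)$-IH, first to $w_3 \ne u_3$ and then to $u_3 \ne v_3''$, yield the reducibility of $v_3''$.

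Cases 2 and 3 follow the same template but are more direct, because the simple (weak) reduction is already attached to $v_2''$. In Case 3, the vertex $v_3''$ itself plays the simple-weak role, so the Square Lemma applied to $v_3, v_3', v_3''$ directly produces $u_3$ with $u_3 \ne v_3'$, $u_3 \ne v_3''$, $\deg v_3 > \deg u_3$, and two applications of the $(\nu,\mu)$-IH (from $v_3'$ to $u_3$, then from $u_3$ to $v_3''$) conclude. In Case 2, an auxiliary simple reduction $w_3''$ with $w_3'' \ne_{v_2''} v_3$ and $\deg v_3 > \deg w_3''$ plays that role; the Square Lemma produces $u_3$ as before, and reducibility propagates along the chain $v_3' \to u_3 \to w_3'' \to v_3''$, the last link coming from the fact that $v_3''$ and $w_3''$ are both neighbors of $v_3$ at $v_2''$ (with the subcase $v_3'' = w_3''$ being trivial).

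There is no substantive obstacle here: the whole argument is a matching of the Square Lemma hypotheses together with the degree bookkeeping that ensures every intermediate vertex has degree at most $\nu$, so that the $(\nu,\mu)$-IH applies at each step. The only point to be vigilant about is that various pairs among $v_3', w_3, u_3, v_3''$ may coincide, but in every such degeneracy the corresponding induction step either disappears or is covered by the reducibility of $v_3'$.
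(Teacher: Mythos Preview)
Your proof is correct and follows essentially the same approach as the paper: in each case you invoke the Square Lemma with the appropriate simple (weak) reduction and the other neighbor, then propagate reducibility from $v_3'$ through the auxiliary vertex $u_3$ (and possibly an intermediate $w_3$ or $w_3''$) to $v_3''$ via the $(\nu,\mu)$-Induction Hypothesis. Your explicit handling of the degenerate coincidences (e.g.\ $v_3' = w_3$ or $v_3'' = w_3''$) is a minor extra care not spelled out in the paper, but otherwise the arguments match.
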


\begin{figure}[ht]
$$
\xymatrix@R-20px{
\mygraph{
!{<0cm,0cm>;<0.55cm,0cm>:<0cm,0.55cm>::}
!{(.6,-.2)}*{\text{\scriptsize $v_1$}}
!{(2.5,1.4)}*{\typethree}="v3''"
!{(2,0)}*{\typethree}="w''"
!{(0,2)}*{\typethree}="v3"
!{(-2,0)}*{\typethree}="v3'"
!{(0,-2)}*{\typethree}="w"
!{(-1,1)}*{\typetwo}="v2NO"
!{(-1,-1)}*{\typetwo}="v2SO"
!{(1,1)}*{\typetwo}="v2NE"
!{(1,-1)}*{\typetwo}="v2SE"
!{(0,0)}*{\typeone}="v1"
"w"-^<{u_3}"v3'"-^<{v_3''}^>{v_3}"v3"-^>{w_3'}"w''"-"w"
"v2NE"-^<{v_2'}^>(1.2){v_3'}"v3''"
"v2NO"-"v1"-|@{<}"v2SO" "v2NE"-|@{>}"v1"-"v2SE"
"w''"-"v1"-"v3'" "v3"-"v1"-"w"-"v2SE"
"v3''"-"v1"
}
&
\mygraph{
!{<0cm,0cm>;<0.55cm,0cm>:<0cm,0.55cm>::}
!{(.6,-.2)}*{\text{\scriptsize $v_1$}}
!{(-2.5,1.4)}*{\typethree}="v3'"
!{(2,0)}*{\typethree}="v3''"
!{(0,2)}*{\typethree}="v3"
!{(-2,0)}*{\typethree}="w'"
!{(0,-2)}*{\typethree}="w"
!{(-1,1)}*{\typetwo}="v2NO"
!{(-1,-1)}*{\typetwo}="v2SO"
!{(1,1)}*{\typetwo}="v2NE"
!{(1,-1)}*{\typetwo}="v2SE"
!{(0,0)}*{\typeone}="v1"
"w"-^<{u_3}"w'"-^<{w_3''}^>{v_3}"v3"-^>{v_3'}"v3''"-"w"
"v2NO"-_<{v_2''}_>(1.2){v_3''}"v3'"
"v2NO"-|@{>}"v1"-"v2SO" "v2NE"-"v1"-|@{<}"v2SE"
"w'"-"v1"-"v3''" "v3"-"v1"-"w"-"v2SE"
"v3'"-"v1"
}
&
\mygraph{
!{<0cm,0cm>;<0.55cm,0cm>:<0cm,0.55cm>::}
!{(.6,-.2)}*{\text{\scriptsize $v_1$}}
!{(2,0)}*{\typethree}="v3''"
!{(0,2)}*{\typethree}="v3"
!{(-2,0)}*{\typethree}="w'"
!{(0,-2)}*{\typethree}="w"
!{(-1,1)}*{\typetwo}="v2NO"
!{(-1,-1)}*{\typetwo}="v2SO"
!{(1,1)}*{\typetwo}="v2NE"
!{(1,-1)}*{\typetwo}="v2SE"
!{(0,0)}*{\typeone}="v1"
"w"-^<{u_3}"w'"-^<{v_3''}^{v_2''}^>{v_3}"v3"-^>{v_3'}"v3''"-"w"
"v2NO"-|@{>}"v1"-"v2SO" "v2NE"-"v1"-|@{<}"v2SE"
"w'"-"v1"-"v3''" "v3"-"v1"-"w"-"v2SE"
}
\\
\text{Case } \ref{case1:easy} & \text{Case } \ref{case2:easy}  & \text{Case } \ref{case3:easy}
}
$$
\caption{Lemma \ref{lem:easy}.}\label{fig:easy}
\end{figure}

\begin{proof}
Since $m_2 = \llb f_1, f_2 \rrb$, we have $v_3 = \llb f_1, f_2, f_3 \rrb$ with $\deg f_3 = \topdeg v_3$.
\begin{enumerate}[wide]
\item
Denote by $w_3'$ a simple elementary reduction of $v_3$ with simple center
$v_2', v_1$.
By the Square Lemma \ref{lem:square}, there exists $u_3$ a neighbor of both $w_3'$ and $v_3''$ such that $\deg v_3 > \deg u_3$ (see Figure \ref{fig:easy}).
Then we conclude by applying the $(\nu,\mu)$-Induction Hypothesis \ref{IH} successively to $v_3' \ne w_3'$, $w_3' \ne u_3$ and $u_3 \ne v_3''$.

\item
Denote by $w_3''$ a simple elementary reduction of $v_3$ with simple center
$v_2'', v_1$.
By the Square Lemma \ref{lem:square}, there exists $u_3$ a neighbor of both $w_3''$ and $v_3'$ such that $\deg v_3 > \deg u_3$.
Then we conclude by applying the $(\nu,\mu)$-Induction Hypothesis \ref{IH} successively to $v_3' \ne u_3$, $u_3 \ne w_3''$ and $w_3'' \ne v_3''$.

\item
By the Square Lemma \ref{lem:square}, there exists $u_3$ a neighbor of both $v_3''$ and $v_3'$ such that $\deg v_3 > \deg u_3$.
Again the $(\nu,\mu)$-Induction Hypothesis \ref{IH} applied successively to $v_3' \ne u_3$ and $u_3 \ne v_3''$ yields that $v_3''$ is reducible.
\qedhere
\end{enumerate}
\end{proof}

The last logical step to finish the proof of Proposition \ref{pro:2 neighbors} is the following lemma.
However, we should mention that we will be able to prove \textit{a posteriori} that this case never happens: see Corollary \ref{cor:no hard}.

\begin{lemma} \label{lem:hard}
Assume Set-Up $\ref{setup:2 neighbors}$, $v_2' = m_2$, and that we are not in one of the cases covered by Lemma $\ref{lem:easy}$.
Then there exists $u_3''$ an elementary $K$-reduction of $v_3$ with center $v_2''$ such that $u_3''$ is reducible.
In particular, by the $(\nu,\mu)$-Induction Hypothesis $\ref{IH}$, $v_3''$ is reducible.
\end{lemma}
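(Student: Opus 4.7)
The plan is to produce an elementary $K$-reduction $u_3''$ of $v_3$ with center $v_2''$ that is reducible, and then transfer reducibility to $v_3''$ via the $(\nu,\mu)$-Induction Hypothesis.

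First, I would translate the failure of the three cases of Lemma \ref{lem:easy} into resonance constraints. Failure of Case \ref{case1:easy} rules out $\deg f_3 \in \N \deg f_2$, so $v_2'' = \llb f_2, f_3 \rrb$ has no inner resonance; failure of Case \ref{case2:easy} rules out $\deg f_1 \in \N \deg f_2$, so $v_2'$ has no inner resonance; failure of Case \ref{case3:easy} forces $P_1$ in $v_3'' = \llb f_1 + P_1(f_2, f_3), f_2, f_3 \rrb$ to depend non-affinely on $f_3$. Since $v_2' = m_2$ forces $\deg f_3 > \max\{\deg f_1, \deg f_2\}$, any outer-resonance relation $\deg f_1 = s \deg f_2 + t \deg f_3$ must have $t = 0$ and then collapses to the already-excluded inner resonance of $v_2'$; hence $v_2''$ has no outer resonance in $v_3$ either.

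Second, the non-affine dependence of $P_1$ on $f_3$ combined with the weak-reduction inequality $\deg f_1 \ge \deg P_1(f_2, f_3)$ forces $\dvirt P_1(f_2, f_3) > \deg P_1(f_2, f_3)$, so Lemma \ref{lem:p and q} produces coprime $p, q \ge 2$ with $p \deg f_2 = q \deg f_3$ and top-term proportionality $\bar f_2^{\,p} \propto \bar f_3^{\,q}$. This resonance lets one build a strict elementary reduction $\tilde u_3$ of $v_3$ with center $v_2''$ by choosing $Q(f_2, f_3)$ whose virtual degree beats its actual degree and realizes $\deg Q(f_2, f_3) = \deg f_1$ with top term proportional to $-\bar f_1$, so that $\deg(f_1 + Q(f_2, f_3)) < \deg f_1$. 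The $(\nu, \mu)$-IH applied to $v_3' \ne v_3$ gives $v_3$ reducible, and Proposition \ref{pro:SPF}\ref{SPF2} applied to $\tilde u_3$ (using the no-resonance properties of $v_2''$ and the fact that $v_2''$ is not the minimal line) equips the pivotal simplex $v_1, v_2'', v_3$ with Strong Pivotal Form $\PF(s)$ for some odd $s \ge 3$. Proposition \ref{pro:slide}\ref{slide4} then delivers an elementary $K$-reduction $u_3''$ of $v_3$ with center $v_2''$ that is moreover reducible. Since $u_3''$ and $v_3''$ both contain $v_2''$ as a line in their link, they are neighbors, and with $\deg u_3'' \le \nu$ (strict reduction from $v_3$ of degree $\le \mu$, $\mu$ and $\nu$ being consecutive) and $\deg v_3'' \le \mu$, a final application of the $(\nu, \mu)$-IH to $u_3'' \ne v_3''$ yields $v_3''$ reducible.

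The main obstacle is the strict construction of $\tilde u_3$: the setup only furnishes the weak reduction $v_3''$, so producing a strict one rests on carefully exploiting the resonance $\bar f_2^{\,p} \propto \bar f_3^{\,q}$ through the parachute-type algebra of Lemma \ref{lem:p and q} and Corollary \ref{cor:parachute}. A further subtle tension is that Proposition \ref{pro:slide}\ref{slide2} applied to the resulting SPF simplex would force every $T$-elementary reduction of $v_3$ (for any compatible good triangle $T$, which necessarily contains $m_2 = v_2'$) to have center $v_2''$, in conflict with the existence of $v_3'$ whose center is $v_2' \ne v_2''$; this apparent inconsistency is precisely why the hard case will be shown a posteriori in Corollary \ref{cor:no hard} never to occur.
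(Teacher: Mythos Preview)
Your overall plan is right — the goal is to show that the simplex $v_1, v_2'', v_3$ has Strong Pivotal Form and then invoke Proposition~\ref{pro:slide}\ref{slide4} — and your first paragraph correctly extracts the resonance constraints from the failure of the three cases of Lemma~\ref{lem:easy}. The closing observation about the tension with Proposition~\ref{pro:slide}\ref{slide2} is also exactly the content of Corollary~\ref{cor:no hard}.

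The gap is your construction of the strict reduction $\tilde u_3$. You claim the resonance $p\deg f_2 = q\deg f_3$ lets you produce $Q(f_2,f_3)$ with $\deg Q(f_2,f_3) = \deg f_1$ and $\bar Q \propto -\bar f_1$, but nothing in the resonance between $f_2$ and $f_3$ gives any control over $\bar f_1$. Concretely, when $\deg f_1 > \deg P_1(f_2,f_3)$ (which is allowed, since Set-Up~\ref{setup:2 neighbors} only requires $\deg v_3 \ge \deg v_3''$), there is no reason for $\deg f_1$ to lie in the set of achievable degrees of polynomials in $f_2,f_3$, and no candidate $Q$ is available. The parachute machinery of Lemma~\ref{lem:p and q} and Corollary~\ref{cor:parachute} gives \emph{lower bounds} on degrees after cancellation, not constructions of polynomials with prescribed top term.

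The paper avoids this detour entirely: it does not go through Proposition~\ref{pro:SPF}\ref{SPF2} (which would indeed require a strict elementary reduction), but verifies the four SPF conditions directly. After your observations that $\dvirt P_1(f_2,f_3) > \deg P_1(f_2,f_3)$ and $\deg f_3 \notin \N\deg f_2$, apply Corollary~\ref{cor:parachute}\ref{parachute-iii} with $f_3$ in the role of the larger component (note $\deg f_3 > \deg f_1 \ge \deg P_1(f_2,f_3)$). This gives an odd $s \ge 3$ and $\delta \in \N^3$ with $\deg f_2 = 2\delta$, $\deg f_3 = s\delta$ (which is \ref{SFinn}) and $\deg P_1(f_2,f_3) \ge \Delta(f_3,f_2)$, hence $\deg f_1 \ge \Delta(v_2'')$ (which is \ref{SFdel}). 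Conditions \ref{SFout} and \ref{SFmin} you already have. Then Proposition~\ref{pro:slide}\ref{slide4} applies directly, with no need for an auxiliary strict reduction.
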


\begin{proof}
It is sufficient to check that the simplex $v_1, v_2'', v_3$ has Strong Pivotal
Form $\PF(s)$ for some odd $s\ge 3$: Indeed then one can apply Proposition
\ref{pro:slide}\ref{slide4} to get the result.

On the one hand $\deg f_3 > \deg f_1 \ge \deg P_1(f_2,f_3)$, and on the other hand since we are not in the situation of Lemma \ref{lem:easy}\ref{case3:easy} we have $P_1(f_2,f_3) \not\in \K[f_2]$, so that
$$\dvirt P_1(f_2,f_3) > \deg P_1(f_2,f_3).$$
We also have $\deg f_3 \not\in \N \deg f_2$ since otherwise we could apply Lemma \ref{lem:easy}\ref{case1:easy}.
So we are in the hypotheses of Corollary \ref{cor:parachute}\ref{parachute-iii}, and there exist a degree $\delta \in \N^3$ and an odd integer $s \ge 3$ such that $\deg f_2 = 2\delta$, $\deg f_3 = s\delta$ and
$$s\delta > \deg P_1(f_2,f_3) \ge \Delta(f_3,f_2).$$
It remains to check \ref{SFout}: if $v_2'' =\llb f_2, f_3\rrb$ had outer resonance in $v_3$ then we would have $\deg f_1 \in \N \deg f_2$, and we could apply Lemma \ref{lem:easy}\ref{case2:easy}, contrary to our assumption.
\end{proof}

\subsection{Proof of the Reducibility Theorem} \label{sec:the proof}

Clearly Theorem \ref{thm:reducibility} is a corollary of

\begin{proposition} \label{pro:reduction path}
If a vertex $v_3$ of type 3 in the complex $\Comp$ is reducible, then any neighbor of $v_3$ also is reducible.
\end{proposition}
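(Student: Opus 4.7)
The plan is to prove Proposition \ref{pro:reduction path} by strong induction on the pair $(\deg v_3, \deg u_3)$ in a well-order on $\N^3 \times \N^3$, thereby establishing Induction Hypothesis \ref{IH} for all such pairs. The base case is $v_3 = u_3 = [\id]$, trivial by Lemma \ref{lem:id is min}. In the inductive step, the main split is whether $\deg u_3 \le \deg v_3$ (so $u_3$ is a weak elementary reduction of $v_3$) or $\deg u_3 > \deg v_3$ (so $v_3$ is a strict elementary reduction of $u_3$).

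In the first case, the plan is to select, by Lemma \ref{lem:optimal normal}, a first step $v_3(1)$ of a reduction path from $v_3$ whose center has minimal degree among such first steps. If $v_3(1) = u_3$, reducibility of $u_3$ is immediate. If the centers of $v_3 \ne v_3(1)$ and $v_3 \ne u_3$ differ, Proposition \ref{pro:2 neighbors} applies directly with $v_3' = v_3(1)$ and $v_3'' = u_3$ to give $u_3$ reducible. When the two centers coincide, we invoke the Stability of $K$-reductions (Proposition \ref{pro:K stability}) if $v_3(1)$ is a $K$-reduction of $v_3$, or Corollary \ref{cor:minimal or K} together with Proposition \ref{pro:slide}\ref{slide4} if $v_3(1)$ is elementary, to produce a common reducible neighbor of $u_3$ and $v_3(1)$ and conclude via further applications of the IH.

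In the second case, we show $u_3$ is reducible by verifying, for every good triangle $T$ in $\lines{u_3}$, the existence of a reduction to a reducible vertex. If the center $v_2$ of $u_3 \ne v_3$ is one of the lines of $T$, then $u_3 \to v_3$ is itself a $T$-elementary reduction to $v_3$, which is reducible by hypothesis. Otherwise, the plan is to apply Proposition \ref{pro:2 neighbors} with the Set-Up's top vertex $v_3$ equal to $u_3$, reducible neighbor $v_3' = v_3$ (of smaller degree), and auxiliary second neighbor $v_3''$ chosen as an optimal elementary $T$-reduction or $T$-$K$-reduction of $u_3$, whose existence follows from Proposition \ref{pro:slide}\ref{slide4} applied to an appropriate Strong Pivotal Form simplex at $u_3$; we then verify the minimality condition on $v_2$ required by Set-Up \ref{setup:2 neighbors}.

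The main obstacle is the careful bookkeeping of the Induction Hypothesis: Propositions \ref{pro:2 neighbors}, \ref{pro:slide}, and \ref{pro:K stability} each demand a distinct form of \ref{IH}, so the induction must be ordered so that the relevant hypothesis is in place at each invocation. A secondary subtlety lies in the coinciding-center subcase of the first case, where one must carefully distinguish between elementary reductions, elementary $K$-reductions, and normal versus non-normal proper $K$-reductions — a case analysis handled by combining the Square Lemma \ref{lem:square} with the Stability (Proposition \ref{pro:K stability}) and Normalization (Lemma \ref{lem:normalization}) results for $K$-reductions.
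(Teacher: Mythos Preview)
Your first case (\(\deg u_3 \le \deg v_3\)) is essentially the paper's Fact~\ref{fact:induction 2}, and the broad outline is correct, though the ``coinciding centers'' subcase is over-engineered: when the centers of \(v_3 \ne v_3(1)\) and \(v_3 \ne u_3\) agree, \(u_3\) and \(v_3(1)\) are themselves neighbors via that common center, so a single application of the Induction Hypothesis to \(v_3(1) \ne u_3\) suffices. You should also separate out explicitly the situation where the first step \(v_3(1)\) is a normal proper \(K\)-reduction (which has no ``center'' to compare), and handle it directly via Stability~\ref{pro:K stability}, as the paper does.

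The genuine gap is in your second case (\(\deg u_3 > \deg v_3\)), which corresponds to the paper's Fact~\ref{fact:induction 1}. Both Proposition~\ref{pro:2 neighbors} (through Set-Up~\ref{setup:2 neighbors}) and Proposition~\ref{pro:slide} presuppose the \((\nu,\mu)\)-Induction Hypothesis with \(\mu \ge \deg u_3\), and Proposition~\ref{pro:slide} in addition requires its top vertex to already be reducible. But the \((\nu,\mu)\)-IH with \(\mu = \deg u_3\) is exactly the statement you are trying to establish in this step, so invoking either proposition here is circular. (No reasonable well-order on pairs fixes this: the required \((\nu,\mu)\)-IH involves \emph{all} reducible vertices of degree \(\le \nu\) and \emph{all} neighbors of degree \(\le \mu\), which includes the very instance under consideration.) Moreover, you have no mechanism to produce ``an appropriate Strong Pivotal Form simplex at \(u_3\)'': such a simplex need not exist.

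The paper avoids this circularity by proving Fact~\ref{fact:induction 1} via a direct case analysis on the center \(v_2'\) of \(v_3 \ne u_3\), using only the already-established \((\nu,\nu)\)-IH: if \(v_2'\) is the minimal line of \(u_3\) the reduction is a \(T\)-reduction for every \(T\); if \(v_2'\) has no resonance one invokes Corollary~\ref{cor:minimal or K}; and if \(v_2'\) has inner or outer resonance one constructs explicit simple reductions and applies the Square Lemma. You need to replace your second case with an argument of this kind.
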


\begin{proof}
We plan to prove Proposition \ref{pro:reduction path} by induction on degree: we need to prove that for any $\nu \in \N^3$, the Induction Hypothesis \ref{IH} holds for degrees $\nu,\nu$.
Clearly when $\nu =(1,1,1)$ this is true (because empty!).

Let $\mu > \nu$ be two consecutive degrees in $\N^3$.
It is sufficient to prove the two following facts.

\begin{fact} \label{fact:induction 1}
Assume the Induction Hypothesis $\ref{IH}$ for degrees $\nu,\nu$.
Then it also holds for degrees $\nu,\mu$.
\end{fact}

\begin{fact} \label{fact:induction 2}
Assume the Induction Hypothesis $\ref{IH}$ for degrees $\nu,\mu$.
Then it also holds for degrees $\mu,\mu$.
\end{fact}

To prove Fact \ref{fact:induction 1}, consider $v_3'$ a reducible vertex with $\nu \ge \deg v_3'$, and let $v_3$ be a neighbor of $v_3'$, with center $v_2'$, and with $\deg v_3 = \mu$ (otherwise there is nothing to prove).
We want to prove that $v_3$ is reducible.

If $v_2'$ is the minimal line of $v_3$, then $v_3'$ is a $T$-reduction of $v_3$ for any good triangle $T$ and we are done.

If $v_2'$ is not the minimal line of $v_3$, and has no inner or outer resonance in $v_3$, then by Corollary \ref{cor:minimal or K} any optimal reduction $u_3$ of $v_3$ with respect to the center $v_2'$ is an elementary $K$-reduction of $v_3$.
Since $u_3$ is a neighbor of $v_3'$ and $\nu \ge \deg u_3$, we conclude by the $(\nu,\nu)$-Induction Hypothesis \ref{IH} that $u_3$ is reducible.
Hence $v_3$ also is reducible, with first step of a reduction path the $K$-reduction to $u_3$.

Finally assume that $v_2'$ has resonance, and that $v_2'$ is not the minimal line of $v_3$.
By Lemma \ref{lem:elem red} we can write
$$v_3 = \llb f_1, f_2, f_3 \rrb, \quad v_3' = \llb f_1, f_2, g_3 \rrb, \quad v_2' = \llb f_1, f_2 \rrb,$$
with $\deg f_1 > \max \{\deg f_3,\deg f_2 \}$ and $g_3 = f_3 + P(f_1, f_2)$ for some polynomial $P$.

If $v_2'$ has inner resonance, then $\deg f_1 = r \deg f_2$ for some $r \ge 2$.
There exists $a \in \K$ such that $v_3'' = [f_1 + af_2^r, f_2, f_3]$ is a simple elementary reduction of $v_3$ with center $\llb f_2, f_3 \rrb$, which is the minimal line of $v_3$, hence belongs to any good triangle $T$.
Then we can apply the Square Lemma \ref{lem:square} to get $u_3$ with $\nu \ge \deg u_3$ and $u_3 \ne v_3'$, $u_3 \ne v_3''$.
We conclude by the $(\nu,\nu)$-Induction Hypothesis \ref{IH}, applied successively to $v_3' \ne u_3$ and $u_3 \ne v_3''$, that $v_3''$ is reducible, hence $v_3$ also is.

If $v_2'$ has no inner resonance, but has outer resonance in $v_3$, then $\deg f_1 > \deg f_3 > \deg f_2$ and $\deg f_3 = r \deg f_2$ for some $r \ge 2$.
There exists $Q(f_2)$ such that $\deg f_3 > \deg (f_3 + Q(f_2))$ and $\deg (f_3 + Q(f_2)) \not \in \N\deg f_2$.
Let $T$ be a good triangle of $v_3$.
One of the lines in $T$ has the form $u_2 = \llb f_1 + af_3, f_2\rrb$.
Set $u_3 = \llb f_1 + af_3, f_2, f_3 + Q(f_2) \rrb$, which is an
elementary $T$-reduction of $v_3$ with center $u_2$, and a neighbor of $w_3 =
\llb f_1, f_2, f_3 + Q(f_2) \rrb$.
By the $(\nu,\nu)$-Induction Hypothesis \ref{IH} applied successively to $v_3' \ne w_3$ and $w_3 \ne u_3$, we obtain that $u_3$ is  reducible, hence $v_3$ also is.
\\

To prove Fact \ref{fact:induction 2}, consider $v_3$ a reducible vertex with $\deg v_3 = \mu$, and let $v_3''$ be a neighbor of $v_3$, with center $v_2''$, such that
$$\deg v_3 \ge \deg v_3''.$$
We want to prove that $v_3''$ is reducible.

First assume that $v_3$ admits a reduction path such that the first step $v_3'$ is an elementary reduction, with center $v_2'$.
Moreover we assume that $v_2'$ has minimal degree between all possible such first center of a reduction path.
If $v_2' = v_2''$, then $v_3''$ is a neighbor of $v_3'$ and by the $(\nu,\mu)$-Induction Hypothesis \ref{IH} we are done.
If on the contrary $v_2'$ and $v_2''$ are two different lines in $\lines{v_3}$, we are in the situation of Set-Up \ref{setup:2 neighbors}.
Then we conclude by Proposition \ref{pro:2 neighbors}.

Finally assume that $v_3$ admits a reduction path such that the first step is a proper $K$-reduction $v_3'$.
If this proper $K$-reduction is normal, then by Stability of $K$-reduction \ref{pro:K stability}, Case
\ref{Kstability:3} or \ref{Kstability:4}, we obtain that $v_3'$ is also a
$K$-reduction of $v_3''$, and we are done.
On the other hand if $v_3'$ is a non-normal proper $K$-reduction of $v_3$, then by Lemma \ref{lem:normalization} we get the existence of a vertex $u_3$ that is both an elementary $K$-reduction of $v_3$ and a neighbor of $v_3'$.
By applying the $(\nu,\mu)$-Induction Hypothesis \ref{IH} to $v_3' \ne u_3$ we get that $u_3$ is reducible, and we are reduced to the previous case of the proof.
\end{proof}

\section{Simple connectedness} \label{sec:1-connected}

In this section we prove that the complex $\Comp$ is simply connected, which
amounts to saying that the group $\TA$ is the amalgamated product of three
subgroups along their pairwise intersections.

\subsection{Consequences of the Reducibility Theorem}

Now that the Reducibility Theorem \ref{thm:reducibility} is proved, all previous results that were dependent of a reducibility assumption become stronger.
This is the case in particular for:
\begin{itemize}
\item The Induction Hypothesis \ref{IH}, which is always true;
\item Lemma \ref{lem:no minimal reduction}, which now implies that if $v_3$ is part of a simplex with Strong Pivotal Form, then $v_3$ does not admit an elementary reduction with center $m_2$ the minimal line of $v_3$: see the proof of Proposition \ref{pro:no K} below;
\item Proposition \ref{pro:slide} and Corollary \ref{cor:minimal or K};
\item Set-Up \ref{setup:2 neighbors}, hence also all results in \S\ref{sec:4C}.
\end{itemize}
In particular we single out the following striking consequences of the Reducibility Theorem \ref{thm:reducibility}.

\begin{proposition} \label{pro:no K}
Let $u_3$ be an elementary $K$-reduction of $v_3$, with center $v_2$.
Then $v_3$ does not admit any elementary reduction with center distinct from $v_2$.
\end{proposition}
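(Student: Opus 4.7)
The plan is to first establish an \emph{Auxiliary Claim} that no vertex lying in a simplex with Strong Pivotal Form admits an elementary reduction whose center is its own minimal line, and then to use the Stability of $K$-reductions to reduce the full statement to this claim, applied not to $v_3$ itself but to a suitable auxiliary vertex.

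Let $v_1, v_2, v_3$ be the pivotal simplex of the given elementary $K$-reduction and let $m_2$ be the minimal line of $\lines{v_3}$. Corollary \ref{cor:SPF} places this simplex in Strong Pivotal Form $\PF(s)$ for some odd $s \ge 3$, and condition \ref{def:Kmin} ensures $v_2 \neq m_2$. The key observation for the Auxiliary Claim is that every good triangle $T$ compatible with a Strong Pivotal simplex necessarily contains both the minimal line of the ambient type-$3$ vertex and the distinguished line $v_2$ of the simplex. Hence if such a vertex admitted an elementary reduction with center equal to its minimal line, that reduction would be an elementary $T$-reduction, and Proposition \ref{pro:slide}\ref{slide2} --- now unconditional thanks to the Reducibility Theorem --- would force its center to be $v_2$, contradicting $v_2 \neq m_2$.

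Arguing by contradiction, suppose $v_3''$ is an elementary reduction of $v_3$ with center $v_2'' \neq v_2$. The Auxiliary Claim applied to $v_3$ rules out $v_2'' = m_2$, and thus the Stability of $K$-reductions (Proposition \ref{pro:K stability}) places us in Case \ref{Kstability:2}: the vertex $u_3$ is a (possibly non-normal) proper $K$-reduction of $v_3''$ via an auxiliary vertex $w_3'$ satisfying $\deg w_3' = \deg v_3$. By Corollary \ref{cor:SPF} again, the pivotal simplex of this proper $K$-reduction places $w_3'$ in Strong Pivotal Form, so the Auxiliary Claim is available for $w_3'$ as well. Conditions \ref{def:KPdegbis}--\ref{def:KPminbis} in the definition of a proper $K$-reduction assert that $v_3''$ is a weak elementary reduction of $w_3'$ with center the minimal line $m_2'$ of $w_3'$; since $\deg w_3' = \deg v_3 > \deg v_3''$, this weak reduction is in fact strict. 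The Auxiliary Claim applied to $w_3'$ then prohibits this reduction, yielding the desired contradiction.

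The one delicate step is to realize that the Auxiliary Claim should be applied to the auxiliary vertex $w_3'$ --- whose Strong Pivotal simplex arises from the proper $K$-reduction $u_3$ --- rather than a second time to $v_3$; once this perspective is adopted, no further technical obstacle is expected, since the Parachute-based content of Proposition \ref{pro:slide} and the combinatorial content of Proposition \ref{pro:K stability} are already fully in place and unconditional.
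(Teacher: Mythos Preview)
Your proof is correct and follows essentially the same route as the paper's: push via Stability to an auxiliary vertex $w_3'$ in Strong Pivotal Form, then derive a contradiction from the impossibility of an elementary reduction with center the minimal line there. The paper invokes Lemma~\ref{lem:no minimal reduction} directly (via an optimality argument) rather than the packaged Proposition~\ref{pro:slide}\ref{slide2}, and it treats Cases~\ref{Kstability:1} and~\ref{Kstability:2} uniformly (in Case~\ref{Kstability:1} one simply has $w_3' = v_3$), so your separate elimination of the case $v_2'' = m_2$ is redundant but harmless.
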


\begin{proof}
By contradiction, assume that $v_3'$ is an elementary reduction of $v_3$, with center $v_2'$ distinct from $v_2$.
Then by Stability of $K$-reduction \ref{pro:K stability},
Case \ref{Kstability:1} or \ref{Kstability:2}, there exists $w_3'$ with $\deg
w_3' = \deg v_3$ such that $u_3$ is a proper $K$-reduction of $v_3'$ via $w_3'$.
In particular, $v_3'$ is an elementary reduction of $w_3'$ with center $m_2$,
the minimal line of $w_3'$.
Moreover, by Corollary \ref{cor:SPF}, the pivotal simplex of this proper
$K$-reduction has Strong Pivotal Form.
Now consider $v_3''$ an optimal elementary reduction of $w_3'$ with center
$m_2$: Lemma \ref{lem:no minimal reduction} gives a contradiction.
\end{proof}

\begin{corollary} \label{cor:v3 w3 same degree}
Let $u_3$ be a proper $K$-reduction of $v_3$, via $w_3$.
Then $\deg w_3 = \deg v_3$.
\end{corollary}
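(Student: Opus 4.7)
The plan is to derive this corollary directly from Proposition \ref{pro:no K}, which has just been established. By the definition of a proper $K$-reduction, we know two things: first, that $v_3$ is a \emph{weak} elementary reduction of $w_3$ with center the minimal line $m_2$ of $w_3$, so $\deg w_3 \ge \deg v_3$ (condition \ref{def:KPdegbis}); and second, that $u_3$ is an elementary $K$-reduction of $w_3$ with some center $w_2$, where by condition \ref{def:KPmin} we have $w_2 \neq m_2$.

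I would argue by contradiction: suppose $\deg w_3 > \deg v_3$. Then the weak elementary reduction $v_3$ of $w_3$ is in fact an elementary reduction of $w_3$ (the inequality on degrees being strict), still with center $m_2$. Since $m_2 \neq w_2$, this exhibits an elementary reduction of $w_3$ whose center differs from the center $w_2$ of the elementary $K$-reduction $w_3 \to u_3$. This directly contradicts Proposition \ref{pro:no K}, so we must have $\deg w_3 = \deg v_3$. I do not foresee any obstacle: the entire content of the corollary is this two-line application of the newly available Proposition \ref{pro:no K}, which is precisely why the author places it immediately after.
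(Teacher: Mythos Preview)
Your proof is correct and takes exactly the same approach as the paper: both argue that a strict inequality $\deg w_3 > \deg v_3$ would give $w_3$ an elementary reduction with center $m_2 \neq w_2$, contradicting Proposition~\ref{pro:no K}. The paper's proof is simply the one-line version of what you wrote.
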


\begin{proof}
By Proposition \ref{pro:no K} we cannot have $\deg w_3 > \deg v_3$.
\end{proof}

\begin{corollary} \label{cor:no inner resonance}
Let $v_3$ admitting a $K$-reduction, then any line $u_2$ in $\lines{v_3}$ has no inner resonance.
In other words, Case \ref{degreeK:case2} in Corollary \ref{cor:degree of K} never  happens.
\end{corollary}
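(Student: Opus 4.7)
The plan is to argue by contradiction: assume $v_3$ admits a $K$-reduction yet some line of $\linesd{v_3}$ has inner resonance, so by Corollary \ref{cor:degree of K} we are in Case \ref{degreeK:case2}. After the stated permutation, there is a good representative $v_3 = \llb f_1, f_2, f_3 \rrb$ with
$$\deg f_1 = 3\delta, \quad \deg f_2 = 2\delta, \quad \deg f_3 = \tfrac{3}{2}\delta$$
for some $\delta \in \N^3$. My goal is to manufacture, directly from this resonance, a second elementary reduction of $v_3$ with center different from that of the given $K$-reduction, and then invoke Proposition \ref{pro:no K} to derive a contradiction.

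First I would reduce to the case of an elementary $K$-reduction. If the $K$-reduction is only a proper $K$-reduction via an auxiliary vertex $w_3$, then Corollary \ref{cor:v3 w3 same degree} forces $\deg w_3 = \deg v_3$. Re-examining the proof of Corollary \ref{cor:degree of K}, the same degree constraints apply to $w_3$: its elementary $K$-reduction has Strong Pivotal Form $\PF(s)$ with $s$ odd (Corollary \ref{cor:SPF}), and the numerical data of Case \ref{degreeK:case2} force $s = 3$ and the same 3-degree $(\tfrac32\delta, 2\delta, 3\delta)$. Hence $w_3$ also lies in Case \ref{degreeK:case2} and admits an elementary $K$-reduction, so replacing $v_3$ by $w_3$ I can assume the $K$-reduction is elementary. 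With this assumption, and following Set-Up \ref{setup:good_rep}\ref{good_rep:1}, the center of the reduction is $v_2 = \llb f_1, f_2 \rrb$ with pivot $v_1 = [f_2]$.

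The key observation is then the resonance $\deg f_1 = 3\delta = 2\deg f_3 = \deg f_3^2$. Since the graded lexicographic degree of a monomial determines it uniquely, the top terms $\bar f_1$ and $\bar f_3^2$ must be proportional: there exists $\lambda \in \K^*$ with $\bar f_1 = \lambda\, \bar f_3^2$, hence $\deg f_1 > \deg(f_1 - \lambda f_3^2)$. Setting
$$v_3' := \llb f_1 - \lambda f_3^2,\; f_2,\; f_3 \rrb,$$
I obtain an elementary reduction of $v_3$ whose center is the minimal line $m_2 = \llb f_2, f_3 \rrb$, distinct from $v_2 = \llb f_1, f_2 \rrb$. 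This violates Proposition \ref{pro:no K}, completing the contradiction.

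The only step that is not bookkeeping is the reduction from a proper $K$-reduction to an elementary one; this is where the recently established Corollary \ref{cor:v3 w3 same degree} is essential. Once that reduction is made, the resonance itself produces an explicit "forbidden" second elementary reduction, and Proposition \ref{pro:no K} does the rest.
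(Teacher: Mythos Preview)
Your proof is correct and follows essentially the same approach as the paper: exploit the resonance $\deg f_1 = 2\deg f_3$ to build a second elementary reduction of $v_3$ with center the minimal line $\llb f_2, f_3\rrb$, then invoke Proposition~\ref{pro:no K} for the contradiction. Your reduction from the proper to the elementary case via Corollary~\ref{cor:v3 w3 same degree} is more explicit than the paper's (which tacitly relies on the notation of Corollary~\ref{cor:degree of K} and the fact that Case~\ref{degreeK:case2} only arises from an elementary $K$-reduction on $v_3$ or on $w_3$), but the substance is identical.
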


\begin{proof}
We use the notation $v_3 = \llb f_1, f_2, f_3 \rrb$ from Corollary
\ref{cor:degree of K}, which says that if there exists a line in $v_3$ with
inner resonance, then $\deg f_1 = 2 \deg f_3$ and $\deg f_1 = \frac32 \deg f_2$.
But in this case we would have an elementary
reduction $[f_1 + af_3^2, f_2, f_3]$ of $v_3$ with center the minimal line $\llb f_2, f_3 \rrb
\neq v_2$, in contradiction with Proposition \ref{pro:no K}
\end{proof}

We also obtain that the situation of Lemma \ref{lem:hard} never happens:

\begin{corollary} \label{cor:no hard}
Assume Set-Up $\ref{setup:2 neighbors}$, and $v_2' = m_2$.
Then we are in one of the cases covered by Lemma $\ref{lem:easy}$.
\end{corollary}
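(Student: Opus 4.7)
The plan is to derive this as an immediate contradiction using Lemma \ref{lem:hard} combined with the newly available Proposition \ref{pro:no K}. Since we are now in a regime where the Reducibility Theorem is already established, the stronger conclusion of Proposition \ref{pro:no K}, namely that an elementary $K$-reduction is the \emph{unique} elementary reduction of its source, becomes a powerful obstruction tool.

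First I would argue by contradiction, assuming that Set-Up \ref{setup:2 neighbors} holds with $v_2' = m_2$, and that none of the three situations listed in Lemma \ref{lem:easy} occurs. The hypotheses of Lemma \ref{lem:hard} are then fulfilled verbatim, so Lemma \ref{lem:hard} produces a vertex $u_3''$ which is an elementary $K$-reduction of $v_3$ whose center is $v_2''$.

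Next I would invoke Proposition \ref{pro:no K} applied to this elementary $K$-reduction $u_3''$ with center $v_2''$: it asserts that $v_3$ admits no elementary reduction whose center differs from $v_2''$. However, in the ambient Set-Up \ref{setup:2 neighbors}, the very vertex $v_3'$ is by construction an elementary reduction of $v_3$ (the defining inequalities $\deg v_3 > \deg v_3'$ and $v_3' \ne_{v_2'} v_3$ are built into the set-up), and its center $v_2'$ satisfies $v_2' \neq v_2''$ by hypothesis. This directly contradicts Proposition \ref{pro:no K}, so the contrapositive assumption fails and one of the cases of Lemma \ref{lem:easy} must hold.

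There is essentially no obstacle here beyond recognising which already-established results combine: the only content is to check that the ``elementary reduction'' hypothesis of Proposition \ref{pro:no K} is correctly seen to apply to $v_3'$ (which is immediate from the first two bullets of Set-Up \ref{setup:2 neighbors}) and that the centers $v_2'$ and $v_2''$ are genuinely distinct (built into Set-Up \ref{setup:2 neighbors} as well). Thus the corollary is a short formal consequence, with all the analytic work having been absorbed into Lemma \ref{lem:hard} and Proposition \ref{pro:no K}.
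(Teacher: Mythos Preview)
Your proof is correct and follows essentially the same route as the paper: assume by contradiction that none of the cases of Lemma~\ref{lem:easy} applies, invoke Lemma~\ref{lem:hard} to obtain an elementary $K$-reduction of $v_3$ with center $v_2''$, and then observe that the elementary reduction $v_3'$ of $v_3$ with center $v_2' \neq v_2''$ (built into Set-Up~\ref{setup:2 neighbors}) contradicts Proposition~\ref{pro:no K}.
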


\begin{proof}
Otherwise by Lemma \ref{lem:hard} there would exist an elementary $K$-reduction of $v_3$ with center $v_2''$, in addition to the elementary reduction with center $v_2'$: This is not compatible with Proposition \ref{pro:no K}.
\end{proof}

\subsection{Local homotopies}

To prove the simple connectedness of $\Comp$, the following terminology will be
convenient.

We call \textbf{combinatorial path} a sequence of vertices $v_3(i)$, $i = 0, \dots, n$,
such that for all $i = 0, \dots, n-1$, $v_3(i) \ne v_3(i+1)$.
Denoting by $v_2(i)$ the center of $v_3(i) \ne v_3(i+1)$, we think of such a sequence as equivalent to a path $\gamma\colon [0,2n] \to \Comp$ where for each $i = 0, \dots, n-1$, the interval $[2i, 2i + 2]$ is mapped isometrically onto the union of the two edges $v_3(i),v_2(i)$ and $v_2(i),v_3(i+1)$.
In particular we have these parameterizations in mind when we say that two such
combinatorial paths are homotopic in $\Comp$.
We say that a combinatorial path is \textbf{locally geodesic} if for all $i$, $v_2(i) \neq v_2(i+1)$ (and $v_3(i) \neq v_3(i+ 1)$, but this is already contained in the definition of $v_3(i) \ne v_3(i+ 1)$).
Observe that starting from any combinatorial path, by removing some vertices we can always obtain a
locally geodesic one.
If $v_3(0) = v_3(n)$ we say that the path is a \textbf{combinatorial loop} with base
point $v_3(0)$.
If $v_3(0) = [\id]$, the \textbf{maximal vertex} of such a loop is defined as
the vertex $v_3(i_0)$ that realizes the maximum $\max \deg v_3(i)$, with $i_0$
maximal.
In particular, we have
$$\deg v_3(i_0) > \deg v_3(i_0+1) \quad \text{ and } \quad \deg v_3(i_0) \ge
\deg v_3(i_0-1).$$

\begin{lemma} \label{lem:local homotopy}
Let $v_3(i)$, $i=0,\dots, n$, be a locally geodesic loop with base point at
$[\id]$, and let $v_3(i_0)$ be the maximal vertex.
Then the combinatorial path $v_3(i_0-1)$, $v_3(i_0)$, $v_3(i_0+1)$ is homotopic in
$\Comp$ to a combinatorial path from $v_3(i_0-1)$ to $v_3(i_0+1)$ where all
type 3 intermediate vertices have degree strictly less than $\deg v_3(i_0)$.
\end{lemma}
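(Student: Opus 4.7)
My plan: set $v_3 := v_3(i_0)$, $v_3^\pm := v_3(i_0\pm 1)$, and let $v_2^\pm$ denote the centers of $v_3 \ne v_3^\pm$, so that $\deg v_3 > \deg v_3^+$, $\deg v_3 \ge \deg v_3^-$, and $v_2^- \ne v_2^+$ by the locally geodesic assumption. I aim to exhibit a combinatorial path from $v_3^-$ to $v_3^+$ through type 3 vertices of degree strictly less than $\deg v_3$, together with a disc in $\Comp$ filling the loop that this new path forms with the original three-vertex path.

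The strategy is to port the case analysis of Fact \ref{fact:induction 2} (inside the proof of Proposition \ref{pro:reduction path}) to the geometric setting, where the conclusion ``$v_3''$ is reducible'' is strengthened to an explicit homotopy. First I would invoke the Reducibility Theorem \ref{thm:reducibility}, together with Lemmas \ref{lem:optimal normal} and \ref{lem:normalization}, to choose an elementary reduction $v_3^\star$ of $v_3$ whose center $v_2^\star$ has minimal degree among all centers of first steps of reduction paths from $v_3$; the candidate $v_3^+$ itself (reducible by \ref{thm:reducibility}) guarantees that such a first step exists, and hence $\deg v_3^\star < \deg v_3$.

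Second, for each $\epsilon \in \{-,+\}$ I would separately connect $v_3^\epsilon$ to $v_3^\star$ as follows. If $v_2^\star = v_2^\epsilon$, take the direct edge $v_3^\star \ne v_3^\epsilon$. Otherwise Set-Up \ref{setup:2 neighbors} is satisfied with $v_3' = v_3^\star$, $v_3'' = v_3^\epsilon$, $v_2' = v_2^\star$, $v_2'' = v_2^\epsilon$ (the minimality hypothesis holding by construction), and Proposition \ref{pro:2 neighbors} applies. The proof of that proposition, through Lemmas \ref{lem:easy}, \ref{lem:outer resonance}, \ref{lem:v3' K reduction} (with Lemma \ref{lem:hard} excluded by Corollary \ref{cor:no hard}), constructs an explicit short combinatorial path from $v_3^\epsilon$ to a vertex adjacent to $v_3^\star$, passing only through type 3 vertices of degree $< \deg v_3$; each step invokes either the Square Lemma \ref{lem:square} or the Stability of $K$-reductions \ref{pro:K stability}, and can be interpreted as filling in a bounded ``square'' of four 2-simplices of $\Comp$ around a common type 1 vertex of $v_2^\star$ and $v_2^\epsilon$. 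I would then concatenate the two resulting paths through $v_3^\star$ to obtain the required combinatorial path, and assemble the filling discs on each side into a disc realizing the homotopy with the original $v_3^-, v_3, v_3^+$.

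The main obstacle is the case where Lemma \ref{lem:v3' K reduction} is invoked: then the path from $v_3^\epsilon$ to $v_3^\star$ is routed through a proper $K$-reduction whose auxiliary vertex $w_3$ has, by Corollary \ref{cor:v3 w3 same degree}, the same degree as $v_3^\epsilon$. This is acceptable on the strict side ($\epsilon = +$) since then $\deg w_3 = \deg v_3^+ < \deg v_3$, but on the weak side ($\epsilon = -$) the equality case $\deg v_3^- = \deg v_3$ would force an intermediate vertex of maximal degree; verifying that this configuration cannot arise, by combining Proposition \ref{pro:no K} (whenever $v_3$ admits an elementary $K$-reduction its only elementary reduction center is fixed, forcing $v_2^- = v_2^+$, a contradiction) with the constraint $v_2^- \neq v_2^+$, will be the most delicate step.
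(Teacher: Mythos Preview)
Your overall approach matches the paper's: pick a minimal-center elementary reduction $v_3^\star$ of $v_3 = v_3(i_0)$, then for each side $\epsilon \in \{+,-\}$ invoke the case analysis behind Proposition~\ref{pro:2 neighbors} (with Lemma~\ref{lem:hard} excluded by Corollary~\ref{cor:no hard}) to produce the homotopy. The paper does exactly this.

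The gap is in your treatment of the delicate case. When Lemma~\ref{lem:v3' K reduction} applies, $v_3$ admits an elementary $K$-reduction with center $v_2^\star$. You then argue that Proposition~\ref{pro:no K} forces $v_2^- = v_2^+$, but this is not what that proposition gives: it only rules out \emph{strict} elementary reductions of $v_3$ with center $\neq v_2^\star$. Since $\deg v_3 > \deg v_3^+$, it does force $v_2^+ = v_2^\star$, so in fact the Lemma~\ref{lem:v3' K reduction} branch is vacuous for $\epsilon = +$ (not merely ``acceptable'' as you say). But when $\deg v_3^- = \deg v_3$, the move $v_3 \to v_3^-$ is only a \emph{weak} reduction, and Proposition~\ref{pro:no K} says nothing about $v_2^-$. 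So the configuration you want to exclude is not excluded by your argument.

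The paper resolves this by going one step further back along the loop. In the remaining situation, Stability (Proposition~\ref{pro:K stability}, Cases~\ref{Kstability:1}--\ref{Kstability:2}) makes the $K$-reduction $u_3$ of $v_3$ into a proper $K$-reduction of $v_3(i_0-1)$ via an auxiliary vertex of degree $\deg v_3$, which up to a local homotopy one may take to be $v_3(i_0)$ itself. If this proper $K$-reduction is non-normal, Lemma~\ref{lem:normalization} supplies the homotopy. If it is normal, one applies Stability a second time, now to $v_3(i_0-1)$ and its neighbor $v_3(i_0-2)$ (a weak reduction of $v_3(i_0-1)$ since $\deg v_3(i_0-1) = \deg v_3(i_0)$ is maximal): Cases~\ref{Kstability:3}--\ref{Kstability:4} then force $v_2(i_0-2) = v_2(i_0-1)$, contradicting the locally geodesic hypothesis. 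The point you are missing is that the contradiction uses the locally geodesic condition at index $i_0-1$, not at $i_0$.
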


\begin{proof}
Since $\deg v_3(i_0) > \deg v_3(i_0+1)$, we know that $v_3(i_0)$ admits an
elementary reduction, so it makes sense to choose $v_2'$ a vertex of
minimal degree such that $v_3 = v_3(i_0)$ admits an elementary reduction $v_3'$
with center $v_2'$.
Then we are going to apply Set-Up \ref{setup:2 neighbors} twice, taking $v_3''$ to be
successively $v_3(i_0-1)$ and $v_3(i_0+1)$.
It is sufficient to prove that in both cases the combinatorial path $v_3'', v_3, v_3'$
is homotopic to a combinatorial path from $v_3''$ to $v_3'$ where all
type 3 intermediate vertices have degree strictly less than $\deg v_3$.
Observe that there are degenerate cases which are easy to handle.
First if $v_3' = v_3''$, we just take the combinatorial path with one single vertex $v_3'$.
Second, if $v_3'$ and $v_3''$ share the same center with $v_3$, we can just discard $v_3$ to obtain a combinatorial path from $v_3''$ to $v_3'$ without any intermediate vertex, so we can indeed
assume that the centers $v_2'$ and $v_2''$ are distinct as required in Set-Up
\ref{setup:2 neighbors}.

If $v_2' = m_2$ then by Corollary \ref{cor:no hard} we are in one of the cases
covered by Lemma \ref{lem:easy}, and the homotopy is clear in all cases (see
Figure \ref{fig:easy}).
For instance in Case \ref{case1:easy} we replace the path $v_3'', v_3, v_3'$ by
$v_3'', u_3, w_3', v_3'$, and the other cases are similar.

If $v_2' \neq m_2$, and $v_2'$ has outer resonance in $v_3$, then we are in one of the two cases covered by Lemma \ref{lem:outer resonance}, and again the homotopy is clear (see Figure \ref{fig:outer resonance}).

If $v_2' \neq m_2$, and $v_2'$ has no outer resonance in $v_3$, then by Lemma
\ref{lem:v3' K reduction} there exists $u_3$ an elementary $K$-reduction of
$v_3$ with center $v_2'$.
Therefore, in that case, up to replacing $v_3'$ by $u_3$ we may assume without loss in generality that $v_3'$ is an elementary $K$-reduction of $v_3$.
By Proposition \ref{pro:no K}, this can only happen in the case $v_3'' = v_3(i_0
- 1)$, and $\deg v_3(i_0) = \deg v_3(i_0 - 1)$.
By Stability of $K$-reduction \ref{pro:K stability}, Case \ref{Kstability:1} or
\ref{Kstability:2}, $v_3'$ is a proper
$K$-reduction of $v_3(i_0 - 1)$, and up to a local homotopy (see Figure
\ref{fig:stability}, Case \ref{Kstability:2}) we can
assume that the intermediate vertex is $v_3 = v_3(i_0)$.
If this proper $K$-reduction is not normal, we obtain the expected homotopy from
the normalization process of Lemma \ref{lem:normalization} (see Figure
\ref{fig:normal}).
Otherwise, By Stability of $K$-reduction \ref{pro:K stability},  Case \ref{Kstability:3} or
\ref{Kstability:4}, we get that
$v_3'$ is a normal proper $K$-reduction of $v_3(i_0 - 2)$, with $v_2(i_0 -2) =
v_2(i_0-1)$ the minimal line of $v_3(i_0)$: This contradicts our assumption
that we started with a locally geodesic loop.
\end{proof}

We need one last ingredient before proving the simple connectedness of the complex $\Comp$.

\begin{lemma} \label{lem:link1connected}
The link $\LL(v_1)$ of a vertex of type 1 is a connected graph.
\end{lemma}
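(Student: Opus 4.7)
The plan is to reduce the statement to showing every type 3 vertex in $\LL(v_1)$ is connected to $[\id]$, and then induct on the degree using the Reducibility Theorem. Without loss of generality, set $v_1 = [x_1]$; then $[\id] = [x_1, x_2, x_3] \in \LL(v_1)$ since the simplex $[\id], \llb x_1, x_2 \rrb, [x_1]$ lies in $\Comp$. The graph $\LL(v_1)$ is bipartite between type 2 and type 3 vertices, and any type 2 vertex $v_2 = \llb x_1, g \rrb \in \LL(v_1)$ is adjacent to some type 3 vertex $[x_1, g, h] \in \LL(v_1)$ (extending $(x_1, g)$ to a tame triple). So it suffices to connect every type 3 vertex of $\LL(v_1)$ to $[\id]$ within $\LL(v_1)$.

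Given $v_3 \in \LL(v_1)$ with $v_3 \neq [\id]$, pick a representative $(x_1, f_2, f_3)$: such a representative exists because $v_1 \in \lines{v_3}$ means $x_1$ lies in the affine span of any representative's components, so an affine change on the range brings $x_1$ into the first slot. The good triangle for $(x_1, f_2, f_3)$ has lines $\llb x_1, f_2 \rrb$, $\llb x_1, f_3 \rrb$, $\llb f_2, f_3 \rrb$, the first two passing through $v_1$. By inspecting Lemma~\ref{lem:elem red}, an elementary reduction $v_3 \to u_3$ with center $v_2$ produces $u_3 \in \LL(v_1)$ exactly when $v_2$ passes through $v_1$ (otherwise the new first component is $x_1 + P(f_2,f_3)$ with $P$ non-affine, kicking $x_1$ out of the affine span of the new components). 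Hence if $v_3$ admits a reduction along $\llb x_1, f_2 \rrb$ or $\llb x_1, f_3 \rrb$, the resulting vertex $u_3 \in \LL(v_1)$ has $\deg u_3 < \deg v_3$, the path $v_3, v_2, u_3$ stays in $\LL(v_1)$, and the inductive hypothesis concludes.

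The main obstacle is the case where every reduction of $v_3$ has center on a line not through $v_1$. By the Reducibility Theorem~\ref{thm:reducibility} and Proposition~\ref{pro:no K}, the only reduction is then a $K$-reduction whose center is $\llb f_2, f_3 \rrb$, and the output vertex exits $\LL(v_1)$. To navigate around this, I would invoke the Stability of $K$-reductions (Proposition~\ref{pro:K stability}) combined with the Square Lemma~\ref{lem:square}, in the spirit of Proposition~\ref{pro:2 neighbors}: a carefully chosen simple elementary modification through a line through $v_1$, together with the $K$-reduction, should yield via the Square Lemma a common neighbor of strictly smaller degree that still lies in $\LL(v_1)$. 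Making this construction rigorous and checking the degree estimates is the hardest step, and it likely requires exploiting the concrete form of $K$-reductions from Corollary~\ref{cor:SPF} and the inner/outer non-resonance properties of the pivotal line to ensure the resulting square keeps $x_1$ in the affine span throughout.
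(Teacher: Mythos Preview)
There is a genuine gap: the ``main obstacle'' you isolate is never actually handled --- the final paragraph is a sketch, not a proof, and the proposed combination of the Square Lemma with Stability of $K$-reductions does not obviously produce a neighbor that both drops degree and keeps $x_1$ in the affine span. More importantly, this obstacle is an artifact of your setup. The paper works with $v_1 = [x_3]$ rather than $[x_1]$, and the reason is decisive: $\deg x_3 = (0,0,1)$ is the minimal possible degree of a nonconstant polynomial, and this kills $K$-reductions outright. Indeed, if $v_3 = \llb f_1, f_2, x_3 \rrb$ admitted an elementary $K$-reduction, Corollary~\ref{cor:SPF} would force the three components to have degrees $s\delta$, $2\delta$, and a third strictly greater than $(s-2)\delta$, with $s \ge 3$ odd and $\delta \in \N^3$ nonzero; but $(0,0,1)$ can play none of these roles. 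Proper $K$-reductions are then excluded because the auxiliary vertex $w_3$ also contains $[x_3]$ (using Corollary~\ref{cor:v3 w3 same degree}, the center $m_2$ of $v_3 \ne w_3$ is the minimal line of $v_3$ as well, hence passes through the minimal vertex $[x_3]$).

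Once $K$-reductions are ruled out, the Reducibility Theorem guarantees an \emph{elementary} reduction of $v_3$, and its center must pass through $[x_3]$: a line avoiding $[x_3]$ has $\deg(v_3 \smallsetminus v_2) = (0,0,1)$, which cannot be lowered. Hence the reduced vertex stays in $\LL([x_3])$, and induction on degree finishes. Your choice $v_1 = [x_1]$ is legitimate by transitivity, but since $\deg x_1 = (1,0,0)$ is not minimal in graded lex the argument above does not transfer; switching to $[x_3]$ is what collapses the proof.
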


\begin{proof}
By transitivity of the action of $\TA$ on vertices of type 1, it is sufficient
to work with the link $\LL([x_3])$.
Let $v_3 = \llb f_1, f_2, x_3 \rrb$ be a vertex of type 3 in $\LL([x_3])$,
where we choose our representative such that $\deg f_1 > \deg f_2$.

First observe that $v_3$ does not admit any elementary $K$-reduction.
Indeed by Corollary \ref{cor:SPF} the pivotal simplex of such a reduction should have Strong Pivotal Form $\PF(s)$ for some odd $s \ge 3$.
In particular there exist $\delta \in \N^3$ and a reordering $\{g_2, g_3\} = \{f_2, x_3\}$ such that
$$\deg f_1 = s\delta, \quad \deg g_2 = 2\delta \quad \text{ and } \quad s\delta > \deg g_3 > (s-2)\delta.$$ 
But since $\deg x_3 = (0,0,1)$ is the minimal possible degree of a component of an automorphism, both cases $g_2 = x_3$ or $g_3 = x_3$ are impossible.

It follows that $v_3$ also does not admit a proper $K$-reduction: such a reduction would be via $w_3 = \llb g_1, f_2, x_3 \rrb$, but we just proved that such a $w_3$ cannot admit an elementary $K$-reduction.

By Theorem \ref{thm:reducibility}, we conclude that $v_3$ admits an elementary
reduction $v_3'$, which clearly must admit $[x_3]$ as pivot, since there is no non-constant
polynomial $P \in \K[x_1,x_2,x_3]$ with $\deg x_3 > \deg P$.
In particular, $v_3'$ also is in $\LL([x_3])$, and by induction on degree, we obtain the existence of a reduction path from $v_3$ to $[\id]$ that stays in $\LL([x_3])$.
\end{proof}

Now we recover a result of \cite{U} and \cite{wright}, about relations in $\Tame(\A^3)$.
Precisely, Umirbaev gives an algebraic description of the relations, and Wright shows that this result can be rephrased in terms of an amalgamated product structure over three subgroups.
Our proof follows the same strategy as in \cite[Proposition 3.10]{BFL}.

\begin{proposition}\label{pro:1connected}
The complex $\Comp$ is simply connected.
\end{proposition}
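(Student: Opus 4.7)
The plan is to show that every loop in $\Comp$ bounds a disk, by reducing arbitrary loops to combinatorial loops based at $[\id]$ and then shrinking them via a double induction on the degrees of their type~3 vertices, with Lemma~\ref{lem:local homotopy} doing all the real work.

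\medskip

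\noindent\textbf{Step 1: reduction to combinatorial loops.} Let $\gamma$ be any loop in $\Comp$. Since each closed $2$-simplex is simply connected and $\Comp$ is path-connected, $\gamma$ is homotopic to a loop in the $1$-skeleton, based at $[\id]$. Next, I would eliminate vertices of type~$1$. Whenever $\gamma$ traverses a segment $v_2 \to v_1 \to v_2'$ with $v_1$ of type~$1$, the two type~$2$ vertices $v_2,v_2'$ both lie in $\LL(v_1)$, which is connected by Lemma~\ref{lem:link1connected}; the star of $v_1$ is the cone on $\LL(v_1)$, hence is contractible, so inside this star one can homotope the subpath through $v_1$ to a path lying in $\LL(v_1)$ (and thus avoiding $v_1$). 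Iterating, $\gamma$ becomes a loop passing only through vertices of types~$2$ and~$3$, i.e.\ a combinatorial loop $v_3(0),v_3(1),\dots,v_3(n)$ with $v_3(0)=v_3(n)=[\id]$. Deleting any immediate backtracking produces a locally geodesic combinatorial loop, in the sense of Lemma~\ref{lem:local homotopy}.

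\medskip

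\noindent\textbf{Step 2: induction on maximal degree.} Order combinatorial loops based at $[\id]$ lexicographically by the pair
\[
\bigl(\,\max_i \deg v_3(i),\; \#\{i : \deg v_3(i) = \max_i \deg v_3(i)\}\,\bigr).
\]
In the base case, $\max_i \deg v_3(i) = (1,1,1)$, so every $v_3(i)$ equals $[\id]$ by Lemma~\ref{lem:id is min}; but the definition of a combinatorial path requires $v_3(i) \neq v_3(i+1)$, forcing $n=0$, and the loop is trivial. For the inductive step, let $v_3(i_0)$ be the last vertex realising the maximum. By Lemma~\ref{lem:local homotopy}, the subpath $v_3(i_0-1), v_3(i_0), v_3(i_0+1)$ is homotopic in $\Comp$ to a combinatorial path from $v_3(i_0-1)$ to $v_3(i_0+1)$ whose intermediate type~$3$ vertices all have degree strictly less than $\deg v_3(i_0)$. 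Splicing this homotopy into the loop and then renormalising (deleting backtracking so that the result is again locally geodesic) produces a new combinatorial loop based at $[\id]$ whose lexicographic invariant is strictly smaller: the count of maximising vertices drops by one (none are created, and $v_3(i_0)$ is removed), and if this count becomes zero the maximum itself strictly decreases. By induction, the new loop is null-homotopic, hence so is the original~$\gamma$.

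\medskip

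\noindent\textbf{Main obstacle.} The substantive work is entirely encoded in Lemma~\ref{lem:local homotopy}, which has already been established using Corollary~\ref{cor:no hard}, Proposition~\ref{pro:no K}, the Stability of $K$-reductions, and the local pictures of Figures~\ref{fig:easy}, \ref{fig:outer resonance}, \ref{fig:stability} and~\ref{fig:normal}. The only delicate point remaining is the elimination of type~$1$ vertices in Step~$1$, for which connectedness of $\LL(v_1)$ (Lemma~\ref{lem:link1connected}) is precisely what is needed. Everything else is a formal induction.
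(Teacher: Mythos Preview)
Your proof is correct and follows essentially the same approach as the paper: reduce to the $1$-skeleton, use Lemma~\ref{lem:link1connected} to avoid type~$1$ vertices, pass to a locally geodesic combinatorial loop, and then shrink via Lemma~\ref{lem:local homotopy}. The only cosmetic difference is the secondary induction invariant: the paper inducts on the pair $(\delta_0, i_0)$ where $i_0$ is the index of the last maximal vertex, while you use the number of vertices realising the maximum; both decrease for the same reason (no vertex of degree $\delta_0$ is created, and $v_3(i_0)$ is removed), so this is not a genuine divergence.
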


\begin{proof}
Let $\gamma$ be a loop in $\Comp$. We want to show that it is homotopic to a trivial loop.
Without loss in generality, we can assume that the image of $\gamma$ is contained in the 1-skeleton of the square complex, and that $\gamma(0) = \llb x_1,x_2,x_3 \rrb$ is the vertex of type 3 associated with the identity.

A priori (the image of) $\gamma$ is a sequence of edges of arbitrary type.
By Lemma \ref{lem:link1connected}, we can perform a homotopy to avoid each vertex of type 1.
So now we assume that vertices in $\gamma$ are alternatively of type 2 and 3.
Precisely, up to a reparametrization we can assume that for each $i$, $\gamma(2i)$ has type 3 and $\gamma(2i+1)$ has type 2, so that $\gamma$ defines a combinatorial path by setting $v_3(i) = \gamma(2i)$ for each $i$.
By removing some of these vertices we can also assume that $\gamma$ is a locally geodesic loop.

Let $v_3(i_0)$ be the maximal vertex of the loop, and $\delta_0$ its degree.
Then by Lemma \ref{lem:local homotopy} we can conclude by induction on the couple $(\delta_0, i_0)$, ordered with lexicographic order.
\end{proof}

Since $\TA$ acts on a simply connected 2-dimensional simplicial complex with fundamental domain a simplex, we can recover the group from the data of the stabilizers of each type of vertex.
This is a simple instance of the theory of developable complexes of groups in the sense of Haefliger (see \cite[III.$\mathcal C$]{BH}).
Following Wright we can phrase this fact as follows:

\begin{corollary}[{\cite[Theorem 2]{wright}}] \label{cor:product}
The group $\TA$ is the amalgamated product of the three following subgroups along their pairwise intersections:
\begin{align*}
\Stab([x_1, x_2, x_3]) &= A_3; \\
\Stab([x_1,x_2]) &= \{(ax_1 + bx_2 + c, a'x_1 + b'x_2 + c', \alpha x_3 + P(x_1, x_2))\}; \\
\Stab([x_1]) &= \{(ax_1 + b, f_2(x_1, x_2, x_3), f_3(x_1, x_2, x_3) \}.
\end{align*}

\end{corollary}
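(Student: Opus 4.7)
The plan is to deduce this from the two inputs already assembled: that $\TA$ acts simplicially on $\Comp$ with the standard simplex $\sigma = \big( [x_1], [x_1,x_2], [x_1,x_2,x_3]\big)$ as a strict fundamental domain (the first lemma of \S\ref{sec:simplicial complex}), and that $\Comp$ is simply connected (Proposition \ref{pro:1connected}). For a group acting simplicially without inversions on a simply connected complex with a single simplex as fundamental domain, the general theory of developable complexes of groups (\cite[III.$\mathcal C$]{BH}) identifies the group with the colimit of the diagram of face stabilizers; for a $2$-simplex, this colimit is precisely the iterated pushout, i.e.\ the amalgamated product of the three vertex stabilizers along their pairwise intersections (which automatically all agree with the stabilizer of the full simplex $\sigma$).

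First I would compute the three vertex stabilizers explicitly. Using $g \cdot [f_1,\dots,f_r] = [f_1\circ g^{-1},\dots, f_r \circ g^{-1}]$: an element fixing $[x_1,x_2,x_3]$ must have $g^{-1}$ (hence $g$) affine, yielding $\Stab([x_1,x_2,x_3]) = A_3$; an element fixing $[x_1,x_2]$ has the first two components of $g^{-1}$ equal to affine combinations of $x_1,x_2$, and the requirement that it extend to a tame automorphism forces the third component to be of the form $\alpha x_3 + P(x_1,x_2)$ with $\alpha \in \K^\times$; similarly an element fixing $[x_1]$ has first component $ax_1+b$ with $a \in \K^\times$, the two remaining components being arbitrary subject to tameness. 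The pairwise intersections are then easily checked to describe the edge stabilizers, and all three pairwise intersections coincide with the stabilizer of $\sigma$, namely the group of diagonal triangular affine automorphisms.

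Next I would make the connection with the amalgamated product concrete via the obvious homomorphism $\Phi$ from the amalgamated product $P$ of the three stabilizers along their pairwise intersections to $\TA$. Surjectivity of $\Phi$ is the statement that $\TA$ is generated by these three subgroups; this follows from the transitivity of the action on each type of simplex (itself a consequence of gallery connectedness of $\Comp$), and ultimately is the definition $\TA = \langle A_3, E_3\rangle$ once one observes $E_3 \subset \Stab([x_1,x_2])\cup\Stab([x_2,x_3])\cup\Stab([x_1,x_3])$. Injectivity is the content of simple connectedness: a word in the generators that becomes trivial in $\TA$ traces a combinatorial loop based at $\sigma$ in $\Comp$, and a null-homotopy of this loop decomposes it into elementary homotopies across single simplices, each of which corresponds to replacing a subword by an equal word inside a single stabilizer — precisely the relations built into $P$.

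The main obstacle is not really hard once the inputs are in place: it lies in formalizing the bijection between combinatorial null-homotopies of loops in $\Comp$ and sequences of Tietze transformations in $P$. Rather than rewrite this classical argument, I would invoke it directly from \cite[III.$\mathcal C$.3.11]{BH} (developability of the simple complex of groups over a simplex, together with the simply connected universal cover), which gives the isomorphism $P \cong \TA$ in one stroke from the two inputs above and the computation of stabilizers.
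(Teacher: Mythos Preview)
Your approach is exactly the paper's: the corollary is stated immediately after Proposition~\ref{pro:1connected} with no proof beyond the remark that this is ``a simple instance of the theory of developable complexes of groups in the sense of Haefliger (see \cite[III.$\mathcal C$]{BH})'', which is precisely what you invoke.

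One slip in your extra detail: the three pairwise intersections of vertex stabilizers are \emph{not} all equal to $\Stab(\sigma)$ --- they are the three distinct edge stabilizers (for instance $\Stab([x_1,x_2,x_3]) \cap \Stab([x_1,x_2])$ contains the transposition $(x_2,x_1,x_3)$, which does not fix $[x_1]$), and only their common triple intersection equals $\Stab(\sigma)$. This does not affect the argument, since the amalgamation is along the pairwise intersections as stated, and you ultimately defer to \cite{BH} anyway.
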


\section{Examples} \label{sec:examples}

We gather in this last section a few examples of interesting reductions.

\begin{example}[Elementary $K$-reduction with $s = 3$] \label{exple:s=3}
Let
\begin{align*}
g &= (x_1,\, x_2,\, x_3 + x_1^2 - x_2^3), \\
t_1 &= (x_1 + \alpha x_2 x_3 + x_3^3,\, x_2 + x_3^2,\, x_3).
\end{align*}
Clearly in the composition $g \circ t_1$ the terms of degree 6 cancel each other.
Moreover, if we choose $\alpha = \tfrac{3}{2}$ this is also the case for terms of degree 5:
$$g \circ t_1 = \left(x_1 + \tfrac{3}{2} x_2 x_3 + x_3^3,\, x_2 + x_3^2,\,
x_3 + x_1^2 - x_2^3 + 3x_1x_2x_3 + \tfrac{x_3^2}4(8x_1x_3-3x_2^2) \right).$$
Consider now a triangular automorphism preserving the quadratic form $8x_1x_3-3x_2^2$ that appears as a factor:
$$ t_2 = (x_1,\, x_2 + x_1^2,\, x_3 + \tfrac34 x_1x_2 + \tfrac{3}{8} x_1^3).$$
A direct computation shows that the components of $f = g \circ t_1 \circ t_2 = (f_1,f_2,f_3)$ admits the following degrees:
$$(9,0,0), \; (6,0,0), \; (7,0,1).$$
Finally, $u_3 = [t_1 \circ t_2]$, whose degrees of components are
$$(9,0,0), \; (6,0,0), \; (3,0,0),$$
is an elementary $K$-reduction of $v_3 = [f_1,f_2,f_3]$.
Following notation from the definition of Strong Pivotal Form, we have $s = 3$ and $\delta = (3,0,0)$.
Moreover
\begin{multline*}
df_1 \wedge df_2 = -\tfrac32 (x_1^2 - x_2)dx_2 \wedge dx_3 + (\tfrac{27}{16}x_1^2x_2 + \tfrac98 x_2^2 + \tfrac32 x_1 x_3 + 1)dx_1 \wedge dx_2 \\
+ (-\tfrac94 x_1^3 - \tfrac32 x_1 x_2 + 2x_3) dx_1 \wedge dx_3.
\end{multline*}
so that $\deg df_1 \wedge df_2 = (4,0,1)$, from the contribution of the factor $x_1^3 dx_1 \wedge dx_3$.

\end{example}

\begin{example}[Elementary $K$-reduction with $s = 5$, {see also \cite{Ku:type1}}] \label{exple:s=5}
One can apply the same strategy to produce examples of $K$-reduction with $s \ge 3$ an arbitrary odd number.
We give the construction for $s = 5$, and leave the generalization to the
reader.
Let
\begin{align*}
g &= (x_1,\, x_2,\, x_3 + x_1^2 - x_2^5), \\
t_1 &= (x_1 + \alpha x_2^2 x_3 + \beta x_2 x_3^3 + x_3^5,\, x_2 + x_3^2,\, x_3).
\end{align*}
Observe that $\alpha x_2^2 x_3 + \beta x_2 x_3^3 + x_3^5$ is homogeneous of degree 5, by putting weight 1 on $x_3$ and weight 2 on $x_2$.
By choosing $\alpha = \tfrac{15}{8}$, $\beta = \tfrac{5}{2}$, we minimize the degree of the composition:
$$ g \circ t_1 = \left(
x_1 + \tfrac{15}{8} x_2^2 x_3 + \tfrac{5}{2} x_2 x_3^3 + x_3^5,\, x_2 + x_3^2,\,
x_3 + \tfrac{1}{8}x_3^4(16x_1x_3 - 5 x_2^3) + \cdots \right).$$
Now take the following triangular automorphism, which preserves the polynomial $16x_1x_3 - 5 x_2^3$:
$$ t_2 = \left(x_1,\, x_2 + x_1^2,\, x_3 + \tfrac{5}{16}(3x_1x_2^2 + 3x_1^3x_2 + x_1^5)\right).$$
We compute the degrees of the components of $f = g \circ t_1 \circ t_2 = (f_1, f_2, f_3)$:
$$(25,0,0), \; (10,0,0), \; (20,3,0).$$
Finally, $u_3 = [t_1 \circ t_2]$, whose degrees of components are
$$(25,0,0), \; (10,0,0), \; (5,0,0),$$
is an elementary $K$ reduction of $v_3 = [f_1, f_2, f_3]$.
Here we have $s = 5$ and $\delta = (5,0,0)$.
Moreover
\begin{multline*}
df_1 \wedge df_2 = -\tfrac{15}8 \left( x_1^2 + x_2 \right)^2 dx_2 \wedge dx_3
+ \left( 2x_3- \tfrac{5}{8}(5 x_1^5 - 9 x_1^3x_2 - 3 x_1x_2^2)  \right) dx_1 \wedge dx_3 \\
+ \left( \tfrac{75}{128}(5 x_1^4 x_2^2 + 9 x_1^2 x_2^3 + 3 x_2^4) + \tfrac{15}8 ( x_1^3x_3 + 2 x_1 x_2 x_3) + 1 \right)dx_1 \wedge dx_2,
\end{multline*}
so that $\deg df_1 \wedge df_2 = (5,3,0)$, from the contribution of the factor $x_1^4x_2^2 dx_1 \wedge dx_2$.
\end{example}

\begin{example}[Elementary reduction without Strong Pivotal Form] \label{exple:noSPF}
We give examples of elementary reduction that show that the three assumptions in Proposition \ref{pro:SPF} are necessary to get Strong Pivotal Form.
\begin{enumerate}[wide]
\item Let $f = (f_1, f_2, f_3) \in \TA$ and $r \ge 2$ such that
$$\deg f_1 > \deg f_3 = r\deg f_2.$$
In particular there exists $a \in \K$ such that $\deg f_3 > \deg (f_3 + a f_2^r)$.
For instance $f= (x_1 + x_3^3, x_2, x_3 + x_2^2)$ is such an automorphism, for $r = 2$ and $a = -1$.
Then $u_3 = \llb f_1, f_2, f_3 + a f_2^r \rrb$ is an elementary reduction of $v_3 = \llb f_1, f_2, f_3 \rrb$, and the pivotal simplex does not have Strong Pivotal Form.
Observe that $v_2 = \llb f_1, f_2 \rrb$ has outer resonance in $v_3$.

\item Let $u_3 = \llb f_1, f_2, f_3 + P(f_1,f_2)\rrb$ be an elementary $K$-reduction of $w_3 = \llb f_1, f_2, f_3 \rrb$, with $2\deg f_1 = s\deg f_2$ for some odd $s \ge 3$.
For instance we can start with one of the examples \ref{exple:s=3} or \ref{exple:s=5}.
Pick any integer $r \ge \frac{s+1}{2}$.
Then $v_3' = \llb f_1+f_2^r, f_2, f_3 + P(f_1-f_2^r,f_2) \rrb$ is an elementary reduction of $v_3 = \llb f_1 + f_2^r, f_2, f_3 \rrb$, and the pivotal simplex does not have Strong Pivotal Form.
Observe that the center $v_2 = \llb f_1 + f_2^r, f_2 \rrb$ has inner resonance.

\item With the notation of Example \ref{exple:s=3} or \ref{exple:s=5}, the elementary reduction from $\llb g \circ t_1 \rrb$ to $\llb t_1 \rrb$ gives an example of an elementary reduction where the center $m_2$, which is the minimal line, does not have inner or outer resonance, and again the pivotal simplex does not have Strong Pivotal Form.
\end{enumerate}
\end{example}

\begin{example}[Non-normal proper $K$-reduction]
Pick the elementary $K$-reduction from Example \ref{exple:s=5}, and set $v_3' = [f_1 + f_2^2, f_2, f_3]$, which is a weak elementary reduction of $v_3$.
Then $u_3$ is a non-normal proper $K$-reduction of $v_3'$, via $v_3$.
This corresponds to Case \ref{Kstability:1} of Stability of a $K$-reduction
\ref{pro:K stability}.
We mention again that it is an open question whether there exists any normal proper $K$-reduction.
\end{example}

\begin{nonexample}[Hypothetical type II and type III reductions] \label{nonexample}
From Corollary \ref{cor:SPF} we know that if $v_3$ admits an elementary $K$-reduction, then the pivotal simplex  
has Strong Pivotal Form $\PF(s)$ for some odd $s \ge 3$.
In particular if $s = 3$, then $v_3 = \llb f_1, f_2, f_3 \rrb$ with $\deg f_1 = 3\delta$, $\deg f_2 = 2 \delta$ and $\deg f_3 > \delta$ for some $\delta \in \N^3$.
It is not clear if there exists an example of such a reduction with $\frac32\delta > \deg f_3$, or even $2\delta > \deg f_3$.
Observe that such an example would be the key for the existence  of the following reductions (for the definition of a reduction of type II or III, see the original paper \cite{SU:main}, or \cite[\S7]{Ku:main}):
\begin{enumerate}
\item If $\frac32\delta > \deg f_3$, then $v_3' = \llb f_1 + f_3^2, f_2, f_3 \rrb$ would admit a normal proper $K$-reduction, via $v_3$: This would correspond to a type III reduction.
\item If $2\delta > \deg f_3 > \frac32\delta$, then $v_3$ would admit an elementary $K$-reduction such that the pivot $[f_2]$ is distinct from the minimal vertex $[f_3]$: This would correspond to a type II reduction.
\item If $\frac32\delta > \deg f_3$, then $v_3'' = \llb f_1 , f_2 + f_3^2, f_3 \rrb$ would be an example of a vertex that admits a reduction along a center with outer resonance in $v_3''$, but that does not admit a reduction with center the minimal line of $v_3''$ (see Lemma \ref{lem:outer resonance}).
\end{enumerate}
\end{nonexample}

\bibliographystyle{myalpha}
\bibliography{biblio}

\def\polhk#1{\setbox0=\hbox{#1}{\ooalign{\hidewidth
  \lower1.5ex\hbox{`}\hidewidth\crcr\unhbox0}}}
\begin{thebibliography}{BFL14}

\bibitem[BFL14]{BFL}
C.~Bisi, J.-P. Furter \& S.~Lamy.
\newblock The tame automorphism group of an affine quadric threefold acting on
  a square complex.
\newblock {\em Journal de l'\'Ecole {P}olytechnique - Math\'ematiques},
  1:161--223, 2014.

\bibitem[BH99]{BH}
M.~R. Bridson \& A.~Haefliger.
\newblock {\em Metric spaces of non-positive curvature}, volume 319 of {\em
  Grundlehren der Mathematischen Wissenschaften}.
\newblock Springer-Verlag, Berlin, 1999.

\bibitem[CL13]{CL}
S.~Cantat \& S.~Lamy.
\newblock Normal subgroups in the {C}remona group.
\newblock {\em Acta Math.}, 210(1):31--94, 2013.
\newblock With an appendix by Yves de Cornulier.

\bibitem[HP94]{HP}
W.~Hodge \& D.~Pedoe.
\newblock {\em Methods of algebraic geometry. {V}ol. {I}}.
\newblock Cambridge Mathematical Library. Cambridge University Press,
  Cambridge, 1994.
\newblock Book I: Algebraic preliminaries, Book II: Projective space, Reprint
  of the 1947 original.

\bibitem[Kur08]{Ku:ineq}
S.~Kuroda.
\newblock A generalization of the {S}hestakov-{U}mirbaev inequality.
\newblock {\em J. Math. Soc. Japan}, 60(2):495--510, 2008.

\bibitem[Kur09]{Ku:type1}
S.~Kuroda.
\newblock Automorphisms of a polynomial ring which admit reductions of type
  {I}.
\newblock {\em Publ. Res. Inst. Math. Sci.}, 45(3):907--917, 2009.

\bibitem[Kur10]{Ku:main}
S.~Kuroda.
\newblock Shestakov-{U}mirbaev reductions and {N}agata's conjecture on a
  polynomial automorphism.
\newblock {\em Tohoku Math. J. (2)}, 62(1):75--115, 2010.

\bibitem[Lam02]{LJung}
S.~Lamy.
\newblock Une preuve g\'eom\'etrique du th\'eor\`eme de {J}ung.
\newblock {\em Enseign. Math. (2)}, 48(3-4):291--315, 2002.

\bibitem[Lon16]{Lonjou}
A.~Lonjou.
\newblock Non simplicité du groupe de {C}remona sur tout corps.
\newblock {\em Ann. Inst. Fourier}, 66(5):2021--2046, 2016.

\bibitem[LP16]{LP}
S.~Lamy \& P.~Przytycki.
\newblock Acylindrical hyperbolicity of the three-dimensional tame automorphism
  group.
\newblock {\em arXiv:1610.05457}, 2016.

\bibitem[Mar15]{Martin}
A.~Martin.
\newblock On the acylindrical hyperbolicity of the tame automorphism group of
  {SL}$_2(\mathbf {C})$.
\newblock {\em arXiv:1512.07526}, 2015.

\bibitem[SU04a]{SU:ineq}
I.~P. Shestakov \& U.~U. Umirbaev.
\newblock Poisson brackets and two-generated subalgebras of rings of
  polynomials.
\newblock {\em J. Amer. Math. Soc.}, 17(1):181--196, 2004.

\bibitem[SU04b]{SU:main}
I.~P. Shestakov \& U.~U. Umirbaev.
\newblock The tame and the wild automorphisms of polynomial rings in three
  variables.
\newblock {\em J. Amer. Math. Soc.}, 17(1):197--227, 2004.

\bibitem[Umi06]{U}
U.~U. Umirbaev.
\newblock Defining relations of the tame automorphism group of polynomial
  algebras in three variables.
\newblock {\em J. Reine Angew. Math.}, 600:203--235, 2006.

\bibitem[V{\'e}n11]{V}
S.~V{\'e}n{\'e}reau.
\newblock A parachute for the degree of a polynomial in algebraically
  independent ones.
\newblock {\em Math. Ann.}, 349(3):589--597, 2011.

\bibitem[Wri15]{wright}
D.~Wright.
\newblock The generalized amalgamated product structure of the tame
  automorphism group in dimension three.
\newblock {\em Transform. Groups}, 20(1):291--304, 2015.

\end{thebibliography}

\end{document}